\definecolor{auburn}{rgb}{0.43, 0.21, 0.1}
\newtheorem*{theorem*}{Theorem}
\newtheorem{theorem}{Theorem}[section]
\newtheorem{proposition}[theorem]{Proposition}
\newtheorem{lemma}[theorem]{Lemma}
\newtheorem{corollary}[theorem]{Corollary}
\newtheorem{notation}[theorem]{Notation}
\newtheorem{definition}[theorem]{Definition}
\newtheorem{claim}[theorem]{Claim}
\newtheorem{conjecture}[theorem]{Conjecture}
\theoremstyle{remark}
\newtheorem{remark}[theorem]{Remark}
\newcommand{\R}{\mathbb{R}}
\newcommand{\N}{\mathbb{N}}
\newcommand{\Z}{\mathbb{Z}}
\newcommand{\D}{\mathbb{D}}
\renewcommand{\S}{\mathbb{S}}
\newcommand{\Cc}{\mathcal{C}}
\newcommand{\Lc}{\mathcal{L}}
\newcommand{\Mc}{\mathcal{M}}
\newcommand{\Pc}{\mathcal{P}}
\newcommand{\Fs}{\mathsf{F}}
\newcommand{\Ir}{\mathrm{I}}
\newcommand{\Lf}{\mathfrak{L}}
\newcommand{\Pf}{\mathfrak{P}}
\newcommand{\Expect}[1]{\mathbb{E} \left[ #1 \right] }
\newcommand{\Prob}[1]{\mathbb{P} \left( #1 \right) }
\renewcommand{\P}{\mathbb{P}}
\newcommand{\E}{\mathbb{E}}
\newcommand{\Pds}{\mathds{P}}
\newcommand{\floor}[1]{\left\lfloor #1 \right\rfloor}
\newcommand{\indic}[1]{ \mathbf{1}_{ \left\{ #1 \right\} } }
\newcommand{\eps}{\varepsilon}
\renewcommand{\d}{\mathrm{d}}
\newcommand{\wt}{\widetilde}
\newcommand{\wh}{\widehat}
\def \loopmeasure{\mu^{\rm loop}}
\def \Bes{{\rm Bes}}
\def \rad{{\rm rad}}
\def \ang{{\rm ang}}
\def \bub{{\rm bub}}
\def \cross{{\rm Cross}}
\def \scale{{\rm scale}}
\def \diam{{\rm diam}}
\def \Seed{{\rm Seed}}
\def \loc{{\rm LocUniq}}
\def \b2b{{\rm B2B}}
\def \tr{{\rm tr}}
\newcommand{\drawcube}[4][]{
\def\h{.5} 

\coordinate (A) at (#2- \h, #3- \h, #4- \h);
\coordinate (B) at (#2+ \h, #3- \h, #4- \h);
\coordinate (C) at (#2+ \h, #3+ \h, #4- \h);
\coordinate (D) at (#2- \h, #3+ \h, #4- \h);

\coordinate (E) at (#2- \h, #3- \h, #4+ \h);
\coordinate (F) at (#2+ \h, #3- \h, #4+ \h);
\coordinate (G) at (#2+ \h, #3+ \h, #4+ \h);
\coordinate (H) at (#2- \h, #3+ \h, #4+ \h);


\fill[fill=gray!40,fill opacity=0.6] (A)--(B)--(C)--(D)--cycle;
\fill[fill=gray!40,fill opacity=0.6] (A)--(B)--(F)--(E)--cycle;
\fill[fill=gray!40,fill opacity=0.6] (A)--(D)--(H)--(E)--cycle;

\fill[fill=gray!40,fill opacity=0.6] (E)--(F)--(G)--(H)--cycle;
\fill[fill=gray!40,fill opacity=0.6] (B)--(C)--(G)--(F)--cycle;
\fill[fill=gray!40,fill opacity=0.6] (D)--(C)--(G)--(H)--cycle;

\draw[black] (A)--(B)--(F)--(E)--cycle;

\draw[black] (E)--(F)--(G)--(H)--cycle;
\draw[black] (B)--(C)--(G)--(F)--cycle;
}
\title{Three-dimensional Brownian loop soup clusters}
\author{Antoine Jego\thanks{CEREMADE, CNRS, Université Paris-Dauphine, PSL Research University, Place de Lattre de Tassigny, 75016 Paris, France; antoinejego@hotmail.fr} \and Titus Lupu\thanks{Sorbonne Université and Université Paris Cité, CNRS, Laboratoire de Probabilités, Statistique et Modélisation, 75005 Paris, France; titus.lupu@sorbonne-universite.fr}}
\date {\today}
\numberwithin{equation}{section}
\begin{document}

\maketitle

\abstract{
We study Brownian loop soup clusters in $\R^3$ for an arbitrary intensity $\alpha>0$. We show the existence of a phase transition for the presence of unbounded clusters and study its basic properties. In particular, we show that, when $\alpha$ is sufficiently large, almost surely all the loops are connected into a single cluster. Such a phenomenon is not observed in discrete percolation-type models. In addition, we prove the existence of a one-arm exponent and compare the clusters with the finite-range system obtained by imposing lower and upper bounds on the diameter of the loops.

Finally, we provide a toolbox concerning the Brownian loop measure in $\R^d$, $d \ge 3$. In particular, we derive decomposition formulas by rerooting the loops in specific ways and show that the loop measure is conformally invariant, generalising results of \cite{Lupu18} in dimension 1 and \cite{Lawler04} in dimension~2.
}

\setcounter{tocdepth}{1}
\tableofcontents


\section{Introduction}

As introduced by Lawler and Werner \cite{Lawler04}, the Brownian loop measure in the $d$-dimensional Euclidean space $\R^d$ is defined by
\begin{equation}
\label{E:loopmeasure}
\loopmeasure = \int_{\R^d} \d x \int_0^\infty \frac{\d t}{t} \frac{1}{(2\pi t)^{d/2}} \P^{x,x;t},
\end{equation}
where $\P^{x,x;t}$ is the law of a Brownian bridge of duration $t$, from $x$ to $x$. In our convention, Brownian motion has generator $\frac12 \Delta$. The factor $t^{-1-d/2}$ in \eqref{E:loopmeasure} makes the measure scale invariant.
For a domain $D \subset \R^d$, the loop measure in $D$ is defined by restricting the whole-space measure to loops which remain in $D$:
\begin{equation}
    \loopmeasure_D(\d \wp) = \mathbf{1}_{\wp \subset D} \loopmeasure(\d \wp).
\end{equation}
The Brownian loop soup $\Lc_D^\alpha$ with intensity parameter $\alpha>0$ in $D$ is a random collection of loops distributed as a Poisson point process with intensity measure $\alpha \loopmeasure_D$.
Due to the divergence of the integral in \eqref{E:loopmeasure} at $t=0$, $\Lc_D^\alpha$ contains almost surely infinitely many small loops.

In this article we will be primally interested in the \emph{clusters} of the Brownian loop soup which we define below, specifically in $\R^3$. Indeed, two independent Brownian trajectories in $\R^3$ intersect each other with positive probability \cite[Theorem 9.22]{morters2010brownian} and therefore the Brownian loops in the 3D loop soup form nontrivial clusters.
In dimension 2, the clusters of the Brownian loop soup have been already intensively studied, in particular in relation to conformal invariance and Schramm--Loewner Evolution processes (SLE) \cite{SheffieldWernerCLE}.
In contrast, two independent Brownian trajectories in dimensions $d\ge4$ with different starting points almost surely do not intersect. Therefore the clusters are trivial in these dimensions, each containing one single Brownian loop.
In Section \ref{SS:literature}, more references to the literature are given, in particular in connection with discrete models.

\begin{definition}[Clusters]
Let $\Lc$ be a collection of (Brownian-type) trajectories and $\wp,\wp' \in \Lc$. By definition, $\wp ,\wp'$ intersect each other, which we denote by $\wp\cap\wp'\neq\varnothing$, if the sets of points visited by $\wp$ and $\wp'$ intersect each other.
The trajectories $\wp$ and $\wp'$ will be said to belong to the same \emph{cluster} of $\Lc$ if there exists a finite chain $\wp_1, \dots, \wp_n\in \Lc$ with $\wp_1=\wp$ and $\wp_n=\wp'$ and such that for all $i=1, \dots, n-1$, $\wp_i \cap \wp_{i+1} \neq \varnothing$.

If $A,B$ are two subsets of $\R^d$, we will denote by $\{ A \overset{\Lc}{\longleftrightarrow} B\}$ the event that there exists a cluster of $\Lc$ which intersects both $A$ and $B$.
\end{definition}

As already alluded to, a major motivation for the study of Brownian loop soup clusters in dimension 2 was the conformal invariance of the model and its relation to SLE curves, which appear as scaling limits of interfaces in 2D models of statistical physics at criticality \cite{zbMATH05204617}. As we will prove in this paper, the conformal invariance of Brownian loop soups is not specific to the 2D case and holds in any dimension. However, the group of conformal mappings is not as rich when $d\ge 3$: it is generated by translations, rotations, scalings and polar inversions \cite{monge}. Moreover, it is believed by physicists that the scaling limits of models of statistical physics (such as percolation and Ising) at criticality are conformal invariant in all dimensions \cite{POLCHINSKI1988226,zbMATH06398901}.
While the clusters of the 3D Brownian loop soup do not necessarily correspond to the scaling limits of 3D critical statistical models, they provide a model of random fractal ``sponges'' which exhibit conformal invariance by construction. This symmetry constitutes a motivation for the study of this model.


\smallskip
\noindent\textbf{Main difficulties, phenomenology and contribution.}
The first difficulty comes from the possibility that clusters which realise long crossings become more and more one-dimensional.
Contrary to the planar case, 3D Brownian motion avoids one-dimensional curves and so attaching new loops to such clusters would then become extremely costly.
Moreover, unlike in the discrete setting where the mesh size plays the role of the smallest possible scale, in the continuum, loops can be arbitrarily small. It is therefore possible in principle that infinitely many clusters intersect a given ball.
For the same reason, there is no scale where desired estimates must clearly hold and can then be propagated by induction on scales and, in fact, Brownian loop soup is scale invariant.

In this continuum long-range percolation model, some phenomenology differs from what is observed in the discrete setting. For instance, we show that there is a sufficiently supercritical phase where all the Brownian loops form one single (necessarily unbounded) cluster: every loop is connected to every other. An analogous phenomenon has already been observed in dimension 2 \cite{SheffieldWernerCLE}. In 3D, it is still however an open problem to show that this uniqueness phase coincides with the whole supercritical regime.

In addition to unravelling the subtleties of the model, we need to combine tools coming from two different areas: percolation theory and the study of Brownian motion/Brownian loop measure. In particular, we provide a robust toolbox which produces precise estimates on the Brownian loop measure which are key inputs in geometrical percolation-type constructions.

\medskip

We describe our main results in Section \ref{SS:main_results} and compare them with the existing literature in Section~\ref{SS:literature}.

\subsection{Main results}\label{SS:main_results}

For $R > 0$, a domain $D\subset\R^3$ and $\alpha>0$, let $\Lc_{D,\ge R}^\alpha$ be the subset of $\Lc_D^\alpha$ consisting of loops of diameter at least $R$. For $D = \R^3$, if there is no ambiguity, we will simply write $\Lc^\alpha=\Lc^\alpha_{\R^3}$ and $\Lc_{\ge R}^\alpha = \Lc^\alpha_{\R^3,\ge R}$.
We introduce the following parameters corresponding to potentially different thresholds:
\begin{align}
\label{E:def_alpha_c}
\text{critical:} \hspace{49pt} \quad
\alpha_c & = \inf \{ \alpha > 0: \P(\Lc^\alpha \text{ contains an unbounded cluster}) > 0 \}, \\
\label{E:def_alpha_u}
\text{uniqueness:} \hspace{32pt} \quad
\alpha_u & = \inf \{ \alpha > 0: \P(\Lc^\alpha \text{ contains a unique unbounded cluster}) > 0 \}, \\
\label{E:def_alpha_b2b}
\text{ball-to-ball:} \hspace{20pt} \quad
\alpha_\b2b & = \inf \{ \alpha >0: \P(\forall R>1 : B(0,1) \overset{\Lc^\alpha}{\longleftrightarrow} \partial B(0,R)) >0 \}, \\
\label{E:def_alpha_tr}
\text{truncated:} \hspace{34pt} \quad
\alpha_\tr & = \{ \alpha > 0: \P(\Lc_{\ge1}^\alpha \text{ contains an unbounded cluster}) > 0 \}.
\end{align}
We will prove in Lemma \ref{L:I3} below that the uniqueness property in \eqref{E:def_alpha_u} is monotone in $\alpha$. For the other three properties above, this clearly follows from the monotonicity of $\Lc^\alpha$ and $\Lc^\alpha_{\ge 1}$. We will also show in Lemma \ref{L:zero_one} that the above events have probability either zero or one. For $\alpha_c$, $\alpha_u$ and $\alpha_\tr$, this follows by ergodicity (Lemma \ref{L:ergodicity}), but for $\alpha_\b2b$ this requires an argument.

\begin{theorem}[Phase transition]\label{T:phase_transition}
We have $0<\alpha_\b2b\le\alpha_c\le\alpha_u\le\alpha_\tr<\infty$.
\end{theorem}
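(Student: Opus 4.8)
The plan is to establish the chain of inequalities $0<\alpha_\b2b\le\alpha_c\le\alpha_u\le\alpha_\tr<\infty$ by treating each comparison separately, the two genuinely substantive endpoints being $\alpha_\b2b>0$ and $\alpha_\tr<\infty$.

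\medskip
\noindent\textbf{The easy middle inequalities.} First, $\alpha_c\le\alpha_u$ is immediate from the definitions: an event forcing a \emph{unique} unbounded cluster in particular forces an unbounded cluster, so the infimum defining $\alpha_u$ is over a smaller set. Next, $\alpha_u\le\alpha_\tr$: if $\Lc_{\ge1}^\alpha$ has an unbounded cluster, then a fortiori $\Lc^\alpha\supset\Lc_{\ge1}^\alpha$ has one. For uniqueness, I would invoke the standard Burton--Keane type argument, which the paper sets up via Lemmas \ref{L:I3}, \ref{L:zero_one} and \ref{L:ergodicity}: the translation-invariance and finite-energy/insertion-tolerance properties of the loop soup (loops of diameter $\ge 1$ already suffice to glue putative distinct unbounded clusters into one, using that two independent Brownian paths in $\R^3$ meet with positive probability) force uniqueness of the unbounded cluster whenever one exists, at \emph{every} supercritical $\alpha$; hence $\alpha_u\le\alpha_c$ as well and in fact $\alpha_u=\alpha_c$ would follow, but for the stated theorem we only need $\alpha_c\le\alpha_u\le\alpha_\tr$. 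Finally, $\alpha_\b2b\le\alpha_c$: if there is an unbounded cluster with positive probability, then by translation invariance and a union bound over a covering of space by unit balls, with positive probability some unbounded cluster passes through $B(0,1)$, and such a cluster realises $B(0,1)\overset{\Lc^\alpha}{\longleftrightarrow}\partial B(0,R)$ for all $R$; by the zero-one law (Lemma \ref{L:zero_one}) this upgrades to probability one, so $\alpha\ge\alpha_\b2b$.

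\medskip
\noindent\textbf{$\alpha_\b2b>0$ (subcriticality for small intensity).} The goal is a first-moment / exploration bound showing that for small $\alpha$, a.s. there is some $R$ with no cluster connecting $B(0,1)$ to $\partial B(0,R)$. I would set up a multiscale union bound over dyadic annuli $A_k=B(0,2^{k+1})\setminus B(0,2^k)$: a crossing of $A_k$ by a cluster is built from a chain of loops, and using the robust estimates on the Brownian loop measure developed in the paper's toolbox (in particular the mass of loops of diameter comparable to a given scale that enter a fixed ball, which scales like a power of the ball's radius) together with the fact that in $\R^3$ chaining many small loops across a macroscopic annulus is increasingly costly, the expected number of such crossing chains is bounded by $C\alpha$ (or a positive power of $\alpha$) per scale, summably small when $\alpha$ is small. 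Borel--Cantelli then gives, a.s. for small $\alpha$, only finitely many crossed annuli, hence a disconnecting annulus, so $\P(\forall R>1: B(0,1)\overset{\Lc^\alpha}{\longleftrightarrow}\partial B(0,R))<1$, and by the zero-one law it is $0$; thus $\alpha_\b2b>0$.

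\medskip
\noindent\textbf{$\alpha_\tr<\infty$ (supercriticality of the truncated model for large intensity) --- the main obstacle.} This is the hard part. I would prove that for $\alpha$ large, $\Lc_{\ge1}^\alpha$ percolates, by a renormalisation/block argument on the lattice $\Z^3$: declare a box good if the loops of diameter roughly the box size whose trace lies inside a neighbourhood of the box connect all its faces in a suitably robust way, and show $\P(\text{good})\to1$ as $\alpha\to\infty$ uniformly, using a lower bound on the loop measure of loops that traverse a fixed region (again from the toolbox: the mass of such loops is a fixed positive constant, so a Poisson$(\alpha\times\text{const})$ number of them appear and with overwhelming probability at least one does the job, and moreover two such in adjacent boxes intersect with probability bounded below). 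The delicate points are (i) making the ``good box'' event depend only on loops that are localised to a bounded neighbourhood, so that goodness of well-separated boxes is independent and a standard stochastic-domination comparison with supercritical Bernoulli site percolation on $\Z^3$ applies; (ii) ensuring the chaining across good boxes genuinely produces one infinite connected cluster of loops rather than merely an infinite sequence of pairwise-linked macroscopic pieces --- this requires that the connecting loops in overlapping or adjacent boxes actually intersect, which one arranges by a geometric construction forcing a deterministic traversal pattern and invoking the positive intersection probability of Brownian paths in $\R^3$. Once good boxes dominate supercritical percolation, the infinite good cluster yields an unbounded cluster of $\Lc_{\ge1}^\alpha$, giving $\alpha_\tr\le\alpha<\infty$. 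I expect step (ii)---controlling connectivity in the continuum through the renormalisation, in the face of the ``clusters becoming one-dimensional and hence hard to hit'' phenomenon flagged in the introduction---to be the principal technical difficulty, handled by working with loops of comparable (not too small) diameter so that the relevant intersection probabilities stay bounded away from zero.
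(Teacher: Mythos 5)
Your treatment of $\alpha_\tr<\infty$ follows essentially the same renormalisation strategy as the paper (compare block boxes with a one-dependent percolation whose success probability tends to $1$ and invoke a Liggett--Schonmann--Stacey domination), so that part is on track. The other two pieces have real problems.

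For the middle inequality $\alpha_u\le\alpha_\tr$, your deduction is incomplete, and the aside that Burton--Keane yields $\alpha_u=\alpha_c$ ``at every supercritical $\alpha$'' is not merely an overclaim but contradicts a point the paper is at pains to flag: $\alpha_c=\alpha_u$ is stated as an \emph{open} conjecture precisely because a naive Burton--Keane argument for the full soup $\Lc^\alpha$ fails. The trifurcation count must be compared against the number of loops hitting $\partial B(0,R)$, and for the untruncated soup this number is a.s.\ infinite, since arbitrarily small loops cross the sphere. Burton--Keane only works for $\Lc^\alpha_{\ge r}$, where Lemma \ref{L:sphere_bound} gives the needed $O(R^2)$ bound. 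The paper then has to do more work to transfer this to $\Lc^\alpha$: in Lemma \ref{L:I3} it takes the increasing union $\Cc_\infty(0)=\bigcup_{r>0}\Cc_\infty(r)$ of the unique truncated clusters, shows via a Brownian-germ trick that independent Brownian motion hits it instantaneously, and then uses Palm's formula to conclude every loop of $\Lc^\alpha$ joins it almost surely. That chain of reasoning is what gives $\Ir_\tr\subset\Ir_u$, and it genuinely uses $\alpha\in\Ir_\tr$; it does not extend to all $\alpha>\alpha_c$. Your sketch does not reproduce it.

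For $\alpha_\b2b>0$, your proposed first-moment bound over dyadic annuli is a genuinely different route from the paper's, and it runs straight into the difficulty the introduction warns about: in the continuum there is no minimal mesh, so a ``crossing chain of loops'' across an annulus is not a finite object one can count, and possibly infinitely many clusters touch any given ball. It is unclear what the quantity ``expected number of crossing chains'' is, and no argument is given for why it should be finite, let alone summably small in $\alpha$. The paper completely sidesteps this by constructing a \emph{blocking sheet}: each loop is replaced by a small enclosing cuboid, this produces a stochastic domination by (a variant of) Mandelbrot fractal percolation, and \cite{Chayes91} yields, for small $\alpha$, positive probability of a loop-free separating surface in a slab. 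The dual (blocking) argument is what makes the small-$\alpha$ subcriticality tractable; to make a direct union bound work, you would at minimum need a controlled decomposition of a crossing cluster into a chain with bounded combinatorics at every scale, and that is not available.
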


\begin{remark}
    In fact we show a stronger statement concerning the inclusion of the intervals appearing in \eqref{E:def_alpha_c}-\eqref{E:def_alpha_tr}. For instance, if $\alpha >0$ is such that $\Lc_{\ge1}^\alpha$ contains an unbounded cluster with positive probability, then $\Lc^\alpha$ contains a unique unbounded cluster with positive probability.
\end{remark}

The inequalities $\alpha_\b2b \le \alpha_c\le\alpha_u$ are trivial, whereas the inequality $\alpha_u \le \alpha_\tr$ requires a justification.
The existence of the phase transition, i.e. that $\alpha_\b2b>0$ and $\alpha_\tr < \infty$, follows similar arguments as in the planar case \cite{SheffieldWernerCLE}.
To show that $\alpha_\tr < \infty$, we build concrete connections and compare the model with a one-dependent percolation model whose success probability is close to 1. We then conclude by \cite{MR1428500}. To show that $\alpha_\b2b >0$, we stochastically dominate Brownian-loop percolation by a variant of Mandelbrot's fractal percolation and then use \cite{Chayes91}.
The proof of Theorem~\ref{T:phase_transition} is given in Section~\ref{S:first_perco}. This section also contains basic properties of the structure of clusters depending on the value of $\alpha$. For instance, we prove in Lemma \ref{L:I2_no_bdd} that, when $\alpha >\alpha_c$, a.s. each cluster of $\Lc^\alpha$ is unbounded and dense in $\R^3$ and, either $\Lc^\alpha$ contains a unique cluster a.s., or $\Lc^\alpha$ contains infinitely many clusters a.s.

\medskip

Our second main result concerns the existence of one-arm exponents:

\begin{theorem}[One-arm exponent]\label{T:crossing}
For all $\alpha >0$, there exists $\xi = \xi(\alpha) \ge 0$ such that
\begin{equation}
    \label{E:T_one-arm}
\P \Big( \partial B(1) \overset{\Lc_{\R^3}^\alpha}{\longleftrightarrow} \partial B(r) \Big) = r^{\xi + o(1)},
\qquad \text{as } r \to 0.
\end{equation}
Moreover, the following bounds are satisfied:
\begin{itemize}
    \item for all $\alpha >0$, $\xi < 1$;
    \item $\xi > 0$ if, and only if, $\alpha$ is such that the left hand side of \eqref{E:T_one-arm} vanishes as $r \to 0$ (in particular when $\alpha < \alpha_\b2b$).
\end{itemize}
\end{theorem}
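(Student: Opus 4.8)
Throughout write $f(r)=\P\!\big(\partial B(1)\overset{\Lc^\alpha}{\longleftrightarrow}\partial B(r)\big)$ for $r\in(0,1]$. By the scale invariance of the loop soup, $\P\!\big(\partial B(s)\overset{\Lc^\alpha}{\longleftrightarrow}\partial B(sr)\big)=f(r)$ for every $s>0$, and since a cluster joining $\partial B(1)$ to $\partial B(r)$ is a connected set it meets every intermediate sphere $\partial B(r')$, $r<r'<1$. The core of the proof is the \emph{quasi-multiplicativity}
\[
c\,f(r_1)f(r_2)\ \le\ f(r_1 r_2)\ \le\ C\,f(r_1)f(r_2),\qquad 0<r_1,r_2\le 1,
\]
with $0<c\le C<\infty$ depending only on $\alpha$. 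Given this, $a_n:=-\log f(2^{-n})$ is subadditive up to an additive constant and finite (a single loop crosses $B(1)\setminus B(1/2)$ with positive probability, so $a_1<\infty$), hence $a_n/n\to L=L(\alpha)\in[0,\infty)$ by Fekete's lemma; using the two inequalities once more to control $f$ between consecutive dyadic scales yields $f(r)=r^{\xi+o(1)}$ with $\xi=L/\log 2$, which is \eqref{E:T_one-arm}.

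For the lower bound $f(r_1r_2)\ge c\,f(r_1)f(r_2)$ I would glue an outer crossing of $B(1)\setminus B(2r_1)$ to an inner crossing of $B(r_1/2)\setminus B(r_1r_2)$ (the latter has probability $\asymp f(r_2)$ by scale invariance) by a chain of loops supported in the shell $B(2r_1)\setminus B(r_1/2)$. Each crossing touches the shell at a point lying in one of boundedly many congruent ``patches'', so by rotational symmetry one loses only a constant; connecting two fixed macroscopic patches through loops of the shell is an increasing event of probability bounded below by a constant depending only on $\alpha$, and the FKG inequality for the Poisson process applied to these increasing events gives the bound. The delicate point, already anticipated in the introduction, is that a crossing cluster may be almost one-dimensional at some scales, which makes attaching loops expensive; one must therefore first establish that at the matching scale $r_1$ the crossing leaves a \emph{macroscopic} (scale-$r_1$) trace inside the shell — a ``thickness of crossings'' input I would borrow from the percolation part of the paper — and attach the gluing loops to that trace at only constant cost.

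For the upper bound $f(r_1r_2)\le C\,f(r_1)f(r_2)$ I would distinguish whether the connecting cluster contains a single loop meeting both $\partial B(1)$ and $\overline{B(r_1r_2)}$. The $\loopmeasure$-mass of such loops (diameter $\gtrsim 1$ and hitting $B(r_1r_2)$) is, via the rerooting formulas of the toolbox and the linear scaling of Newtonian capacity in $\R^3$, of order $\Cap(B(r_1r_2))\asymp r_1r_2\lesssim f(r_1)f(r_2)$ (using the single-loop bound $f(s)\gtrsim s$), so this contribution is negligible. On the complementary event the two crossings can, after a standard well-separation argument, be realised by disjoint families of loops, and a van den Berg--Kesten-type inequality for Poisson point processes gives $f(r_1r_2)\le\P(\text{outer})\,\P(\text{inner})\lesssim f(r_1)f(r_2)$. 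The upper bound alone yields the equivalence in the theorem: iterating $f(r^2)\le Cf(r)^2$ shows that as soon as $f(r_0)<1/C$ for some $r_0$ — which happens exactly when the left-hand side of \eqref{E:T_one-arm} tends to $0$ — $f$ decays (after monotone interpolation) at least polynomially with a positive exponent, hence $\xi>0$; the converse is immediate since $\xi>0$ forces $f(r)=r^{\xi+o(1)}\to0$.

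It remains to bound $\xi$. The estimate $\xi\le 1$ follows from the single-loop lower bound $f(r)\ge 1-e^{-c\alpha r}\gtrsim\alpha r$ used above. When $\alpha\ge\alpha_\b2b$ the cluster of $\overline{B(1)}$ is unbounded with positive probability, so $f(r)\not\to0$ and $\xi=0<1$; thus the strict inequality only needs proof when $f(r)\to0$. This is the main obstacle: it cannot come from a single loop (mass exactly of order $r$) nor from a chain of single loops (whose costs telescope back to order $r$), so one must exploit that the loop soup is a \emph{long-range} model — a loop of diameter $D$ joins points at distance $\asymp D$ with probability $\asymp\alpha/D$. I would lower bound $f(r)$ by the probability of a crossing built from loops of all sizes between $r$ and $1$ inside a thin radial region, and compare it with a hierarchical/long-range percolation on the dyadic tree of scales whose one-arm probabilities decay polynomially with exponent strictly below $1$, the long bonds coming from large loops providing the redundancy needed for the requisite second-moment estimate. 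I expect this last step, together with the thickness/gluing input of the second paragraph, to be where the bulk of the work lies.
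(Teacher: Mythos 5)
Your plan rests on a two–sided quasi‑multiplicativity
$c\,f(r_1)f(r_2)\le f(r_1 r_2)\le C\,f(r_1)f(r_2)$, and you acknowledge that the supermultiplicative side requires a ``thickness of crossings'' input that you would try to ``borrow from the percolation part of the paper.'' That input does not exist in the paper, and the authors say explicitly that this is the obstruction: one cannot control the shape of a cluster conditioned on realising a rare crossing (it may degenerate towards a one‑dimensional object, to which 3D Brownian loops attach only with vanishing probability). The paper's key move, which your proposal misses, is to never prove supermultiplicativity of $p_r$ at all. Instead it passes to the integrated quantity $\Fs(s)=\int_s^1 r^{-2}p_r\,\d r$ and proves only the \emph{submultiplicative} inequality $\Fs(ss')+1\le C(\Fs(s)+1)(\Fs(s')+1)$ (Proposition~\ref{P:submultiplicative}); Fekete then gives an exponent for $\Fs$, and the monotonicity of $r\mapsto p_r$ together with the cheap single‑loop lower bound $p_r\gtrsim r$ is enough to transfer this to $p_r=r^{\xi+o(1)}$. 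This is a one‑sided argument and requires no control of the conditional geometry of crossing clusters.

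Your sketch of the submultiplicative side is also not on solid ground as written. A BK‑type inequality does exist for Poisson processes, but the event $\{\partial B(1)\overset{\Lc^\alpha}\longleftrightarrow\partial B(r_1r_2)\}$ does not decompose as two events realised by disjoint sets of loops just because no single loop spans the whole annulus: the cluster may contain many loops that cross slightly past $\partial B(r_1)$ in both directions, and there are almost surely infinitely many loops near the separating sphere, so the ``well‑separation argument'' you invoke has to face the same small‑loop degeneracy as the supermultiplicative side. The paper's Lemma~\ref{L:small_loop} replaces this with a genuine conditional decoupling estimate: conditioned on crossing, the probability that some loop reaches from $\partial B(r)$ to $\partial B(rx)$ is still $O(x)$, controlled by the exactly‑$n$‑crossings decomposition of Corollary~\ref{C:crossing_exactly_n} and a moment‑generating‑function bound on the number of crossings.

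Finally, for $\xi<1$ you only sketch a ``hierarchical long‑range percolation'' comparison and admit this is where the bulk of the work would lie. The paper proves $p_r\ge c\,r^{1-\varepsilon}$ (Lemma~\ref{L:lower_bound}) by a direct multiplicative exploration: starting from $\partial B(1)$ one repeatedly uses the welding estimate (Lemma~\ref{L:intersection_loop}) to find a loop of large relative capacity, $g_{\cdot,a}(\wp)\ge 1/2$, reaching a deeper scale; the ratios of consecutive scales then stochastically dominate i.i.d.\ variables with $\E[R^{-1}]<\infty$, giving the strict gain of $\varepsilon$. This construction is the substitute for your hierarchical comparison and is essential — the naive single‑loop chain you correctly note would only reproduce the $O(r)$ bound. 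In short: the correct conclusions are stated, but the central technical ideas (the choice of $\Fs$, the conditional decoupling, and the capacity‑preserving chain) are absent, and the steps you defer to ``the percolation part of the paper'' or to ``the bulk of the work'' are precisely where the difficulty lies.
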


As we will see in \eqref{E:L_estimate_crossing2}, the probability that a loop intersects both $\partial B(1)$ and $\partial B(r)$ behaves asymptotically like $\alpha r$. The fact that $\xi < 1$ shows that realising this crossing with a cluster is much easier.
Note that, by scale invariance of $\Lc^\alpha_{\R^3}$, the probability on the left hand side of \eqref{E:T_one-arm} is invariant under $r \to 1/r$. It thus also describes the decay of the probability of large crossings.
The proof of Theorem~\ref{T:crossing} is contained in Section \ref{S:one-arm}.

\medskip

Finally, we introduce two-sided restrictions on the diameter of loops and compare the structure of clusters of $\Lc^\alpha$ with those restricted loops. The restricted loops are much closer to Bernoulli percolation on the lattice.
For $R>1$, let $\Lc_{1,R}^\alpha$ be the subset of $\Lc^\alpha$ consisting of all loops with diameter in $[1,R]$ and let
\begin{equation}\label{E:def_alpha_R}
R\text{-truncated:} \hspace{15pt} \quad
\alpha_\tr^R = \inf \{ \alpha> 0: \P( \Lc_{1,R}^\alpha \text{ contains an unbounded cluster})>0 \}.
\end{equation}

\begin{theorem}[Truncated model and local uniqueness]\label{T:JR}
For all $R>1$, $0<\alpha_\tr^R < \infty$ and
\begin{equation}
    \label{E:TJR}
    \text{for all } R'>R>1, \qquad \alpha_\tr^R > \alpha_\tr^{R'}.
\end{equation}
Moreover, when $\Lc_{\ge 1}^\alpha$ percolates, there must exist $R >1$ such that $\Lc^\alpha_{1,R}$ percolates. Also, the supercritical phase of $\Lc_{\ge 1}^\alpha$ coincides with the local uniqueness phase of $\Lc^\alpha$:
\begin{equation}
    \label{E:T_locuniq}
\alpha_\tr = \lim_{R \to \infty} \alpha_\tr^R = \inf \{ \alpha >0: \forall~D\subset\R^3 \text{ connected and open}, \P(\Lc^\alpha_D \text{ contains a unique cluster}) =1 \}.
\end{equation}
Finally, $\Lc_{\ge 1}^\alpha$ does not percolate when $\alpha=\alpha_\tr$: almost surely it does not contain an unbounded cluster.
\end{theorem}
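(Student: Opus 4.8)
\emph{Strategy of proof.} The plan is to prove the five assertions of the theorem in the order: (I) $0<\alpha_\tr^R<\infty$; (II) ``$\Lc^\alpha_{\ge1}$ percolates $\Rightarrow$ $\Lc^\alpha_{1,R}$ percolates for some $R>1$'', which together with the monotonicity of $R\mapsto\alpha_\tr^R$ yields $\alpha_\tr=\lim_{R\to\infty}\alpha_\tr^R$; (III) the strict monotonicity \eqref{E:TJR}; (IV) absence of percolation of $\Lc^{\alpha_\tr}_{\ge1}$, as a short corollary of (II) and (III); and (V) the identification $\alpha_\tr=\alpha_{\mathrm{loc}}$, where $\alpha_{\mathrm{loc}}$ denotes the right-hand side of \eqref{E:T_locuniq}. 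For (I), the lower bound is immediate since $\Lc^\alpha_{1,R}\subseteq\Lc^\alpha$ and Theorem~\ref{T:phase_transition} gives $\alpha_\tr^R\ge\alpha_c>0$; for $\alpha_\tr^R<\infty$ I would rerun the proof of $\alpha_\tr<\infty$, building connections at a single scale comparable to $1$ by chaining overlapping loops of diameter in $[1,R]$ (possible for any $R>1$), comparing the resulting crossing events with a $k$-dependent site percolation of density close to $1$, and concluding by \cite{MR1428500}.

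For (II), the key toolbox input is that, by scale invariance of $\loopmeasure$, the mass of loops of diameter $\ge R$ meeting $B(0,1)$ equals the mass of loops of diameter $\ge1$ meeting $B(0,1/R)$, which tends to $0$ as $R\to\infty$. Hence, in a fixed box of side $L$, with probability tending to $1$ as $R\to\infty$ the soup $\Lc^\alpha$ has no loop of diameter $\ge R$ there, so $\Lc^\alpha_{1,R}$ and $\Lc^\alpha_{\ge1}$ coincide in that box. On the other hand I would show that supercriticality of $\Lc^\alpha_{\ge1}$ upgrades, through a multi-scale renormalisation — using a decoupling inequality for the loop soup to control the long-range dependence created by the large loops, and ``thick'' crossing constructions ensuring the crossing clusters do not degenerate to one-dimensional sets — to box-crossing probabilities arbitrarily close to $1$. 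Combining the two, for $L$ fixed and then $R$ large, $\Lc^\alpha_{1,R}$ has box-crossing probability $>1-\eps_0$ with bounded dependence range, hence percolates by \cite{MR1428500}; since $\Lc^\alpha_{1,R}\subseteq\Lc^\alpha_{\ge1}$ gives $\alpha_\tr\le\alpha_\tr^R$, this yields $\alpha_\tr=\lim_{R\to\infty}\alpha_\tr^R$.

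For (III), the idea is that inside $\Lc^\alpha_{1,R'}=\Lc^\alpha_{1,R}\sqcup\Lc^\alpha_{(R,R']}$ (independent) the loops of diameter in $(R,R']$ form an \emph{essential enhancement} of $\Lc^\alpha_{1,R}$, a single such loop being able to glue together distinct large clusters. I would renormalise at a scale $L$ large compared to $R'$, declaring a block good when $\Lc^\alpha_{1,R}$ produces many macroscopic clusters in it which the loops of $\Lc^\alpha_{(R,R']}$ then merge into a single block-crossing cluster, and argue that the good-block probability exceeds the \cite{MR1428500} threshold already at some intensity $\alpha'<\alpha_\tr^R$, whence $\alpha_\tr^{R'}\le\alpha'<\alpha_\tr^R$. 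Assertion (IV) then follows quickly: were $\Lc^{\alpha_\tr}_{\ge1}$ to percolate, (II) would give some $R$ with $\Lc^{\alpha_\tr}_{1,R}$ percolating, hence $\alpha_\tr^R\le\alpha_\tr$; but picking any $R'>R$, (III) gives $\alpha_\tr^R>\alpha_\tr^{R'}\ge\alpha_\tr$, a contradiction.

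For (V): if $\alpha>\alpha_\tr$, then by (II) some $\Lc^\alpha_{1,R}$ percolates robustly, and by scale invariance so do all its rescalings $\Lc^\alpha_{\lambda,\lambda R}$; given a connected open $D$ I would cover the bulk of $D$ by an overlapping chain of blocks, obtain in each a macroscopic cluster of $\Lc^\alpha_{1,R}|_D$, merge these across overlaps into a single cluster of $\Lc^\alpha_D$, and finally show that every loop $\wp\subset D$ of any diameter $\delta$ is attached to it by invoking the robust percolation at scale $\eps\delta$ near $\wp$ and noting that the expected number of those loops meeting $\wp$ diverges as $\eps\to0$; hence $\Lc^\alpha_D$ has a unique cluster and $\alpha_{\mathrm{loc}}\le\alpha_\tr$. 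Conversely, if $\alpha<\alpha_\tr$ then $\Lc^\alpha_{\ge1}$ does not percolate, and using the conformal invariance of $\loopmeasure$ I would exhibit a connected open $D$, conformally equivalent to an unbounded domain in which long-range connectivity can only be carried by macroscopic loops, for which non-percolation of $\Lc^\alpha_{\ge1}$ forces $\Lc^\alpha_D$ to contain more than one cluster, giving $\alpha_{\mathrm{loc}}\ge\alpha_\tr$. I expect the ``robustness of supercriticality'' underlying (II), (III) and (V) to be the main obstacle — upgrading bare percolation to a renormalised percolation of density close to $1$ in the continuum despite the danger (flagged in the introduction) of crossing clusters becoming one-dimensional — and within (III) the crux is to secure that the enhancing bounded loops reconnect the near-critical, possibly filamentary clusters of $\Lc^\alpha_{1,R}$ efficiently enough to gain a strictly positive margin $\alpha_\tr^R-\alpha'$.
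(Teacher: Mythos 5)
Your outline is in the right shape (five pieces, same logical dependencies), and parts (I), (II) in outline, and the short deduction (IV) are essentially what the paper does. The genuine problems are in (III) and in the converse direction of (V).

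For (III), the block-renormalisation idea cannot close on its own. You want to run a seed renormalisation at some $\alpha'<\alpha_\tr^R$ and show the good-block probability already beats the \cite{MR1428500} threshold. But at $\alpha'<\alpha_\tr^R$ the soup $\Lc^{\alpha'}_{1,R}$ is subcritical, so in a big block there is no single macroscopic cluster — and there is no reason the handful of bounded loops from $\Lc^{\alpha'}_{(R,R']}$ should reconnect many small, widely separated clusters of $\Lc^{\alpha'}_{1,R}$; you flag this yourself as ``the crux'' but offer nothing concrete. The paper circumvents it entirely: no renormalisation, no margin hunting. Instead it proves the surgery estimate Lemma~\ref{L:key}, which says that uniformly over arbitrary Borel sets $K_1,K_2$, the $\loopmeasure$-mass of loops of diameter in $[R,R']$ joining $K_1$ to $K_2$ dominates a fixed constant times the mass of loops of diameter in $[1,R]$ joining them. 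With this one compares pivotality contributions pointwise along an interpolating family of intensities $\mu(t)$ and differentiates the crossing probability $\Theta_n(t)$; the sign of $\Theta_n'$ is controlled by Lemma~\ref{L:key} and gives $\alpha_\tr^{R'}(1+c)\le\alpha_\tr^R$ directly. This Aizenman--Grimmett differential-inequality step is the essential input you are missing, and the surgery lemma (cutting the loop at hitting times of a small ball around the root, so that one piece carries the $K_1\leftrightarrow K_2$ event and the other is an explicit Brownian piece whose diameter can be resampled) is what makes the enhancement comparison uniform in $K_1,K_2$.

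For (V), the direction $\alpha_{\mathrm{loc}}\le\alpha_\tr$ you sketch via Grimmett--Marstrand is in the same spirit as Proposition~\ref{P:local_main}/\ref{P:GM} (though the paper's sprinkling is again done on the diameter of loops, using Lemma~\ref{L:key}, rather than on $\alpha$ — this is what lets the argument run all the way to criticality). The converse direction, however, does not work as you propose. Conformal invariance of $\loopmeasure$ maps a bounded domain to an unbounded one but does not destroy the small-loop divergence at any interior point: every interior point of the image still has infinitely many small loops around it, so there is no domain $D$ in which ``long-range connectivity can only be carried by macroscopic loops'' in any useful sense, and non-percolation of $\Lc^\alpha_{\ge1}$ does not obviously force non-uniqueness of $\Lc^\alpha_D$. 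The paper's mechanism is far more prosaic: if $\Lc^\alpha_D$ has a.s.\ a unique cluster for every connected open $D$, then for two overlapping dyadic boxes $D,D'$ the largest clusters $\Cc_{D,\delta}$ and $\Cc_{D',\delta}$ must meet (they are both connected inside $\Lc^\alpha_{D\cap D'}$ once $\delta$ is small), and this yields a finite-range site percolation of density close to $1$, hence percolation of $\Lc^\alpha_{\delta,\sqrt6}$ by \cite{MR1428500} — i.e.\ $\alpha\in \Ir_\tr^R$ for some $R$. That is a static renormalisation, not a conformal construction.

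Finally, one smaller point: for (I), the lower bound on $\alpha_\tr^R$ should be referenced as $\alpha_\tr^R\ge\alpha_\b2b>0$ (or the direct argument of Lemma~\ref{L:positive_critical_point}), rather than via $\alpha_c$; in the paper $\alpha_\b2b$ is what is shown to be positive.
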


It should be possible to prove that $\alpha_\tr$ is universal in the sense that it does not depend on the choice of cutoff. For instance, one could consider the loops with duration at least one instead of restricting the diameter.
The proof of Theorem \ref{T:JR}, \eqref{E:TJR} is based on an Aizenman--Grimmett-type argument \cite{MR1116036} and is written in Section \ref{S:AG}. The proof of Theorem \ref{T:JR}, \eqref{E:T_locuniq} is based on a Grimmett--Marstrand-type argument \cite{zbMATH04169828} and is written in Section \ref{S:GM}. The identification of these regimes can be done all the way up to the critical point thanks to the long-range interactions: sprinkling can be done on the diameter of loops, instead of sprinkling the intensity $\alpha$. This eventually yields a proof of absence of percolation at the critical point: \eqref{E:TJR} demonstrates that $\bigcup_{R>1} [\alpha_\tr^R,\infty) = \bigcup_{R>1} (\alpha_\tr^R,\infty) $ is open.

Finally, we believe that all the thresholds defined in \eqref{E:def_alpha_c}-\eqref{E:def_alpha_tr} coincide but this is still an open problem. For instance, showing that $\alpha_c=\alpha_u$ amounts to showing the absence of a phase where there are infinitely many unbounded clusters, each cluster being dense in $\R^3$. A naive Burton--Keane argument cannot be used for this purpose since, almost surely, there are infinitely many small loops in $\Lc^\alpha$ intersecting the boundary of a given ball.

\begin{conjecture}
    $\alpha_c=\alpha_u=\alpha_\tr=\alpha_\b2b$.
\end{conjecture}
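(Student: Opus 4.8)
The conjecture is open; what follows is a strategy rather than a proof, together with the point at which it currently breaks down. By Theorem~\ref{T:phase_transition} we already have $\alpha_\b2b\le\alpha_c\le\alpha_u\le\alpha_\tr<\infty$, so it would be enough to prove the single reverse inequality $\alpha_\tr\le\alpha_\b2b$: the whole chain then collapses to one value. Concretely, one would fix $\alpha>\alpha_\b2b$ and try to show that $\Lc_{\ge1}^\alpha$ percolates; since this would hold for every such $\alpha$, it forces $\alpha_\tr\le\alpha_\b2b$. A pleasant feature of this route is that it proves $\alpha_c=\alpha_u$ as a by-product, without having to run a Burton--Keane argument for $\Lc^\alpha$ (which is problematic, as noted above, because infinitely many loops of $\Lc^\alpha$ meet a fixed sphere): uniqueness of the unbounded cluster is instead inherited from the local uniqueness identity \eqref{E:T_locuniq}.

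For the renormalisation, fix $\alpha>\alpha_\b2b$. By the $0$--$1$ law, a.s.\ $B(0,1)\overset{\Lc^\alpha}{\longleftrightarrow}\partial B(0,R)$ for all $R>1$, and by translation and scale invariance this upgrades to the following statement: for every dyadic box $Q$ of side $2^k$ there is, with probability bounded below uniformly in $k$ and in the position of $Q$, a cluster of $\Lc^\alpha$ inside $Q$ connecting the concentric sub-box $\tfrac12Q$ to every face of $Q$. Call $Q$ \emph{good} if such a crossing can be realised using only loops of diameter comparable to $2^k$ (say in $[c\,2^k,2^{k+1}]$ for a fixed small $c$) that are moreover contained in a bounded dilate of $Q$, and if these partial crossings are compatible across neighbouring boxes so that they chain together. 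The family of good boxes is then finitely dependent, so if its density can be driven close to $1$, \cite{MR1428500} yields an infinite cluster of good boxes; rescaling by $c\,2^k$ turns this into an unbounded cluster of $\Lc^\alpha_{\ge1}$, which is what we want. Driving the density up is a Grimmett--Marstrand-type multiscale induction in the spirit of Section~\ref{S:GM}: at scale $2^{k+1}$ one combines several scale-$2^k$ good events, whose relevant loops live in essentially disjoint diameter windows and are therefore independent, into a scale-$2^{k+1}$ good event of even higher probability, the extra room being obtained by sprinkling on the diameter cutoff (replacing $c\,2^k$ by a slightly smaller value), exactly as in the proof of \eqref{E:T_locuniq}.

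The obstruction — and the reason the conjecture is still open — lies entirely in the step that turns a raw crossing into a good box, i.e.\ in replacing a connection made of arbitrarily small loops by one made of loops of diameter comparable to the box. Conditionally on a crossing of $Q$, the crossing cluster may a priori be asymptotically one-dimensional: a thin tube following a smooth curve and built out of loops far smaller than $2^k$. Since $3$-dimensional Brownian motion avoids curves, attaching a loop of diameter $\sim2^k$ to such a tube has vanishing probability and the induction stalls. What is needed is a \emph{seed estimate}: conditionally on the crossing, with probability bounded below uniformly in $k$, the crossing cluster contains a \Seed{} at scale $2^k$ — a sub-configuration of diameter $\gtrsim2^k$ that is robustly connected, in the sense that further loops of diameter $\sim2^k$ attach to it with probability bounded below (for instance a macroscopic loop lying in the cluster, or a ball of radius $\sim2^k$ that the cluster fills). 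The subtlety is that the estimate must survive the conditioning on the crossing, and this conditioning might bias the cluster toward exactly the thin, seedless shapes — something one does not expect but cannot currently exclude. Proving such a conditional seed estimate — equivalently, ruling out asymptotically one-dimensional crossings — is, to my mind, the single hardest point; once it is available, the renormalisation above together with the arguments of Sections~\ref{S:first_perco} and~\ref{S:GM} should give $\alpha_\tr\le\alpha_\b2b$ and hence the conjecture. Even a partial result, such as improving the lower bound on the ball-to-ball probability in Theorem~\ref{T:crossing} to a polynomial bound witnessed by a crossing that carries a seed, would feed directly into this scheme.
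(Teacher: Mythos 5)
The statement is a conjecture, not a theorem: the paper offers no proof, and explicitly flags it as open. Your response correctly recognises this and, rather than manufacturing a fake argument, reduces the problem to the single missing inequality $\alpha_\tr\le\alpha_\b2b$ — which, given Theorem~\ref{T:phase_transition}, is indeed the right reduction — and then isolates the genuine obstruction. Your diagnosis matches the paper's own: the introduction already warns that ``clusters which realise long crossings become more and more one-dimensional'' and that ``3D Brownian motion avoids one-dimensional curves,'' so attaching new loops is costly; the remark after Theorem~\ref{T:JR} adds that a naive Burton--Keane argument is blocked by the infinitely many small loops crossing any fixed sphere. Your ``seed estimate survives the conditioning'' is precisely the missing ingredient: the \Seed\ events of Definition~\ref{D:seed} are defined unconditionally and used in Section~\ref{S:GM} in the truncated regime $\alpha\in\Ir_\tr^R$, where Lemma~\ref{L:localK} and Lemma~\ref{L:seed_proba} supply the density of seeds; there is currently no analogue starting only from the ball-to-ball hypothesis $\alpha\in\Ir_\b2b$, i.e.\ from crossings potentially made of arbitrarily small loops.

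Two small refinements. First, the diameter-sprinkling you invoke (as in the proof of \eqref{E:TJR} via Lemma~\ref{L:key}) trades loops of diameter in $[1,R]$ for loops in $[R,R']$; it cannot by itself manufacture macroscopic loops out of a mist of microscopic ones, so it does not bypass the seed problem — it is exactly what one does \emph{after} a seed exists, not how one produces one. Second, as observed both in the paper and in \cite{arXiv:2510.20526}, the bound $\xi(1/2)>0$ already gives $\alpha_\b2b\ge 1/2$, so any proof of $\alpha_\tr\le\alpha_\b2b$ would in particular need $\alpha_\tr\le 1/2$ at $\alpha=1/2$; keeping this concrete target in mind is a useful sanity check for candidate arguments. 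With these caveats, your write-up is an accurate account of why the conjecture is open and of the most natural line of attack.
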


\noindent\textbf{Brownian excursions, loop measure and conformal invariance.}
Many of the proofs of this paper use geometric strategies that are standard in percolation theory. However, these proofs have to be coupled with precise estimates on the Brownian loop measure of specific events which can be rather tricky. Using the definition \eqref{E:loopmeasure} of $\loopmeasure$ to obtain the desired estimates is often hopeless, or at least inefficient. The common strategy is to decompose the loop measure in a specific way which depends on each case at hand.
For instance, one can wish to reroot the loop measure at the point of the loop which is closest to a given point (Proposition~\ref{P:decompo_loopmeasure}), or at a double point of the loop (Lemma~\ref{L:double}).
Based on the two-dimensional results of \cite{Lawler04}, this strategy has been used many times in the 2D case but, to the best of our knowledge, is used for the first time in 3D in this article. For this reason, we dedicate Part \ref{PartA} of this article to properties of the Brownian loop measure $\loopmeasure$: we construct excursion measures, bubbles measures and link them to the Brownian loop measure.
We also prove conformal invariance of $\loopmeasure$ in any dimension $d \ge 3$ (Lemma~\ref{L:inversion_loop}).
This generalises properties which are well known in 1D \cite{Lupu18} and 2D \cite{Lawler04}.

\subsection{Literature review}\label{SS:literature}

We now describe some closely related works on the structure of loop soup clusters. As already mentioned, the two-dimensional picture is pretty well understood; see \cite{JLQb,arXiv:2304.03150} and references therein for recent papers on this subject.
The discussion below is restricted to dimensions $d \ge 3$.

\medskip

\noindent\textbf{Random walk loop soup.}
Brownian loop soup has a natural analogue in the discrete setting, called random walk loop soup \cite{LawlerFerreras07RWLoopSoup}.
Contrary to Brownian loop soup clusters, random walk loop soup clusters have a rich structure in dimensions $d \ge 4$.
In the discrete, the mesh size can be thought of as a minimal scale for the loops. So random walk loop soup clusters can be heuristically compared to the clusters of the truncated Brownian loop soup $\Lc_{\ge 1}^\alpha$ where only loops with a diameter at least 1 are kept.
By contrast, the behaviour of clusters of $\Lc^\alpha$ (without truncation) is both phenomenologically different and more difficult to study.

The article \cite{chang2016phase} studies random walk clusters on $\Z^d$ for $d \ge 3$.
They prove existence of a phase transition which corresponds to the fact that $0<\alpha_c<\infty$ using our notation. In 3D, they do not prove existence of a one-arm exponent $\xi$, though they obtain bounds on the probability of the one-arm event. Assuming existence of the exponent, these bounds read as $\xi < 1$ for all $\alpha$, $\xi > 0$ for $\alpha$ small enough and $\xi \to 1$ as $\alpha \to 0$.
We also mention the recent article \cite{arXiv:2508.09291} which obtains precise asymptotics on the one-arm probability when $d \ge 5$.

The article \cite{chang2017supercritical} also studies random walk clusters on $\Z^d$ for $d \ge 3$. Among other things, Chang considers truncations on the number of steps of the walk, closely related to our truncation \eqref{E:Jr} on the diameter. He proves that the critical intensity of the truncated model is strictly decreasing function of the truncation parameter, which can be compared with Theorem \ref{T:JR}, \eqref{E:TJR}. He also proves a statement analogous to the equality $\alpha_\tr = \lim_{R \to \infty} \alpha_\tr^R$ stated in Theorem \ref{T:JR}, \eqref{E:T_locuniq}; see \cite[Section 4]{chang2017supercritical}.

\medskip
\noindent\textbf{Metric graph loop soups and the Gaussian free field.}
At the special value $\alpha =1/2$, one can couple a metric graph loop soup and a metric graph Gaussian free field (GFF) $\varphi$ in such a way that clusters of the loop soup agree with the excursion sets $\{ \varphi \neq 0 \}$ \cite{zbMATH06603570}. This leads to a form of exact solvability and a deeper understanding of the model at $\alpha=1/2$.
The value $\alpha=1/2$ also corresponds the critical point $\alpha_c$ for clusters of the metric graph loop soup \cite{zbMATH07898720}.

The exact value of the one-arm exponent has been predicted in any dimension $d\ge 3$ in \cite{werner2021clusters}, based on the exact expression of the two-point connectivity function \cite{zbMATH06603570}. Since then, this conjecture has been confirmed in a series of works; see \cite{zbMATH07662556, arXiv:2405.17417, zbMATH08035669, arXiv:2406.02397} and the references therein.

\medskip

\textit{
During the final stages of preparation of this manuscript, the article \cite{arXiv:2510.20526} was released on ArXiv. In particular, the authors prove the existence of a one-arm exponent $\xi$ for the Brownian loop soup clusters in $\R^3$ using a different method. Although their proof is written for $\alpha = 1/2$, it should generalise to any value of $\alpha$ yielding an alternative derivation of \eqref{E:T_one-arm}. Moreover, they show that $\xi(1/2) > 1/2$, i.e. that $\xi(1/2)$ is strictly larger than the analogous exponent in the metric graph. The article \cite{arXiv:2510.20526} thus provides a strong evidence that Brownian loop soup clusters do not correspond to the scaling limit of the metric graph clusters when $\alpha =1/2$.
Combining the fact that $\xi(1/2)>0$ and Theorem~\ref{T:crossing} above, one obtains that $\alpha_\b2b \ge 1/2$.
}

\medskip

\noindent\textbf{Organisation.}
This article is divided into two main parts.
Part \ref{PartA} is concerned with general properties of Brownian excursions and the Brownian loop measure in dimensions $d \ge 3$.
Part~\ref{PartB} deals with the Brownian loop soup clusters in 3D and contains in particular the proofs of the main results announced in the introduction.
Before diving into Part \ref{PartA}, we start with a short preliminary section used throughout the article.

\section{Preliminaries}

\subsection{Notations}

Here are a few notations regularly used in this paper:
\begin{itemize}
\item $B(x,R)$: the Euclidean ball $\{y \in \R^d: \|y-x\| < R\}$, where $R>0$ and $x \in \R^d$. We simply write $B(R)$ when $x=0$.
\item $\S^{d-1}$: the unit sphere $\{ \omega \in \R^d: \|\omega\| = 1 \}$ of $\R^d$.
\item $m_{R\S^{d-1}}$: the uniform measure on $R\S^{d-1}$ whose total mass equals $2\pi^{d/2}\Gamma(d/2)^{-1}R^{d-1}$, where $R>0$.
\item $G_D$, $H_D$: Green's function and Poisson kernel in $D$; see Section \ref{SS:harmonic}.
\end{itemize}

Mirroring the definitions \eqref{E:def_alpha_c}-\eqref{E:def_alpha_tr} and \eqref{E:def_alpha_R} of the critical points, we introduce the following supercritical phases:
\begin{align}
\mathrm{I}_\infty & = \{ \alpha > 0: \P(\Lc^\alpha \text{ contains an unbounded cluster}) > 0 \}, \\
\mathrm{I}_u & = \{ \alpha > 0: \P(\Lc^\alpha \text{ contains a unique unbounded cluster}) > 0 \}, \\
\mathrm{I}_\b2b & = \{ \alpha >0: \P(\forall R>1 : B(0,1) \overset{\Lc^\alpha}{\longleftrightarrow} \partial B(0,R)) >0 \}, \\
\mathrm{I}_\tr & = \{ \alpha > 0: \P(\Lc_{\ge1}^\alpha \text{ contains an unbounded cluster}) > 0 \},\\
\label{E:Jr}
\Ir_\tr^R & = \{ \alpha> 0: \P( \Lc_{1,R}^\alpha \text{ contains an unbounded cluster})>0 \}.
\end{align}
Statements about the supercritical phases above will be more precise than statements about the associated critical points: for instance, claiming that $\Ir_\infty \supset \Ir_u$ is stronger than claiming than $\alpha_c \le \alpha_u$ since it also handles the lower-end of the interval.

\subsection{Setup}
For $T>0$, let $\S^1_T$ be the circle of length $T$ and consider the space of rooted loops
\begin{equation}\label{E:space_rooted_loops}
\Pf := \{ \wp : \S^1_{T(\wp)} \to \R^3 \text{ continuous, with } T(\wp) \in (0,\infty) \}.
\end{equation}
We define an equivalence relation $\sim$ on $\Pf$ by saying that
two rooted loops $(\wp_t)_{0 \le t \le T(\wp)}$ and $(\wp_t')_{0 \le t \le T(\wp')}$ are equivalent if, and only if, $T(\wp) = T(\wp')$ and there exists $t_0 \in \S^1_{T(\wp)}$ s.t. $\wp_{t+t_0} = \wp'_t$ for all $t \in \S^1_{T(\wp)}$. We will view the equivalence class $[\wp]$ of a rooted loop $\wp$ as an unrooted loop and we will denote by
\begin{equation}\label{E:space_unrooted_loops}
[\Pf] := \Pf / \sim
\end{equation}
the space of unrooted loops. To ease notations, we will often simply write $\wp$ instead of $[\wp]$.
Finally, we consider the space $\Lf$ of ``locally finite'' collections $\Lc$ of unrooted loops:
\begin{equation}\label{E:space_collection_loops}
\Lf := \{ \Lc \subset [\Pf]: \forall R>0, \# \{ \wp \in \Lc, \wp(\S^1_{T(\wp)}) \subset B(0,R), \mathrm{diam}(\wp) > R^{-1} \} < \infty \}.
\end{equation}
We can equip each of these spaces with natural metrics that turn them into Polish spaces; see \cite[Section 2.1]{aidekon2023multiplicative}.
We will view the measure $\loopmeasure$ as a Borel measure on $[\Pf]$ and, for all $\alpha>0$, we will view a Brownian loop soup $\Lc^\alpha$ with intensity $\alpha$ as a random variable taking values in $\Lf$.

\subsection{Classical lemmas}

For ease of future reference, we record standard percolation and Poisson point process results. 
The following result is often referred to as Mecke equation, or Palm's formula and can be found in \cite[Theorems 4.1 and 4.4]{zbMATH06796875}.

\begin{lemma}[Palm's formula]\label{L:Palm}
Let $D\subset\R^3$ be an open set and $\alpha>0$.
Any measurable functions $F : [\Pf] \times \Lf \to [0,\infty]$ and $G : [\Pf] \times [\Pf] \times \Lf \to [0,\infty]$ satisfy
\begin{equation}\label{E:Palm}
\hspace{50pt}
    \E \Big[ \sum_{\wp \in \Lc_D^\alpha} F(\wp,\Lc_D^\alpha)] = \alpha \int_{[\Pf]} \E[F(\wp,\Lc_D^\alpha \cup \{\wp\})] \loopmeasure_D(\wp) \hspace{40pt} \text{and}
\end{equation}
\begin{equation}\label{E:Palm2}
    \E \Big[ \sum_{\wp \neq \wp' \in \Lc_D^\alpha} G(\wp,\wp',\Lc_D^\alpha)] = \alpha^2 \int_{[\Pf]\times[\Pf]} \E[G(\wp,\wp',\Lc_D^\alpha \cup \{\wp,\wp'\})] \loopmeasure_D(\wp) \loopmeasure_D(\wp').
\end{equation}    
\end{lemma}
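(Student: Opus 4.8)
The plan is to derive \eqref{E:Palm} and \eqref{E:Palm2} directly from the Mecke equation and its multivariate counterpart for Poisson point processes, namely \cite[Theorems 4.1 and 4.4]{zbMATH06796875}, so that the only real work is to check that the Brownian loop soup fits into their framework. There are three points to verify. First, the phase space $[\Pf]$ of unrooted loops is Polish, hence a standard Borel space, so that Poisson point processes on it are well defined. Second, the intensity measure $\alpha\loopmeasure_D$ is $\sigma$-finite: setting $A_n = \{ \wp \in [\Pf] : \wp \subset B(0,n),\ \diam(\wp) \ge 1/n \}$, one has $[\Pf] = \bigcup_{n \ge 1} A_n$ up to the $\loopmeasure$-negligible set of constant loops (a Brownian bridge is a.s. non-constant), while $\loopmeasure(A_n) < \infty$ for each $n$ by standard estimates on the Brownian loop measure deduced from \eqref{E:loopmeasure} — small-time large deviations for the Brownian bridge control the contribution of $t$ near $0$ on $\{\diam \ge 1/n\}$, and the exponential decay of confinement probabilities in $B(0,n)$ controls large $t$; a fortiori $\loopmeasure_D(A_n) \le \loopmeasure(A_n) < \infty$. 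Third, the Poisson point process with intensity $\alpha\loopmeasure_D$ takes values in $\Lf$ almost surely: by the previous point and the restriction property of Poisson point processes it has only finitely many loops in each $A_n$, which is exactly the local finiteness required in \eqref{E:space_collection_loops}; this random variable is the loop soup $\Lc_D^\alpha$.

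Granting these three points, \eqref{E:Palm} is the Mecke equation applied to the functional $F$ and \eqref{E:Palm2} is the bivariate Mecke equation applied to $G$; the ordered sum $\sum_{\wp \ne \wp' \in \Lc_D^\alpha}$ in \eqref{E:Palm2} matches precisely the sum over ordered pairs of distinct atoms appearing in the multivariate Mecke formula. The one bookkeeping point worth spelling out is the passage between the counting-measure operations $\eta \mapsto \eta + \delta_\wp$ and $\eta \mapsto \eta + \delta_\wp + \delta_{\wp'}$ used in \cite{zbMATH06796875} and the set-theoretic operations $\Lc_D^\alpha \cup \{\wp\}$ and $\Lc_D^\alpha \cup \{\wp,\wp'\}$ used in \eqref{E:Palm}--\eqref{E:Palm2}: these coincide because $\loopmeasure_D$ is non-atomic, so that for $\loopmeasure_D$-almost every $\wp$ (resp. $\loopmeasure_D\otimes\loopmeasure_D$-almost every $(\wp,\wp')$) the loop $\wp$ (resp. each of $\wp$ and $\wp'$) almost surely does not already belong to $\Lc_D^\alpha$, and the configuration obtained after addition remains simple; the measurability of $F$ and $G$ with respect to the product Borel structures is part of the hypotheses.

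I do not expect a genuine obstacle: the statement is essentially a transcription of a classical result, and the only care needed is the measure-theoretic verification above — chiefly the $\sigma$-finiteness of $\loopmeasure_D$, which rests on the (elsewhere recorded) finiteness of the loop measure of loops of diameter bounded below that stay in a bounded region.
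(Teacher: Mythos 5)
Your proof is correct and takes the same route as the paper, which simply cites the Mecke equation from \cite[Theorems 4.1 and 4.4]{zbMATH06796875} without supplying a proof; your additional verifications ($\sigma$-finiteness of $\loopmeasure_D$, Polishness of $[\Pf]$, that the process lands in $\Lf$, and the non-atomicity bookkeeping) are all sound and are exactly what one would implicitly check to invoke that reference.
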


Let $P(\wp,\Lc_D^\alpha)$ be a given property depending on $\Lc_D^\alpha$ and $\wp \in \Lc_D^\alpha$; for instance $P(\wp,\Lc_D^\alpha)$ could be the property that $\wp$ belongs to an unbounded cluster of $\Lc_D^\alpha$.
By \eqref{E:Palm}, if we want to show that $P(\wp,\Lc_D^\alpha)$ is satisfied almost surely for every loop $\wp \in \Lc_D^\alpha$, it is enough to show that $P(\wp,\Lc_D^\alpha \cup \{\wp\})$ holds almost surely where $\wp$ is sampled from $\loopmeasure_D$, independently of $\Lc_D^\alpha$.
If the property depends on two distinct loops $\wp,\wp' \in \Lc_D^\alpha$ instead, we proceed similarly using \eqref{E:Palm2}.
We will repeatedly use this reasoning and refer to it by saying ``by Palm's formula''.

For the next lemma which can be found in \cite[Lemma 2.1]{Janson84}, recall that a function $F: \Lf \to \R$ is said to be increasing if for all $\Lc, \Lc' \in \Lf$, $\Lc \subset \Lc' \implies F(\Lc) \le F(\Lc')$. An event $E$ is increasing if the indicator function of $E$ is an increasing function.

\begin{lemma}[FKG inequality]
    For all increasing measurable bounded functions $F, G : \Lf \to \R$,
    \[ 
    \E[F(\Lc_D^\alpha)G(\Lc_D^\alpha)] \ge \E[F(\Lc_D^\alpha)] \E[G(\Lc_D^\alpha)].
    \]
\end{lemma}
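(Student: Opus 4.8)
This is a general fact about Poisson point processes; see \cite{Janson84}. I would reprove it in our setting as follows. The conceptual point is that, although there is no natural partial order on the loop space $[\Pf]$ making $F$ ``coordinatewise increasing'', one can condition on the numbers of loops of $\Lc_D^\alpha$ lying in the cells of a fine partition of loop space: these counts are \emph{independent} and take values in the totally ordered set $\Z_{\ge 0}$, so the classical Harris inequality applies to them, and the whole statement follows by a martingale approximation.

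Concretely, I would fix a sequence $\mathcal{P}_n=\{Q_{n,i}\}_{i\ge 1}$ of countable measurable partitions of $\loopmeasure_D$-almost all of $[\Pf]$ such that each cell has finite mass $\loopmeasure_D(Q_{n,i})<\infty$, $\mathcal{P}_{n+1}$ refines $\mathcal{P}_n$, and $\bigcup_n\mathcal{P}_n$ generates the Borel $\sigma$-algebra of $[\Pf]$. Such partitions exist because $[\Pf]$ is Polish, hence second countable, and because $\loopmeasure_D$ is $\sigma$-finite on the loops that are bounded and non-degenerate: the sets $\{\wp\subset B(0,k),\ \diam(\wp)>1/k\}$ have finite $\loopmeasure$-mass (this is precisely what makes the loop soup locally finite, cf.\ \eqref{E:space_collection_loops}). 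Writing $N_{n,i}=\#(\Lc_D^\alpha\cap Q_{n,i})$ and $\Fc_n=\sigma(N_{n,i}:i\ge 1)$, the family $(N_{n,i})_i$ consists of independent Poisson variables (restriction of a Poisson process to disjoint sets), and since $\bigcup_n\mathcal{P}_n$ generates the Borel $\sigma$-algebra we have $\bigvee_n\Fc_n=\sigma(\Lc_D^\alpha)$; hence the bounded martingales $\E[F(\Lc_D^\alpha)\mid\Fc_n]$ and $\E[G(\Lc_D^\alpha)\mid\Fc_n]$ converge in $L^2$ to $F(\Lc_D^\alpha)$ and $G(\Lc_D^\alpha)$.

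The key observation is that $\E[F(\Lc_D^\alpha)\mid\Fc_n]$ is an increasing function of the vector $(N_{n,i})_i$: conditionally on $(N_{n,i})_i=(k_i)_i$, the soup is obtained by placing, independently in each cell $Q_{n,i}$, exactly $k_i$ i.i.d.\ loops with law $\loopmeasure_D(\cdot\cap Q_{n,i})/\loopmeasure_D(Q_{n,i})$; increasing some $k_i$ produces a configuration that, under the obvious coupling (keep the first $k_i$ of $k_i'$ i.i.d.\ loops in each cell), is larger for inclusion, hence makes $F$ larger. The same holds for $G$. The Harris/FKG inequality for the independent, $\Z_{\ge 0}$-valued family $(N_{n,i})_i$ — valid for finite products, and for countable ones by approximation — then gives
\[
\E\big[\, \E[F(\Lc_D^\alpha)\mid\Fc_n]\,\E[G(\Lc_D^\alpha)\mid\Fc_n] \,\big] \ \ge\ \E[F(\Lc_D^\alpha)]\,\E[G(\Lc_D^\alpha)],
\]
and letting $n\to\infty$ the left-hand side converges to $\E[F(\Lc_D^\alpha)G(\Lc_D^\alpha)]$ (product of two bounded, $L^2$-convergent sequences), which is the claim.

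I do not expect a genuine obstacle here: the only ``idea'' is to test monotonicity against the cell counts rather than against the loop locations. The points to be checked are routine — the existence of the refining partitions into finite-mass cells (a consequence of the local finiteness of $\loopmeasure_D$ and separability of $[\Pf]$) and the finite-dimensional Harris inequality (proved by induction on the number of coordinates from the scalar correlation inequality) — and could alternatively be replaced by a direct citation of the Poisson-process FKG inequality in \cite{Janson84}.
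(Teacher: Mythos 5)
Your argument is correct, and it is essentially a reproof of the result the paper simply cites from \cite{Janson84} without proof. The discretization you describe (conditioning on cell counts in a refining sequence of finite-mass partitions, applying Harris to the independent Poisson counts, and passing to a martingale limit) is the standard route to Poisson FKG and is presumably close to what Janson's own proof does; the two unstated but routine checks you should make explicit if you wrote this up are (i) that the sets $\{\wp\subset B(0,k),\ \diam(\wp)>1/k\}$ having finite $\loopmeasure_D$-mass indeed lets you build the refining partitions, and (ii) that $\bigvee_n\Fc_n=\sigma(\Lc_D^\alpha)$, which follows because the $\sigma$-algebra on a space of locally finite point configurations is generated by the counting functionals $N_A$ for $A$ ranging over a ring generating the Borel $\sigma$-algebra of the underlying Polish space. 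Everything else — the identification of the conditional law given the cell counts as i.i.d.\ sampling in each cell, the monotonicity of $\E[F\mid\Fc_n]$ in the counts via the inclusion coupling, the countable Harris inequality by approximation, and $L^2$-martingale convergence of the bounded martingales — is sound.
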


The final result is a direct consequence of FKG inequality and can be found in \cite[Exercise~11]{duminil}.

\begin{lemma}[Square-root trick]\label{L:square-root_trick}
For any increasing events $A_1, \dots, A_n$,
\[
\max_{i=1, \dots, n} \P(A_i) \ge 1 - (1 - \P(A_1 \cup \dots \cup A_n))^{1/n}.
\]
\end{lemma}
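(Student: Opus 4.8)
The plan is to reduce everything to the FKG inequality applied to the complementary events. First I would observe that since each $A_i$ is an increasing event, its complement $A_i^c$ is decreasing, i.e. the indicator $\mathbf{1}_{A_i^c} = 1 - \mathbf{1}_{A_i}$ is a bounded measurable decreasing function on $\Lf$. The FKG inequality stated above is phrased for increasing functions, but it passes immediately to decreasing ones: if $F,G$ are decreasing then $-F,-G$ are increasing, so applying FKG to $-F$ and $-G$ and expanding gives $\E[FG] \ge \E[F]\,\E[G]$.

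Next I would iterate this. Writing $\mathbf{1}_{A_1^c \cap \dots \cap A_n^c} = \prod_{i=1}^n \mathbf{1}_{A_i^c}$ and peeling off one factor at a time — each partial product $\prod_{i\le k} \mathbf{1}_{A_i^c}$ being again a bounded decreasing function — one obtains
\[
\P\Big(\bigcap_{i=1}^n A_i^c\Big) \;\ge\; \prod_{i=1}^n \P(A_i^c) \;=\; \prod_{i=1}^n \big(1 - \P(A_i)\big).
\]
Then I would bound each factor from below by $1 - \max_{i} \P(A_i)$, which yields
\[
1 - \P\Big(\bigcup_{i=1}^n A_i\Big) \;=\; \P\Big(\bigcap_{i=1}^n A_i^c\Big) \;\ge\; \big(1 - \max_{i} \P(A_i)\big)^{n}.
\]
Taking $n$-th roots and rearranging gives $\max_{i} \P(A_i) \ge 1 - \big(1 - \P(A_1 \cup \dots \cup A_n)\big)^{1/n}$, which is the claim.

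There is essentially no obstacle here; the statement is, as noted, a direct consequence of FKG. The only two points requiring a word of care are the extension of FKG from increasing to decreasing functions and its iteration to products of more than two factors, both of which are entirely routine.
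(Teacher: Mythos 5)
Your proof is correct and is the standard argument (FKG for decreasing events, then bound each complement by the worst one); the paper does not write out a proof but cites this as a direct consequence of FKG, which is exactly what you have supplied.
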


\part{Brownian excursions and loop measure}\label{PartA}

\section{Excursions and bubble measures}\label{S:Excursions}

This section introduces natural Brownian-type measures on trajectories and studies some of their properties in any dimension $d \ge 3$.


\subsection{Heat kernel, Green's function and Poisson kernel}\label{SS:harmonic}

We recall basic notions of harmonic analysis. We refer to \cite{MR3410783} for a detailed exposition.
We will denote the heat kernel in $\R^d$ by
\begin{equation}\label{E:heat_kernel}
p^{\R^d}_t(x,y) = \frac{1}{(2\pi t)^{d/2}} \exp \Big(-\frac{\|x-y\|^2}{2t} \Big), \qquad x, y \in \R^d, \quad t >0,
\end{equation}
and the Green's function in $\R^d$ by
\begin{equation}\label{E:Green}
G_{\R^d}(x,y) = \int_0^\infty p^{\R^d}_t(x,y) \d t = c_d \|x-y\|^{2-d},
\qquad x,y \in \R^d,
\qquad \text{where} \quad c_d = \frac{1}{2\pi^{d/2}} \Gamma(d/2-1).
\end{equation}
Now let $D \subset \R^d$ be some open set.
For $x,y \in D$ and $t>0$, let $\pi_t^D(x,y)$ be the probability that a Brownian bridge of duration $t$ from $x$ to $y$ remains in $D$. The Green's function in $D$ is then defined by
\begin{equation}\label{E:Green_D}
G_D(x,y) = \int_0^\infty p^{\R^d}_t(x,y) \pi_t^D(x,y) \d t.
\end{equation}
In the case of the unit ball, the Green's function is explicit: for all distinct points $x, y \in B(0,1)$,
\begin{equation}\label{E:Green_ball}
G_{B(0,1)}(x,y) = G_{\R^d}(x,y) - G_{\R^d}\Big(\frac{x}{\|x\|},\|x\|y \Big).
\end{equation}

For the remaining of this section, we need to impose some regularity assumption on $D$:
\begin{equation}\label{E:regularity_D}
\text{Suppose that }
D \text{ has a smooth boundary and that } G_D \text{ is } C^1 \text{ up to } \partial D.
\end{equation}
See \cite[Chapter 3]{MR3410783} for much more on the relations between regularity of $D$ and harmonic functions.
Let $x \in D$. The first hitting point of $\partial D$ by a Brownian motion starting from $x$ has a density with respect to the $(d-1)$-dimensional Hausdorff measure on $\partial D$: the Poisson kernel $H_D(x,\cdot)$. We use the convention that the integral of $H_D(x,\cdot)$ against the $(d-1)$-dimensional Hausdorff measure on $\partial D$ is one.
The Poisson kernel can also be obtained by differentiating the Green's function \cite[(3.1.47)]{MR3410783}: for any $x \in D$ and $y \in \partial D$, letting $n_y$ being the inward unit normal vector at $y$, we have
\begin{equation}
\label{E:Poisson_Green}
H_D(x,y) = \lim_{\eps \to 0} \frac{1}{2\eps} G_D(x,y+\eps n_y).
\end{equation}
If $D$ is the unit ball or the complement of the unit ball, then the Poisson kernel is explicit \cite[Theorem 3.44]{morters2010brownian}: for all $x \in D$ and $\omega \in \S^{d-1}$,
\begin{equation}
\label{E:Poisson_sphere}
H_D(x,\omega) = \frac{\Gamma(d/2)}{2\pi^{d/2}} \frac{|1-\|x\|^2|}{\|x-\omega\|^d}.
\end{equation}
Finally, the boundary Poisson kernel is defined by differentiating the Poisson kernel and the Green's function: for any distinct $x, y \in \partial D$,
\begin{equation}
\label{E:Poisson_boundary}
H_D(x,y) = \lim_{\eps \to 0} \frac{1}{\eps} H_D(x+\eps n_x,y) = \lim_{\eps \to 0} \frac{1}{2\eps^2} G_D(x+\eps n_x,y + \eps n_y).
\end{equation}

We state the following elementary result for ease of future reference. Recall that $m_{R\S^{d-1}}$ stands for the $(d-1)$-dimensional Hausdorff measure of $R\S^{d-1}$.

\begin{lemma}\label{L:Poisson}
Let $d \ge 3$.
For all $R>r>0$, the following integrals can be computed:
\begin{align}
\label{E:L_Poisson1}
& \int_{R\S^{d-1}} m_{R\S^{d-1}}(\d x) H_{B(R)}(y,x) = 1, \quad y \in r\S^{d-1}; \\
\label{E:L_Poisson2}
& \int_{r\S^{d-1}} m_{r\S^{d-1}}(\d x) H_{\R^d \setminus B(r)}(x,y) = (r/R)^{d-2}, \quad x \in R\S^{d-1};\\
& \int_{R\S^{d-1}} m_{R\S^{d-1}}(\d x) H_{B(R)\setminus \overline{B(r)}}(y,x) = (d-2) \frac{ r^{1-d} }{r^{2-d}-R^{2-d}}, \quad y \in r\S^{d-1}.
\label{E:L_Poisson3}
\end{align}
\end{lemma}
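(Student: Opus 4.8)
Here is my plan for proving Lemma~\ref{L:Poisson}.

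\textbf{Strategy.} All three identities express harmonic-measure-type quantities for concentric spheres, and the natural tool is the optional stopping theorem applied to suitable harmonic functions, combined with the rotational symmetry of the configuration. In each case the integral on the left is $\int H_D(y, \cdot)$ (or its reversal) against the surface measure on a boundary sphere, which by definition of the Poisson kernel equals the probability — or, in the transient case, the expected number / a capacity-type weight — that Brownian motion from $y$ exits $D$ through that sphere. So I would translate each statement into a hitting statement and evaluate it using the radial harmonic function $x \mapsto \|x\|^{2-d}$ (which is harmonic on $\R^d \setminus \{0\}$ for $d \ge 3$) together with the constant function.

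\textbf{Step-by-step.} First, for \eqref{E:L_Poisson1}: $H_{B(R)}(y,\cdot)$ is by our normalisation a probability density on $\partial B(R) = R\S^{d-1}$ with respect to $m_{R\S^{d-1}}$, so the integral is simply the total mass $1$; this is immediate from the convention stated after \eqref{E:regularity_D}. Second, for \eqref{E:L_Poisson2}: by reversibility/symmetry of the problem, $\int_{r\S^{d-1}} m_{r\S^{d-1}}(\d z)\, H_{\R^d\setminus B(r)}(z,y)$ with $y = x \in R\S^{d-1}$ is, up to the explicit Poisson kernel formula \eqref{E:Poisson_sphere}, a function of $\|x\|=R$ only; alternatively and more cleanly, recognise that $z \mapsto \|z\|^{2-d}$ restricted to the exterior domain $\R^d\setminus B(r)$ is harmonic, bounded, tends to $0$ at infinity and equals $r^{2-d}$ on $\partial B(r)$, so for Brownian motion $W$ started at $x$ with $\tau$ the hitting time of $\partial B(r)$ (possibly infinite), $\|x\|^{2-d} = \E_x[\,\|W_\tau\|^{2-d}\mathbf{1}_{\tau<\infty}\,] = r^{2-d}\,\P_x(\tau<\infty)$. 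Hence $\P_x(\tau < \infty) = (\|x\|/r)^{2-d} = (r/R)^{d-2}$. It remains to observe that the left-hand side of \eqref{E:L_Poisson2} is exactly $\P_x(\tau<\infty)$: integrating the exterior Poisson kernel $H_{\R^d\setminus B(r)}(z,\cdot)$ — wait, one must be careful about which variable is the interior point. Here $z \in r\S^{d-1}$ is a \emph{boundary} point and $x$ an interior point of $\R^d \setminus B(r)$, so $H_{\R^d\setminus B(r)}(x,z)\, m_{r\S^{d-1}}(\d z)$ is the exit distribution of $W$ started at $x$, and its total mass over $r\S^{d-1}$ is precisely $\P_x(\tau<\infty)$ (mass can be lost to escape at infinity). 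This gives \eqref{E:L_Poisson2}.

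\textbf{The main identity.} For \eqref{E:L_Poisson3}, the domain is the annulus $D = B(R)\setminus\overline{B(r)}$, which is bounded with nice boundary, so Brownian motion from $y \in r\S^{d-1}$ — interpreted as a limit from inside $D$ — exits almost surely through $\partial B(r) \cup \partial B(R)$. By definition of the \emph{boundary} Poisson kernel \eqref{E:Poisson_boundary}, $\int_{R\S^{d-1}} m_{R\S^{d-1}}(\d x)\, H_D(y,x)$ is the normal derivative at $y$ of the harmonic function $h$ on $D$ with boundary values $\mathbf{1}_{\partial B(R)}$ — i.e. $h(z) = \P_z(W \text{ exits } D \text{ through } \partial B(R))$. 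By radial symmetry, $h(z) = u(\|z\|)$ where $u$ solves the ODE making $z\mapsto u(\|z\|)$ harmonic, whose general solution is $u(\rho) = A + B\rho^{2-d}$; imposing $u(r)=0$, $u(R)=1$ gives $u(\rho) = (\rho^{2-d}-r^{2-d})/(R^{2-d}-r^{2-d})$. Then the quantity we want is the inward normal derivative at radius $r$, namely $u'(r) = (2-d)\,r^{1-d}/(R^{2-d}-r^{2-d}) = (d-2)\,r^{1-d}/(r^{2-d}-R^{2-d})$, which is exactly the right-hand side of \eqref{E:L_Poisson3}. The one point requiring a little care — and the place I expect to spend the most words — is justifying that $\int_{R\S^{d-1}} H_D(y,\cdot)\,\d m$ really equals the inward normal derivative of $h$ at $y$: this follows by integrating the identity $H_D(y+\eps n_y, x) = \eps^{-1}$-rescaled Poisson kernel from \eqref{E:Poisson_boundary} over $x \in R\S^{d-1}$, using \eqref{E:Poisson_Green}/the definition of $H_D$, and passing $\eps \to 0$; the interchange of limit and integral is legitimate since $R\S^{d-1}$ is a compact piece of $\partial D$ bounded away from the singularity at $y \in r\S^{d-1}$, on which $G_D(\cdot,\cdot)$ and its derivatives are uniformly continuous by \eqref{E:regularity_D}. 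Once that is set up, the computation is just the elementary ODE above.
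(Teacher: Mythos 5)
Your proposal is correct and follows essentially the same approach as the paper: translate each integral into a Brownian hitting probability via the definition of the (boundary) Poisson kernel, then evaluate using the radial harmonic function $\rho\mapsto\rho^{2-d}$. For \eqref{E:L_Poisson3} the paper writes the $\eps\to0$ difference quotient and extracts the asymptotics directly, whereas you identify it as $u'(r)$ and differentiate; these are the same computation in two notations, and your extra care about interchanging the $\eps$-limit with the surface integral is a reasonable (and standard) point the paper leaves implicit.
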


\begin{proof}
By definition of the Poisson kernel, the first (resp. second) integral equals the probability that a Brownian motion starting from $y$ (resp. $x$) hits $\S^{d-1}$ (resp. $r \S^{d-1}$) in finite time. These hitting probabilities can be computed from the fact that $x \mapsto \|x\|^{2-d}$ is harmonic. By \eqref{E:Poisson_boundary}, the third integral is the limit as $\eps \to 0$ of $1/\eps$ times the probability that a Brownian path starting at $y+\eps n_y$ hits $R\S^{d-1}$ before $r \S^{d-1}$, which is equal to:
\[
\frac{1}{\eps} \frac{r^{2-d}-(r+\eps)^{2-d}}{r^{2-d}-R^{2-d}} \sim (d-2) \frac{ r^{1-d} }{r^{2-d}-R^{2-d}}.
\]
This concludes the proof.
\end{proof}

\subsection{Whole-space excursion measures}\label{SS:excursions}

We start by defining and studying some properties of Brownian excursion measures in $\R^d$.
For distinct points $x,y \in \R^d$, consider the following stochastic differential equation
\[
\d W^{x,y}(t) = \d W(t) - \frac{d-2}{\|W^{x,y}(t)-y\|^2} (W^{x,y}(t)-y) \d t, \qquad \text{with} \quad W_0^{x,y} = x,
\]
where $W$ is a $d$-dimensional standard Brownian motion. The process $W^{x,y}$ is well defined up to $\tau_{y,r}$ for all $r>0$ where
\[
\tau_{y,r} = \inf \{ t > 0: \| W^{x,y}(t) - y \| = r \}.
\]
A small calculation involving Itô's formula shows that
\[
\d \|W^{x,y}(t) - y\|^2 = 2 \|W^{x,y}(t) - y\| \d Z_t + (4-d) \d t
\qquad \text{where} \quad \d Z_t = \frac{\sum_{j=1}^d (W^{x,y}_j(t) - y_j) \d W_j(t)}{\|W^{x,y}(t) - y\|}.
\]
The process $(Z_t)_{t \ge0}$ is a continuous local martingale and $\langle Z \rangle_t = t$. By Lévy's characterisation of Brownian motion, $Z$ is a standard one-dimensional Brownian motion and thus $\|W^{x,y} - y\|^2$ is a squared Bessel process of dimension $4-d$. Since $4-d < 2$, this implies in particular that $\|W^{x,y} - y\|^2$ reaches 0 in finite time almost surely. Thus $W^{x,y}$ is a random process from $x$ to $y$, well defined up to the hitting time $\tau_y = \inf \{ t>0 : W^{x,y}(t) = y\}$ of $y$.
We will denote by $\mu^\#_{x,y}$ the law of $(W^{x,y}(t))_{0 \le t \le \tau_y}$ and by $\E^\#_{x,y}$ the associated expectation.
We will identify the law of a standard $d$-dimensional Brownian motion run up to time infinity with $\mu^\#_{x,\infty}$, an excursion from $x$ to $\infty$.

Given a standard $d$-dimensional Brownian motion $W$, $\|W-y\|^{2-d}$ is a local martingale (recall \eqref{E:Green}).
By Girsanov's theorem, $\mu^\#_{x,y}$ agrees with the Doob's transform associated to this local martingale: for any $T>0$ and any test function $f$,
\begin{equation}
\label{E:RN_excursion}
\E^\#_{x,y}[f(W^{x,y}(t))_{0\le t \le T} \indic{\tau_y > T}]
= \|x-y\|^{d-2} \E_x [ f(W(t))_{0 \le t \le T} \|W(T)-y\|^{2-d} ].
\end{equation}
The following lemma provides a third description of $\mu_{x,y}^\#$.

\begin{lemma}
Let $x,y \in \R^d$ be two distinct points. Let $(W_t)_{t \ge 0}$ be a Brownian trajectory starting at $x$ and let $\tau_{y,r} = \inf\{t > 0: W_t \in \partial B(y,r) \}$. Then the law of $(W_t)_{t \le \tau_{y,r} \wedge \tau_{y,R}}$ conditioned on $\tau_{y,r} < \tau_{y,R}$ converges weakly as $r \to 0$, $R \to \infty$ to $\mu^\#_{x,y}$.
\end{lemma}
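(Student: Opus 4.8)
The plan is to reduce to $y = 0$ by translation invariance and then compare the two laws by cutting each trajectory at the first time $\tau_{0,\rho}$ it enters $\overline{B(\rho)}$, for a parameter $\rho \in (0,\|x\|)$ that is sent to $0$ only at the very end. Throughout, write $\tau_{0,s}$ for the hitting time of $\partial B(s)$, set $h_{r,R}(z) := \P_z(\tau_{0,r} < \tau_{0,R}) = (\|z\|^{2-d} - R^{2-d})/(r^{2-d} - R^{2-d})$ — the explicit formula coming from optional stopping applied to the martingale $\|W\|^{2-d}$, recall \eqref{E:Green} — and let $\Q_{r,R}$ denote the law of $(W_t)_{t \le \tau_{0,r} \wedge \tau_{0,R}}$ conditioned on $\{\tau_{0,r} < \tau_{0,R}\}$, under which the trajectory has lifetime $\tau_{0,r}$. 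I will establish: (a) for each fixed $\rho$, the $\Q_{r,R}$-law of the initial segment $(W_t)_{t \le \tau_{0,\rho}}$ does not depend on $r < \rho$ and converges, as $R \to \infty$, to the $\mu^\#_{x,0}$-law of $(W^{x,0}_t)_{t \le \tau_{0,\rho}}$; and (b) the removed final segment is negligible, uniformly in $r$ and $R$, as $\rho \to 0$.

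For (a): on $\{\tau_{0,r} < \tau_{0,R}\}$ the trajectory, starting from radius $\|x\| > \rho$, crosses radius $\rho$ before reaching radius $r < \rho$, so $\tau_{0,\rho} < \tau_{0,r} \wedge \tau_{0,R}$; since $h_{r,R}$ is constant on $\partial B(\rho)$, the strong Markov property at $\tau_{0,\rho}$ shows that the $\Q_{r,R}$-law of $(W_t)_{t \le \tau_{0,\rho}}$ equals $\P_x(\,\cdot \mid \tau_{0,\rho} < \tau_{0,R})$, which is independent of $r$ and converges, as $R \to \infty$, to $\P_x(\,\cdot \mid \tau_{0,\rho} < \infty)$ (one conditions on the increasing family $\{\tau_{0,\rho} < \tau_{0,R}\}$, whose union $\{\tau_{0,\rho} < \infty\}$ has probability $(\rho/\|x\|)^{d-2} > 0$). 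On the other hand, letting $T \to \infty$ in \eqref{E:RN_excursion} with the trajectory stopped at $\tau_{0,\rho} \wedge T$ — using that $\tau_0 \equiv +\infty$ $\P_x$-a.s., that $\|W_T\|^{2-d} \to 0$ on $\{\tau_{0,\rho} = \infty\}$, and bounded convergence (the relevant integrand is $\le \|f\|_\infty \rho^{2-d}$) — yields the optional-stopping identity $\E^\#_{x,0}\big[f\big((W^{x,0}_t)_{t \le \tau_{0,\rho}}\big)\big] = \|x\|^{d-2} \rho^{2-d}\, \E_x\big[f\big((W_t)_{t \le \tau_{0,\rho}}\big) \mathbf 1_{\tau_{0,\rho} < \infty}\big]$, i.e. the $\mu^\#_{x,0}$-law of $(W^{x,0}_t)_{t \le \tau_{0,\rho}}$ is exactly $\P_x(\,\cdot \mid \tau_{0,\rho} < \infty)$. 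Together with the previous sentence, this gives (a).

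For (b): by the strong Markov property at $\tau_{0,\rho}$ and Brownian scaling, under $\Q_{r,R}$ the removed segment $(W_{\tau_{0,\rho}+t})_{0 \le t \le \tau_{0,r} - \tau_{0,\rho}}$ is, after rescaling lengths by $1/\rho$ and times by $1/\rho^2$, a Brownian path from a point of $\S^{d-1}$ conditioned to reach $\partial B(r/\rho)$ before $\partial B(R/\rho)$ and stopped on reaching the former. The Doob density of this rescaled object, on the event that it survives to time $T$, equals $h_{r/\rho,R/\rho}(W_T)/h_{r/\rho,R/\rho}(w)$, which is bounded by a numerical constant times $\|W_T\|^{2-d}$ uniformly over $\|w\| = 1$ and over $r/\rho \to 0$, $R/\rho \to \infty$ (the denominator is $\ge \tfrac12 (r/\rho)^{d-2}$ once $R/r$ is large); since $\sup_{\|w\|=1} \E_w[\|W_T\|^{2-d}] < \infty$ and tends to $0$ as $T \to \infty$, the duration of this rescaled object is tight, uniformly in the truncation, while the radial harmonic function $\|\cdot\|^{2-d}$ bounds the probability it ever reaches radius $M$ by $O(M^{2-d})$, so its spatial extent is tight as well. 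Hence the removed segment has duration $O_{\P}(\rho^2)$ and diameter $O_{\P}(\rho)$, both tending to $0$ as $\rho \to 0$, uniformly in $r$ and $R$; and the corresponding segment $(W^{x,0}_t)_{\tau_{0,\rho} \le t \le \tau_0}$ under $\mu^\#_{x,0}$ vanishes as $\rho \to 0$ directly, since $\tau_0 - \tau_{0,\rho} \downarrow 0$ and, by continuity, the range of $W^{x,0}$ over $[\tau_{0,\rho}, \tau_0]$ shrinks to $\{0\}$.

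To conclude: given a bounded continuous functional $\Phi$ on the space of trajectories-with-lifetime (with its topology as in \cite[Section~2.1]{aidekon2023multiplicative}) and $\eps > 0$, fix $\rho$ small so that, by (b), removing the final segment perturbs the whole trajectory by less than $\eps$ in the relevant metric with probability at least $1 - \eps$, both under every $\Q_{r,R}$ with $r < \rho$, $R$ large, and under $\mu^\#_{x,0}$; then (a), together with the tightness of the families $\{\Q_{r,R}\}$ and $\{\mu^\#_{x,0}\}$ on that space — which holds at fixed $\rho$ via the decomposition $\tau_{0,r} = \tau_{0,\rho} + (\text{duration of the removed segment})$ with both summands tight (their laws converging by (a) and by (b) respectively), plus the bounded-density estimate of (b) applied at finite times — lets one pass to the limit and obtain $\E_{\Q_{r,R}}[\Phi] \to \E^\#_{x,0}[\Phi]$. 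I expect the genuine work to be in (b): the uniform-in-truncation tightness of the removed segment's duration, and carrying out the final approximation cleanly on the excursion-path topology; the identification of the limit in (a) is a direct consequence of \eqref{E:RN_excursion}, the strong Markov property and Brownian scaling.
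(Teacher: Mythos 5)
The paper leaves this lemma to "standard arguments that we omit," so there is no in-paper proof to compare against. Your argument is correct: the identification in (a) via \eqref{E:RN_excursion} evaluated at the bounded stopping time $\tau_{0,\rho}\wedge T$ and the strong Markov/rotational-invariance observation that conditioning on $\{\tau_{0,r}<\tau_{0,R}\}$ restricted to $\mathcal F_{\tau_{0,\rho}}$ reduces to conditioning on $\{\tau_{0,\rho}<\tau_{0,R}\}$ are both sound, and the uniform tightness estimates in (b), via the Doob density bound $h_{r/\rho,R/\rho}(W_T)/h_{r/\rho,R/\rho}(w)\le C\|W_T\|^{2-d}$ and the radial harmonic bound on the spatial extent, are correct.

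One streamlining worth recording: the auxiliary level $\rho$ is not really needed. Under the conditioning $\{\tau_{0,r}<\tau_{0,R}\}$ the trajectory is \emph{already} cut at $\tau_{0,r}$, so your optional-stopping identity applied at $\tau_{0,r}$ shows directly that the $\mu^\#_{x,0}$-law of $(W^{x,0}_t)_{t\le\tau_{0,r}}$ is $\P_x(\cdot\mid\tau_{0,r}<\infty)$, while $\Q_{r,R}$ is $\P_x(\cdot\mid\tau_{0,r}<\tau_{0,R})$ restricted to $\mathcal F_{\tau_{0,r}}$. These two conditional laws on $\mathcal F_{\tau_{0,r}}$ differ in total variation by at most
$\displaystyle 2\,\frac{\P_x(\tau_{0,R}<\tau_{0,r}<\infty)}{\P_x(\tau_{0,r}<\infty)} \;=\; 2\,\frac{\P_x(\tau_{0,R}<\tau_{0,r})\,(r/R)^{d-2}}{(r/\|x\|)^{d-2}} \;\le\; 2\,(\|x\|/R)^{d-2},$
which tends to $0$ as $R\to\infty$ \emph{uniformly} in $r$. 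Combined with the almost-sure convergence $(W^{x,0}_t)_{t\le\tau_{0,r}}\to(W^{x,0}_t)_{t\le\tau_0}$ under $\mu^\#_{x,0}$ as $r\to0$, this gives the weak convergence with a two-term triangle inequality and no intermediate-scale tightness argument at all; the heavy lifting in your part (b) then becomes unnecessary. Your version is nevertheless correct as written — it is just doing more work than the statement requires.
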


\begin{proof}
The proof follows from standard arguments that we omit.
\end{proof}

Recall that we denote by $p^{\R^d}_t(x,y)$ the heat kernel in $\R^d$ \eqref{E:heat_kernel}.

\begin{lemma}[Density and time reversal]\label{L:density_excursion}
Let $x, y \in \R^d$ be two distinct points and $(W^{x,y}(t))_{0\le t \le \tau_y}$ be sampled according to $\mu^\#_{x,y}$. 
For all $k\ge0$, $t_0=0 < t_1 <  \dots < t_k < t=t_{k+1}$, the joint law of $(W^{x,y}(t_1), \dots, W^{x,y}(t_k), \tau_y)\indic{\tau_y\ge t}$ has a density with respect to Lebesgue measure on $(\R^d)^k \times (0,\infty)$ given by: 
for all
pairwise distinct points $z_1, \dots, z_k \in \R^d \setminus \{x,y\}$,
\begin{equation}\label{E:L_density_excursion}
\P(W^{x,y}_{t_j} \in \d z_j, j=1 \dots k, \tau_y \in \d t) = \frac{2\pi^{d/2}}{\Gamma(d/2-1)} \|x-y\|^{d-2} \prod_{j=0}^k p^{\R^d}_{t_{j+1}-t_j}(z_j,z_{j+1}),
\end{equation}
where by convention $z_0=x$ and $z_{k+1}=y$. In particular, if $(W^{x,y}(t))_{0\le t \le \tau_y} \sim \mu^\#_{x,y}$, then the time reversal $(W^{x,y}(\tau_y-t))_{0\le t \le \tau_y}$ is distributed according to $\mu^\#_{y,x}$.
\end{lemma}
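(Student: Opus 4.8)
The plan is to establish the density formula \eqref{E:L_density_excursion} directly from the Doob transform description \eqref{E:RN_excursion}, and then read off the time-reversal statement from the symmetry of the right-hand side of \eqref{E:L_density_excursion}.

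First I would compute the joint law of the intermediate positions together with the terminal time. Fix $k \ge 0$ and $0 = t_0 < t_1 < \dots < t_k < t = t_{k+1}$, and apply \eqref{E:RN_excursion} with $T = t_k$ and test function $f$ depending on $(W^{x,y}(t_1), \dots, W^{x,y}(t_k))$; this gives
\[
\E^\#_{x,y}\big[ f(W^{x,y}_{t_1}, \dots, W^{x,y}_{t_k}) \indic{\tau_y > t_k} \big]
= \|x-y\|^{d-2} \, \E_x\big[ f(W_{t_1}, \dots, W_{t_k}) \|W_{t_k}-y\|^{2-d} \big].
\]
Using the Markov property of $W$ and the explicit transition densities $p^{\R^d}$, the right-hand side becomes an integral over $z_1, \dots, z_k$ of $f(z_1, \dots, z_k) \prod_{j=0}^{k-1} p^{\R^d}_{t_{j+1}-t_j}(z_j, z_{j+1}) \cdot \|z_k - y\|^{2-d}$ times $\|x-y\|^{d-2}$. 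To get the joint density including $\tau_y \in \d t$, I would further condition on $W^{x,y}_{t_k} = z_k$ and study the remaining excursion from $z_k$ to $y$: by the strong Markov property of the Doob-transformed process (equivalently, re-applying \eqref{E:RN_excursion} on $[t_k, t]$ started from $z_k$), the density of $\tau_y$ at $t$ equals $\|z_k - y\|^{d-2}$ times the density at time $t - t_k$ that a Brownian motion started at $z_k$ is at $y$, which involves $p^{\R^d}_{t-t_k}(z_k, y)$ — but one must be slightly careful, since what is needed is the density of the \emph{hitting time} of $y$, not of the position. The clean way is to note that $\|W^{x,y}-y\|^2$ is a $\mathrm{BESQ}^{4-d}$ and that the hitting density of $0$ by such a process started at $\|z_k-y\|^2$ has the known expression producing precisely the factor $\|z_k-y\|^{d-2} p^{\R^d}_{t-t_k}(z_k,y)$ after the $\Gamma$-normalisation. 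Collecting the factors, the $\|z_k-y\|^{2-d}$ from the Doob weight cancels against the $\|z_k-y\|^{d-2}$ from the hitting density, leaving exactly $\frac{2\pi^{d/2}}{\Gamma(d/2-1)} \|x-y\|^{d-2} \prod_{j=0}^{k} p^{\R^d}_{t_{j+1}-t_j}(z_j, z_{j+1})$, as claimed (the constant $2\pi^{d/2}/\Gamma(d/2-1) = 1/c_d$ is $G_{\R^d}(x,y)/\|x-y\|^{2-d}$, matching \eqref{E:Green}).

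For the time-reversal statement, observe that the right-hand side of \eqref{E:L_density_excursion} is manifestly invariant under the simultaneous substitution $(x, z_1, \dots, z_k, y) \mapsto (y, z_k, \dots, z_1, x)$ and $(t_1, \dots, t_k) \mapsto (t - t_k, \dots, t - t_1)$, since $p^{\R^d}_s(\cdot,\cdot)$ is symmetric in its spatial arguments and the time increments $t_{j+1}-t_j$ are simply traversed in reverse order, while $\|x-y\|^{d-2}$ is unchanged. Hence the finite-dimensional distributions of $(W^{x,y}(\tau_y - t))_{0 \le t \le \tau_y}$ coincide with those of a $\mu^\#_{y,x}$-excursion; since both are continuous processes of finite (random) duration and finite-dimensional distributions determine laws on $\Pf$, this gives $(W^{x,y}(\tau_y - t))_{0 \le t \le \tau_y} \sim \mu^\#_{y,x}$.

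The main obstacle is the careful handling of the terminal time $\tau_y$: one must justify that the density of the hitting time of $y$ for the excursion process, given the trajectory up to $t_k$, is exactly $\|z_k-y\|^{d-2} p^{\R^d}_{t-t_k}(z_k,y)$ (up to the global constant). I expect the cleanest route is via the $\mathrm{BESQ}^{4-d}$ identity established just before the lemma together with the explicit first-passage density of squared Bessel processes to $0$; alternatively, one can obtain it by a limiting argument, writing $\{\tau_y \in \d t\}$ as a limit of $\{W^{x,y} \text{ enters } B(y,\delta) \text{ in } [t, t+\d t]\}$ as $\delta \to 0$ and using \eqref{E:RN_excursion} on $[t_k, t]$, then passing to the limit. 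Everything else is a routine application of the Markov property and the explicit heat kernel.
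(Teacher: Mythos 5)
Your proof is correct and follows essentially the same route as the paper: both use the Doob-transform expression \eqref{E:RN_excursion} together with the Markov property, obtain the density of $\tau_y$ from the $\mathrm{BESQ}^{4-d}$ first-passage-to-$0$ law (the paper also gives a heat-kernel/integration-by-parts alternative), and deduce the time-reversal identity from the manifest symmetry of the right-hand side of \eqref{E:L_density_excursion}. The only difference is one of ordering -- the paper proves the $k=0$ case first and then remarks that the general case follows ``from \eqref{E:RN_excursion} and Markov's property'', whereas you build in the intermediate positions first and then append the terminal-time density -- which is cosmetic.
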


\begin{proof}[Proof of Lemma \ref{L:density_excursion}]
By the Doob's transform expression \eqref{E:RN_excursion} and dominated convergence theorem, $\tau_y$ has a density with respect to Lebesgue measure on $(0,\infty)$ given by
\begin{align*}
\P(\tau_y \in \d t) = \indic{t>0}
\|x-y\|^{d-2} \int_{\R^d} (\partial_t p^{\R^d}_t(x,z)) \|z-y\|^{2-d} \d z.
\end{align*}
The heat kernel satisfies the heat equation $\partial_t p^{\R^d}_t(x,z) = \frac{1}{2}\Delta_z p^{\R^d}_t(x,z)$.  By an integration by parts, we can make the Laplacian act on $z\mapsto \|z-y\|^{2-d}$ and, recalling that $\Delta_z \|z-y\|^{2-d} = \frac{4\pi^{d/2}}{\Gamma(d/2-1)} \delta_y(z)$, we obtain that
\begin{equation}
\label{E:pf_density_excursion}
\P(\tau_y \in \d t) = \frac{2\pi^{d/2}}{\Gamma(d/2-1)} \|x-y\|^{d-2} p^{\R^d}_t(x,y) \indic{t >0}.
\end{equation}
Alternatively, $\tau_y$ has the law of the first hitting time of the origin of a $(4-d)$-dimensional Bessel process starting at $\|x-y\|$ which has an explicit density (see e.g. \cite[Proposition 2.9]{LawlerBessel}).
\eqref{E:pf_density_excursion} is the $k=0$ case of \eqref{E:L_density_excursion}.
The proof of \eqref{E:L_density_excursion} in the general case then follows from \eqref{E:RN_excursion} and Markov's property.
The invariance under time inversion then follows directly by comparing the finite dimensional marginals of $(W^{x,y}(\tau_y-t))_{0\le t \le \tau_y}$ and $(W^{y,x}(\tau_x))_{0\le t \le \tau_x} \sim \mu^\#_{y,x}$.
\end{proof}

For distinct points $x, y \in \R^d$, we define the measure $\mu_{x,y} = G(x,y) \mu^\#_{x,y}$ with total mass $G(x,y)$. 
We can then define the excursion measure from $y$ to $y$ by
\begin{equation}
\label{E:construction_excursion_yy}
\mu_{y,y} = \lim_{\substack{x \to y\\x \ne y}} \mu_{x,y}.
\end{equation}
The total mass of $\mu_{y,y}$ is infinite and the above limit means that for any $r>0$, if we restrict $\mu_{x,y}$ to the space of trajectories intersecting $\partial B(y,r)$, then the limit exists and agrees with the finite measure $\mu_{y,y}$ restricted to that space.
This convergence is a direct consequence of Lemma \ref{L:density_excursion}. Using this lemma, one can also show that
\begin{equation}\label{E:excursion_bridge}
\mu_{y,y} = \int_0^\infty \frac{1}{(2\pi t)^{d/2}} \P^{y,y,t}.
\end{equation}

We now move to the invariance properties of $\mu_{x,y}$ under the action of conformal maps, focusing on the non trivial case of invariance under inversion.

\begin{notation}\label{N:inversion}
For $R>0$, define the $R$-scaling map
\begin{equation}
\label{E:map_scaling}
\scale_R : \wp \in \Pf \mapsto (R \wp_{t/R^2})_{0 \le t \le T(\wp)} \in \Pf.
\end{equation}
We will denote by $\iota : x \in \R^d \cup \{ \infty \} \mapsto -x / \|x\|^2 \in \R^d \cup \{ \infty \}$ the inversion of $\R^d$, with the convention that $\iota(\infty)=0$ and $\iota(0) = \infty$.
For a path $\wp = (\wp(t))_{0 \le t \le T(\wp)}$, let us denote by $\iota \circ \wp$ the path $(\iota(\wp(\sigma(t)))_{0 \le t \le T(\iota \circ \wp)}$ where
\begin{equation}
\label{E:iota_circ_wp}
\sigma(t) = \inf \Big\{ s>0: \int_0^s \|\wp(u)\|^{-4} \d u \ge t \Big\}
\quad \text{and} \quad T(\iota \circ \wp) = \int_0^{T(\wp)} \|\wp(u)\|^{-4} \d u.
\end{equation}
For any measure $\mu$ on paths, we will denote by $\iota \circ \mu$ the pushforward of $\mu$.
\end{notation}

One can directly check the following scaling property: for any points $x,y \in \R^d$ (distinct or not),
\begin{equation}
\label{E:scaling_excursion_whole}
\scale_R \circ \mu_{x,y} = R^{d-2} \mu_{Rx,Ry}.
\end{equation}

\begin{lemma}[Invariance under inversion]\label{L:inversion_excursion}
For any $x \in \R^d \setminus \{0\}$ and $y \in \R^d\cup\{\infty\} \setminus \{x\}$, $\iota \circ \mu_{x,y}^\# = \mu_{\iota(x), \iota(y)}^\#$
and $\iota \circ \mu_{x,x} = \| \iota(x) \|^{2d-4} \mu_{\iota(x),\iota(x)}.$
\end{lemma}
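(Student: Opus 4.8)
The plan is to reduce everything to the SDE/Doob-transform description of $\mu^\#_{x,y}$ and a direct Itô-type computation showing that the inversion map $\iota$ sends the generator of $W^{x,y}$ (the $h$-transform of Brownian motion by $\|\cdot - y\|^{2-d}$) to the generator of $W^{\iota(x),\iota(y)}$ after the time change \eqref{E:iota_circ_wp}. Concretely, I would first record the key algebraic identity for the inversion: for $u,v \in \R^d\setminus\{0\}$,
\begin{equation}
\label{E:inv_dist}
\|\iota(u) - \iota(v)\| = \frac{\|u-v\|}{\|u\|\,\|v\|}.
\end{equation}
This immediately gives the harmonic-function transformation law $\|\iota(u)-\iota(v)\|^{2-d} = \|u\|^{d-2}\|v\|^{d-2}\|u-v\|^{2-d}$. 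The second, more standard fact I would invoke is that Brownian motion is conformally invariant up to a time change: if $W$ is a BM and $\iota$ the inversion, then $\iota\circ W$ (with the time change $\sigma$ of \eqref{E:iota_circ_wp}, whose Radon--Nikodym derivative $\d\sigma(t)/\d t = \|W(\sigma(t))\|^4$ comes precisely because $\iota$ has conformal factor $\|x\|^{-2}$, squared in the quadratic variation) is again a Brownian motion, stopped when it hits $\infty$ (i.e. run for infinite time if $W$ never hits $0$). This is classical in $d=2$ and holds verbatim in $d\ge 3$ for the time-changed process; I would either cite it or give the one-line Itô/Lévy argument, since it is exactly the $y=\infty$, ``$h\equiv 1$'' case of what I am proving.

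Next I would prove the $\mu^\#$ statement. Using the Doob-transform characterisation \eqref{E:RN_excursion}: $\mu^\#_{x,y}$ is BM from $x$ weighted by the martingale $M_t = \|W(t)-y\|^{2-d}/\|x-y\|^{2-d}$, stopped at $\tau_y$ (when $y=\infty$ this martingale is constant $1$, consistent with the convention $\mu^\#_{x,\infty}=$ BM). Pushing forward by $\iota$ and applying the time change, the BM part becomes BM from $\iota(x)$ by the classical fact above; and the martingale weight transforms, via \eqref{E:inv_dist}, into
\[
M_t = \frac{\|W(t)-y\|^{2-d}}{\|x-y\|^{2-d}} = \frac{\|\iota(W(t))-\iota(y)\|^{2-d}}{\|\iota(x)-\iota(y)\|^{2-d}}\cdot\frac{\|x\|^{d-2}}{\|W(t)\|^{d-2}},
\]
and the crucial point is that the extra factor $\|x\|^{d-2}/\|W(t)\|^{d-2}$ is exactly the Radon--Nikodym derivative (up to a deterministic constant) between BM from $x$ and its image under $\iota$-plus-time-change seen as a measure on the path run up to a hitting time — equivalently, $\|W(t)\|^{2-d}$ is itself a local martingale, and the time change absorbs it. So the weighted-and-pushed-forward measure is exactly BM from $\iota(x)$ weighted by $\|\cdot - \iota(y)\|^{2-d}/\|\iota(x)-\iota(y)\|^{2-d}$ and stopped at the hitting time of $\iota(y)$, i.e. $\mu^\#_{\iota(x),\iota(y)}$. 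The cases $y\in\R^d\setminus\{x\}$, $y=\infty$ (so $\iota(y)=0$), and $y=0$ excluded by hypothesis are all covered by the same computation with the obvious reading of the conventions. As an alternative (and a useful cross-check) I would note that one can instead verify the claim on finite-dimensional marginals using the explicit density \eqref{E:L_density_excursion}, together with the Jacobian $|\det D\iota(z)| = \|z\|^{-2d}$ of the inversion and the heat-kernel transformation under $\iota$ with the time change; this is the more computational but entirely mechanical route.

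Finally, for the $\mu_{x,x}$ statement I would combine the $\mu^\#$ result with the normalisation $\mu_{x,y}=G(x,y)\mu^\#_{x,y} = c_d\|x-y\|^{2-d}\mu^\#_{x,y}$ and pass to the limit \eqref{E:construction_excursion_yy} defining $\mu_{y,y}$. For $x'\ne x$ both nonzero, $\iota\circ\mu_{x',x} = c_d\|x'-x\|^{2-d}\,\mu^\#_{\iota(x'),\iota(x)}$; rewriting $\|x'-x\|^{2-d} = \|x'\|^{d-2}\|x\|^{d-2}\|\iota(x')-\iota(x)\|^{2-d}$ via \eqref{E:inv_dist} gives $\iota\circ\mu_{x',x} = \|x'\|^{d-2}\|x\|^{d-2}\,\mu_{\iota(x'),\iota(x)}$. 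Letting $x'\to x$ and using that $\iota$ is continuous and bijective on $\R^d\setminus\{0\}$ so that it commutes with the restriction-to-$\{$paths hitting $\partial B(\iota(x),r)\}$ procedure in \eqref{E:construction_excursion_yy}, the prefactor converges to $\|x\|^{2d-4}$ and we get $\iota\circ\mu_{x,x} = \|x\|^{2d-4}\mu_{\iota(x),\iota(x)} = \|\iota(x)\|^{2d-4}\mu_{\iota(x),\iota(x)}$ after noting $\|x\| = \|\iota(x)\|^{-1}$... wait — more carefully, $\|x\|^{2d-4} = \|\iota(x)\|^{-(2d-4)}$, so the stated exponent $\|\iota(x)\|^{2d-4}$ requires $\|x\|^{2d-4} = \|\iota(x)\|^{2d-4}$, which forces me to recheck: since $\|\iota(x)\|=1/\|x\|$, the correct prefactor in terms of $\iota(x)$ is $\|\iota(x)\|^{-(2d-4)}=\|\iota(x)\|^{4-2d}$; I would therefore double-check the exponent against the $d=2$ reference \cite{Lawler04} and against the scaling relation \eqref{E:scaling_excursion_whole}, and present the limit argument so that the bookkeeping of $\|x\|$ versus $\|\iota(x)\|$ is explicit.

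The main obstacle I expect is precisely this: not the conceptual content (conformal invariance of BM plus the $h$-transform algebra), but the careful accounting of the time change's Radon--Nikodym factor and the powers of $\|x\|$, especially handling the $y=\infty$ endpoint and making sure the total-mass normalisations in $\mu_{x,y}=G(x,y)\mu^\#_{x,y}$ and in the limit defining $\mu_{y,y}$ are tracked consistently so that the final exponent matches the statement.
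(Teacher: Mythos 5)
Your overall route—reduce to the Doob $h$-transform description \eqref{E:RN_excursion}, use the algebraic identity $\|\iota(u)-\iota(v)\|=\|u-v\|/(\|u\|\|v\|)$ to track how the $h$-function transforms, and push through the total-mass normalisation—is genuinely different from the paper's proof, which works directly at the SDE level via Itô's formula and Lévy's characterisation. If set up correctly, your factorisation of the Radon--Nikodym density would indeed avoid the lengthy drift computation that the paper carries out. However, there is a real gap in the premise on which the whole argument rests, plus a recoverable algebra error at the end.

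The gap: the ``classical fact'' you invoke -- that for $W$ a $d$-dimensional BM with $d\ge3$, the process $\iota\circ W$ (time-changed as in \eqref{E:iota_circ_wp}) is again a Brownian motion run for infinite time -- is false. The inversion $\iota$ is conformal for $d\ge 3$, but it is \emph{not} harmonic component-wise when $d\ge 3$ (a direct computation gives $\Delta(x/\|x\|^2)_i = (4-2d)\,x_i/\|x\|^4 \ne 0$), so the time change alone does not remove the drift, unlike in $d=2$. In fact, because BM in $\R^d$, $d\ge 3$, is transient with $\|W(t)\|\to\infty$, the duration $T(\iota\circ\wp)=\int_0^\infty\|W(u)\|^{-4}\,\d u$ is a.s.\ \emph{finite} and $\iota(W(\sigma(t)))\to 0$ as $t\uparrow T(\iota\circ\wp)$: the inverted path is a process of finite lifetime ending at the origin. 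That is exactly the measure $\mu^\#_{\iota(x),0}$ -- i.e.\ the $y=\infty$ case of the very lemma you are proving. So the premise both contradicts the lemma's statement and begs the question: establishing it is most of the work, and requires the Itô computation (or a citation, e.g.\ the reference on inversion-invariance of BM the paper mentions in its remark) rather than being ``$h\equiv 1$ and trivial''. Once $\iota\circ\text{BM}_x = \mu^\#_{\iota(x),0}$ is in hand, your factorisation $h_y(\iota(w))\,h_0(w)=\|\iota(y)\|^{d-2}\,h_{\iota(y)}(w)$ does close the argument cleanly, but that input must be proved rather than asserted.

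The algebra slip you half-caught at the end: from \eqref{E:inv_dist} one gets $\|x'-x\|^{2-d} = \bigl(\|x'\|\|x\|\bigr)^{2-d}\,\|\iota(x')-\iota(x)\|^{2-d}$, not the reciprocal exponent $\bigl(\|x'\|\|x\|\bigr)^{d-2}$ that you wrote. Correcting this, the prefactor in the limit $x'\to x$ is $\|x\|^{2(2-d)}=\|x\|^{4-2d}=\|\iota(x)\|^{2d-4}$, which matches the paper's statement and is also consistent with the scaling relation \eqref{E:scaling_excursion_whole} viewing $\iota$ locally at $x$ as a rigid motion composed with a dilation by $\|x\|^{-2}$. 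So the exponent in the lemma is right; the inconsistency was in your intermediate algebra, not the statement.
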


\begin{proof}[Proof of Lemma \ref{L:inversion_excursion}]
Let $x \in \R^d \setminus \{0\}$ and $y \in \R^d \setminus \{0\}$ (the cases $y=0$ and $y=\infty$ are similar). Let $(W^{x,y}(t))_{0 \le t \le \tau_y} \sim \mu_{x,y}^\#$ and define for all $t \in [0,\tau_y]$, $\tilde{W}^{x,y}(t) = - W^{x,y}(t)/\|W^{x,y}(t)\|^2$ and the time change
\[
\sigma(t) = \inf \Big\{ s>0: \int_0^s \|W^{x,y}(u)\|^{-4} \d u \ge t \Big\}. 
\]
An elementary computation and an application of Itô's formula show that $\tilde{W}^{x,y}$ satisfies
\begin{align*}
&\d \tilde{W}^{x,y}(t) = - \frac{1}{\|W^{x,y}(t)\|^2} \d W^{x,y}(t) + 2 \frac{W^{x,y}(t)}{\|W^{x,y}(t)\|^4} \sum_{j=1}^d W^{x,y}_j(t) \d W^{x,y}_j(t) + (d-2)\frac{W^{x,y}(t)}{\|W^{x,y}(t)\|^4} \d t \\
& = \hspace{-.45pt} \d Z_t \hspace{-.45pt} + \hspace{-.45pt} \Big( \frac{\|W^{x,y}(t)\|^2(W^{x,y}(t)-y)}{\|W^{x,y}_t-y\|^2} - 2 W^{x,y}(t) \sum_{j=1}^d \hspace{-.45pt} \frac{W^{x,y}_j(t)(W^{x,y}_j(t)-y_j)}{\|W^{x,y}_t-y\|^2} + W^{x,y}(t) \Big) \frac{(d-2)\d t}{\|W^{x,y}(t)\|^4}
\end{align*}
\[
\text{where} \hspace{60pt}
\d Z(t) = - \frac{1}{\|W^{x,y}(t)\|^2} \d W(t) + 2 \frac{W^{x,y}(t)}{\|W^{x,y}(t)\|^4} \sum_{j=1}^d W^{x,y}_j(t) \d W_j(t). \hspace{65pt}
\]
A lengthy calculation shows that the $\d t$-term can be rewritten as
\[
-\frac{(d-2)}{\|\tilde{W}^{x,y}(t) - \iota(y)\|^2} (\tilde{W}^{x,y}(t) - \iota(y)) \|\tilde{W}^{x,y}(t)\|^4 \d t.
\]
On the other hand,
$Z$ is a continuous local martingale and its martingale bracket equals
\[
\langle Z_j,Z_k \rangle_t = \delta_{j,k} \int_0^t \|W^{x,y}(s)\|^{-4} \d s = \delta_{j,k} \sigma^{-1}(t).
\]
By Lévy's characterisation, there exists a Brownian motion $\tilde{W}$ such that $Z_t = \tilde{W}_{\sigma^{-1}(t)}$. Altogether, we have
\[
\d \tilde{W}^{x,y}(t) = \d \tilde{W}(\sigma^{-1}(t)) -\frac{(d-2)}{\|\tilde{W}^{x,y}(t) - \iota(y)\|^2} (\tilde{W}^{x,y}(t) - \iota(y)) \|\tilde{W}^{x,y}(t)\|^4 \d t
\]
and thus
\[
\d \tilde{W}^{x,y}(\sigma(t)) = \d \tilde{W}(t) -\frac{(d-2)}{\|\tilde{W}^{x,y}(\sigma(t)) - \iota(y)\|^2} (\tilde{W}^{x,y}(\sigma(t)) - \iota(y)) \d t,
\]
i.e. $(\tilde{W}^{x,y}(\sigma(t)))_t$ follows the law $\mu_{\iota(x), \iota(y)}^\#$.
This concludes the proof of the invariance of $\mu^\#_{x,y}$ under inversion.

The proof that  $\iota \circ \mu_{x,x} = \| \iota(x) \|^{2d-4} \mu_{\iota(x),\iota(x)}$ then follows directly from \eqref{E:construction_excursion_yy} and from the fact that $
\lim_{y \to x} G(x,y) / G(\iota(x),\iota(y)) = \| \iota(x) \|^{2d-4}.
$
\end{proof}

\begin{remark}
Invariance of Brownian motion under inversion for $d \ge 3$ is not well known, but is not new; see \cite{zbMATH03886845}.
We can also mention a related well-known result on harmonic functions (see e.g. \cite[Section~3.1, Problem~15]{MR3410783}): if $\Omega$ is an open set of $\R^d \setminus \{0\}$, $\widetilde{\Omega} = \{ x/\|x\|^2: x \in \Omega \}$ and $u$ is harmonic in $\Omega$, then
\begin{equation}
\label{E:Kelvin}
x \in \widetilde{\Omega} \mapsto \|x\|^{2-d} u(x/\|x\|^2)
\end{equation}
is harmonic in $\widetilde{\Omega}$. In fact, the map \eqref{E:Kelvin} is called the Kelvin transform of $u$.
\end{remark}

\subsection{Boundary excursion and bubble measures}\label{SS:excursion_domain}

We have defined Brownian excursion measures $\mu_{x,y}$ \eqref{E:construction_excursion_yy}, for distinct or same starting and ending points $x$ and $y$, in the whole space $\R^d$. We will now define excursion measures in some open domain $D$ of $\R^d$, bulk and boundary versions.

\smallskip
\noindent\textbf{Bulk-to-bulk.}
The bulk-to-bulk measure is simply defined by restricting the whole-space measure: for all $x,y \in D$, define
\[
\mu_{x,y}^D = \indic{\wp \cap D^c = \varnothing} \mu_{x,y}(\d \wp).
\]
Using Lemma \ref{L:density_excursion}, one can show that the total mass of $\mu_{x,y}^D$ is given by $G_D(x,y)$.

To define boundary versions, we now need to assume that $\partial D$ is smooth in the same sense as in \eqref{E:regularity_D}.

\smallskip
\noindent\textbf{Bulk-to-boundary.}
Let $x \in D$ and $y \in \partial D$. Denote by $n_y$ the inward normal vector at $y$. 
Let $T>0$ and $F:\Pf \to [0,\infty)$ be a nonnegative measurable function. By \eqref{E:RN_excursion}, for any $\eps >0$,
\begin{align*}
\frac{1}{2\eps} \int \mu_{x,y+\eps n_y}^D(\d \wp) F(\wp) \indic{T(\wp) > T}
= \frac{1}{2\eps} \E_x [ F((\wp(t))_{0 \le t \le T}) \indic{\forall t \in [0,T], \wp(t) \in D} G_D(\wp(T),y+\eps n_y) ].
\end{align*}
By \eqref{E:Poisson_Green}, $G_D(\wp(T),y+\eps n_y)/(2\eps) \to H_D(\wp(T),y)$ as $\eps \to 0$ and we deduce that the following weak limit exists:
\begin{equation}
\label{E:excursion_bulk_boundary}
\mu_{x,y}^D := \lim_{\eps \to 0} \frac{1}{2\eps} \mu_{x,y+\eps n_y}^D,
\end{equation}
where the measure $\mu_{x,y}^D$ is characterised by:
for any $T>0$ and nonnegative measurable function $F:\Pf \to [0,\infty)$,
\begin{align*}
\int \mu_{x,y}^D(\d \wp) F((\wp(t))_{0 \le t \le T}) \indic{T(\wp) > T} = \E_x \Big[ F((\wp(t))_{0 \le t \le T}) \indic{\forall t \in [0,T], \wp(t) \in D} H_D(\wp(T),y) \Big].
\end{align*}
The total mass of $\mu_{x,y}^D$ is given by the Poisson kernel $H_D(x,y)$.

\smallskip
\noindent\textbf{Boundary-to-boundary.}
Let $x, y \in \partial D$ be distinct points. The boundary-to-boundary excursion measure is defined by
\begin{equation}
\mu_{x,y}^D
= \lim_{\eps \to 0} \frac{1}{\eps} \mu_{x + \eps n_x,y}
= \lim_{\eps \to 0} \frac{1}{2\eps^2} \mu_{x+\eps n_x, y+\eps n_y}.
\end{equation}
The existence of this limit can be justified in a similar way as for \eqref{E:excursion_bulk_boundary}.
By \eqref{E:Poisson_boundary}, the total mass of $\mu_{x,y}^D$ is given by the boundary Poisson kernel $H_D(x,y)$.

\smallskip
\noindent\textbf{Bubble measure.}
Finally, for $x \in \partial D$, we can define the bubble measure rooted at $x$ by
\begin{equation}\label{E:def_bubmeasure}
\mu^{\bub,D}_x = \lim_{\substack{y \to x\\y \in \partial D}} \mu_{x,y}^D.
\end{equation}
The total mass of $\mu^{\bub,D}_x$ is infinite and the above limit means that for any $r>0$, if we restrict $\mu_{x,y}^D$ to the space of trajectories intersecting $\partial B(x,r)$, then the limit exists and agrees with the finite measure $\mu^{\bub,D}_x$ restricted to that space.

The measures defined in this section satisfy many properties inherited from the properties of the whole-space excursions described in Section \ref{SS:excursions}. For future reference, we mention the following scaling property: recalling the definition \eqref{E:map_scaling} of the $R$-scaling map $\scale_R$ and denoting $\mathbb{H}^d = \{ (t,x) \in \R^d : t>0, x \in \R^{d-1} \}$, one has  
\begin{equation}
\label{E:scaling_excursion_bubble}
\scale_R *\mu^{\bub,\mathbb{H}^d}_0 = R^d \mu^{\bub,\mathbb{H}^d}_0
\quad \text{and} \quad
\scale_R *\mu^{\bub,\R^d \setminus B(0,1)}_{\omega} = R^d \mu^{\bub,\R^d \setminus B(0,R)}_{R\omega}, \quad \omega \in \S^{d-1}.
\end{equation}

\section{Brownian loop measure}\label{S:BLM}

This section studies basic properties of the Brownian loop measure in any dimension $d \ge 3$. Section~\ref{SS:BLM1} studies the invariance under inversion of $\loopmeasure$. Sections \ref{SS:BLM2} and \ref{SS:BLM3} provide two decompositions of $\loopmeasure$ by rerooting the loop in two different specific ways. Finally, Section \ref{SS:BLM4} describes $\loopmeasure$ restricting to loops which cross a given spherical shell.

\subsection{Invariance under inversion}\label{SS:BLM1}

Recall that in Notation \ref{N:inversion} we introduced the inversion map $\iota$. In this section, we show that the Brownian loop measure is invariant under $\iota$:

\begin{lemma}\label{L:inversion_loop}
As measures on the space $[\Pf]$ of unrooted loops, we have
$\iota \circ \loopmeasure = \loopmeasure$.
\end{lemma}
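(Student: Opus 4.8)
The plan is to reduce the statement about the loop measure $\loopmeasure$ to the already-established invariance under inversion of the excursion measures $\mu_{x,x}$ (Lemma~\ref{L:inversion_excursion}). The key observation is that $\loopmeasure$ can be disintegrated over the starting/ending point of a rerooting of the loop: combining \eqref{E:loopmeasure} with \eqref{E:excursion_bridge}, we have
\begin{equation*}
\loopmeasure = \int_{\R^d} \d x \int_0^\infty \frac{\d t}{t}\frac{1}{(2\pi t)^{d/2}} \P^{x,x;t} = \int_{\R^d} \frac{\d x}{?}\,\mu_{x,x},
\end{equation*}
but one must be careful: $\mu_{x,x} = \int_0^\infty (2\pi t)^{-d/2}\P^{x,x;t}$ carries no factor of $1/t$, so the loop measure is rather obtained from the collection $\{\mu_{x,x}\}_x$ by first weighting each bridge of duration $t$ by $1/t$. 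The cleanest route is to reroot loops by time-length: sample $x$ uniformly in $\R^d$, a loop from $\mu_{x,x}$, and then — since a loop of time-length $t$ has $t$ ``worth'' of rooting points but we have integrated $\d x$ once — reroot at a uniform point of the loop. This is exactly the standard relation, in any dimension, between the (rooted) measure $\int \d x\, \mu_{x,x}$ and the unrooted loop measure with its $1/t$ factor: forgetting the root of $\int_{\R^d}\d x\,\mu_{x,x}$ and dividing by time-length gives $\loopmeasure$ on $[\Pf]$. I would state this identity as a short lemma (or recall it from the literature, e.g.\ \cite{Lawler04, aidekon2023multiplicative}) and then verify the invariance of each ingredient under $\iota$.

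So the steps are: (1) Write $\loopmeasure$ on $[\Pf]$ as the image, under ``forget the root and divide by $T(\wp)$'', of the rooted measure $\nu := \int_{\R^d}\d x\,\mu_{x,x}$. (2) Show $\iota \circ \nu = \nu$: by Lemma~\ref{L:inversion_excursion}, $\iota \circ \mu_{x,x} = \|\iota(x)\|^{2d-4}\mu_{\iota(x),\iota(x)}$, so
\begin{equation*}
\iota \circ \nu = \int_{\R^d} \d x\, \|\iota(x)\|^{2d-4} \mu_{\iota(x),\iota(x)} = \int_{\R^d} \d x\, \|x\|^{-(2d-4)} \mu_{\iota(x),\iota(x)};
\end{equation*}
now change variables $y = \iota(x) = -x/\|x\|^2$, whose Jacobian is $\|x\|^{-2d}$, i.e. $\d x = \|y\|^{-2d}\,\d y$ and $\|x\| = \|y\|^{-1}$, so that $\|x\|^{-(2d-4)}\,\d x = \|y\|^{2d-4}\cdot\|y\|^{-2d}\,\d y = \|y\|^{-4}\,\d y$. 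Hence $\iota \circ \nu = \int_{\R^d} \|y\|^{-4}\,\d y\,\mu_{y,y}$, which is \emph{not} $\nu$ on the nose — but the discrepancy $\|y\|^{-4}$ is precisely the local change of time-length induced by $\iota$: from \eqref{E:iota_circ_wp}, $T(\iota\circ\wp) = \int_0^{T(\wp)}\|\wp(u)\|^{-4}\,\d u$, and for the purpose of the ``divide by time-length, forget the root'' operation, an excursion $\mu_{y,y}$ pushed forward by $\iota$ contributes with its time-length rescaled pointwise by $\|\cdot\|^{-4}$ near $y$. (3) Track this factor carefully through the rerooting/time-normalisation: the $1/t$ weight in $\loopmeasure$ interacts with the local dilation $\|\wp(u)\|^{-4}$ of the clock so that the extra $\|y\|^{-4}$ appearing in $\iota\circ\nu$ is exactly absorbed, leaving $\loopmeasure$ invariant. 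Concretely, I would carry this out at the level of the basic building block: compare the contributions to $\iota\circ\loopmeasure$ and $\loopmeasure$ of loops rerooted at a given point, using that under $\iota$ both the spatial part (excursion from the root back to the root) and the time-length transform consistently, and the Jacobian computation above shows the measures match.

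The main obstacle is \textbf{bookkeeping of the time-length factor}: $\iota$ is not a time-preserving transformation — it reparametrises the clock by $\d(\text{new time}) = \|\wp(u)\|^{-4}\,\d(\text{old time})$ — so the only subtlety is to check that this reparametrisation interacts correctly with (a) the $1/t$ weight in the definition \eqref{E:loopmeasure}, and (b) the uniform-rerooting normalisation that relates $\int\d x\,\mu_{x,x}$ to $\loopmeasure$. I expect these two effects to combine so that the surviving Jacobian is trivial, which is morally why scale-invariance of $\loopmeasure$ (the $t^{-1-d/2}$ power) is the right normalisation — but making this rigorous requires either a clean abstract statement (loop measure as ``de-rooted, time-normalised'' excursion measure, and both operations commute with $\iota$ up to the matching Jacobians) or an explicit computation with the finite-dimensional marginals of $\P^{x,x;t}$ transformed by $\iota$, analogous to the computation done in the proof of Lemma~\ref{L:inversion_excursion}. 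I would favour the abstract route: prove once that de-rooting and dividing by $T$ commutes with pushforward by any smooth conformal map up to the map's time-change Jacobian, then feed in Lemma~\ref{L:inversion_excursion}.
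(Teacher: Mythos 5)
Your approach is essentially the paper's: both reduce $\loopmeasure$ to the excursion measures $\mu_{x,x}$ via \eqref{E:loop_measure_excursion}, feed in Lemma~\ref{L:inversion_excursion}, and track the spatial Jacobian $\|x\|^{-2d}$, the mass factor $\|\iota(x)\|^{2d-4}$, and the time-dilation $T(\iota\circ\wp)$ until they cancel. The device that makes the ``abstract route'' you flag completely rigorous is the \emph{unit weight} trick of \cite{Lawler04}, which the paper uses: on the space $[\Pf]$ of unrooted loops, $\loopmeasure = \int\d x\,U(\wp)\,\mu_{x,x}(\d\wp)$ for \emph{any} measurable $U$ with $\int_0^{T(\wp)}U(\theta_t\circ\wp)\,\d t = 1$, not only for $U_0=1/T$; choosing $U(\wp)=\|\wp(0)\|^{-4}/T(\iota\circ\wp)$ — which is a unit weight since $T(\iota\circ\theta_t\circ\wp)=T(\iota\circ\wp)$ — makes the push-forward close in one line and absorbs exactly the $\|y\|^{-4}$ factor and the $T(\iota\circ\wp)/T(\wp)$ discrepancy you identified as the ``main obstacle.''
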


We will be able to use the results derived in Section \ref{SS:excursions} since, by combining \eqref{E:loopmeasure} and \eqref{E:excursion_bridge}, we see that the Brownian loop measure $\loopmeasure$ is related to the excursion measures $\mu_{x,x}$ by
\begin{equation}\label{E:loop_measure_excursion}
\loopmeasure= \int_{\R^3} \d x \, \frac{1}{T(\wp)} \mu_{x,x}(\d \wp).
\end{equation}

\begin{proof}[Proof of Lemma \ref{L:inversion_loop}]
We follow the same approach as in \cite{Lawler04} and say that a measurable function $U : \Pf \to (0,\infty)$ is a \textit{unit weight} if for all $\wp \in \Pf$,
\[
\int_0^{T(\wp)} U(\theta_t \circ \wp) \d t = 1
\]
where $\theta_t \circ \wp$ denotes the time shift of $\wp$ by $t$. A canonical example of unit weight is given by $U_0(\wp) = 1/T(\wp)$ and we can rephrase \eqref{E:loop_measure_excursion} as
\[
\loopmeasure = \int_{\R^d} \d x \, U_0(\wp) \mu_{x,x}(\d \wp).
\]
Since we view $\loopmeasure$ as a measure on the space $[\Pf]$ of unrooted loops, the above equality also holds for any unit weight $U$ instead of $U_0$. Here, we will consider 
$U(\wp) = \| \wp(0) \|^{-4} / T(\iota\circ \wp)$ which is a unit weight by definition \eqref{E:iota_circ_wp} of $\iota \circ \wp$. We thus have
\[
\loopmeasure = \int_{\R^d} \d x \, \|x\|^{-4} \frac{1}{T (\iota \circ \wp)} \mu_{x,x}(\d \wp)
\]
and, by Lemma \ref{L:inversion_excursion}, we deduce that
\begin{align*}
\iota \circ \loopmeasure = \int_{\R^d} \d x \, \|x\|^{-4} \frac{1}{T (\iota \circ \wp)} \iota \circ \mu_{x,x}(\d \wp)
= \int_{\R^d} \d x \, \|x\|^{-4} \|x\|^{4-2d} \frac{1}{T (\wp)} \mu_{\iota(x),\iota(x)}(\d \wp).
\end{align*}
With a change of variable $\tilde{x} = \iota(x)$ (the Jacobian cancels out the $\|x\|^{-2d}$ term), we get
\[
\iota \circ \loopmeasure = \int_{\R^d} \d \tilde x \, \frac{1}{T (\wp)} \mu_{\tilde x, \tilde x}(\d \wp) = \loopmeasure.
\]
\end{proof}

\subsection{Decomposition of Brownian loop measure}\label{SS:BLM2}

The following result is the main result of this section. It gives a decomposition of the $\loopmeasure$-measure by re-rooting the loop at its point whose distance to the origin (or any other fixed point by translation invariance) is minimal. This will be an effective tool for subsequent computations.

\begin{proposition}\label{P:decompo_loopmeasure}
As measures on the space $[\Pf]$ of unrooted loops, we have
\begin{equation}\label{E:P_decompo_loopmeasure}
\loopmeasure = \int_0^\infty \d a ~a^{d-1} \int_{\S^{d-1}} m_{\S^{d-1}}(\d \omega) \mu^{\bub,\R^d \setminus B(0,a)}_{a\omega}.
\end{equation}
\end{proposition}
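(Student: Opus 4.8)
\emph{Reformulation.} Rewriting the right-hand side of \eqref{E:P_decompo_loopmeasure} in Cartesian rather than polar coordinates — under $z = a\omega$ one has $\d z = a^{d-1}\,\d a\,m_{\S^{d-1}}(\d\omega)$ — the identity to prove becomes
\[
\loopmeasure = \int_{\R^d} \d z\, \mu^{\bub,\R^d\setminus B(0,\|z\|)}_z,
\]
each exterior bubble being rooted at $z \in \partial B(0,\|z\|)$ and, as throughout the paper, inducing an unrooted loop. This is the analogue of \eqref{E:loop_measure_excursion}: there the loop is rooted at an arbitrary point $x$ with weight $1/T(\wp)$; here it will be rooted at its point closest to the origin, with weight $1$. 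The first step is to check that for $\loopmeasure$-almost every loop $\wp$ the map $t \mapsto \|\wp(t)\|$ attains its minimum $\rad(\wp)$ at a single time: loops through the origin are $\loopmeasure$-negligible, and away from the origin one argues on time intervals bounded away from $0$ and $T(\wp)$, where $\mu_{x,x}$ is absolutely continuous with respect to Brownian bridge and the radial part inherits the almost-sure uniqueness of local minima of Brownian motion (alternatively, one reads this off the $(4-d)$-dimensional squared Bessel description of Section~\ref{SS:excursions}). This lets us canonically reroot a $\loopmeasure$-typical loop at its closest point to the origin; writing $\wt\loopmeasure$ for the push-forward of $\loopmeasure$ under this rerooting, the claim becomes the equality of $\wt\loopmeasure$ with $\int_{\R^d}\d z\,\mu^{\bub,\R^d\setminus B(0,\|z\|)}_z$ as measures on \emph{rooted} loops, both carried by loops with $\|\wp(0)\| = \min_t\|\wp(t)\|$.

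\emph{The disintegration via a unit weight.} Recall from the proof of Lemma~\ref{L:inversion_loop} that $\loopmeasure = \int_{\R^d}\d x\, U(\wp)\,\mu_{x,x}(\d\wp)$ for every unit weight $U$. The plan is to apply this with a weight localising the root near the point closest to the origin: for $\eps>0$, set $\ell_\eps(\wp) = \mathrm{Leb}\{t : \|\wp(t)\|^2 < \rad(\wp)^2 + \eps\}$ and $U_\eps(\wp) = \ell_\eps(\wp)^{-1}\indic{\|\wp(0)\|^2 < \rad(\wp)^2+\eps}$ (this is a unit weight, since $\rad$ and $\ell_\eps$ are rerooting-invariant). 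Pushing forward under the canonical rerooting gives an expression for $\wt\loopmeasure$ in which, as $\eps\downarrow 0$, the root $x = \wp(0)$ is forced onto the closest point $z$, near which the loop performs a short dip toward $\partial B(0,\|z\|)$ and then stays outside $B(0,\|z\|)$. I would pass to the limit using the explicit heat-kernel density of $\mu_{x,x}$ (equivalently Lemma~\ref{L:density_excursion}), together with the Markov property of the bridge at the time of closest approach and the hitting/occupation statistics of the radial squared Bessel process, and check that $\int\d x$ over the thin shell $\{\|x\|^2<\rad(\wp)^2+\eps\}$, renormalised by $\ell_\eps^{-1}$, converges to $\int_{\R^d}\d z$ tested against $\mu^{\bub,\R^d\setminus B(0,\|z\|)}_z$, with constant exactly $1$. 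Here the two nested renormalised limits defining the exterior bubble, $\mu^{\bub,D}_z = \lim_{y\to z,\,y\in\partial D}\mu^D_{z,y}$ and $\mu^D_{z,y} = \lim_{\delta\to 0}(2\delta^2)^{-1}\mu^D_{z+\delta n_z,\,y+\delta n_y}$ (Section~\ref{SS:excursion_domain}), must be matched against the single limit $\eps\to 0$.

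\emph{Symmetry reduction and the main obstacle.} It is convenient to exploit symmetry first: $\scale_R\ast\loopmeasure = \loopmeasure$, rotation invariance, the compatibility of the rerooting with scalings and rotations, and \eqref{E:scaling_excursion_bubble} together show that the disintegration of $\wt\loopmeasure$ over the root position $z=\wp(0)$ has base measure a constant multiple of $\d z$ and $\sigma$-finite conditional kernels that are scalings and rotations of a single kernel at $z=\mathbf{e}_1$; by \eqref{E:scaling_excursion_bubble} the same holds for $\int_{\R^d}\d z\,\mu^{\bub,\R^d\setminus B(0,\|z\|)}_z$. Matching then reduces to two one-parameter statements: identifying the conditional kernel at $\mathbf{e}_1$ with $\mu^{\bub,\R^d\setminus B(0,1)}_{\mathbf{e}_1}$ up to a scalar, via a Williams-type decomposition of $\mu_{x,x}$ at the minimum of its radial part (using the Bessel description and the time-reversal of Lemma~\ref{L:density_excursion}); and pinning that scalar down to $1$ by one explicit computation, e.g. comparing the masses of both sides on loops of duration in a fixed interval that just graze $\partial B(0,1)$, or comparing with \eqref{E:loop_measure_excursion}. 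The genuine difficulty is precisely this renormalisation: showing that the ``barely touches $\partial B(0,a)$ from outside'' intensity equals the bare surface measure with no extra constant, and that the attached loop is exactly the exterior bubble. The scaling reduction turns this into a one-parameter computation but does not make it routine.
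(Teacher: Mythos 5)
Your route is genuinely different from the paper's. The paper disintegrates through a skew-product reduction to one dimension: Lemma~\ref{L:skew_loopmeasure} expresses $\loopmeasure_{\R^d}$ in terms of the $d$-dimensional Bessel loop measure and an angular Brownian bridge in $\S^{d-1}$, then invokes the one-dimensional decomposition of $\loopmeasure_{\Bes_d}$ at the minimum from \cite{Lupu18} (Proposition~\ref{P:titus}), and finally reassembles the $\R^d$-bubble measure through its own skew-product representation (Lemma~\ref{L:skew_bubble}). What you propose instead is a direct argument carried out in $\R^d$: start from the unit-weight identity $\loopmeasure = \int_{\R^d}\d x\, U(\wp)\,\mu_{x,x}(\d\wp)$ with a weight $U_\eps$ concentrating the root near the radial minimum, and pass to the limit $\eps\downarrow 0$ to land on the surface disintegration. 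The ingredients you assemble --- a.s.\ uniqueness of the radial-minimum time, a Williams-type split of $\mu_{x,x}$ at that time, time-reversal from Lemma~\ref{L:density_excursion}, and the scaling/rotation reduction via \eqref{E:scaling_excursion_bubble} --- are the right ones for such a route, and the weight $U_\eps(\wp)=\ell_\eps(\wp)^{-1}\indic{\|\wp(0)\|^2<\rad(\wp)^2+\eps}$ you write down is indeed a unit weight, since $\rad$ and $\ell_\eps$ are rerooting invariant.

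The problem is that you stop short of the step that carries all the content, and you say so yourself: you ``would pass to the limit using'' the density of $\mu_{x,x}$, you flag that ``the genuine difficulty is precisely this renormalisation,'' and you note that the two nested renormalised limits defining $\mu^{\bub,D}_z$ in Section~\ref{SS:excursion_domain} must be ``matched against the single limit $\eps\to 0$,'' but none of this is carried out. Concretely, what is missing is: (i) a proof that, after rerooting at the radial minimum, the thin-shell normalisation by $\ell_\eps^{-1}$ reproduces the $(2\delta^2)^{-1}$ normalisation implicit in the boundary excursion and bubble measures, with constant exactly $1$; (ii) the Williams decomposition of $\mu_{x,x}$ at the radial minimum, which is asserted but not proved; and (iii) the scalar calibration you defer to ``one explicit computation, e.g.\ comparing the masses.'' The symmetry reduction is correct and usefully shrinks the problem to a one-parameter identity at $\mathbf{e}_1$, but, as you acknowledge, it does not make that identity routine --- and that identity is precisely what the paper obtains for free from \cite{Lupu18} through the skew-product reduction. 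As written, this is a plausible and interesting blueprint for an alternative proof, but it is not a proof.
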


As a sanity check, one can verify using \eqref{E:scaling_excursion_bubble} that the measure on the right hand side of \eqref{E:P_decompo_loopmeasure} is scale invariant as the Brownian loop measure should be.

Such a decomposition was already known in dimension 2 \cite[Proposition 8]{Lawler04} and in dimension 1 \cite[Corollary 3.20]{Lupu18}.
In fact, \cite{Lupu18} considers loop measures associated to general one-dimensional diffusions. 
To prove Proposition \ref{P:decompo_loopmeasure}, we will establish a link between the Brownian loop measure in $\R^d$ and the loop measure associated to the $d$-dimensional Bessel process reminiscent of the skew-decomposition of Brownian motion. We will then be able to use \cite{Lupu18}'s general result to derive Proposition \ref{P:decompo_loopmeasure}. This will provide a unifying framework for any dimension $d \ge 2$.
We emphasise that the two-dimensional strategy of \cite{Lawler04} cannot be replicated in other dimensions since it eventually relies on the conformal equivalence of the upper half plane and the unit disc.

\paragraph*{Bessel process and Brownian motion on the sphere.}
For any $r>0$, we will denote by $\P^r_{\Bes_d}$ the $d$-dimensional Bessel probability measure on paths that start at $r$.
For any $r,r'>0$, $t>0$, we will denote by $\P^{r,r';t}_{\Bes_d}$ the probability measure on $d$-dimensional Bessel bridges of duration $t$ starting at $r$ and ending at $r'$. We will also denote by $p^{\Bes_d}_t(r,\cdot)$ the density of the law of $\wp_t$ under $\P^r_{\Bes_d}$ with respect to Lebesgue measure on $(0,\infty)$. This density can actually be written explicitly in terms of some special function, see e.g. \cite[Section 2.2]{LawlerBessel}. For any bounded measurable function $F$ on paths, one has
\[
\E^r_{\Bes_d}[F((\wp_s)_{0 \le s \le t})] = \int_0^\infty p^{\Bes_d}_t(r,r') \E^{r,r';t}_{\Bes_d}[F((\wp_s)_{0 \le s \le t})] \d r.
\]
The loop measure associated to the $d$-dimensional Bessel process as defined in \cite[Definition~3.8]{Lupu18} is
\begin{equation}
\label{E:loopmeasure_Bessel}
\loopmeasure_{\Bes_d} = \int_0^\infty \d r \int_0^\infty \frac{\d t}{t} p^{\Bes_d}_t(r,r) \P^{r,r;t}_{\Bes_d}.
\end{equation}
The convention used in this article differs slightly from the one in \cite{Lupu18}: our term $p^{\Bes_d}_t(r,r)$ corresponds to $p_t(r,r)m(r)$ in \cite{Lupu18} where Lebesgue measure was not the reference measure used to define the densities.

Similarly to the Bessel case,
for any $\omega, \omega' \in \S^{d-1}, t>0$,
we will write $\P^\omega_{\S^{d-1}}$, $\P^{\omega,\omega';t}_{\S^{d-1}}$ for the law of Brownian motion and Brownian bridges in the sphere $\S^{d-1}$. We will also denote by $p^{\S^{d-1}}_t(\omega,\omega')$ the density with respect to the measure $m_{\S^{d-1}}$.

\paragraph*{Skew-product representation of Brownian motion.}
We recall the following classical result that describes the law of the radial and angular parts of Brownian motion in $\R^d$, see e.g. \cite[Chapter IV, (35.19)]{MR1780932}. Let $(W(t))_{t \ge 0}$ be a Brownian motion in $\R^d$ starting away from 0. Then $(\|W(t)\|)_{t\ge 0}$ is a $d$-dimensional Bessel process and
\begin{equation}
\label{E:skew_prod}
W(t)/\|W(t)\| = X\Big( \int_0^t \|W(s)\|^{-2} \d s \Big), \quad t \ge 0,
\end{equation}
where $X$ is a Brownian motion on the sphere $\S^{d-1}$ independent of $\|W\|$.

\begin{lemma}\label{L:skew_loopmeasure}
For any bounded measurable function $F: \Pf \to \R$,
\[
\int F(\wp) \loopmeasure_{\R^d}(\d \wp)
= \int \loopmeasure_{\Bes_d}(\d \wp^\rad) p^{\S^{d-1}}_{\tau^\rad}(\omega_0,\omega_0)
\int_{\S^{d-1}} m_{\S^{d-1}}(\d \omega) \E^{\omega,\omega;\tau^\rad}_{\S^{d-1}} [F(\wp^\rad \wp^\ang \circ U^\rad)]
\]
where $\omega_0 \in \S^{d-1}$ is any point and the time change $U^\rad$ is given by
\begin{equation}
\label{E:L_skew}
U^\rad(t) = \int_0^t \wp^\rad(s)^{-2} \d s, 0\le t\le T(\wp^\rad), \quad \text{and} \quad \tau^\rad = U^\rad(T(\wp^\rad)).
\end{equation}
\end{lemma}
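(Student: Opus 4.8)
The plan is to feed the skew-product representation \eqref{E:skew_prod} into the definition \eqref{E:loopmeasure} of $\loopmeasure$: each Brownian bridge $\P^{x,x;t}$ will be disintegrated into a radial part (which will generate $\loopmeasure_{\Bes_d}$) and an angular part (which will generate the spherical bridge), after which a passage to polar coordinates will absorb the polar Jacobian and the Gaussian prefactor $(2\pi t)^{-d/2}$.

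First I would fix $x \in \R^d \setminus \{0\}$, write $r = \|x\|$, $\omega_0 = x/\|x\|$, $t>0$, and describe $\P^{x,x;t}$ in skew-product coordinates. Under $\P_x$, by \eqref{E:skew_prod} one has $W(s) = \rho(s)\, X(U^\rad(s))$, where $\rho = \|W\| \sim \P^r_{\Bes_d}$ and $X \sim \P^{\omega_0}_{\S^{d-1}}$ are independent and $U^\rad(s) = \int_0^s \rho(u)^{-2}\,\d u$. Since the Bessel process $\rho$ of dimension $d\ge 2$ does not hit $0$, $U^\rad$ is finite on $[0,t]$ and strictly increasing, so $0<\tau^\rad = U^\rad(t)<\infty$ a.s. The event $\{W(t) = x\}$ equals $\{\rho(t) = r\}\cap\{X(\tau^\rad) = \omega_0\}$, and the key point is to condition on the \emph{entire radial path} $\rho$ first: given $\rho$, the clock value $\tau^\rad$ is deterministic, conditioning $\rho$ on $\{\rho(t) = r\}$ produces the Bessel bridge $\P^{r,r;t}_{\Bes_d}$ with density factor $p^{\Bes_d}_t(r,r)$, and — using that $X$ is independent of $\rho$ — conditioning $X$ on $\{X(\tau^\rad) = \omega_0\}$ produces the spherical bridge $\P^{\omega_0,\omega_0;\tau^\rad}_{\S^{d-1}}$ with density factor $p^{\S^{d-1}}_{\tau^\rad}(\omega_0,\omega_0)$. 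Taking into account the polar Jacobian $r^{d-1}$ of the endpoint and dividing by $p^{\R^d}_t(x,x) = (2\pi t)^{-d/2}$, this should give, for every bounded measurable $F:\Pf\to\R$,
\[
\E^{x,x;t}[F(\wp)] = \frac{(2\pi t)^{d/2}}{r^{d-1}}\, p^{\Bes_d}_t(r,r)\, \E^{r,r;t}_{\Bes_d}\Big[ p^{\S^{d-1}}_{\tau^\rad}(\omega_0,\omega_0)\, \E^{\omega_0,\omega_0;\tau^\rad}_{\S^{d-1}}\big[ F(\wp^\rad \wp^\ang \circ U^\rad) \big] \Big],
\]
where $\wp^\rad = \rho$ has duration $t$ and $\wp^\ang$ is the spherical bridge of duration $\tau^\rad$. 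To avoid conditioning on a null event, I would in fact test $\E_x[F(\wp)\,G(W(t))]$ against an arbitrary bounded continuous $G$, perform the two disintegrations above on the left-hand side, compare with $\int_{\R^d} p^{\R^d}_t(x,y)\,\E^{x,y;t}[F]\,G(y)\,\d y$ written in polar coordinates, and identify densities at $y=x$.

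Then I would insert the displayed identity into \eqref{E:loopmeasure} and switch to polar coordinates $\d x = r^{d-1}\,\d r\, m_{\S^{d-1}}(\d\omega_0)$; the prefactor $(2\pi t)^{-d/2}$ cancels the $(2\pi t)^{d/2}$, the Jacobian $r^{d-1}$ cancels the $r^{-(d-1)}$, and one is left with
\[
\int F(\wp)\, \loopmeasure(\d\wp) = \int_0^\infty \d r \int_0^\infty \frac{\d t}{t}\, p^{\Bes_d}_t(r,r)\, \E^{r,r;t}_{\Bes_d}\Big[ \int_{\S^{d-1}} m_{\S^{d-1}}(\d\omega_0)\, p^{\S^{d-1}}_{\tau^\rad}(\omega_0,\omega_0)\, \E^{\omega_0,\omega_0;\tau^\rad}_{\S^{d-1}}\big[ F(\wp^\rad \wp^\ang \circ U^\rad) \big] \Big].
\]
Finally I would invoke the rotational invariance of $\S^{d-1}$: the diagonal heat kernel $p^{\S^{d-1}}_{\tau^\rad}(\omega_0,\omega_0)$ does not depend on $\omega_0$, so it can be pulled out of the $m_{\S^{d-1}}$-integral and evaluated at an arbitrary reference point $\omega_0$; recognising that $\int_0^\infty \d r \int_0^\infty \frac{\d t}{t}\, p^{\Bes_d}_t(r,r)\, \P^{r,r;t}_{\Bes_d} = \loopmeasure_{\Bes_d}$ by \eqref{E:loopmeasure_Bessel}, and relabelling the remaining integration variable $\omega_0$ as $\omega$, yields exactly the claimed formula.

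I expect the main obstacle to be the bridge disintegration in the first step: because the angular clock $\tau^\rad = \int_0^t\rho(u)^{-2}\,\d u$ couples the radial and angular components, one cannot condition them separately in a naive way, and the bookkeeping of the three scalar factors $p^{\Bes_d}_t(r,r)$, $p^{\S^{d-1}}_{\tau^\rad}(\omega_0,\omega_0)$ and the polar Jacobian $r^{d-1}$ must be handled carefully. The remaining steps are routine changes of variables together with the spherical symmetry argument.
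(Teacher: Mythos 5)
Your proposal follows essentially the same route as the paper: the key intermediate identity (your displayed formula for $\E^{x,x;t}[F(\wp)]$) coincides with the paper's \eqref{E:pf_skew3}, you derive it by the same device of testing against a function of the endpoint and identifying densities at the diagonal to avoid conditioning on a null event, and the concluding passage to polar coordinates with cancellation of $(2\pi t)^{\pm d/2}$ and $r^{\pm(d-1)}$ followed by the rotational-invariance step is also identical. Your argument is correct.
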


\begin{proof}
This proof follows directly from the definitions \eqref{E:loopmeasure} and \eqref{E:loopmeasure_Bessel} of the Brownian loop measure in $\R^d$ and the Bessel loop measure in $(0,\infty)$ and from the skew-product representation \eqref{E:skew_prod} of Brownian motion. We write the details for completeness.
It is enough to prove the lemma for nonnegative continuous functions $F: \Pf \to [0,\infty)$. Let us fix such a function $F$.
We are first going to show that for all 
$r>0$, $\omega \in \S^2$ and $t>0$,
\begin{align}\label{E:pf_skew3}
r^{d-1} p_t^{\R^d}(r\omega,r\omega) \E_{\R^d}^{r\omega,r \omega;t}[F(\wp)] = p_t^{\Bes_d}(r,r) \E_{\Bes_d}^{r,r;t} [ p^{\S^{d-1}}_{\tau^\rad}(\omega,\omega) \E_{\S^{d-1}}^{\omega,\omega;\tau^\rad}[F(\wp^\rad \wp^\ang \circ U^\rad))] ].
\end{align}
Let $t>0$ and $x = r_0 \omega_0 \in \R^d$ with $r_0>0$ and $\omega_0 \in \S^{d-1}$. Let $f:\R^d \to [0,\infty)$ be nonnegative measurable function. To derive \eqref{E:pf_skew3}, we will compute $\E^x_{\R^d}[f(\wp(t))F((\wp(s))_{0\le s\le t})]$ in two different ways. Our first computation is direct and simply uses a change of variables:
\begin{align}
\notag
\E^x_{\R^d}[f(\wp(t))F((\wp(s))_{0\le s\le t})]
& =
\int_{\R^d} \d y~ p_t^{\R^d}(x,y) f(y) \E_{\R^d}^{x,y;t}[F(\wp)] \\
& = \int_0^\infty \d r ~r^{d-1} \int_{\S^{d-1}} m_{\S^{d-1}}(\d \omega) p_t^{\R^d}(x,r\omega) f(r\omega) \E_{\R^d}^{x,r \omega;t}[F(\wp)].
\label{E:pf_skew1}
\end{align}
On the other hand, using the skew-product representation \eqref{E:skew_prod} of Brownian motion, the expectation $\E^x_{\R^d}[f(\wp(t))F((\wp(s))_{0\le s\le t})]$ is also equal to
\begin{align}
\label{E:pf_skew2}
\int_0^\infty \hspace{-5pt} \d r~p_t^{\Bes_d}(r_0,r) \hspace{-4pt} \int_{\S^{d-1}} \hspace{-5pt} m_{\S^{d-1}}(\d \omega) f(r\omega) \E_{\Bes_d}^{r_0,r;t} [ p^{\S^{d-1}}_{\tau^\rad}(\omega_0,\omega) \E_{\S^{d-1}}^{\omega_0,\omega;\tau^\rad}[F((\wp^\rad(s) \wp^\ang (U^\rad(s)))_{0\le s\le t})] ]
\end{align}
where $\wp^\rad$ is sampled from $\P_{\Bes_d}^{r_0,r;t}$, $U^\rad$ and $\tau^\rad$ are as in \eqref{E:L_skew} and $\wp^\ang$ is sampled from $\P_{\S^{d-1}}^{\omega_0,\omega;\tau^\rad}$. In particular, \eqref{E:pf_skew1} and \eqref{E:pf_skew2} agree. Since this is true for any test function $f$, we must have for almost all $r$ and $\omega$,
\begin{align*}
& r^{d-1} p_t^{\R^d}(x,r\omega) \E_{\R^d}^{x,r \omega;t}[F(\wp)] = p_t^{\Bes_d}(r_0,r) \E_{\Bes_d}^{r_0,r;t} [ p^{\S^{d-1}}_{\tau^\rad}(\omega_0,\omega) \E_{\S^{d-1}}^{\omega_0,\omega;\tau^\rad}[F(\wp^\rad \wp^\ang \circ U^\rad))] ].
\end{align*}
Since $F$ is continuous, the left hand side and right hand side terms of the above display are continuous in $r$ and $\omega$ and we actually deduce that the above equality holds for all $r$ and $\omega$. In particular, it holds for $r=r_0$ and $\omega=\omega_0$ which corresponds to \eqref{E:pf_skew3} after relabelling.

We now use the definition \eqref{E:loopmeasure} of $\loopmeasure_{\R^d}$, perform a change of variable and use \eqref{E:pf_skew3} to get that
\begin{align*}
& \int F(\wp) \loopmeasure_{\R^d}(\d \wp)
= \int_{\R^d} \d x \int_0^\infty \frac{\d t}{t} p^{\R^d}_t(x,x) \E_{\R^d}^{x,x;t}[F(\wp)]\\
& = \int_0^\infty \d r~ r^{d-1} \int_{\S^{d-1}} m_{\S^{d-1}}(\d \omega) \int_0^\infty \frac{\d t}{t} p_t^{\R^d}(r\omega, r \omega) \E_{\R^d}^{r\omega,r\omega;t}[F(\wp)]\\
& = \int_0^\infty \d r \int_0^\infty \frac{\d t}{t} p_t^{\Bes_d}(r,r) \E_{\Bes_d}^{r,r;t} [ p^{\S^{d-1}}_{\tau^\rad}(\omega,\omega) \E_{\S^{d-1}}^{\omega,\omega;\tau^\rad}[F(\wp^\rad \wp^\ang \circ U^\rad))] ].
\end{align*}
The Bessel loop measure \eqref{E:loopmeasure_Bessel} naturally appears in the last equation. Together with the fact that $p^{\S^{d-1}}_{\tau^\rad}(\omega,\omega)$ does not depend on $\omega$ (rotational invariance of Brownian motion), this concludes the proof of the lemma.
\end{proof}

We now recall a result from \cite{Lupu18} which decomposes the Bessel loop measure $\loopmeasure_{\Bes_d}$ according to the minimal point of the loop. To this end, we recall that for all $a>0$ there exists a natural infinite measure $\mu_{\Bes_d}^{\ge a}$ (denoted $\eta^{>a}$ in \cite{Lupu18} with $w(a)=a^{1-d}$) on Bessel excursions starting and ending at $a$ and staying in $[a,\infty)$. This is the analogue of bubble measures \eqref{E:def_bubmeasure} for the Bessel process.

\begin{proposition}[Corollary 3.20 of \cite{Lupu18}]\label{P:titus}
As measures on the space $[\Pf]$ of unrooted loops, we have
\begin{equation}
\loopmeasure_{\Bes_d} = \int_0^\infty \mu_{\Bes_d}^{\ge a} a^{1-d} \d a.
\end{equation}
\end{proposition}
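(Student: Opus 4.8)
\smallskip
\noindent\textbf{Proof idea.} Since this is imported verbatim from \cite[Corollary 3.20]{Lupu18} (valid there for loop measures of general one-dimensional diffusions), no new argument is needed; I only outline the idea of that result in the Bessel case. The starting point is that the $d$-dimensional Bessel process is a regular diffusion on $(0,\infty)$ whose scale function has derivative $s'(x)$ proportional to $x^{1-d}$ and whose speed measure is proportional to $x^{d-1}\,\d x$, and that $\loopmeasure_{\Bes_d}$ of \eqref{E:loopmeasure_Bessel} is its associated loop measure; exactly as in \eqref{E:loop_measure_excursion} one can write $\loopmeasure_{\Bes_d}=\int_0^\infty\d r\,\frac{1}{T(\wp)}\,\mu^{\Bes_d}_{r,r}(\d\wp)$, where $\mu^{\Bes_d}_{r,r}:=\int_0^\infty p^{\Bes_d}_t(r,r)\,\P^{r,r;t}_{\Bes_d}\,\d t$ is the Bessel ``bubble'' rooted at $r$ (compare \eqref{E:excursion_bridge}). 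The plan is to re-root each loop at the (a.s.\ unique) point where its radial coordinate attains its minimum value, say $a$: after re-rooting, the loop becomes an excursion from $a$ to $a$ that stays in $[a,\infty)$, hence an element of the support of $\mu^{\ge a}_{\Bes_d}$, and the content of the proposition is that integrating over the position of the root then produces precisely the weight $a^{1-d}\,\d a$.

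\smallskip
To make this precise, I would use — exactly as in the proof of Lemma~\ref{L:inversion_loop} — that on unrooted loops $\loopmeasure_{\Bes_d}=\int_0^\infty\d r\,U(\wp)\,\mu^{\Bes_d}_{r,r}(\d\wp)$ for \emph{any} unit weight $U$, and apply this to a family of unit weights $U_\eps$ concentrating, as $\eps\downarrow0$, on the set of times at which $\wp$ lies within $\eps$ of its minimum. Analysing the $\eps\to0$ limit via the excursion theory of the Bessel process above level $a$ (its local time at $a$ and Itô excursion measure) identifies the contribution of each minimum value $a$ as a multiple of $\mu^{\ge a}_{\Bes_d}$, the multiplicative factor being $s'(a)$ up to the constant absorbed into the normalisation $w(a)=a^{1-d}$ of $\mu^{\ge a}_{\Bes_d}=\eta^{>a}$ fixed in \cite{Lupu18}. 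Equivalently, one may argue by monotonicity in the level: restricting $\loopmeasure_{\Bes_d}$ to loops with minimum $\ge a$ gives the loop measure of the Bessel process killed at $a$, and differentiating this family in $a$ yields $a^{1-d}\mu^{\ge a}_{\Bes_d}$; integrating over $a\in(0,\infty)$ then recovers $\loopmeasure_{\Bes_d}$.

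\smallskip
I expect the main obstacle to be precisely this identification — of the re-rooted-at-the-minimum measure, equivalently of the $a$-derivative of the killed loop measures, with $a^{1-d}\mu^{\ge a}_{\Bes_d}$ — together with the careful bookkeeping of normalisations: this is where the scale-function derivative $s'(a)\propto a^{1-d}$ enters, through the role of the scale function in the hitting probabilities and excursion-measure normalisations of the Bessel process. A useful heuristic is that passing to the natural scale $s(\rho)$ removes the $a$-dependence of the weight, so that transforming back introduces the Jacobian factor $s'(a)$. For the complete argument, carried out at the level of general one-dimensional diffusions, I refer to \cite[Section~3]{Lupu18}.
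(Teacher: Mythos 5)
Your proposal is correct and takes the same approach as the paper: Proposition~\ref{P:titus} is imported verbatim from \cite[Corollary 3.20]{Lupu18}, and the paper gives no independent proof. Your outline of the underlying argument — re-rooting a Bessel loop at its minimum via a degenerating family of unit weights, identifying the limit through excursion theory above level $a$, and tracking the normalisation through the scale-function derivative $s'(a)\propto a^{1-d}$ (absorbed into the choice $w(a)=a^{1-d}$ for $\mu_{\Bes_d}^{\ge a}=\eta^{>a}$) — is a faithful summary of what \cite{Lupu18} actually does.
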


We now state a skew-product representation of the bubble measure:

\begin{lemma}\label{L:skew_bubble}
For any $a>0$, $\omega \in \S^{d-1}$ and any bounded measurable function $F: \Pf \to \R$,
\begin{equation}\label{E:L_skew_bubble}
\int F(\wp) \mu^{\bub,\R^d \setminus B(0,a)}_{a\omega}(\d \wp) = a^{-2(d-1)} \int \mu_{\Bes_d}^{\ge a}(\d \wp^\rad) p_{\tau^\rad}^{\S^{d-1}}(\omega,\omega) \E^{\omega,\omega;\tau^\rad}_{\S^{d-1}} [F([\wp^\rad \wp^\ang \circ U^\rad])],
\end{equation}
where $\tau^\rad$ and $U^\rad$ are as in \eqref{E:L_skew}.
\end{lemma}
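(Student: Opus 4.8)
The plan is to run the argument behind Lemma~\ref{L:skew_loopmeasure} inside the domain $D_a:=\R^d\setminus B(0,a)$ and then pass to the boundary limits defining the bubble measure. First, for distinct $x,y$, Lemma~\ref{L:density_excursion} gives $\mu_{x,y}=\int_0^\infty p^{\R^d}_t(x,y)\,\P^{x,y;t}_{\R^d}\,\d t$, and restricting to loops contained in $D_a$ simply replaces $p^{\R^d}_t$ by the Dirichlet heat kernel $p^{D_a}_t$. Since the event $\{\wp\subset D_a\}=\{\|\wp(\cdot)\|>a\}$ is a function of the radial part of $\wp$ alone, the skew-product representation \eqref{E:skew_prod} survives this restriction exactly as in the derivation of \eqref{E:pf_skew3}, with $\|W\|$ now being a $d$-dimensional Bessel process killed at level $a$. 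Writing $x=\|x\|\omega_x$, $y=\|y\|\omega_y$, this yields, for bounded measurable $F$,
\begin{equation}\label{E:plan_step1}
\int F(\wp)\,\mu^{D_a}_{x,y}(\d\wp)=\|y\|^{1-d}\int\mu^{\Bes_d,\downarrow a}_{\|x\|,\|y\|}(\d\wp^\rad)\;p^{\S^{d-1}}_{\tau^\rad}(\omega_x,\omega_y)\;\E^{\omega_x,\omega_y;\tau^\rad}_{\S^{d-1}}\!\big[F([\wp^\rad\wp^\ang\circ U^\rad])\big],
\end{equation}
where $\mu^{\Bes_d,\downarrow a}_{r,r'}=\int_0^\infty p^{\Bes_d,\downarrow a}_t(r,r')\,\P^{r,r';t}_{\Bes_d,\downarrow a}\,\d t$ is the excursion measure of the killed Bessel process and $\tau^\rad,U^\rad$ are as in \eqref{E:L_skew}; the factor $\|y\|^{1-d}$ is the Jacobian between the $\R^d$ heat kernel and the one-dimensional Bessel heat kernel, already visible as the $r^{d-1}$ on the left of \eqref{E:pf_skew3}.

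Next I would take $x=(a+\eps)\omega$, $y=(a+\eps)\omega'$ with $\omega\ne\omega'$: since the inward normal of $D_a$ at $a\omega$ is $\omega$, these equal $a\omega+\eps n_{a\omega}$ and $a\omega'+\eps n_{a\omega'}$. Multiplying \eqref{E:plan_step1} by $\tfrac1{2\eps^2}$ and letting $\eps\to0$, the left side converges to the boundary-to-boundary measure $\mu^{D_a}_{a\omega,a\omega'}$ by definition; on the right, $\|y\|^{1-d}\to a^{1-d}$, the spherical factors do not depend on $\eps$, and $\tfrac1{2\eps^2}\mu^{\Bes_d,\downarrow a}_{a+\eps,a+\eps}$ converges \emph{vaguely} (i.e.\ on excursions whose radial range, equivalently $\tau^\rad$, is bounded below) to a multiple of the Bessel bubble measure $\mu^{\ge a}_{\Bes_d}$ --- this is precisely the construction of $\eta^{>a}=\mu^{\ge a}_{\Bes_d}$ from the killed Bessel process in \cite{Lupu18}. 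The functional $\wp^\rad\mapsto p^{\S^{d-1}}_{\tau^\rad}(\omega,\omega')\,\E^{\omega,\omega';\tau^\rad}_{\S^{d-1}}[F(\cdots)]$ is bounded and, because $\omega\ne\omega'$ forces $p^{\S^{d-1}}_{0}(\omega,\omega')=0$, it vanishes continuously as $\wp^\rad$ degenerates to the trivial excursion, so the vague limit may be applied to it; this is why $\eps\to0$ is taken before $\omega'\to\omega$. Letting then $\omega'\to\omega$ turns the left side into $\mu^{\bub,D_a}_{a\omega}$ by \eqref{E:def_bubmeasure}, leaves the Bessel factor untouched, and, by continuity of the spherical heat kernel and bridge laws in their endpoints, sends $p^{\S^{d-1}}_{\tau^\rad}(\omega,\omega')\to p^{\S^{d-1}}_{\tau^\rad}(\omega,\omega)$ and $\E^{\omega,\omega';\tau^\rad}_{\S^{d-1}}\to\E^{\omega,\omega;\tau^\rad}_{\S^{d-1}}$. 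This gives \eqref{E:L_skew_bubble} up to an explicit $a$-dependent constant.

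To pin the constant to $a^{-2(d-1)}$ I would either chase the normalisations directly through the previous step --- the $\|y\|^{1-d}\to a^{1-d}$, the $\tfrac1{2\eps^2}$, and \cite{Lupu18}'s weight $w(a)=a^{1-d}$ carried by $\mu^{\ge a}_{\Bes_d}$ --- or, more cheaply, note that once \eqref{E:L_skew_bubble} is known with \emph{some} prefactor $\lambda(a)$, the scaling relations $\scale_R*\mu^{\bub,\R^d\setminus B(0,a)}_{a\omega}=R^d\mu^{\bub,\R^d\setminus B(0,Ra)}_{Ra\omega}$ (from \eqref{E:scaling_excursion_bubble}) and $\scale_R*\mu^{\ge a}_{\Bes_d}=R^{2-d}\mu^{\ge Ra}_{\Bes_d}$ (forced by Proposition~\ref{P:titus} together with the scale invariance of $\loopmeasure_{\Bes_d}$), combined with the invariance of $\tau^\rad$ and of the angular bridge under the radial scaling, force $\lambda(Ra)=R^{2-2d}\lambda(a)$, hence $\lambda(a)=\lambda(1)\,a^{-2(d-1)}$; the value $\lambda(1)=1$ then follows by evaluating both sides on one convenient test function.

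The hard part is thus not structural --- the skew-product commutes with the killing and with the boundary limits precisely because every operation involved is a measurable function of the radial coordinate alone --- but bookkeeping: reconciling the $\tfrac1{2\eps^2}$-normalisation used here for Brownian boundary excursion measures with \cite{Lupu18}'s normalisation of the Bessel bubble measure, and handling the fact that the limit in the second step is vague rather than in total variation. As a sanity check one can also recover \eqref{E:L_skew_bubble} a posteriori by combining Proposition~\ref{P:decompo_loopmeasure}, Lemma~\ref{L:skew_loopmeasure} and Proposition~\ref{P:titus}, which all express $\loopmeasure_{\R^d}$: since both $\mu^{\bub,\R^d\setminus B(0,a)}_{a\omega}$ and the right side of \eqref{E:L_skew_bubble} are supported on loops whose distance to the origin is uniquely minimised at the root $a\omega$, uniqueness of the disintegration over $(a,\omega)$ identifies the fibre measures for a.e.\ $(a,\omega)$, hence for all of them by rotational invariance and scaling --- but this route is not available for the proof itself, as \eqref{E:L_skew_bubble} is one of the inputs to Proposition~\ref{P:decompo_loopmeasure}.
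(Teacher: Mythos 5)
Your plan is correct and is essentially the paper's own argument: the paper declares ``the proof boils down to the skew-product representation\ldots\ we omit the details'' and then fixes the multiplicative constant by computing, on both sides of \eqref{E:L_skew_bubble}, the asymptotic mass as $R\to\infty$ of loops hitting $aR\S^{d-1}$ (using the explicit law of the maximum under $\mu_{\Bes_d}^{\ge a}$ from \cite{Lupu18}). You supply the omitted structural details --- skew-product in the killed domain $\R^d\setminus B(0,a)$ via the off-diagonal analogue of \eqref{E:pf_skew3}, then the boundary limits $\eps\to0$ and $\omega'\to\omega$ with the vague-convergence caveat correctly flagged --- and you add a scaling reduction of the prefactor to $\lambda(a)=\lambda(1)a^{-2(d-1)}$, but the underlying route (skew-product, boundary limits, constant pinned by one explicit computation) is the same.
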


\begin{proof}
The proof boils down to the skew-product representation \eqref{E:skew_prod} of Brownian motion. Since it is very similar to the proof of Lemma \ref{L:skew_loopmeasure} we omit the details. To check that the multiplicative constant is correct, one can for instance compute the asymptotic behaviour as $R \to \infty$ of the measures on both sides of \eqref{E:L_skew_bubble} of the event that the loop hits $a R \S^{d-1}$. For the left hand side, using similar computations as in the proof of Proposition \ref{P:loopmeasure_cross}, one finds $(1+o(1))\frac{d-2}{|\S^{d-1}|} a^{-d} R^{2-d}$. For the right hand side, using the explicit law of the maximum of $\wp$ under $\mu_{\Bes_d}^{\ge a}$ derived in \cite[Corollary 3.20]{Lupu18}, one finds
\begin{align*}
& (1+o(1)) a^{-2(d-1)} (\lim_{t \to \infty} p_t^{\S^{d-1}}(\omega,\omega)) \mu_{\Bes_d}^{\ge a}(\max \wp > aR)\\
& = (1+o(1)) a^{-2(d-1)} \frac{1}{|\S^{d-1}|} (d-2)^2 \int_{aR}^\infty \frac{b^{1-d}}{(a^{2-d} - b^{2-d})^2} \d b
= (1+o(1)) \frac{d-2}{|\S^{d-1}|} a^{-d} R^{2-d}.
\end{align*}
This concludes the proof.
\end{proof}

We now have all the ingredients to prove Proposition \ref{P:decompo_loopmeasure}.

\begin{proof}[Proof of Proposition \ref{P:decompo_loopmeasure}]
Let $F:\Pf \to [0,\infty)$ be a measurable function.
Combining Lemma \ref{L:skew_loopmeasure} and Proposition \ref{P:titus}, we obtain that $\int F([\wp]) \loopmeasure_{\R^d}(\d \wp)$ is equal to
\begin{align*}
\int_0^\infty \d a ~a^{1-d} \int_{\S^{d-1}} m_{\S^{d-1}}(\d \omega) \int \mu_{\Bes_d}^{\ge a}(\d \wp^\rad) p_{\tau^\rad}^{\S^{d-1}}(\omega,\omega) \E^{\omega,\omega;\tau^\rad}_{\S^{d-1}} [F([\wp^\rad \wp^\ang \circ U^\rad])]
\end{align*}
where $\tau^\rad$ and $U^\rad$ are as in \eqref{E:L_skew}.
The skew-product decomposition of the bubble measure (Lemma \ref{L:skew_bubble}) then shows that
\[
\int F([\wp]) \loopmeasure_{\R^d}(\d \wp) = \int_0^\infty \d a ~a^{d-1} \int_{\S^{d-1}} m_{\S^{d-1}}(\d \omega) \int F(\wp) \mu^{\bub,\R^d \setminus B(0,a)}_{a\omega}(\d \wp)
\]
which concludes the proof of Proposition \ref{P:decompo_loopmeasure}.
\end{proof}

\subsection{Rooting the loop at a double point}\label{SS:BLM3}

In this section, we give a decomposition of the measure $\loopmeasure_{\R^d}$ by rooting the loop at a double point.
Since Brownian motion is almost surely simple in dimension at least 4, the results of this section are restricted to $d \in \{2,3\}$. Moreover, one can easily generalise our arguments to $p$-multiple points in dimension 2 for any $p$. In fact, in dimension 2, an analogous statement for the so-called thick points which are points of infinite multiplicity has been derived in \cite[Lemma 5.1]{aidekon2023multiplicative}.

For a Brownian trajectory $\wp$, let $\ell^2_\wp$ be the self intersection local time of $\wp$. It is a $\sigma$-finite measure on $(0,T(\wp))^2$ supported on pairs $(t_1,t_2)$ such that $\wp(t_1)=\wp(t_2)$ formally given by
\begin{equation}\label{E:l2}
\ell^2_\wp(\d t_1 \d t_2) = \indic{\wp(t_1)=\wp(t_2)} \d t_1 \d t_2.
\end{equation}
Since the expectation of the measure on the right hand side of \eqref{E:l2} vanishes, a renormalisation procedure is needed to properly define $\ell^2_\wp$; see \cite{MR723731, MR701921}. 
The measure $\ell^2_\wp$ is infinite because of the many self intersections occurring in each arbitrary small time intervals.

\begin{lemma}\label{L:double}
Let $d \in \{2,3\}$.
For all nonnegative measurable function $F: \R^d \times \Pf \times \Pf \to [0,\infty)$,
\begin{align}
\label{E:P_double}
& \int \loopmeasure_{\R^d}(\d \wp) \int_{(0,T(\wp))^2} \ell^2_\wp(\d t_1 \d t_2) F(\wp(t_1),\wp_{\vert [t_1,t_2]}, \wp_{\vert [t_2,t_1]}) \\
& = 2 \int_{\R^d} \d x \int \mu_{x,x}(\d \wp_1) \int \mu_{x,x}(\d \wp_2) F(x,\wp_1,\wp_2),
\notag
\end{align}
where $[t_1,t_2]$ and $[t_2,t_1]$ are the two intervals of $\S^1_{T(\wp)}$ with end points $t_1$ and $t_2$ and where the measure $\mu_{x,x}$ is defined in \eqref{E:construction_excursion_yy}.
\end{lemma}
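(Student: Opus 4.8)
The plan is to reduce the identity to the $k=1$ marginal formula from Lemma~\ref{L:density_excursion}, after unwinding the renormalisation defining $\ell^2_\wp$. First I would recall that the self-intersection local time $\ell^2_\wp$ is, by definition, obtained as a limit of mollified quantities: for a smooth approximate identity $\varphi_\eps \to \delta_0$ on $\R^d$, one has $\ell^2_\wp(\d t_1\,\d t_2) = \lim_{\eps\to 0}\varphi_\eps(\wp(t_1)-\wp(t_2))\,\d t_1\,\d t_2$ (in dimension $3$ after the appropriate additive renormalisation; in dimension $2$ the renormalisation is also additive and the diagonal contributions cancel in the limit). Since the left-hand side of \eqref{E:P_double} integrates $F$ against $\loopmeasure_{\R^d}\otimes\ell^2_\wp$, I would interchange the limit with the $\loopmeasure_{\R^d}$-integral (justified by the construction in \cite{MR723731,MR701921} together with a monotone/dominated convergence argument, using that $F\ge 0$), and work with
\[
\int \loopmeasure_{\R^d}(\d\wp)\int_{(0,T(\wp))^2}\varphi_\eps(\wp(t_1)-\wp(t_2))\,F\big(\wp(t_1),\wp_{\vert[t_1,t_2]},\wp_{\vert[t_2,t_1]}\big)\,\d t_1\,\d t_2.
\]

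Next I would use the unit-weight / unrooting formalism already set up in the proof of Lemma~\ref{L:inversion_loop}: writing $\loopmeasure_{\R^d}=\int_{\R^d}\d x\,\tfrac{1}{T(\wp)}\mu_{x,x}(\d\wp)$ via \eqref{E:loop_measure_excursion}, the inner double integral over $(t_1,t_2)\in(0,T(\wp))^2$ combined with the $1/T(\wp)$ weight effectively picks a uniformly random reference time, which I can fix to be $t_1$; this exploits the $\loopmeasure$-measure being on \emph{unrooted} loops, so rerooting at time $t_1$ is free. Concretely, by Fubini and rerooting, the displayed expression becomes
\[
\int_{\R^d}\d x\int \mu_{x,x}(\d\wp)\int_0^{T(\wp)}\d s\;\varphi_\eps(\wp(0)-\wp(s))\,F\big(\wp(0),\wp_{\vert[0,s]},\wp_{\vert[s,T(\wp)]}\big),
\]
where now $s=t_2-t_1\bmod T(\wp)$ plays the role of the position of the second visit. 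At this point I split the loop under $\mu_{x,x}$ at the intermediate time $s$ into two excursion-type pieces.

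The heart of the computation is then the following marginal identity for $\mu_{x,x}$: for any test function $\psi$ on $\R^d$ and any bounded functionals $F_1,F_2$,
\[
\int \mu_{x,x}(\d\wp)\int_0^{T(\wp)}\d s\;\psi(\wp(s))\,F_1(\wp_{\vert[0,s]})F_2(\wp_{\vert[s,T(\wp)]})
=\int_{\R^d}\d z\;\psi(z)\int\mu_{x,z}(\d\wp_1)F_1(\wp_1)\int\mu_{z,x}(\d\wp_2)F_2(\wp_2).
\]
This is precisely the loop-measure analogue of the Markov/rerooting decomposition; it follows from \eqref{E:excursion_bridge}, which writes $\mu_{y,y}=\int_0^\infty (2\pi t)^{-d/2}\P^{y,y;t}$, by conditioning the bridge at the intermediate time $s$ and using the explicit bridge transition densities together with the mass $G(x,z)=c_d\|x-z\|^{2-d}$ of $\mu_{x,z}$ and the density description of $\mu^\#_{x,z}$ from Lemma~\ref{L:density_excursion}; alternatively one can chain two applications of \eqref{E:RN_excursion}. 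Applying this with $\psi(z)=\varphi_\eps(x-z)$ (here I use translation invariance to reroot so that $\wp(0)=x$), the right-hand side converges as $\eps\to 0$ to $\int\mu_{x,x}(\d\wp_1)F_1(\wp_1)\int\mu_{x,x}(\d\wp_2)F_2(\wp_2)$ — the $\delta$-function at $z=x$ forces both excursions to start and end at $x$, and the convergence $\mu_{x,z}\to\mu_{x,x}$ as $z\to x$ is exactly the content of \eqref{E:construction_excursion_yy}.

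Finally I would account for the factor $2$: the self-intersection local time $\ell^2_\wp$ is a measure on the ordered pair $(t_1,t_2)\in(0,T(\wp))^2$, so each unordered double point is counted twice (once as $(t_1,t_2)$ and once as $(t_2,t_1)$, with the roles of the two arcs $\wp_{\vert[t_1,t_2]}$ and $\wp_{\vert[t_2,t_1]}$ swapped); after rerooting at $t_1$ and integrating $s$ over all of $(0,T(\wp))$ rather than a fundamental domain, one indeed recovers a clean factor $2$ multiplying the symmetric product $\int\mu_{x,x}(\d\wp_1)\int\mu_{x,x}(\d\wp_2)F(x,\wp_1,\wp_2)$, matching \eqref{E:P_double}. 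The main obstacle I anticipate is making the interchange of the $\eps\to 0$ limit with the $\loopmeasure_{\R^d}$-integral fully rigorous in dimension $3$, where the renormalisation of $\ell^2_\wp$ subtracts a (deterministic, $\eps$-dependent) divergent term: one must check that this counterterm does not contribute in the limit when paired against $F$, which is where the precise construction of $\ell^2_\wp$ in \cite{MR723731,MR701921} and the control of the $\mu_{x,x}$-mass of short loops enter. The algebraic skeleton, by contrast, is routine once the marginal identity for $\mu_{x,x}$ is in hand.
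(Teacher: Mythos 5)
Your route is genuinely different from the paper's. The paper reduces immediately to the $\{t_1<t_2\}$ contribution (after symmetrising $F$), invokes the Palm-type moment formula from \cite{MR880979} for the self-intersection local time of a Brownian bridge to split the loop into three bridge pieces at times $t_1$ and $t_2$, integrates out the original root $z$ by Chapman--Kolmogorov, and finishes with a change of variables $(t_1,t_2,t)\mapsto(s_1,s_2,s_3)$. You instead mollify $\ell^2_\wp$, use the unit-weight rewriting $\loopmeasure = \int \d x\,\tfrac{1}{T}\mu_{x,x}$ to reroot the loop at $t_1$ (this shift-invariance of $\int\d x\,\mu_{x,x}$ is correct and can be checked from the finite-dimensional marginals), and then apply a one-time ``marginal identity'' for $\mu_{x,x}$ that disintegrates the loop at a single intermediate time into two excursion pieces $\mu_{x,z}\otimes\mu_{z,x}$. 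That identity is correct: it follows from \eqref{E:excursion_bridge} by conditioning the bridge at the intermediate time and noting that the duration marginal of $\mu_{x,z}$ is $p_t(x,z)\,\d t$. What each approach buys: the paper's is shorter because the hard analytic content (the intensity of $\ell^2_\wp$ under the bridge) is outsourced to \cite{MR880979}, at the cost of a somewhat opaque change of variables; yours is more self-contained and exposes the structure (reroot, then split once), at the cost of having to justify the $\eps\to0$ interchange and the mollified definition of $\ell^2_\wp$ in dimension $3$.

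However, your final paragraph on the factor $2$ is hand-waving and does not survive scrutiny. If you actually carry out the steps you describe, you get
\[
\int_{\R^d}\d x\int_{\R^d}\d z\;\varphi_\eps(x-z)\int\mu_{x,z}(\d\wp_1)\int\mu_{z,x}(\d\wp_2)\,F(x,\wp_1,\wp_2)
\;\xrightarrow[\eps\to0]{}\;\int_{\R^d}\d x\int\mu_{x,x}(\d\wp_1)\int\mu_{x,x}(\d\wp_2)\,F(x,\wp_1,\wp_2),
\]
\emph{without} any factor $2$. The ``double counting'' of the ordered pair $(t_1,t_2)$ is already accounted for by letting $s=(t_2-t_1)\bmod T(\wp)$ range over all of $(0,T(\wp))$ after the reroot; you cannot invoke it a second time to conjure an additional factor. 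You should check this arithmetic before claiming to have matched \eqref{E:P_double}. In fact, the same discrepancy appears if one scrutinises the paper's own change of variables: with $s_1=t_2-t_1$, $s_2=t-t_2+t_1$, $s_3=t_1$, the constraint $0<t_1<t_2<t$ translates to $s_1,s_2>0$ and $s_3\in(0,s_2)$ (coming from $t_2<t$), \emph{not} $s_3\in(0,s_1+s_2)$ as written; the $s_3$-integration then produces $s_2$ rather than $s_1+s_2$, and after symmetrising $\tfrac{s_2}{s_1+s_2}\leadsto\tfrac12$ the leading factor $2$ cancels. A sanity check against $F\equiv1$ (or $F$ a product of duration cut-offs) gives the same conclusion. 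So your approach is sound and actually produces the formula without the factor $2$; the discrepancy with the stated lemma should be flagged and resolved rather than patched over with an appeal to symmetry.
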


Informally, Lemma~\ref{L:double} states that, if we denote by $L^2_\wp(\d x)$ the ``push forward of $\ell^2_\wp$ by the map $(t_1,t_2) \mapsto \wp(t_1)$'', then the following measures on unrooted loops agree:
\begin{equation}
L^2_\wp(\d x) \loopmeasure_{\R^3}(\d \wp) = 2 \mu_{x,x} \wedge \mu_{x,x} \d x,
\end{equation}
where ``$\wedge$'' means that we concatenate the two loops.
This is only informal since the measure $L^2_\wp(\d x)$ is infinite on any open set of $\R^3$.
Similarly, the measure $\mu_{x,x} \wedge \mu_{x,x}$ is infinite on any nondegenerate event since one of the two excursion measure can always produce an arbitrarily small loop.

We initially derived Lemma~\ref{L:double} in order to prove Lemma \ref{L:key} below, but ended up using a different argument. We nevertheless decided to keep Lemma~\ref{L:double} because we find it interesting in its own right and it could be useful for other purposes.

\begin{proof}
By symmetry, the left hand side of \eqref{E:P_double} is twice the same expression with the additional constraint that $t_1<t_2$ (alternatively, one can do the computation for both contribution and then realise that they agree).
By definition \eqref{E:loopmeasure} of $\loopmeasure_{\R^d}$, and recalling from \eqref{E:heat_kernel} that we denote by $(p_t^{\R^d}(x,y))_{t>0,x,y\in\R^d}$ the heat kernel in $\R^d$, the left hand side of \eqref{E:P_double} is then equal to
\begin{align*}
2 \int_{\R^d} \d z \int_0^\infty \frac{\d t}{t} p_t^{\R^d}(z,z) \int \P^{z,z;t}(\d \wp) \int_{(0,t)^2} \ell^2_\wp(\d t_1 \d t_2) \indic{t_1<t_2} F(\wp(t_1),\wp_{\vert [t_1,t_2]}, \wp_{\vert [t_2,t_1]}).
\end{align*}
By \cite[Theorem 2.1]{MR880979}, for any $z \in \R^d$ and $t>0$,
\begin{align*}
& p_t^{\R^d}(z,z) \int \P^{z,z;t}(\d \wp) \int_{(0,t)^2} \ell^2_\wp(\d t_1 \d t_2) \indic{t_1<t_2} F(\wp(t_1),\wp_{\vert [t_1,t_2]}, \wp_{\vert [t_2,t_1]})\\
& = \int_{(0,t)^2} \d t_1 \d t_2 \indic{t_1<t_2} \int_{\R^d} \d x ~p_{t_1}^{\R^d}(z,x) p_{t_2-t_1}^{\R^d}(x,x) p_{t-t_2}^{\R^d}(x,z)\\
& \hspace{20pt} \times \int \P^{z,x;t_1}(\d \wp_1) \int \P^{x,x;t_2-t_1}(\d \wp_2) \int \P^{x,z;t-t_2}(\d \wp_3) F(x,\wp_1,\wp_3 \wedge \wp_2).
\end{align*}
Since the integrand is nonnegative, we can exchange the order of integration. Integrating with respect to $z \in \R^3$ first, we end up computing
\begin{align*}
\int_{\R^d} \d z~ p_{t-t_2}^{\R^d}(x,z) p_{t_1}^{\R^d}(z,x) \P^{x,z;t-t_2} \wedge \P^{z,x;t_1} = p_{t-t_2+t_1}^{\R^d}(x,x) \P^{x,x;t-t_2+t_1}.
\end{align*}
Putting things together, we obtain that the left hand side of \eqref{E:P_double} equals
\begin{align*}
& 2 \int_{\R^d} \d x \int_0^\infty \frac{\d t}{t} \int_{(0,t)^2} \d t_1 \d t_2 \indic{t_1<t_2} p_{t_2-t_1}^{\R^d}(x,x) p_{t-t_2+t_1}^{\R^d}(x,x) \\
& \hspace{16pt} \times \int \P^{x,x;t_2-t_1}(\d \wp_1) \int \P^{x,x;t-t_2+t_1}(\d \wp_2) F(x,\wp_1,\wp_2).
\end{align*}
We now do the change of variables $s_1 = t_2-t_1$, $s_2 = t-t_2+t_1$ and $s_3=t_1$ where the domain of integration is $\{ s_1, s_2 \in (0,\infty), s_3 \in (0,s_1+s_2)\}$. Integrating first with respect to $s_3$, we obtain a factor $s_1+s_2$ which cancels out with the $t$ in the denominator of the above display. Overall, we get that the left hand side of \eqref{E:P_double} equals
\begin{align*}
2 \int_{\R^d} \d x \int_{(0,\infty)^2} \d s_1 \d s_2~ p_{s_1}^{\R^d}(x,x) p_{s_2}^{\R^d}(x,x) \int \P^{x,x;s_1}(\d \wp_1) \int \P^{x,x;s_2}(\d \wp_2) F(x,\wp_1,\wp_2).
\end{align*}
Combining this with the relation \eqref{E:excursion_bridge} between $\mu_{x,x}$ and the bridge probability measures, this concludes the proof.
\end{proof}

\subsection{Crossing \texorpdfstring{$n$}{n} times a spherical shell}\label{SS:BLM4}

For $n \geq 1$ and $r \in (0,1)$, let $\cross_{r,n}(\wp)$ be the event that the loop $\wp$ crosses $B(1) \setminus \overline{B(r)}$ exactly $n$ times.
We are going to describe precisely the loop measure $\loopmeasure_{\R^d}$ restricted to this event, up to rerooting the loop.
To this end, we will write $\mathbf{x} = (x_1,\dots, x_n)$, $\mathbf{y} = (y_1,\dots, y_n)$ and
\[
m_{\S^{d-1}}^{\otimes n}(\d \mathbf{x}) = m_{\S^{d-1}}(\d x_1) \dots m_{\S^{d-1}}(\d x_n)
\quad \text{and} \quad
m_{r\S^{d-1}}^{\otimes n}(\d \mathbf{y}) = m_{r\S^{d-1}}(\d y_1) \dots m_{r\S^{d-1}}(\d y_n).
\]

\begin{proposition}\label{P:loopmeasure_cross}
Let $r \in (0,1)$ and $n \ge 1$. For any bounded measurable function $F:\Pf \to \R$,
\begin{align}\label{E:P_loopmeasure_cross}
\int F([\wp]) \mathbf{1}_{\cross_{r,n}(\wp)} \loopmeasure_{\R^d}(\d \wp)
& = \frac{1}{n} \int_{(\S^{d-1})^n}
m_{\S^{d-1}}^{\otimes n}(\d \mathbf{x}) \int_{(r\S^{d-1})^n} m_{r\S^{d-1}}^{\otimes n}(\d \mathbf{y}) \times \\
& \times \prod_{j=1}^n \int \mu_{x_j,y_j}^{\R^d \setminus \overline{B(r)}}(\d e_{2j-1}) \int \mu_{y_j,x_{j+1}}^{B(1)}(\d e_{2j}) F([e_1 \wedge e_2 \wedge \dots \wedge e_{2n}]),
\notag
\end{align}
where the product of integrals means multiple integrals and $x_{n+1}=x_1$.
\end{proposition}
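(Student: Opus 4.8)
The plan is to reroot the loop at one of the $n$ points of $\partial B(1)$ at which it finishes a traversal of the shell from $\partial B(r)$ to $\partial B(1)$, to cut it there and at the ensuing first‑passage times into the $2n$ arcs appearing on the right‑hand side of \eqref{E:P_loopmeasure_cross}, and to recognise each arc, via the strong Markov property, as the relevant bulk‑to‑boundary excursion measure of Section~\ref{SS:excursion_domain}; the prefactor $1/n$ will account for the $n$ equivalent choices of rerooting point.

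\emph{Combinatorics of crossings.} Fix a loop $\wp$ in $\cross_{r,n}$, so that $\wp$ makes exactly $n$ traversals of the open annulus $A:=B(1)\setminus\overline{B(r)}$ from $\partial B(r)$ to $\partial B(1)$ (``outward''). Running along $\wp$ one visits the three regions $B(r)$, $A$, $\R^d\setminus\overline{B(1)}$ in a cyclic order in which $B(r)$ and $\R^d\setminus\overline{B(1)}$ are never adjacent (one must pass through $A$), and it follows that among the maximal sojourns of $\wp$ in $A$ the outward and inward traversals strictly alternate (possibly with non‑crossing excursions into $A$ interspersed); being cyclic, there are then also $n$ inward traversals. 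Fixing one outward traversal to be the first and letting $x_1\in\S^{d-1}$ be its endpoint, set recursively $y_j\in r\S^{d-1}$ to be the position of $\wp$ at its first hit of $\partial B(r)$ after $x_j$, and $x_{j+1}\in\S^{d-1}$ (indices modulo $n$) its position at the first hit of $\partial B(1)$ after $y_j$. One checks that $x_{n+1}=x_1$, that the arc $e_{2j-1}$ of $\wp$ between $x_j$ and $y_j$ stays in $\R^d\setminus\overline{B(r)}$, that the arc $e_{2j}$ between $y_j$ and $x_{j+1}$ stays in $B(1)$, and that $\wp=e_1\wedge\dots\wedge e_{2n}$; the only freedom is the choice of the first outward traversal, a cyclic relabelling which leaves the unrooted loop $[\wp]$ unchanged. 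This last point is the source of the factor $1/n$.

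\emph{Analytic decomposition.} Here I would imitate the computation in the proof of Lemma~\ref{L:double}. Starting from $\loopmeasure=\int_{\R^d}\d z\int_0^\infty\frac{\d t}{t}\,p^{\R^d}_t(z,z)\,\P^{z,z;t}$, and using that as a measure on unrooted loops $\loopmeasure$ is unchanged if one replaces the unit weight $1/T(\wp)$ by any other unit weight, reroot so that $\wp(0)$ lies inside an outward traversal — for instance with the unit weight given by the indicator that $\wp(0)$ lies in an outward traversal, divided by the total time $\wp$ spends in its outward traversals (positive and finite on $\cross_{r,n}$ for $n\ge1$). Then apply the strong Markov property at the first‑passage times of $\partial B(r)$ and $\partial B(1)$ identified above to cut the bridge into the $2n$ arcs; integrating out the root position merges the two halves of the arc containing the root into a single excursion, exactly as integrating over $z$ merges $\wp_1$ and $\wp_3$ in the proof of Lemma~\ref{L:double}, and integrating out the durations of the arcs cancels the $1/t$ factor by the same change of variables. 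By construction of the bulk‑to‑boundary excursion measures (cf.\ \eqref{E:excursion_bulk_boundary} and \eqref{E:Poisson_Green}), each arc going from a point of $\partial B(1)$ to $\partial B(r)$ — a Brownian path stopped at its first visit to $\partial B(r)$, reweighted by the appropriate transition density — is distributed as $\mu^{\R^d\setminus\overline{B(r)}}_{x_j,y_j}$ after $y_j$ is integrated against $m_{r\S^{d-1}}$, and symmetrically each arc from $\partial B(r)$ to $\partial B(1)$ is distributed as $\mu^{B(1)}_{y_j,x_{j+1}}$ after $x_{j+1}$ is integrated against $m_{\S^{d-1}}$. Collecting the $2n$ independent pieces yields the product of excursion integrals in \eqref{E:P_loopmeasure_cross}, while the $1/n$ comes from the choice of distinguished outward traversal, each loop in $\cross_{r,n}$ being produced by exactly $n$ of them.

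\emph{Alternative route and main obstacle.} One may also bypass the unit‑weight manipulation by invoking Proposition~\ref{P:decompo_loopmeasure}: on $\cross_{r,n}$ the point of $\wp$ closest to the origin lies in $B(r)$, so after rerooting there $\wp$ is distributed as $\int_0^r\d a\,a^{d-1}\int_{\S^{d-1}}m_{\S^{d-1}}(\d\omega)\,\mu^{\bub,\R^d\setminus B(0,a)}_{a\omega}$ restricted to the event $\cross_{r,n}$; the two arcs of $\wp$ adjacent to that closest point then merge, together with the $\d a$‑integral, into a single bulk‑to‑boundary excursion in $B(1)$ via the analogue of Proposition~\ref{P:decompo_loopmeasure} inside $B(r)$, and one is reduced to the same bookkeeping. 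I expect that bookkeeping to be the main obstacle: the multiplicative constants must be tracked so that, after the boundary points are integrated out, the excursion measures appear with precisely the normalisations of Section~\ref{SS:excursion_domain} (recall that $m_{\S^{d-1}}$ and $m_{r\S^{d-1}}$ are the full $(d-1)$‑dimensional Hausdorff measures and the Poisson‑kernel conventions must be matched, so that the total masses are $H_{\R^d\setminus\overline{B(r)}}(x_j,y_j)$ and $H_{B(1)}(y_j,x_{j+1})$); and one must verify that the numerous non‑crossing excursions of $\wp$ into $A$, into $B(r)$ and into $\R^d\setminus\overline{B(1)}$ are absorbed into the arcs $e_{2j-1},e_{2j}$ without changing the traversal count, the configurations on which the combinatorics above degenerates forming a $\loopmeasure$‑null set.
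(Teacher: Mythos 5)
Your ``alternative route'' is the paper's argument: one applies Proposition~\ref{P:decompo_loopmeasure} to reroot $\wp$ at its point $a\omega$ closest to the origin (so $a<r$ on $\cross_{r,n}$), cuts the bubble loop at the alternating first-passage times of $\S^{d-1}$ and $r\S^{d-1}$, and then remerges the two end pieces $\tilde e_1,\tilde e_2$, together with the $\int_0^r\d a\,a^{d-1}\int_{\S^{d-1}} m_{\S^{d-1}}(\d\omega)$ integrals, into the single bulk-to-boundary excursion $\mu^{B(1)}_{y_n,x_1}$ via the closest-point decomposition of that excursion measure --- precisely the ``analogue of Proposition~\ref{P:decompo_loopmeasure} inside $B(1)$'' you describe. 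The paper carries this out only for $n=1$, where it is clean. Your primary route via a unit weight on outward traversals is in the same spirit as Lemma~\ref{L:inversion_loop} and can also be made to work, and you are right that the bookkeeping is where the subtlety lives; let me name it. After the Markovian cut, integrating the root over the single outward traversal that contains it produces the weight $T_{\rm trav}(e_{2n})/T_{\rm out}(\wp)$ (the fraction of the total outward-traversal time spent in that one traversal), \emph{not} $1/n$: your unit weight does not factor through the decomposition. To finish, one must push forward to unrooted loops (where these fractions sum to $1$ over the $n$ cyclic relabellings) and then invoke the invariance of the product excursion measure under the cyclic shift $(e_1,\dots,e_{2n})\mapsto(e_3,\dots,e_{2n},e_1,e_2)$, together with the fact that the concatenation map to unrooted loops is exactly $n$-to-$1$ on $\cross_{r,n}$, to recognise this pushforward as $\tfrac1n$ times the pushforward of the unweighted product measure. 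The same $\Z/n$-invariance step is in fact implicitly required for $n\geq 2$ in the closest-point route as well (and left unsaid in the paper's ``we prove this for $n=1$''): rerooting at the global minimum constrains \emph{all} interior arcs $e_{2j}$, not merely the merged $e_{2n}$, to avoid $B(a)$, so the $a$-integral does not directly reconstitute the unconstrained $\mu^{B(1)}_{y_j,x_{j+1}}$ for $j<n$. What one actually obtains is the product measure restricted to the event that the global minimum lies in $e_{2n}$, and it is the cyclic invariance --- rather than the $a$-integral alone --- that converts this into $\tfrac1n$ times \eqref{E:P_loopmeasure_cross}. So your proposal correctly identifies both viable strategies and the origin of the $1/n$, but in either route the $1/n$ comes out of a symmetry argument layered on top of the computation, not out of the integration by itself.
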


We emphasise that the excursion measures on the right hand side of \eqref{E:P_loopmeasure_cross} are not normalised and thus implicitly contain Poisson kernel terms; see in particular Section \ref{SS:excursion_domain}.
A more readable version of \eqref{E:P_loopmeasure_cross} when $n=1$ is written in \eqref{E:pf_cross2}. In Corollary \ref{C:crossing_exactly_n} below, we spell out direct consequences of Proposition \ref{P:loopmeasure_cross}, coupled with elementary properties on Poisson point processes.

\begin{proof}
To ease the notations, we prove this for $n=1$. By Proposition \ref{P:decompo_loopmeasure}, we have
\begin{align}\label{E:pf_cross1}
\int F([\wp]) \mathbf{1}_{\cross_{r,n}(\wp)} \loopmeasure_{\R^d}(\d \wp)
= \int_0^r \d a~ a^{d-1} \int_{\S^{d-1}} m_{\S^{d-1}}(\d \omega) \int F([\wp]) \mathbf{1}_{\cross_{r,n}(\wp)} \mu^{\bub,\R^d\setminus B(a)}_{a\omega}(\d \wp).
\end{align}
Let $a \in (0,r)$, $\omega \in \S^{d-1}$, $\wp$ be a loop sampled according to $\mathbf{1}_{\cross_{r,n}(\wp)} \mu^{\bub,\R^d\setminus B(a)}_{a\omega}$ and consider the following stopping times:
\[
\tau_1 = \inf \{ t>0: \wp(t) \in \S^{d-1} \}
\quad \text{and} \quad
\tau_2 = \inf \{ t> \tau_1: \wp(t) \in r \S^{d-1} \}.
\]
This decomposes $\wp$ into three pieces:
\begin{itemize}
\item
an excursion $\tilde{e}_1 = (\wp(t))_{0 \le t \le \tau_1}$ in $B(1) \setminus \overline{B(a)}$ between two boundary points;
\item
an excursion $e_1 = (\wp(t))_{\tau_1 \le t \le \tau_2}$ in $\R^d \setminus \overline{B(r)}$ between a bulk point and a boundary point;
\item
an excursion $\tilde{e}_2 = (\wp(t))_{\tau_2 \le t \le T(\wp)}$ in $B(1) \setminus \overline{B(a)}$ between a bulk point and a boundary point.
\end{itemize}
This yields
\begin{align*}
& \int F([\wp])\mathbf{1}_{\cross_{r,n}(\wp)} \mu^{\bub,\R^d\setminus B(a)}_{a\omega}(\d \wp)
= \int_{\S^{d-1}} m_{\S^{d-1}}(\d x_1) \int_{r\S^{d-1}} m_{r\S^{d-1}}(\d y_1) \times \\
& \hspace{60pt} \times
\int \mu_{a\omega,x_1}^{B(1) \setminus \overline{B(a)}}(\d \tilde{e}_1)
\int \mu_{x_1,y_1}^{\R^d \setminus \overline{B(r)}}(\d e_1)
\int \mu_{y_1,a\omega}^{B(1) \setminus \overline{B(a)}}(\d \tilde{e}_2)
F([\tilde{e}_1 \wedge e_1 \wedge \tilde{e}_2]).
\end{align*}
We now exchange the order of integration in \eqref{E:pf_cross1} and fix $x_1$ and $y_1$ and integrate with respect to $a$ and $\omega$.
Decomposing an excursion from $y_1$ to $x_1$ in $B(1)$ according to its point closest to the origin, we have for any bounded measurable function $\tilde{F}:\Pf \to \R$,
\begin{align*}
& \int \tilde{F}(e_2) \mu_{y_1,x_1}^{B(1)}(\d e_2) = \int_0^r \hspace{-1.3pt} \d a~a^{d-1} \int_{\S^{d-1}} \hspace{-1.5pt} m_{\S^{d-1}}(\d \omega) \int \mu_{y_1,a\omega}^{B(1) \setminus \overline{B(a)}}(\d \tilde{e}_2) \int \mu_{a\omega,x_1}^{B(1) \setminus \overline{B(a)}}(\d \tilde{e}_1) \tilde{F}(\tilde{e}_2 \wedge \tilde{e}_1).
\end{align*}
Since $F([\tilde{e}_1 \wedge e_1 \wedge \tilde{e}_2]) = F([e_1 \wedge \tilde{e}_2 \wedge \tilde{e}_1])$, we have obtained that
\begin{align}\label{E:pf_cross2}
& \int F([\wp])\mathbf{1}_{\cross_{r,n}(\wp)} \mu^{\bub,\R^d\setminus B(a)}_{a\omega}(\d \wp) \\
& = \int_{\S^{d-1}} m_{\S^{d-1}}(\d x_1) \int_{r\S^{d-1}} m_{r\S^{d-1}}(\d y_1) \int \mu_{x_1,y_1}^{\R^d \setminus \overline{B(r)}}(\d e_1) \int \mu_{y_1,x_1}^{B(1)}(\d e_2) F([e_1 \wedge e_2]).
\notag
\end{align}
This is \eqref{E:P_loopmeasure_cross} for $n=1$ which concludes the proof.
\end{proof}

We now describe consequences of Proposition \ref{P:loopmeasure_cross}.
Let $\Lc_r^{(n)}$ be the subset of $\Lc_{\R^d}^\alpha$ consisting of the loops which cross $B(1) \setminus \overline{B(r)}$ exactly $n$ times.
Conditionally on $\# \Lc_r^{(n)}$, the loops in $\Lc_r^{(n)}$ are i.i.d. Let us denote by $\Pds_r^{(n)}$ their common distribution. An unrooted loop sampled according to $\Pds_r^{(n)}$ can be decomposed as the (unrooted) concatenation of $e_1, e_2, \dots, e_{2n}$ with $e_{2j-1}$ (resp. $e_{2j}$) being excursions from $\S^{d-1}$ to $r\S^{d-1}$ in $\R^d \setminus \overline{B(r)}$ (resp. from $r\S^{d-1}$ to $\S^{d-1}$ in $B(1)$).
We will denote by $\wt \Pds_r$ (resp. $\wh \Pds_r$) the law of a Brownian excursion from $\S^{d-1}$ to $r\S^{d-1}$ in $\R^d \setminus \overline{B(r)}$ (resp. from $r\S^{d-1}$ to $\S^{d-1}$ in $B(1)$) with starting and ending points that are independent and uniform on both spheres.
Finally, we introduce the following measure on $(\S^{d-1})^n \times (r\S^{d-1})^n$:
\begin{equation}
\label{E:nu_r^n}
d \nu_r^{(n)}(\mathbf{x},\mathbf{y}) :=
\frac1n \Big( \prod_{j=1}^n H_{\R^d \setminus \overline{B(r)}}(x_j,y_j) H_{B(1)}(y_j,x_{j+1}) \Big) m_{\S^{d-1}}^{\otimes n}(\d \mathbf{x}) m_{r\S^{d-1}}^{\otimes n}(\d \mathbf{y}),
\end{equation}
with the convention that $x_{n+1}=x_1$.

\begin{corollary}\label{C:crossing_exactly_n}
Let $n \geq 1$ be an integer and $r \in (0,1)$.

1. \textbf{Number of loops.} $\# \Lc_r^{(n)}$ is a Poisson random variable with mean $\alpha$ times
\begin{align}
\label{E:loopmeasure_exactly_n}
& \loopmeasure_{\R^d}(\cross_{r,n}(\wp))
= \nu_r^{(n)}((\S^{d-1})^n \times (r\S^{d-1})^n).
\end{align}

2. \textbf{Description of $\Pds_r^{(n)}$.}
Let $(X_1, \dots, X_n, Y_1, \dots, Y_n)$ be a random element of $(\S^{d-1})^n \times (r\S^{d-1})^n$ sampled according to $\frac1Z \nu_r^{(n)}$ where $Z$ is the normalising constant \eqref{E:loopmeasure_exactly_n}.
Conditionally on $(X_1, \dots, Y_n)$, let $e_{2j-1}$ (resp. $e_{2j}$) be independent excursions from $X_j$ to $Y_j$ in $\R^3 \setminus \overline{B(r)}$ (resp. from $Y_j$ to $X_{j+1}$ in $B(1)$), $j=1, \dots, n$.
Then the loop obtained by concatenating $e_1, e_2, \dots, e_{2n}$ is distributed according to $\Pds_r^{(n)}$.

3. \textbf{Decoupling.} Let $r' < r$, define $E_{r,r'}^{(n)} := \{ \exists j=1, \dots, n, e_{2j}$ reaches $B(r')\}$ and recall that $\wt \Pds_r$ and $\wh \Pds_r$ are defined above \eqref{E:nu_r^n}.
The laws of $(e_{2j-1})_{j=1}^n$ under $\Pds_r^{(n)}$, $(e_{2j-1})_{j=1}^n$ under $\Pds_r^{(n)}(\cdot \vert E_{r,r'}^{(n)})$ and $(e_{2j})_{j=1}^n$ under $\Pds_r^{(n)}$ are mutually absolutely continuous with respect to $\wt \Pds_r^{\otimes n}$, $\wt \Pds_r^{\otimes n}$ and $\wh \Pds_r^{\otimes n}$ respectively.
Moreover, each Radon-Nikodym derivative
\begin{equation}
\label{E:L_Radon-N}
\frac{\d \Pds_r^{(n)}((e_{2j-1})_{j=1}^n)}{\d \wt \Pds_r^{\otimes n}},
\quad
\frac{\d \Pds_r^{(n)}((e_{2j-1})_{j=1}^n \vert E_{r,r'}^{(n)})}{\d \wt \Pds_r^{\otimes n}}
\quad \text{and} \quad
\frac{\d \Pds_r^{(n)}((e_{2j})_{j=1}^n)}{\d \wh \Pds_r^{\otimes n}}
\end{equation}
is equal to $(1+O(r))^n$ (uniformly in $r'$ for the second one).
\end{corollary}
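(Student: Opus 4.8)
\textbf{Parts 1 and 2.} These follow from Proposition~\ref{P:loopmeasure_cross} together with standard Poisson point process bookkeeping. Since $\cross_{r,n}$ is a rerooting-invariant Borel property of a single loop, $\Lc_r^{(n)}$ is the restriction of the Poisson point process $\Lc_{\R^d}^\alpha$ to the loops with that property, hence itself Poisson with intensity $\alpha\,\loopmeasure_{\R^d}(\cdot\cap\cross_{r,n})$; consequently $\#\Lc_r^{(n)}$ is Poisson with mean $\alpha\,\loopmeasure_{\R^d}(\cross_{r,n}(\wp))$ and, conditionally on that number, the loops are i.i.d.\ with law $\Pds_r^{(n)}=\loopmeasure_{\R^d}(\cdot\cap\cross_{r,n})/\loopmeasure_{\R^d}(\cross_{r,n})$. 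Taking $F\equiv1$ in \eqref{E:P_loopmeasure_cross} and recalling (Section~\ref{SS:excursion_domain}) that a bulk-to-boundary excursion measure $\mu^D_{x,y}$ has total mass $H_D(x,y)$ identifies $\loopmeasure_{\R^d}(\cross_{r,n}(\wp))$ with $\nu_r^{(n)}((\S^{d-1})^n\times(r\S^{d-1})^n)$, which is \eqref{E:loopmeasure_exactly_n}. For Part~2 I would write $\mu^D_{x,y}=H_D(x,y)\bar\mu^D_{x,y}$ with $\bar\mu^D_{x,y}$ the associated probability measure, insert this for all $2n$ excursion measures in \eqref{E:P_loopmeasure_cross}, and pull the $2n$ Poisson kernels out of the path integrals; after dividing by $Z:=\loopmeasure_{\R^d}(\cross_{r,n})$ the right-hand side of \eqref{E:P_loopmeasure_cross} becomes exactly the two-step procedure of the statement. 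The prefactor $1/n$ in \eqref{E:nu_r^n} is just the cyclic relabelling of the $n$ inward crossings; it cancels against the normalisation.

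\textbf{Part 3, the two unconditioned derivatives.} The structural fact I would extract from Part~2 is: under $\Pds_r^{(n)}$, conditionally on the endpoint vector $(\mathbf X,\mathbf Y)$ — which is a measurable function of $(e_{2j-1})_{j=1}^n$, and also of $(e_{2j})_{j=1}^n$ — the $2n$ excursions are mutually independent with laws $\bar\mu^{\R^d\setminus\overline{B(r)}}_{X_j,Y_j}$ and $\bar\mu^{B(1)}_{Y_j,X_{j+1}}$; and $\wt\Pds_r^{\otimes n}$ (resp.\ $\wh\Pds_r^{\otimes n}$) has the same conditional law for $(e_{2j-1})_{j=1}^n$ (resp.\ $(e_{2j})_{j=1}^n$) given the endpoints, but with the endpoint vector uniform instead of $\nu_r^{(n)}/Z$-distributed. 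Hence the first and third Radon--Nikodym derivatives in \eqref{E:L_Radon-N} both equal the Radon--Nikodym derivative of $\nu_r^{(n)}/Z$ with respect to the uniform probability on $(\S^{d-1})^n\times(r\S^{d-1})^n$, evaluated at the endpoint vector of the excursions, i.e.\ a constant times $\prod_{j=1}^n H_{\R^d\setminus\overline{B(r)}}(x_j,y_j)H_{B(1)}(y_j,x_{j+1})$. Now \eqref{E:Poisson_sphere} applies directly to $H_{B(1)}$, and to $H_{\R^d\setminus\overline{B(r)}}$ after Brownian scaling $H_{\R^d\setminus\overline{B(r)}}(\cdot,\cdot)=r^{1-d}H_{\R^d\setminus\overline{B(1)}}(\cdot/r,\cdot/r)$; combined with $\|x_j-y_j\|,\|y_j-x_{j+1}\|\in[1-r,1+r]$ this shows $H_{B(1)}(y_j,x_{j+1})$ equals a $d$-dependent constant times $1+O(r)$ and $H_{\R^d\setminus\overline{B(r)}}(x_j,y_j)$ a $d$-dependent constant times $r^{-1}(1+O(r))$, uniformly in the endpoints. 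The same estimate controls $Z$; all sphere volumes and powers of $r$ cancel, leaving $(1+O(r))^n$.

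\textbf{Part 3, the conditioned derivative, and the main difficulty.} For the middle derivative I condition on $E_{r,r'}^{(n)}$, which lies in $\sigma((e_{2j})_{j=1}^n)$. By the conditional independence above, $(e_{2j-1})_{j=1}^n\perp E_{r,r'}^{(n)}$ given $(\mathbf X,\mathbf Y)$, so the law of $(e_{2j-1})_{j=1}^n$ given $E_{r,r'}^{(n)}$ keeps the same conditional structure, with the endpoint law replaced by $\frac{q(\mathbf x,\mathbf y)}{\Prob{E_{r,r'}^{(n)}}}\cdot\frac{\nu_r^{(n)}}{Z}$, where $q(\mathbf x,\mathbf y)=\Prob{E_{r,r'}^{(n)}\mid(\mathbf X,\mathbf Y)=(\mathbf x,\mathbf y)}=1-\prod_{j=1}^n(1-p_j(y_j,x_{j+1}))$ and $p_j(y,x)$ is the probability that $\bar\mu^{B(1)}_{y,x}$ enters $B(r')$. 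Thus the middle derivative is the first one times $q(\mathbf X,\mathbf Y)/\Prob{E_{r,r'}^{(n)}}$, and it remains to bound this factor by $(1+O(r))^n$ \emph{uniformly in $r'\in(0,r)$} — this is the main obstacle. I would handle it as follows: $\bar\mu^{B(1)}_{y,x}$ is the Doob $h$-transform of Brownian motion from $y$ by the harmonic function $H_{B(1)}(\cdot,x)$ stopped on $\partial B(1)$, so optional stopping gives $p_j(y,x)=\Expect{H_{B(1)}(B_{\tau_{r'}},x)\indic{\tau_{r'}<\tau_{\partial B(1)}}}/H_{B(1)}(y,x)$ with $\tau_{r'}$ the hitting time of $\partial B(r')$; evaluating $H_{B(1)}$ on $r\S^{d-1}$ and $r'\S^{d-1}$ via \eqref{E:Poisson_sphere} and using that $z\mapsto\|z\|^{2-d}$ is harmonic yields $p_j(y,x)=(1+O(r))\PROB{y}{\tau_{r'}<\tau_{\partial B(1)}}=(1+O(r))\rho$, $\rho:=\frac{r^{2-d}-1}{(r')^{2-d}-1}$, with the error uniform in $y\in r\S^{d-1}$, $x\in\S^{d-1}$ and $r'$. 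Telescoping the product to compare $q$ at two endpoint vectors, $|p_j-p_j'|=O(r)\rho$ together with $q\ge p_1\ge\rho/2$ give $q(\mathbf x,\mathbf y)/q(\mathbf x',\mathbf y')=1+O(nr)$ uniformly in $r'$; since $\Prob{E_{r,r'}^{(n)}}$ is a $(\nu_r^{(n)}/Z)$-average of $q$, the same bound holds for $q(\mathbf X,\mathbf Y)/\Prob{E_{r,r'}^{(n)}}$. Each of the three derivatives being bounded above and below away from $0$ finally gives the claimed mutual absolute continuity. The crux is the decoupling $E_{r,r'}^{(n)}\perp(e_{2j-1})_j$ given the endpoints, which is what makes the delicate $r'$-dependence factor through $q$ alone.
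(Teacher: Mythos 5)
Your proof is correct and follows essentially the same approach as the paper's for Parts~1, 2, and the two unconditioned derivatives in Part~3: Proposition~\ref{P:loopmeasure_cross} for the structure, conditional independence of the $2n$ excursions given their endpoints, and the Poisson-kernel estimates \eqref{E:Poisson_sphere} for the endpoint Radon--Nikodym derivative. For the conditioned derivative you go beyond what the paper writes: the paper reduces all three derivatives to the endpoint-density estimate without spelling out how conditioning on $E_{r,r'}^{(n)}$ perturbs the endpoint law, whereas you explicitly factor the $r'$-dependence through $q(\mathbf X,\mathbf Y)/\P(E_{r,r'}^{(n)})$, bound the hitting probability $p_j(y,x)=(1+O(r))\rho$ via the Doob-transform description of the $e_{2j}$'s and harmonicity of $\|z\|^{2-d}$, and show $q/\P(E_{r,r'}^{(n)})=(1+O(r))^n$ uniformly in $r'$ by the telescoping estimate — correctly identifying and closing the one step where $r'$-uniformity actually has to be argued.
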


The third point compares the law of the excursions $(e_{2j-1})_{j=1}^n)$ under $\Pds_r^{(n)}$ (which are not independent since $\nu_r^{(n)}$ is not a product measure) with the law of independent excursions. This is a decoupling estimate. Of course, we cannot consider all excursions in this procedure because the last point of an excursion coincides with the first point of the next and so we only consider every second excursion.

\begin{proof}[Proof of Corollary \ref{C:crossing_exactly_n}]
Points 1. and 2. are direct consequences of Proposition \ref{P:loopmeasure_cross}. 

We now prove the third point. Conditionally on their endpoints, the excursions $e_1, \dots, e_{2n}$ are independent. Since the law of the endpoints $(X_1, \dots, X_n, Y_1, \dots, Y_n)$ is $\frac1Z \nu_r^{(n)}$ (Point 2.), it is enough to show that the Radon--Nikodym derivative of $\frac{1}{Z} \nu_r^{(n)}$ with respect to the product measure $m_{\S^2}^{\otimes n} \otimes m_{r\S^2}^{\otimes n}$, normalised to be a probability measure, equals $(1+O(r))^n$. This follows from the following estimate.
Using the explicit expression \eqref{E:Poisson_sphere} of the Poisson kernels in $B(1)$ and $\R^d \setminus \overline{B(r)}$, we find that for all $y \in r\S^{d-1}$ and $x \in \S^{d-1}$,
\begin{equation}
\label{E:Poisson_approx}
H_{B(1)}(y,x) = (1+O(r)) \frac{\Gamma(d/2)}{2\pi^{d/2}}
\quad \text{and} \quad
H_{\R^d \setminus \overline{B(r)}}(x,y) = (1+O(r)) \frac{\Gamma(d/2)}{2\pi^{d/2}} r^{2-d}.
\end{equation}
This concludes the proof.
\end{proof}

\section{Three-dimensional estimates}

This short section records estimates on the three-dimensional Brownian loop measure that will be of use. It relies on the groundwork of Sections \ref{S:Excursions} and \ref{S:BLM}.

\subsection{Crossing a spherical shell}

\begin{lemma}\label{L:estimate_crossing}
The following estimates hold:
\begin{align}
\label{E:L_estimate_crossing1}
\loopmeasure_{\R^3}(\{ \wp : \S^2 \overset{\wp}{\longleftrightarrow} r\S^2 \})
& \le \frac{1+r}{2} ((1-r)^{-2}-1) \qquad  \text{for all } r \in (0,1); \\
\label{E:L_estimate_crossing2}
\loopmeasure_{\R^3}(\{ \wp : \S^2 \overset{\wp}{\longleftrightarrow} r\S^2 \})
& = (1+O(r)) r \qquad  \text{as } r  \to 0.
\end{align}
\end{lemma}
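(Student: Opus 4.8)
The plan is to apply Proposition~\ref{P:loopmeasure_cross} with $F \equiv 1$ and sum over the number of crossings $n \ge 1$. Indeed, the event $\{\wp : \S^2 \overset{\wp}{\longleftrightarrow} r\S^2\}$ is exactly the disjoint union over $n \ge 1$ of the events $\cross_{r,n}(\wp)$, so
\[
\loopmeasure_{\R^3}(\{ \wp : \S^2 \overset{\wp}{\longleftrightarrow} r\S^2 \}) = \sum_{n \ge 1} \loopmeasure_{\R^3}(\cross_{r,n}(\wp)) = \sum_{n \ge 1} \nu_r^{(n)}\big((\S^2)^n \times (r\S^2)^n\big),
\]
using \eqref{E:loopmeasure_exactly_n} from Corollary~\ref{C:crossing_exactly_n}. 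The first step is therefore to evaluate $\nu_r^{(n)}\big((\S^2)^n \times (r\S^2)^n\big)$ in closed form. By the definition \eqref{E:nu_r^n}, this is $\frac1n$ times an alternating product of integrals of Poisson kernels over the two spheres; since each factor $H_{\R^3 \setminus \overline{B(r)}}(x_j, \cdot)$ and $H_{B(1)}(y_j, \cdot)$ integrates (against the Hausdorff measure on the target sphere) to a hitting probability that can be computed via Lemma~\ref{L:Poisson} (with $d=3$), one gets a product of the form $\big(\text{(hitting prob.\ from } r\S^2 \text{ to } \S^2 \text{ in } \R^3\setminus\overline{B(r)}) \times (\text{hitting prob.\ from }\S^2\text{ to }r\S^2\text{ in }B(1))\big)^n$, up to keeping track of which Poisson kernel is the normalised (bulk) one and which carries the extra mass. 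The key quantities are: a Brownian motion started on $\S^2$ hits $r\S^2$ before escaping $B(1)$ with probability $\frac{r^{-1}-1}{r^{-1}-1} \cdots$ — more precisely, using $x \mapsto \|x\|^{-1}$ harmonic in $d=3$, the probability of hitting $r\S^2$ before $\S^2$ starting from the sphere of radius $\rho \in (r,1)$ is $\tfrac{\rho^{-1}-1}{r^{-1}-1}$; one evaluates this at $\rho = 1^-$ for the $B(1)$-excursions and uses that a Brownian motion started near $r\S^2$ in $\R^3 \setminus \overline{B(r)}$ returns to $\S^2$ with probability $r$ (as $\|x\|^{-1}$ is harmonic and bounded on the exterior, giving return probability $r/1 = r$ from radius $1$, but here one starts on $r\S^2$ so it is probability $1$ to leave, and the relevant factor is the Poisson-kernel mass). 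I expect the net result to be that $\nu_r^{(n)}$ has total mass exactly $\frac1n q(r)^n$ for an explicit $q(r) \in (0,1)$, presumably $q(r) = \tfrac{(1+O(r))}{?}$, and in fact working purely from the clean Poisson-kernel integrals (not the approximations) one should get $q(r)$ in closed form.

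Granting $\nu_r^{(n)}\big((\S^2)^n \times (r\S^2)^n\big) = \tfrac1n q(r)^n$, the second step is the summation: $\sum_{n\ge1}\tfrac1n q(r)^n = -\log(1 - q(r))$. So the loop measure of the crossing event equals $-\log(1-q(r))$. To finish, I would first establish the explicit value of $q(r)$: combining the two hitting-probability factors, I expect something like $q(r) = \big(\tfrac{1/\rho - 1}{1/r - 1}\big)\big|_{\rho\to1}$-type expressions to combine into $q(r) = \tfrac{2r}{1+r} - \tfrac{?}{?}$ or, more likely given the shape of the claimed bound, $q(r)$ such that $1 - q(r) = \tfrac{2}{1+r}(1-r)^{-?}$... — in any case, the cleanest route is: compute $\loopmeasure_{\R^3}(\cross_{r,1})$ directly and exactly from \eqref{E:pf_cross2} via Lemma~\ref{L:Poisson} to pin down $q(r)$, then note all higher-$n$ terms are $\tfrac1n q(r)^n$ by the product structure. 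For \eqref{E:L_estimate_crossing1}, I would bound $-\log(1-q(r)) \le \tfrac{q(r)}{1-q(r)}$ (valid since $-\log(1-x) \le x/(1-x)$ for $x \in [0,1)$) and check that $\tfrac{q(r)}{1-q(r)}$ equals, or is bounded by, $\tfrac{1+r}{2}\big((1-r)^{-2}-1\big)$; this reduces to an elementary algebraic inequality once $q(r)$ is known. For \eqref{E:L_estimate_crossing2}, since $q(r) = (1+O(r))\,c\,r$ for the appropriate constant $c$ (which must be $1$ by the matching of constants, or absorbed — one checks $q(r) \to 0$ and $q(r)/r \to 1$), we get $-\log(1-q(r)) = q(r) + O(q(r)^2) = (1+O(r))r$.

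The main obstacle I anticipate is purely bookkeeping: getting the constant in $q(r)$ exactly right, i.e. correctly tracking which of the two Poisson kernels in each $(x_j \to y_j \to x_{j+1})$ block contributes the normalised bulk-measure mass versus the $r^{2-d} = r^{-1}$-scaled boundary mass, and making sure the $\tfrac1n$ from \eqref{E:nu_r^n} is not double-counted against the cyclic symmetry. Concretely, one must verify that $\int_{r\S^2} m_{r\S^2}(dy_1)\,H_{\R^3\setminus\overline{B(r)}}(x_1,y_1) \cdot \big(\text{something}\big)$ collapses correctly; the integrals \eqref{E:L_Poisson1} and \eqref{E:L_Poisson2} of Lemma~\ref{L:Poisson} are exactly tailored for this, so the risk is low, but the arithmetic linking the resulting rational function of $r$ to the specific bound $\tfrac{1+r}{2}((1-r)^{-2}-1)$ in \eqref{E:L_estimate_crossing1} requires care. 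A secondary point is justifying the interchange of $\loopmeasure_{\R^3}$ with the infinite sum over $n$, which is immediate by monotone convergence since everything is nonnegative, and confirming $q(r) < 1$ so that the logarithm is finite (equivalently, that the crossing event has finite loop measure, which is forced by the fact that loops crossing the shell have diameter $\ge 1-r$ bounded below and the mass of large loops through a bounded region is finite).
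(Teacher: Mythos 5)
Your central claim --- that the total mass of $\nu_r^{(n)}$ equals $\tfrac1n q(r)^n$ for a single scalar $q(r)$ --- is false, and this is the fatal gap. Looking at \eqref{E:nu_r^n}, the product of Poisson kernels is \emph{cyclic}: $x_1 \to y_1 \to x_2 \to y_2 \to \cdots \to y_n \to x_1$. If you define the operator $T$ on $L^2(\S^2, m_{\S^2})$ by
\[
(Tf)(x) = \int_{r\S^2} m_{r\S^2}(\d y)\, H_{\R^3\setminus\overline{B(r)}}(x,y) \int_{\S^2} m_{\S^2}(\d x')\, H_{B(1)}(y,x')\, f(x'),
\]
then the total mass in \eqref{E:loopmeasure_exactly_n} is not $\tfrac1n q(r)^n$ but $\tfrac1n \mathrm{Tr}(T^n)$; you cannot peel off scalar factors one by one because each inner integral $\int_{r\S^2} m_{r\S^2}(\d y)\, H_{\R^3\setminus\overline{B(r)}}(x,y)\,H_{B(1)}(y,x')$ genuinely depends on both $x$ and $x'$. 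The operator $T$ is rotation invariant, hence diagonal on spherical harmonics: since the interior and exterior harmonic extensions of $Y_\ell$ scale like $\rho^\ell$ and $(r/\rho)^{\ell+1}$ respectively, one finds $T Y_\ell = r^{2\ell+1} Y_\ell$ with multiplicity $2\ell+1$. Thus $\mathrm{Tr}(T^n) = \sum_{\ell\ge0}(2\ell+1)\, r^{(2\ell+1)n}$, and your intended summation gives
\[
\loopmeasure_{\R^3}(\S^2 \overset{\wp}{\leftrightarrow} r\S^2) = -\sum_{\ell\ge0}(2\ell+1)\log\bigl(1-r^{2\ell+1}\bigr),
\]
not $-\log(1-q(r))$. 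The $\ell\ge1$ contributions are of order $r^3$, so \eqref{E:L_estimate_crossing2} would still come out with this corrected formula, and one can check numerically that \eqref{E:L_estimate_crossing1} is also consistent, but your write-up as given goes through only for the $\ell=0$ eigenvalue and silently drops the rest; the ``arithmetic linking the resulting rational function of $r$ to the specific bound'' cannot be made to work from the wrong factorized form.

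Separately, the worry you flag about ``double counting the $\tfrac1n$ against the cyclic symmetry'' is misplaced: the $\tfrac1n$ comes out of the rerooting in Proposition~\ref{P:decompo_loopmeasure} and is already correctly accounted for in Corollary~\ref{C:crossing_exactly_n}; there is no further symmetry factor to extract.

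For reference, the paper sidesteps crossing counts entirely: it applies Proposition~\ref{P:decompo_loopmeasure} directly to rewrite the loop measure as $\int_0^r \d a\, a^2 \int_{\S^2} m_{\S^2}(\d\omega)\, \mu_{a\omega}^{\bub,\R^3\setminus B(a)}(\wp\cap\S^2\ne\varnothing)$, evaluates the bubble measure as $\int_{\S^2} m_{\S^2}(\d x)\, H_{B(1)\setminus B(a)}(a\omega,x)\,H_{\R^3\setminus B(a)}(x,a\omega)$, bounds the exterior Poisson kernel via \eqref{E:Poisson_sphere}, and integrates in $a$ using \eqref{E:L_Poisson3}. If you want to pursue your route, the corrected trace/Fredholm-determinant identity above is a genuine alternative derivation, but it needs the full spectral decomposition of $T$, not a rank-one shortcut.
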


\begin{proof}
By Proposition \ref{P:decompo_loopmeasure},
\begin{equation}
\label{E:samedi1}
\loopmeasure_{\R^3}(\{ \wp : \S^2 \overset{\wp}{\longleftrightarrow} r\S^2 \})
= \int_0^r \d a~ a^2 \int_{\S^2} m_{\S^2}(\d \omega) \mu_{a\omega}^{\bub,\R^3 \setminus B(a)}(\wp \cap \S^2 \neq \varnothing).
\end{equation}
Let $a \in [0,r]$ and $\omega \in \S^2$. We have
\begin{align*}
\mu_{a\omega}^{\bub,\R^3 \setminus B(a)}(\wp \cap \S^2 \neq \varnothing)
= \int_{\S^2} m_{\S^2}(\d x) H_{B(1)\setminus B(a)}(a\omega,x) H_{\R^3 \setminus B(a)}(x,a\omega).
\end{align*}
Using the explicit expression \eqref{E:Poisson_sphere} of the Poisson kernel in $\R^3 \setminus B(a)$, we have for each $x \in \S^2$,
\begin{equation}
\label{E:samedi2}
H_{\R^3 \setminus B(a)}(x,a\omega') \le \frac{1+r}{4\pi a(1-a)^2}
\quad \text{and} \quad
H_{\R^3 \setminus B(a)}(x,a\omega') = \frac{(1+O(r))}{4\pi a}.
\end{equation}
Using the first bound, together with \eqref{E:L_Poisson3}, we get that
\begin{align*}
\mu_{a\omega}^{\bub,\R^3 \setminus B(a)}(\wp \cap \S^2 \neq \varnothing)
\le \frac{2}{4\pi a(1-a)^2} \int_{\S^2} m_{\S^2}(\d x) H_{B(1)\setminus B(a)}(a\omega,x)
= \frac{1+r}{4\pi a(1-a)^2} \frac{a^{-2}}{a^{-1} - 1}.
\end{align*}
Plugging this estimate in \eqref{E:samedi1}, we have obtained
\[
\loopmeasure_{\R^3}(\{ \wp : \S^2 \overset{\wp}{\longleftrightarrow} r\S^2 \})
\le (1+r) \int_0^r \frac{\d a}{(1-a)^3} = \frac{1+r}{2} ((1-r)^{-2}-1).
\]
This proves \eqref{E:L_estimate_crossing1}. To prove \eqref{E:L_estimate_crossing2}, we use the same method by using the second estimate in \eqref{E:samedi2} instead of the first one.
\end{proof}

\begin{lemma}\label{L:sphere_bound}
There exists $C>0$ such that for all $R\ge1$,
\begin{equation}
\label{E:sphere_bound}
\loopmeasure ( \{ \wp : \mathrm{diam}(\wp) \ge 1, \wp \cap \partial B(0,R) \neq \varnothing \} ) \le C R^2.
\end{equation}
\end{lemma}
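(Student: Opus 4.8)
The plan is to use the decomposition of $\loopmeasure$ by the point closest to the origin (Proposition~\ref{P:decompo_loopmeasure}), but rescaled and recentred so that the reference point is the centre $0$ of the sphere $\partial B(0,R)$. Equivalently, by translation invariance of $\loopmeasure$ we may root the loop at the point $\wp(t_\ast)$ minimising $\|\wp(t)\|$, writing $a = \|\wp(t_\ast)\|$ for the radial distance of that closest point. If $\wp$ intersects $\partial B(0,R)$ then certainly $a \le R$; if moreover $\diam(\wp) \ge 1$ then the loop, started from a point on $a\S^2$, must reach $\partial B(0, a+1/2)$ (since its diameter forces it to travel distance at least $1/2$ from its closest point, hence to radius at least $a+1/2$ — more carefully, either it reaches radius $a+1/4$ or it stays within $B(0,a+1/4)$ and then, being outside $B(0,a)$, has diameter at most $1/4$, a contradiction once $1/4 < 1$). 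So, applying Proposition~\ref{P:decompo_loopmeasure},
\[
\loopmeasure(\{\wp : \diam(\wp)\ge 1,\ \wp\cap\partial B(0,R)\ne\varnothing\})
\le \int_0^R \d a\, a^{2} \int_{\S^{2}} m_{\S^{2}}(\d\omega)\, \mu^{\bub,\R^3\setminus B(0,a)}_{a\omega}\big(\wp \cap \partial B(0,a+\tfrac14) \ne \varnothing\big).
\]

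Next I would bound the inner bubble-measure quantity. A loop counted by $\mu^{\bub,\R^3\setminus B(0,a)}_{a\omega}$ that reaches radius $a+1/4$ can be cut at its first hitting time $\tau_1$ of $\partial B(0,a+1/4)$ and its last return, so that $\mu^{\bub,\R^3\setminus B(0,a)}_{a\omega}(\wp\cap\partial B(0,a+1/4)\ne\varnothing)$ equals
\[
\int_{\partial B(0,a+1/4)} m(\d x)\, H_{\R^3\setminus B(0,a)}(a\omega,x)\, H_{\R^3\setminus B(0,a)}(x,a\omega),
\]
by the same bulk-to-boundary / boundary-to-bulk decomposition used in Lemma~\ref{L:estimate_crossing}. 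Using the explicit Poisson kernel \eqref{E:Poisson_sphere} for the complement of a ball, $H_{\R^3\setminus B(0,a)}(a\omega,x) = \frac{1}{4\pi}\frac{\|x\|^2/a^2 - 1}{\|x-a\omega\|^3/a}$, which on $\|x\| = a+1/4$ with $a\ge 0$ is comparable to $a\cdot\frac{(a+1/4)^2-a^2}{a^2}\cdot\|x-a\omega\|^{-3} \asymp \|x-a\omega\|^{-3}$ for the "far" part and is uniformly bounded; one checks the surface integral $\int_{\partial B(0,a+1/4)} m(\d x)\|x-a\omega\|^{-3}H_{\R^3\setminus B(0,a)}(x,a\omega)$ is $O(1)$ uniformly in $a\in[0,R]$ (for small $a$ one uses scale invariance, for $a$ bounded away from $0$ the kernels are bounded and the sphere has bounded area). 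This gives an estimate of the form $\mu^{\bub,\R^3\setminus B(0,a)}_{a\omega}(\wp\cap\partial B(0,a+1/4)\ne\varnothing) \le C a^{-2}$ uniformly in $a\in[0,R]$, the $a^{-2}$ being forced on dimensional grounds by comparison with Lemma~\ref{L:estimate_crossing}'s computation (there the shell was $B(1)\setminus B(a)$ of relative width $\asymp 1$, here it is $B(a+1/4)\setminus B(a)$ of relative width $\asymp 1/a$, which changes a $1$ into $a^{-2}$).

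Plugging this in, $\int_0^R \d a\, a^2 \cdot |\S^2| \cdot C a^{-2} = C' R$, which is even better than $CR^2$; but the bound is only trustworthy once $a$ is not too small — more precisely, the naive estimate above degrades as $a\to 0$ because a loop of diameter $\ge 1$ rooted very close to the origin must travel a distance $\gg a$, making the "reach radius $a+1/4$" event essentially certain for the bubble but the loop measure of loops of diameter $\ge 1$ rooted in $B(0,\epsilon)$ is itself small. The clean way to handle this is to split the $a$-integral at $a = 1$: for $a\in[1,R]$ use the argument above (contributing $O(R)$); for $a\in(0,1)$ bound the relevant loop-measure directly by $\loopmeasure(\{\wp : \diam(\wp)\ge 1, \wp\cap \partial B(0,R)\ne\varnothing, \|\wp(t_\ast)\| < 1\})$, which by inclusion is at most $\loopmeasure(\{\wp : \diam(\wp)\ge 1,\ \wp\cap B(0,1)\ne\varnothing,\ \wp\cap\partial B(0,R)\ne\varnothing\})$; rerooting this at its closest point to $0$ and using that such a loop crosses $B(0,R)\setminus \overline{B(0,a)}$ we get, via Proposition~\ref{P:decompo_loopmeasure} and Lemma~\ref{L:Poisson}~\eqref{E:L_Poisson3} (applied with the pair $(a,R)$ as in the computation in Lemma~\ref{L:estimate_crossing}), a contribution bounded by $\int_0^1 \d a\, a^2 \cdot |\S^2| \cdot O\!\big(a^{-1}\cdot \frac{(d-2) a^{1-d}}{a^{2-d}-R^{2-d}}\big) = O(1)$ uniformly in $R\ge 1$. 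Summing the two regimes yields $\loopmeasure(\cdots) \le C R$, hence a fortiori $\le CR^2$.

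\emph{Main obstacle.} The one place that needs genuine care is the uniform-in-$a$ control of the bubble measure of the event "reach radius $a+1/4$", i.e. verifying that the surface integral of a product of two Poisson kernels over $\partial B(0,a+1/4)$ is $O(a^{-2})$ for all $a\in[1,R]$ with a constant independent of $a$ and $R$; the cleanest route is to reduce to $a=1$ by the scaling \eqref{E:scaling_excursion_bubble} of the bubble measure (which turns the shell $B(a+1/4)\setminus B(a)$ into $B(1+1/(4a))\setminus B(1)$, of width at least comparable to $1/(4a)$ when $a\ge 1$) and then apply exactly the estimate already carried out in the proof of Lemma~\ref{L:estimate_crossing} with $r$ replaced by $(1+1/(4a))^{-1}$, giving the factor $a^{-2}$ transparently. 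Everything else is bookkeeping with the Poisson kernel formula \eqref{E:Poisson_sphere} and the explicit integral \eqref{E:L_Poisson3}.
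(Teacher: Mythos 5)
There is a genuine gap: the estimate
$\mu^{\bub,\R^3\setminus B(0,a)}_{a\omega}\big(\wp\cap\partial B(0,a+\tfrac14)\neq\varnothing\big)\le C a^{-2}$,
which you claim uniformly for $a\in[0,R]$, is false once $a\gtrsim 1$; in that range the quantity is bounded above \emph{and below} by positive absolute constants, with no decay in $a$. Your own scaling reduction shows this: by \eqref{E:scaling_excursion_bubble} with factor $\lambda=a+\tfrac14$, the left-hand side equals $\lambda^{-3}\,\mu^{\bub,\R^3\setminus B(r)}_{r\omega}(\wp\cap\S^2\neq\varnothing)$ with $r=a/\lambda$, and the bound derived inside the proof of Lemma~\ref{L:estimate_crossing} (with its parameter $a$ taken equal to our $r$) gives $\mu^{\bub,\R^3\setminus B(r)}_{r\omega}(\wp\cap\S^2\neq\varnothing)\le\frac{1+r}{4\pi r^2(1-r)^3}$; since $1-r=\tfrac{1/4}{\lambda}$, this is of order $\lambda^3$, and the prefactor $\lambda^{-3}$ cancels it exactly, leaving a constant, not $a^{-2}$. (Equivalently, as $a\to\infty$ the rooted sphere flattens out and the bubble measure converges to $\mu^{\bub,\mathbb{H}^3}_0(\text{reach height }\tfrac14)\in(0,\infty)$.) With the correct constant-order bound the $a\in[1,R]$ part of your integral is $\int_1^R a^2\,\d a\asymp R^3$, not $O(R)$. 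Your final answer $O(R)$ should in any case have been a warning: the true left-hand side of \eqref{E:sphere_bound} is of order $R^2$, since already loops of diameter in $[1,2]$ whose closest point to $0$ lies at radius in $[R-\tfrac12,R]$ have $\loopmeasure$-mass $\asymp R^2$ by Proposition~\ref{P:decompo_loopmeasure}, and a positive fraction of those hit $\partial B(0,R)$.

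The source of the over-count is that you replaced the constraint $\wp\cap\partial B(0,R)\neq\varnothing$ by the far weaker ``reach $\partial B(0,a+\tfrac14)$''; this discards essentially all information whenever $a$ is not within $O(1)$ of $R$. You repaired this in the regime $a<1$ by reinstating the crossing constraint, but in the regime $a\in[1,R-1]$ the loss is fatal. The paper's proof avoids it with a prior split: loops crossing the shell $B(R)\setminus\overline{B(R-1)}$ have $\loopmeasure$-mass $\le R^2$ by scale invariance and \eqref{E:L_estimate_crossing1}; and any remaining loop hitting $\partial B(0,R)$ but \emph{not} $\partial B(0,R-1)$ has its closest point to the origin at radius $a\in[R-1,R]$, so after Proposition~\ref{P:decompo_loopmeasure} and the (correct, uniform) $O(1)$ bound on the bubble measure of $\{\diam(\wp)\ge1\}$, the $a$-integral contributes only $\int_{R-1}^R a^2\,\d a\cdot O(1)=O(R^2)$. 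To salvage a single re-rooting without this split you would have to keep the event that the bubble excursion actually hits $\partial B(0,R)$, whose bubble measure decays in $R-a$.
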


\begin{proof}
By scale invariance and \eqref{E:L_estimate_crossing1},
\[
\loopmeasure(\{ \wp: (R-1)\S^2 \overset{\wp}{\longleftrightarrow} R\S^2 \}) = \loopmeasure(\{ \wp: (1-1/R)\S^2 \overset{\wp}{\longleftrightarrow} \S^2 \}) \le R^2.
\]
It then remains to bound
\[
\loopmeasure ( \{ \wp : \mathrm{diam}(\wp) \ge 1, \wp \cap \partial B(0,R) \neq \varnothing , \wp \cap (R-1)\S^2 = \varnothing\} ).
\]
By Proposition \ref{P:decompo_loopmeasure}, it is equal to
\begin{align*}
& \int_{R-1}^R \d a~a^2 \int_{\S^2} m_{\S^2}(\d \omega) \mu_{a\omega}^{\bub,\R^3 \setminus B(a)}(\mathrm{diam}(\wp) \ge 1, \wp \cap \partial B(0,R) \neq \varnothing) \\
& \le \int_{R-1}^R \d a~a^2 \int_{\S^2} m_{\S^2}(\d \omega) \mu_{a\omega}^{\bub,\R^3 \setminus B(a)}(\mathrm{diam}(\wp) \ge 1).
\end{align*}
A simple computation shows that the above bubble measure can be bounded by some constant independent of $a$, $\omega$ or $R$. It remains to integrate the volume of $B(R) \setminus B(R-1)$ which is of order $R^2$. This concludes the proof.
\end{proof}

\subsection{Welding lemma}

We finish this section by stating and proving an intermediate result (Lemma \ref{L:intersection_loop} below) which will be useful to weld successive loops.
Before doing so, we recall a result from \cite{zbMATH00918284} on the intersection of independent Brownian paths in 3D.

\begin{lemma}\label{L:intersection_BM}
Let $B_1$ and $B_2$ be two independent Brownian paths with uniform starting points on $\S^2$. Let $T_i(R) := \inf \{ t > 0: |B_i(t)| = R \}$, $i=1,2$.
There exist $\xi_3(1,1) >0$ and constants $C_1, C_2 >0$ such that for all $R \geq 1$,
\[
C_1 R^{-\xi_3(1,1)} \leq \Prob{ \{ B_1(t), t \in [0,T_1(R)] \} \cap \{ B_2(t), t \in [0,T_2(R)] \} = \varnothing } \leq C_2 R^{-\xi_3(1,1)}.
\] 
\end{lemma}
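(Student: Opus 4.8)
The plan is to follow the classical subadditivity approach to Brownian intersection exponents; the statement is due to \cite{zbMATH00918284}. For $R \ge 1$ set
\[
q(R) := \Prob{ B_1[0,T_1(R)] \cap B_2[0,T_2(R)] = \varnothing },
\]
which is non-increasing in $R$. The heart of the matter is a two-sided quasi-multiplicativity estimate: there exists $C \ge 1$ such that
\[
C^{-1} q(R)\, q(S) \le q(RS) \le C\, q(R)\, q(S), \qquad R, S \ge 1.
\]
Granting this, the sequences $n \mapsto -\log q(2^n) + \log C$ and $n \mapsto -\log q(2^n) - \log C$ are respectively sub- and super-additive, so Fekete's lemma yields the existence of $\xi_3(1,1) := \lim_{R\to\infty} \frac{-\log q(R)}{\log R} \in [0,\infty)$; combining the two sides of Fekete with the monotonicity of $q$ to interpolate between powers of $2$ then gives the announced two-sided bound $C_1 R^{-\xi_3(1,1)} \le q(R) \le C_2 R^{-\xi_3(1,1)}$.

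To obtain the quasi-multiplicativity estimate I would run each $B_i$ until it exits $B(0,R)$ and then restart by the strong Markov property; after rescaling by $R^{-1}$, the continuations are two independent Brownian motions started from points on $\S^2$ that must avoid each other before exiting $B(0,S)$. The upper bound $q(RS) \le C q(R) q(S)$ then follows by bounding the continuation probability by its supremum over starting configurations and comparing that supremum to $q(S)$ up to a bounded factor. The lower bound $q(RS) \ge C^{-1} q(R) q(S)$ is the difficult direction: it requires a separation lemma in the spirit of Lawler's, stating that, conditionally on the two paths avoiding one another up to radius $R$, with probability bounded below uniformly in $R$ they reach $R\S^2$ in a ``well-separated'' configuration --- each exiting through its own tube with a definite gap between the tubes --- so that one may splice in an independent rescaled copy of the radius-$S$ avoidance event and concatenate. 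I expect this separation lemma, i.e.\ the control of the conditional law of the pair of paths given the rare non-intersection event, to be the main obstacle; the supporting ingredients (strong Markov property, Brownian scaling, Fekete's lemma) are routine.

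It remains to check $\xi_3(1,1) > 0$, equivalently $q(R) \to 0$ as $R \to \infty$. This follows from the classical fact that two independent Brownian paths in $\R^3$ intersect with positive probability \cite[Theorem~9.22]{morters2010brownian}: localising the first intersection in a bounded ball gives $q(R_0) < C^{-1}$ for some finite $R_0$, whence $\widetilde q := C q$ is genuinely submultiplicative with $\widetilde q(R_0) < 1$, so $\widetilde q(R_0^n) \le \widetilde q(R_0)^n \to 0$ and $\xi_3(1,1) \ge \frac{-\log \widetilde q(R_0)}{\log R_0} > 0$.
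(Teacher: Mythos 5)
The paper does not prove Lemma~\ref{L:intersection_BM}; it records the statement and attributes the existence of the exponent to \cite{zbMATH04123007} (subadditivity) and the two-sided up-to-constants estimate to \cite{zbMATH00918284}. Your outline reproduces precisely the strategy those references execute: strong Markov restarting and Brownian scaling to set up a quasi-multiplicative recursion, a separation lemma to control the conditional exit configuration, Fekete's lemma, and a Harnack-type endpoint comparison. So your proposal is not a different route --- it \emph{is} the route the paper delegates to the literature, correctly identified and summarised.

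One small overclaim and one genuine gap. The overclaim: the upper bound $q(RS)\le C\,q(R)\,q(S)$ is not materially easier than the lower one; bounding the continuation probability by its supremum over exit configurations and then comparing that supremum to $q(S)$ is itself a separation/Harnack statement (the extremal configuration is roughly antipodal, and must be shown comparable to the uniform average), and in the classical treatments both inequalities lean on the separation lemma. The gap: the final paragraph as written does not close. Positive-probability intersection, which is all \cite[Theorem~9.22]{morters2010brownian} gives, yields only $q(R_0)<1$, and this cannot be bootstrapped to $q(R_0)<C^{-1}$ through quasi-multiplicativity alone: sending $S\to\infty$ in $q(RS)\le C q(R)q(S)$ shows that any scenario with $q_\infty:=\lim_{R\to\infty} q(R)>0$ would force $q(R)\ge C^{-1}$ for all $R$, which is entirely consistent with $q(R_0)<1$. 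What is actually needed is $q(R)\to 0$, i.e.\ that two independent Brownian paths in $\R^3$ intersect \emph{almost surely} (Dvoretzky--Erd\H{o}s--Kakutani), a strictly stronger input than what you cite. A clean workaround is to run the submultiplicativity argument on $\bar q(R):=\sup_{x,y\in\S^2}\P_{x,y}\bigl(B_1[0,T_1(R)]\cap B_2[0,T_2(R)]=\varnothing\bigr)$, which is genuinely submultiplicative without a constant, dominates $q$, and by compactness and continuity of $(x,y)\mapsto\P_{x,y}(\cdot)$ satisfies $\bar q(R_0)<1$ for some finite $R_0$.
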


The existence of the exponent was first derived in \cite{zbMATH04123007} using a subadditivity argument. The up-to-constant estimate stated above was derived in \cite{zbMATH00918284}.
Contrary to the planar case, the Brownian intersection exponent $\xi_3(1,1)$ is not known in 3D. However, it is known that $1/2 < \xi_3(1,1) < 1$ \cite{zbMATH04186799}. Actually, crude bounds on the non-intersection probability would be enough for our purposes.

\medskip

Let $r >0$, $a>1$ and let $(B_t, t \in [0,\tau_r])$ be a Brownian motion that starts uniformly on $r\S^{2}$ and killed upon reaching $ar \S^2$. For any path $\wp$, let
\begin{equation}
\label{E:g_ra}
g_{r,a}(\wp) := \Prob{ \{ \wp_t, t \in [0,T(\wp)]\} \cap \{B_t, t \in [0,\tau_r] \} \neq \varnothing },
\end{equation}
where only the randomness of $B$ is integrated out. The quantity $g_{r,a}(\wp)$ is a notion of size of $\wp$ viewed from Brownian motion. One could certainly use the capacity of $\wp$ instead.

\begin{lemma}\label{L:intersection_loop}
Let $\eta \in (0,1)$. There exist $a=a(\eta)>$ large enough, $c=c(\eta,\alpha)>0$ such that for all $r \in (0,1)$ and any loop $\wp_0$ with $g_{1,a}(\wp_0) \geq \eta$,
\begin{equation}
\Prob{ \exists \wp \in \Lc_{B(a^2)}^\alpha: \wp_0 \overset{\wp}{\longleftrightarrow} B(r), g_{r,a}(\wp) \geq \eta } \geq c r.
\end{equation}
\end{lemma}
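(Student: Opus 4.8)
The plan is to build an explicit, scale-covariant connection from $\wp_0$ down to $B(r)$ using loops of the soup in the annulus $B(a^2) \setminus B(r)$, and to quantify its probability using Palm's formula together with the crossing estimates of Section~4. By scale invariance of $\loopmeasure_{\R^3}$ and the hypothesis $g_{1,a}(\wp_0) \ge \eta$, it suffices (after rescaling by $r$) to produce, with probability $\ge cr$, a single loop $\wp \in \Lc^\alpha$ that crosses the shell $B(1) \setminus \overline{B(r)}$, that intersects a Brownian motion started uniformly on $\S^2$ and run until it hits $a\S^2$ (which is how $g_{1,a}(\wp_0)\ge\eta$ enters: $\wp_0$ is ``visible'' from such a Brownian motion with probability $\ge\eta$), and that has $g_{r,a}(\wp) \ge \eta$, i.e. is itself visible from a Brownian motion started on $r\S^2$ and killed on $ar\S^2$. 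Concretely I would look for a loop in $\Lc^\alpha_r{}^{(1)}$, the loops crossing $B(1)\setminus\overline{B(r)}$ exactly once, as described in Corollary~\ref{C:crossing_exactly_n}.

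First I would estimate the $\loopmeasure$-mass of the relevant family of loops. By \eqref{E:L_estimate_crossing2}, $\loopmeasure(\{\wp : \S^2 \overset{\wp}{\longleftrightarrow} r\S^2\}) = (1+O(r))r$, so up to constants there is mass of order $r$ in loops crossing the shell; by Corollary~\ref{C:crossing_exactly_n} a positive proportion of this mass is carried by loops crossing exactly once, whose law $\Pds_r^{(1)}$ is (Point~3) absolutely continuous, with Radon--Nikodym derivative $1+O(r)$, with respect to the product of an excursion $\wt e$ from $\S^2$ to $r\S^2$ in $\R^3\setminus\overline{B(r)}$ and an excursion $\wh e$ from $r\S^2$ to $\S^2$ in $B(1)$, with independent uniform endpoints. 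Under $\Pds_r^{(1)}$ the concatenated loop $\wp = \wt e \wedge \wh e$ automatically satisfies $\diam(\wp) \ge 1-r$, so it is a macroscopic loop near $\S^2$; I claim that with probability bounded below (uniformly in small $r$) it simultaneously intersects the killed Brownian motion from $\S^2$ (using that $\wt e$ behaves like an unconditioned Brownian excursion that fills a nondegenerate region near $\S^2$, and Lemma~\ref{L:intersection_BM}-type lower bounds on hitting, or more simply a crude a-priori bound) \emph{and} has $g_{r,a}(\wp)\ge\eta$ (using that $\wh e$ comes close to $r\S^2$, so a Brownian motion started uniformly on $r\S^2$ hits a neighbourhood of $\wh e$ with probability bounded below; here I would fix $a=a(\eta)$ large so that the Brownian motion from $r\S^2$ has enough time before being killed at $ar\S^2$). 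Combining these two events, which are essentially determined by $\wh e$ and $\wt e$ respectively and hence, by the decoupling of Point~3, roughly independent, gives a conditional probability bounded below by some $p=p(\eta)>0$.

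The main estimate then comes from Palm's formula (Lemma~\ref{L:Palm}): writing $E$ for the event in the statement and bounding below
\[
\Prob{E} \ge \Prob{\exists\, \wp \in \Lc^\alpha_{\R^3}:\ \wp_0 \overset{\wp}{\longleftrightarrow} B(r),\ g_{r,a}(\wp)\ge\eta,\ \wp\subset B(a^2)},
\]
I would use the first-moment lower bound $\Prob{\exists \wp\in\Lc^\alpha \text{ good}} \ge c\,\E[\#\{\text{good loops}\}] = c\alpha\,\loopmeasure(\{\text{good loops}\})$, valid because the number of good loops is Poisson (a loop is ``good'' if it crosses the shell once, lies in $B(a^2)$, meets the given killed Brownian motion from $\S^2$, and has $g_{r,a}\ge\eta$). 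Actually one must be slightly careful that the event also involves the randomness of $\wp_0$'s visibility, so the clean statement is: with probability $\ge\eta$ over the auxiliary Brownian motion defining $g_{1,a}(\wp_0)$, that Brownian motion meets $\wp_0$, and then any loop meeting that Brownian motion is connected to $\wp_0$; so it suffices to lower-bound the expected number of soup loops meeting a fixed Brownian path from $\S^2$ to $a\S^2$ that additionally have $g_{r,a}\ge\eta$ and cross to $B(r)$. By the previous paragraph this expected number is $\ge \alpha\, p(\eta)\, \loopmeasure(\{\wp : \S^2\overset{\wp}{\longleftrightarrow} r\S^2, \text{crosses once}, \subset B(a^2)\}) \ge c'(\eta,\alpha)\,r$ for $r$ small, using \eqref{E:L_estimate_crossing2} and that restricting to $B(a^2)$ and to single crossings only costs a constant factor. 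For $r$ bounded away from $0$ the statement is trivial by a separate compactness/positivity argument, so we may assume $r$ small. This yields $\Prob{E}\ge c\,r$ with $c = c(\eta,\alpha)>0$.

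I expect the main obstacle to be the two geometric intersection lower bounds — showing that the excursion $\wt e$ meets the killed Brownian motion from $\S^2$, and that $\wh e$ is ``visible'' (has $g_{r,a}\ge\eta$) — with constants \emph{uniform in $r$}. The delicate point is that as $r\to 0$ the excursion $\wh e$ in $B(1)$ from $r\S^2$ to $\S^2$ spends a vanishing fraction of its time near $r\S^2$, so one needs that the mere fact of starting on $r\S^2$ already guarantees, with probability bounded below, that a Brownian motion started on $r\S^2$ hits a neighbourhood of $\wh e$ before leaving $ar\S^2$; this should follow from the invariance of $\mu_{x,y}$ under scaling \eqref{E:scaling_excursion_whole} and the fact that a $3$-dimensional Brownian excursion has nondegenerate range near its endpoint, but it requires care, and choosing $a$ large (depending on $\eta$) is what makes it work. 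The decoupling between the ``top'' excursion (controlling visibility from $\S^2$) and the ``bottom'' excursion (controlling $g_{r,a}$) is handled by Corollary~\ref{C:crossing_exactly_n}(3), whose Radon--Nikodym bound $1+O(r)$ is exactly what lets us pretend the two excursions are independent at the cost of a $(1+O(r))$ factor.
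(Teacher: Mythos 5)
Your broad strategy does mirror the paper's (exactly-once--crossing loops via Corollary~\ref{C:crossing_exactly_n}, its decoupling Point 3, visibility estimates via Lemma~\ref{L:intersection_BM}, and a Palm/Poisson first-moment wrap-up), but there is a genuine gap in the step that is supposed to produce the connection of $\wp$ to $\wp_0$. You aim to show that the crossing loop $\wp$ meets an \emph{auxiliary} killed Brownian motion $B$ from $\S^2$ to $a\S^2$, and then to invoke $g_{1,a}(\wp_0)\ge\eta$ to say $B$ meets $\wp_0$, concluding that ``any loop meeting that Brownian motion is connected to $\wp_0$.'' But $B$ is not a loop of the soup: $\wp\cap B\neq\varnothing$ and $B\cap\wp_0\neq\varnothing$ do not imply $\wp\cap\wp_0\neq\varnothing$, and the lemma requires the single-loop event $\wp_0\overset{\wp}{\longleftrightarrow}B(r)$, i.e.\ $\wp$ must itself intersect both $\wp_0$ and $B(r)$. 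Relatedly, your crossing shell $B(1)\setminus\overline{B(r)}$ is too small: a loop confined near $\S^2$ cannot be expected to reach $\wp_0$, which lives at the scale $B(a)\setminus B(1)$. The paper works with the shell $B(a)\setminus\overline{B(r)}$ and uses $g_{1,a}(\wp_0)\ge\eta$ by noting, via Corollary~\ref{C:crossing_exactly_n}, that \emph{the crossing loop itself} contains a sub-excursion whose law between $\S^2$ and $a\S^2$ is close (up to $1+O(a^{-1})$) to that of the killed Brownian motion in the definition of $g_{1,a}$, so $\wp$ directly hits $\wp_0$ with probability $\ge(1+O(a^{-1}))\eta$.

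A second, smaller issue concerns how you combine the two target events $E_1=\{\wp\cap\wp_0\neq\varnothing\}$ and $E_2=\{g_{r,a}(\wp)\ge\eta\}$: ``roughly independent because determined by $\wt e$ and $\wh e$ respectively'' is not accurate, since both $E_1$ and $E_2$ depend on the full loop $\wp$ and not on disjoint pieces, so the decoupling does not directly yield independence of these events. The paper avoids the issue by a union bound: it shows $\Pds(E_1)\ge(1+O(a^{-1}))\eta$ and, using Lemma~\ref{L:intersection_BM} together with Markov's inequality, $\Pds(E_2)\ge(1+O(a^{-1}))(1-\eta/2)$; for $a=a(\eta)$ large the two probabilities sum to more than $1$, whence $\Pds(E_1\cap E_2)\ge c(\eta)>0$. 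Note this requires $\Pds(E_2)$ to be quantitatively close to $1$ rather than merely bounded below, which is why the non-intersection exponent $\xi_3(1,1)$ (and the choice of $a$ depending on $\eta$) enters.
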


\begin{proof}
Let $r \in (0,1)$. We start by noticing that if $B$ is a Brownian motion that starts uniformly on $r\S^{2}$ and which is killed upon reaching $\partial B(ar)$, then Lemma \ref{L:intersection_BM} shows that
\begin{align*}
\Expect{1-g_{r,a}(B)} \leq C_2 a^{-\xi_3(1,1)}.
\end{align*}
By Markov inequality, we deduce that
\begin{equation}
\label{E:intersection1}
\Prob{g_{r,a}(B) \geq \eta} = 1-\Prob{1-g_{r,a}(B) > 1-\eta}
\geq 1 - C_2 a^{-\xi_3(1,1)}/(1-\eta).
\end{equation}
By picking $a$ large enough, we can ensure that the right hand side is at least $1-\eta/2$.

Let $\wp_0$ be a loop as in the statement of the lemma. We apply Corollary~\ref{C:crossing_exactly_n} to $n=1$.
The probability that there is a loop crossing exactly once
$B(a) \setminus \overline{B(r)}$ is of order $r$.
Consider such a loop $\wp$ whose law is described in Corollary~\ref{C:crossing_exactly_n}. Let $E_1$ be the event that $\wp$ intersects $\wp_0$ and $E_2$ the event that $g_{r,a}(\wp) \geq \eta$. Using Corollary~\ref{C:crossing_exactly_n} Point 3. and the definition of $g_{1,a}(\wp_0)$, we have that $\Prob{E_1} \geq (1+O(a^{-1})) \eta$. On the other hand, by Corollary~\ref{C:crossing_exactly_n} Point 3. and \eqref{E:intersection1}, $\Prob{E_2} \geq (1+O(a^{-1}))(1-\eta/2)$. If $a$ is large enough, the sum of these two probabilities exceeds one and by a union bound we deduce that $\Prob{E_1 \cap E_2} \ge c$ for some constant $c>0$ which may depend on $\eta$ and $a$. This concludes the proof.
%
%
%
%
%
%
\end{proof}

\part{Brownian loop soup clusters}\label{PartB}

In this entire part, we work in dimension $d=3$.

\section{First percolative properties}\label{S:first_perco}

This section gathers many rather soft properties concerning the structure of clusters, depending on the value of $\alpha$.

\subsection{Ergodicity and zero-one law}

Recall that we denote by $\Lf$ \eqref{E:space_collection_loops} the space of locally finite collections of unrooted loops.
For $x \in \R^3$, let $\tau_x : \Lf \to \Lf$ be the shift operator induced by the shift $y \in \R^3 \mapsto x+y \in \R^3$.

\begin{lemma}[Ergodicity]\label{L:ergodicity}
For all $x \in \R^3 \setminus \{0\}$, the shift operator $\tau_x$ is ergodic with respect to the law of $\Lc^\alpha$. That is, the law of $\Lc^\alpha$ is preserved by $\tau_x$ and if $A \subset \Lf$ is a Borel set which is $\tau_x$-invariant ($\tau_x^{-1}(A) = A$), then $\P(\Lc^\alpha \in A) \in \{0,1\}$.
\end{lemma}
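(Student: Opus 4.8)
The plan is to prove the two assertions separately, the invariance being routine and the ergodicity being deduced from the stronger mixing property. Invariance of the law of $\Lc^\alpha$ under $\tau_x$ follows at once from the translation invariance of the intensity measure: pushing $\P^{y,y;t}$ forward by a translation yields $\P^{y+x,y+x;t}$, so the substitution $y\mapsto y+x$ in \eqref{E:loopmeasure} gives $\tau_x\circ\loopmeasure=\loopmeasure$, and a Poisson point process is transformed into a Poisson point process with the same intensity by any intensity-preserving map.

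For ergodicity, fix a $\tau_x$-invariant Borel set $A\subset\Lf$; the goal is to show $\P(\Lc^\alpha\in A)=\P(\Lc^\alpha\in A)^2$. The one place where the possibility of arbitrarily large loops must be addressed is a localisation step: for $k\ge1$ I would set $V_k:=\{\wp\in[\Pf]:\mathrm{diam}(\wp)\le k,\ \wp(\S^1_{T(\wp)})\cap B(0,k)\neq\varnothing\}$. Since every individual loop is a bounded subset of $\R^3$, the sets $V_k$ increase to $[\Pf]$; moreover any loop in $V_k$ lies in $B(0,2k)$, so for $|nx|>4k$ the sets $V_k$ and $\tau_x^{-n}V_k$ are disjoint. (The fact that $\loopmeasure(V_k)$ is infinite, because of the small loops, plays no role here.) Writing $\mathcal{G}_k:=\sigma(\Lc^\alpha\cap V_k)$, and noting that for any measurable $U$ one has $\#(\Lc^\alpha\cap U)=\lim_k\#(\Lc^\alpha\cap(U\cap V_k))$, the $\sigma$-algebras $\mathcal{G}_k$ increase to the full $\sigma$-algebra generated by $\Lc^\alpha$, so by martingale convergence $f_k:=\E[\mathbf{1}_A\mid\mathcal{G}_k]\to\mathbf{1}_A$ in $L^1$. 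Fix $k$ and take $n$ with $|nx|>4k$: then $\Lc^\alpha\cap V_k$ and $\Lc^\alpha\cap\tau_x^{-n}V_k$ are independent by the independence-over-disjoint-sets property of Poisson point processes, and since $f_k$ is $\mathcal{G}_k$-measurable while $f_k\circ\tau_x^n$ is measurable with respect to $\sigma(\Lc^\alpha\cap\tau_x^{-n}V_k)$, one gets $\E[f_k\cdot(f_k\circ\tau_x^n)]=\E[f_k]\,\E[f_k\circ\tau_x^n]=\P(\Lc^\alpha\in A)^2$, using the invariance of the law and $\E[f_k]=\P(\Lc^\alpha\in A)$. On the other hand $\mathbf{1}_A=\mathbf{1}_A\circ\tau_x^n$ almost surely because $A$ is invariant, so $\P(\Lc^\alpha\in A)=\E[\mathbf{1}_A\cdot(\mathbf{1}_A\circ\tau_x^n)]$, and replacing $\mathbf{1}_A$ by $f_k$ in this identity costs at most $2\|\mathbf{1}_A-f_k\|_{L^1}$ uniformly in $n$. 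Combining, $|\P(\Lc^\alpha\in A)-\P(\Lc^\alpha\in A)^2|\le 2\|\mathbf{1}_A-f_k\|_{L^1}$, and letting $k\to\infty$ forces $\P(\Lc^\alpha\in A)\in\{0,1\}$.

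The only real obstacle I anticipate is the localisation step just described: unlike in lattice percolation there is no minimal scale, so a single loop can be arbitrarily large and events localised near the origin are a priori not independent of events localised far away. The resolution is to exhaust $[\Pf]$ by the sets $V_k$, which both cap the diameter and require the loop to meet $B(0,k)$ — an exhaustion that remains valid precisely because each individual loop is bounded — after which the usual Poisson independence and martingale convergence close the argument with no quantitative estimate on the loop measure needed.
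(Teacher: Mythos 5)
Your proof is correct and rests on exactly the same key idea as the paper's: invariance follows from translation invariance of $\loopmeasure$, and mixing follows because loops localised in a bounded region (which, crucially, any single loop is) give rise to Poisson restrictions that become independent once translated far enough apart. The only difference is the measure-theoretic wrapper used to extend from localised events to all events: the paper runs a Dynkin $\pi$--$\lambda$ argument over the algebra of $D^c$-independent events, whereas you use the backward-martingale convergence $\E[\mathbf 1_A\mid\mathcal G_k]\to\mathbf 1_A$ together with the uniform-in-$n$ approximation bound. Both close the argument in the same way, and both handle the lack of a minimal scale in the model through the observation that each individual loop is bounded, so the exhaustion by your $V_k$ (or by the paper's $\{\wp\subset D\}$) suffices even though $\loopmeasure(V_k)=\infty$.
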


The discrete analogue of this result can be found in \cite[Proposition 3.2]{chang2016phase}.

\begin{proof}[Proof of Lemma \ref{L:ergodicity}]
This will follow from standard arguments. We provide a proof for completeness.
Let $x \in \R^3 \setminus \{0\}$.
In this proof, we will denote by $\Mc^\alpha$ the law of $\Lc^\alpha$, which is a Borel measure on $\Lf$.
Firstly, by translation invariance of $\Lc^\alpha$, $\tau_x$ preserves $\Mc^\alpha$. It remains to show that any Borel set $A \subset \Lf$ that is $\tau_x$-invariant has $\Mc^\alpha$-measure 0 or 1.
To show this, it is enough to prove that for all Borel sets $A,B \subset \Lf$,
\begin{equation}
    \label{E:mixing}
\lim_{n \to \infty} \Mc^\alpha(A \cap t_x^{-n}(B)) = \Mc^\alpha(A) \Mc^\alpha(B).
\end{equation}
Fix $B$ first.
For any $D \subset \R^3$ and any Borel set $A \subset \Lf$, we will say that $A$ is $D^c$-independent if for all $\Lc \in \Lf$, $\Lc \in A$ if, and only if, the restriction $\{ \wp \in \Lc: \wp \subset D\}$ of $\Lc$ to $D$ belongs to $A$. Denote by $\Pc$ the collection of Borel sets $A \subset \Lf$ which are $D^c$-independent for some bounded $D \subset \R^3$. The collection $\Pc$ is a $\pi$-system which generates the $\sigma$-algebra of $\Lc$.
The set
\[
\mathcal{G}=\{ A \subset \Lf \text{ Borel}: \lim_{n \to \infty} \Mc^\alpha(A \cap t_x^{-n}(B)) = \Mc^\alpha(A) \Mc^\alpha(B) \}
\]
is a $\lambda$-system. By Dynkin's $\pi-\lambda$ theorem, to show that $\mathcal{G}$ contains all Borel subsets $A \subset \Lf$, it is enough to show that $\Pc \subset \mathcal{G}$. Fixing now $A \in \mathcal{P}$ and running the above argument with $B$ shows that, to obtain \eqref{E:mixing} for all Borel subsets $A,B$, it is enough to derive it for all $A,B \in \Pc$. But for such events this is clear since $A$ and $t_x^{-n}(B)$ are independent for $n$ large enough.
\end{proof}

\begin{lemma}[Zero-one law]\label{L:zero_one}
Let $k \in \{c, u, \b2b, \tr\}$ and let $E_k$ be the event appearing in the definition of $\Ir_k$. Then for all $\alpha \in \Ir_k$, $\P(E_k)=1$.

\noindent
For $R>1$, let $E_\tr^R$ be the event appearing in the definition of $\Ir_\tr^R$. Then for all $\alpha \in \Ir_\tr^R$, $\P(E_\tr^R)=1$.
\end{lemma}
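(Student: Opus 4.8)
The plan is to prove the zero-one law separately for the four events indexed by $k \in \{c,u,\b2b,\tr\}$ together with the $R$-truncated events, using ergodicity (Lemma \ref{L:ergodicity}) for three of them and a separate argument for the ball-to-ball event. First I would observe that the events defining $\Ir_c$, $\Ir_u$, $\Ir_\tr$ and $\Ir_\tr^R$ are all $\tau_x$-invariant: whether $\Lc^\alpha$ (or $\Lc_{\ge 1}^\alpha$, or $\Lc_{1,R}^\alpha$) contains an unbounded cluster, and whether it contains a \emph{unique} unbounded cluster, is unchanged if we translate the whole configuration by any $x \in \R^3$. Since $\Lc^\alpha$, $\Lc_{\ge1}^\alpha$ and $\Lc_{1,R}^\alpha$ are each translation-invariant in law, and since $\Lc_{\ge 1}^\alpha$, $\Lc_{1,R}^\alpha$ are deterministic functionals of $\Lc^\alpha$ commuting with translations, Lemma \ref{L:ergodicity} applies and gives that each of these invariant events has probability $0$ or $1$. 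By definition of $\Ir_k$ as the set of $\alpha$ where the probability is positive, on $\Ir_k$ the probability must then equal $1$; likewise $\P(E_\tr^R) = 1$ for $\alpha \in \Ir_\tr^R$. One small point to check for uniqueness: ``unique unbounded cluster'' is genuinely a Borel, translation-invariant property of the loop configuration (e.g. ``there exists an unbounded cluster and any two loops lying in unbounded clusters are in the same cluster''), which I would spell out.

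The only case requiring a genuinely different argument is $\b2b$, because the event $E_\b2b = \{\forall R>1:\ B(0,1) \overset{\Lc^\alpha}{\longleftrightarrow} \partial B(0,R)\}$ is anchored at the origin and is \emph{not} translation-invariant. Here I would argue as follows. For $x \in \R^3$ let $E_\b2b(x)$ be the analogous event with $B(0,1)$ and $\partial B(0,R)$ replaced by balls centred at $x$, so that $E_\b2b(x) = \tau_{-x}^{-1}(E_\b2b)$ and all the $E_\b2b(x)$ have the same probability by translation invariance. The key observation is that $E_\b2b(x)$ says precisely that the cluster of $\Lc^\alpha$ touching $B(x,1)$ is unbounded (more precisely, that there is a single cluster meeting $B(x,1)$ that reaches every sphere $\partial B(x,R)$; one should be slightly careful since a priori several clusters could touch $B(x,1)$, but the event as stated asks for one witnessing cluster at \emph{every} scale, and by a compactness/diagonal argument a configuration in $E_\b2b(x)$ has an unbounded cluster meeting $B(x,1)$). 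Consequently, the event $\bigcup_{x \in \Q^3} E_\b2b(x)$ — ``some rational-centred unit ball is met by an unbounded cluster'' — is translation-invariant and Borel, hence has probability $0$ or $1$ by ergodicity. If it has probability $0$ then in particular $\P(E_\b2b) = \P(E_\b2b(0)) = 0$. If it has probability $1$, I would still need to upgrade this to $\P(E_\b2b(0)) = 1$ whenever $\P(E_\b2b(0)) > 0$.

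To make that upgrade I would use a $0$-$1$ argument exploiting the independence structure of the Poisson point process together with the insertion tolerance coming from Palm's formula (Lemma \ref{L:Palm}). Decompose $\Lc^\alpha = \Lc^\alpha_{\ge \rho} \sqcup \Lc^\alpha_{<\rho}$ into loops of diameter $\ge \rho$ and $< \rho$ for a small parameter $\rho$; the event $E_\b2b(0)$ is increasing in $\Lc^\alpha$. Given $\P(E_\b2b(0)) > 0$: by the ergodicity result of the previous paragraph, almost surely some rational-centred unit ball $B(x,1)$ is met by an unbounded cluster $\mathcal{C}$. This cluster reaches $\partial B(0, R)$ for all large $R$ and comes within distance $2$ of $0$; so, using FKG or insertion tolerance, by adding loops at diameters in $[\rho, C|x|]$ near the segment joining $0$ to $B(x,1)$ one can connect $B(0,1)$ to $\mathcal{C}$ with positive conditional probability, and then fill in the remaining scales $R \le |x|+2$ by the same device, so that $E_\b2b(0)$ occurs. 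Formally I would phrase this as: the event that \emph{some} rational-centred ball is met by an unbounded cluster has probability $1$, and conditionally on it one can locally modify $\Lc^\alpha$ to produce $E_\b2b(0)$ with positive probability; since $E_\b2b(0)$ itself must therefore have positive probability — but we want probability exactly $1$.

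The cleanest route, which I would actually take, is to bypass the local-modification step and instead prove directly that $\{E_\b2b(x)\ \text{for some}\ x\in\Q^3\}$ has the \emph{same} probability as $E_\b2b(0)$. Indeed $E_\b2b(0)$ is, up to a null set, equivalent to ``the cluster of $\Lc^\alpha$ through $0$ (the union of all loops in clusters meeting $B(0,1)$ that survive at every scale) is unbounded'', and by a union bound over a translation-invariant family together with ergodicity one gets that if $\P(E_\b2b(0))>0$ then $\P(\bigcup_{x\in\Q^3} E_\b2b(x))=1$. Then the heart of the matter is a Newman--Schulman-type dichotomy: either a.s. no ball meets an unbounded cluster, or a.s. every ball does — the latter because, on the positive-probability event that $B(x,1)$ is met by an unbounded cluster, one uses translation invariance and the fact that an unbounded cluster, being unbounded, must pass arbitrarily close to \emph{every} point, and insertion tolerance at small diameters to attach $B(y,1)$ to it for any other rational $y$. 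Thus $\P(E_\b2b(0)) \in \{0,1\}$. I expect this last insertion-tolerance/dichotomy step to be the main obstacle: one must be careful that ``$\wp \cap \wp' \ne \varnothing$'' connections can be forced with positive probability by adding finitely many loops of bounded diameter (which is where one invokes that two Brownian paths in $\R^3$ intersect with positive probability, cf.\ the discussion after \eqref{E:loopmeasure}, or equivalently Lemma \ref{L:intersection_loop}), and that these additions do not destroy the long-range connection already present — which follows since $E_\b2b$ is an increasing event.
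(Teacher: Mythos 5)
Your treatment of the four translation-invariant events ($k\in\{c,u,\tr\}$ and $E_\tr^R$) is correct and coincides with the paper's proof: one checks invariance and invokes Lemma~\ref{L:ergodicity}. You also correctly diagnose that the $\b2b$ case is different because $E_\b2b$ is anchored at the origin, and you are right that passing to a translation-invariant enlargement handled by ergodicity is the first step.

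The gap is in the step you yourself flag as ``the main obstacle.'' After showing the translation-invariant enlargement (your $\bigcup_{x\in\Q^3} E_\b2b(x)$, or the paper's $\tilde E_\b2b$) has probability $0$ or $1$, you still need to descend to $\P(E_\b2b(0))=1$. You propose a Newman--Schulman/insertion-tolerance dichotomy, asserting ``either a.s.\ no ball meets an unbounded cluster, or a.s.\ every ball does.'' This dichotomy is exactly what needs proof, and insertion tolerance alone does not deliver it: the analogous claim is \emph{false} for Bernoulli percolation, where in the whole supercritical phase the event $\{0\leftrightarrow\infty\}$ has probability $\theta(p)\in(0,1)$, not $\{0,1\}$, even though insertion tolerance holds. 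Your intermediate claim that ``an unbounded cluster, being unbounded, must pass arbitrarily close to every point'' is also not automatic — it's a density statement about clusters that is proved in the paper only later (Lemma~\ref{L:I2_no_bdd}) and only in the regime $\alpha\in\Ir_\infty$. So the blind insertion-tolerance route, as written, does not close.

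What you are missing is that the paper exploits \emph{scale invariance}, not merely translation invariance, and that is what separates the Brownian loop soup from Bernoulli percolation at this step. The paper's argument: first, $\tilde E_\b2b := \{\exists r>0, \forall R>r, \, B(0,r)\overset{\Lc^\alpha}{\longleftrightarrow}\partial B(0,R)\}$ is translation-invariant (sandwiching spheres), so by ergodicity $\P(\tilde E_\b2b)\in\{0,1\}$, and it equals $1$ when $\alpha\in\Ir_\b2b$. Then set $\rho:=\inf\{r>0:\forall R>r,\ B(0,r)\leftrightarrow\partial B(0,R)\}$. Since $\Lc^\alpha$ is exactly scale invariant, the law of $\rho$ is scale invariant, hence $\rho$ is supported on $\{0,+\infty\}$; as $\tilde E_\b2b$ forces $\rho<\infty$ a.s., one gets $\rho=0$ a.s., so $\rho<1$ a.s., i.e.\ $\P(E_\b2b)=1$. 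This is the one-line mechanism that replaces the dichotomy you were hoping to establish by insertion tolerance, and it is the genuinely new ingredient your proposal lacks.
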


\begin{proof}[Proof of Lemma \ref{L:zero_one}]
For $\Ir_k$, $k \in \{c,u,\tr\}$, and for $\Ir_\tr^R$, $R>1$, this follows directly from Lemma~\ref{L:ergodicity} since the related events are all invariant under translations and have thus probability 0 or 1.
Concerning $\Ir_\b2b$, we need to work a bit more since it is not clear that the event $E_\b2b$ appearing in the definition of $\Ir_\b2b$ is translation invariant.
Consider the modified event
\[
\tilde{E}_\b2b := \{ \exists r>0, \forall R>r, B(0,r) \overset{\Lc^\alpha}{\longleftrightarrow} \partial B(0,R) \}.
\]
This event is invariant under translations since for any $x \in \R^3$, we can ``sandwich'' spheres around $x$ by two spheres around the origin and vice versa. By Lemma \ref{L:ergodicity}, we deduce that $\P(\tilde{E}_\b2b) \in \{0,1\}$. Let $\alpha \in \Ir_\b2b$. By definition, for such a value of $\alpha$, $\P(\tilde{E}_\b2b)>0$ and thus $\P(\tilde{E}_\b2b)=1$. Now, consider the random variable
\[
\rho := \inf\{ r>0: \forall R>r, B(0,r) \overset{\Lc^\alpha}{\longleftrightarrow} \partial B(0,R) \}.
\]
By scale invariance of $\Lc^\alpha$, the law of $\rho$ is scale invariant and thus $\rho$ takes values in $\{0,+\infty\}$. Since $\P(\tilde{E}_\b2b)=1$, $\rho$ is finite a.s. and therefore, $\rho=0$ a.s. In particular, $\rho < 1$ a.s. and thus $\P(E_\b2b)=1$: with probability 1, for all $R>1$, there is a cluster of $\Lc^\alpha$ intersecting both $B(0,1)$ and $\partial B(0,R)$.
This concludes the proof.
\end{proof}

\subsection{First qualitative behaviours of the supercritical phases}

\begin{lemma}\label{L:I2_no_bdd}
Let $\alpha \in \Ir_\infty$. Almost surely, each cluster of $\Lc^\alpha$ is unbounded and dense in $\R^3$. Moreover, either $\Lc^\alpha$ contains a unique cluster almost surely, or infinitely many almost surely.
\end{lemma}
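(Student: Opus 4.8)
The plan is to establish three assertions in sequence: (i) almost surely there is no bounded cluster; (ii) almost surely every cluster is dense in $\R^3$; (iii) almost surely the number of unbounded clusters is a deterministic constant, which by ergodicity must be $0$, $1$, or $\infty$, and since $\alpha \in \Ir_\infty$ it is $1$ or $\infty$. For (i), I would argue by contradiction using Palm's formula (Lemma \ref{L:Palm}). Suppose with positive probability some cluster of $\Lc^\alpha$ is bounded; by translation invariance and countable additivity, with positive probability there is a bounded cluster contained in some fixed ball $B(0,N)$, hence (summing over loops) with positive $\loopmeasure$-probability a loop $\wp$ sampled from $\loopmeasure_{\R^3}$ has the property that, when added to an independent $\Lc^\alpha$, it lies in a bounded cluster of $\Lc^\alpha \cup \{\wp\}$. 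But $\alpha \in \Ir_\infty$ means $\Lc^\alpha$ has an unbounded cluster a.s.; the added loop $\wp$ is bounded, say of diameter $\le D$ and contained in some ball $B$. There is positive probability that some loop of $\Lc^\alpha$ belonging to an unbounded cluster passes through a small ball intersecting $\wp$ — indeed one can condition on the configuration near $B$ and use that, with positive probability, a fresh loop (again via Palm) or simply an existing loop of the unbounded cluster comes within the range of $\wp$. The cleanest route: the unbounded cluster is a.s. \emph{dense} once we have (ii), so it must intersect any loop; hence I would actually prove (ii) first and then (i) becomes: any bounded cluster would have to be disjoint from the (dense) unbounded cluster, which is impossible since a dense set intersects every loop's trace (loops have nonempty interior of positive reach? — more carefully, a dense set need not meet a given compact curve, so this needs the stronger statement that the unbounded cluster is not just dense but its closure is all of $\R^3$ \emph{and} it actually passes arbitrarily close; one instead argues that with positive probability a loop of the unbounded cluster genuinely hits the bounded cluster, using that two independent Brownian loops meeting a common small ball intersect with positive probability, \cite[Theorem 9.22]{morters2010brownian}).

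For (ii), fix a ball $B = B(z,\delta)$ with rational center and radius. Let $A_B$ be the event that some cluster of $\Lc^\alpha$ does \emph{not} intersect $B$. I claim $\P(A_B) = 0$. By Palm's formula it suffices to show that, for a loop $\wp$ sampled from $\loopmeasure_{\R^3}$ independently of $\Lc^\alpha$, almost surely the cluster of $\wp$ in $\Lc^\alpha \cup \{\wp\}$ intersects $B$. Now the cluster of $\wp$ is unbounded (this is exactly assertion (i) applied after the fact, or can be run simultaneously): it contains loops of arbitrarily large diameter, hence loops crossing arbitrarily large spherical shells centered at $z$. Using the estimates of Section \ref{S:one-arm} — or more elementarily, using the FKG inequality and the fact that, conditionally on a loop crossing a shell $\partial B(z,2^{-k-1}) \leftrightarrow \partial B(z,2^{-k})$, there is with probability bounded below (uniformly in $k$, by scale invariance) a further connection into $B(z,2^{-k-1})$ — one gets that the cluster of $\wp$, being unbounded, reaches into every scale around $z$ and in particular into $B$ with probability one by a Borel--Cantelli / $0$--$1$ argument on independent shells. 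Intersecting over all rational balls $B$ gives that a.s. every cluster is dense.

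For (iii), let $M = M(\Lc^\alpha) \in \{0,1,2,\dots,\infty\}$ be the number of unbounded clusters. Since $\Lc^\alpha$ is invariant (in law) under the ergodic translations $\tau_x$ of Lemma \ref{L:ergodicity}, and $M$ is translation invariant, $M$ is a.s. constant. By definition of $\Ir_\infty$, $M \ge 1$ a.s. Suppose $2 \le M < \infty$. By translation invariance there is a bounded ball $B(0,N)$ which, with positive probability, intersects at least two distinct unbounded clusters, say $\Cc_1 \ne \Cc_2$. But by (ii) every cluster is dense, so both $\Cc_1$ and $\Cc_2$ come arbitrarily close to every point; picking a small ball in which both pass, the corresponding loops of $\Cc_1$ and $\Cc_2$ intersect with positive probability — more robustly, resample: condition on everything outside a tiny ball $B'$ that is visited by a loop $\wp_1 \in \Cc_1$ and a loop $\wp_2 \in \Cc_2$, and note that with positive probability an independent loop $\wp$ from $\loopmeasure_{B'}$ (added via sprinkling, legitimate because $\loopmeasure$ charges loops inside $B'$ and by the Poissonian structure $\Lc^\alpha$ restricted to $B'$ is a.s. nonempty and an extra loop can be added by absolute continuity of Poisson processes) meets both, merging $\Cc_1$ and $\Cc_2$ — contradiction with $M$ being deterministic and $\ge 2$ after merging becomes $\le M-1$, applied to a positive-probability event. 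Hence $M \in \{1,\infty\}$, which is the claim.

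\textbf{Main obstacle.} The delicate point is the merging/collision step used in both (i) and (iii): turning "two dense sets both pass near a common small ball" into "the relevant loops actually intersect with positive probability." Density alone does not force a curve to meet a given compact curve, so one must genuinely use the positive-probability intersection of independent Brownian loops through a common small ball (\cite[Theorem 9.22]{morters2010brownian}) together with a local-modification argument — either a Palm-type resampling of a single loop, or conditioning on the exterior of a small ball and exploiting the conditional law there. Making this insertion rigorous (it is essentially a finite-energy / sprinkling property for the Brownian loop soup) is where the real work lies; the ergodicity and $0$--$1$ inputs are routine given Lemma \ref{L:ergodicity}.
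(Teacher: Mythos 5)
There is a genuine gap in your argument for density (your step (ii)), and it cascades back into your step (i). You reason that the cluster of a Palm loop $\wp$, being unbounded, ``contains loops of arbitrarily large diameter, hence loops crossing arbitrarily large spherical shells centered at $z$'', and then you propose to propagate inward shell by shell with a probability bounded below at each scale, concluding by Borel--Cantelli. Neither half of this works. An unbounded cluster is merely an infinite chain of loops and need not contain any loop of large diameter, and even when it does, nothing forces those loops to pass near a given point $z$ --- an unbounded cluster could in principle be confined far from $z$. And even granting crossings of large shells around $z$, a uniformly-positive-probability step at each scale yields only a positive probability of reaching $B(z,\varepsilon)$, not probability one; there is no independent-shell structure that would let a naive Borel--Cantelli or $0$--$1$ argument rescue this, because the event ``the cluster of $\wp$ meets $B(z,\varepsilon)$'' visibly depends on the configuration. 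The paper's proof of density is of a completely different nature: for a Brownian bridge $W$ rooted at $x_0$, it partitions the sphere $\partial B(x_0,\|x-x_0\|)$ into finitely many congruent caps $U_i$, uses the square-root trick (Lemma~\ref{L:square-root_trick}) to obtain that at least one $\P(A_i^T \mid W) = 1$, shows that $\lim_{T\to 0}\P(A_i^T\mid W_{[0,T]})$ is germ-measurable (hence a constant, by Blumenthal), and uses rotational invariance to conclude that this constant is the same for every $i$ and thus equals $1$.

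Your plan for assertion (i) is also problematic: you want to deduce unboundedness from density, but you yourself note density does not imply intersection, and the patch you offer (``two loops meeting a common small ball intersect with positive probability, then resample/sprinkle'') is left at the level of intention, and would in any case only produce a positive-probability merge, not the contradiction you need without an additional $0$--$1$ ingredient you have not supplied. The paper proves (i) directly and much more cleanly: for an independent Brownian motion $W$ started at $0$, the hitting time $T$ of the union of unbounded clusters has a scale-invariant law, hence $T\in\{0,\infty\}$ a.s., and the Markov property plus non-polarity of clusters rules out $T=\infty$; one then transfers to Brownian bridges by absolute continuity and applies Palm's formula. There is no circularity, and (ii) is not needed for (i). Your step (iii) is in the right spirit (ergodicity to make $N$ deterministic, then a Palm-type merging step using non-polarity and the Brownian intersection property), and coincides roughly with the paper's argument, but as written it leans on (ii) and on a sprinkling step that you gesture at rather than carry out; the precise mechanism the paper uses is Lemma~\ref{L:Palm} applied to the event that an added loop connects two unbounded clusters. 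In short, (iii) is near the target, but (i) and especially (ii) require entirely different arguments from the ones you sketch.
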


\begin{proof}[Proof of Lemma \ref{L:I2_no_bdd}]
Let $(W_t)_{t \ge0}$ be a standard Brownian motion starting at the origin of $\R^3$ and independent of $\Lc^\alpha$ and let
\[
T := \inf \{ t>0: W_t \text{ belongs to an unbounded cluster of } \Lc^\alpha \}.
\]
Let $\lambda >0$. Because $((\lambda^{-1/2} W_{\lambda t})_{t \ge 0}, \lambda^{-1/2} \Lc^\alpha)$ has the same law as $((W_t)_{t \ge 0}, \Lc^\alpha)$, $\lambda T \overset{\rm (d)}{=} T$, i.e. the law of $T$ is scale invariant. In particular, $\P(T \in \{0,\infty\}) = 1$. Assume by contradiction that $\P(T = \infty) >0$. Then the event
\[
E := \{ W_{[0,1]} \text{ does not intersect an unbounded cluster of } \Lc^\alpha \}
\]
has a positive probability. But, by Markov's property and because clusters of $\Lc^\alpha$ are non polar for Brownian motion,
\[
\P(W_{[1,2]} \text{ intersects an unbounded cluster of } \Lc^\alpha \vert E) > 0.
\]
This implies that $\P(T \in [1,2]) > 0$ which is absurd. We have proved that $T=0$ a.s.

The first half of a Brownian bridge being absolutely continuous with respect to an unconditioned Brownian motion, the same conclusion would remain for a Brownian bridge trajectory independent of $\Lc^\alpha$: almost surely, the trajectory hits instantaneously an unbounded cluster of $\Lc^\alpha$.
We can then conclude by Palm's formula (Lemma \ref{L:Palm}) that the cluster of each loop in $\Lc^\alpha$ is unbounded.

We now show that each cluster is dense in $\R^3$. Let $W = (W_t)_{0 \le t \le T}$ be a Brownian bridge trajectory starting and ending at some point $x_0 \in \R^3$, $x \in \R^3 \setminus \{x_0\}$ and $\eps >0$. Let $U = B(x,\eps) \cap \partial B(x_0,\|x-x_0\|)$. We decompose the sphere $\partial B(x_0,\|x-x_0\|)$ as a finite union of $U_i, i=1, \dots, N$, where each $U_i$ is some rotated version of $U$ (rotation centred at $x_0$). For $i=1, \dots, N$, let $A_i^T$ be the event that there is a cluster of $\Lc^\alpha$ intersecting both $W$ and $U_i$. Conditionally on $W$, these events are increasing and we get by the square-root trick (see Lemma~\ref{L:square-root_trick}) that
\[
\max_{i=1, \dots, N} \P(A_i^T \vert W) \ge 1 - (1-\P(A_1^T \cup \dots \cup A_N^T\vert W))^{1/N}.
\]
Because there is a.s. an unbounded cluster of $\Lc^\alpha$ intersecting $W$, $\P(A_1^T \cup \dots \cup A_N^T \vert W)=1$.
We deduce that the set
\[
I_T = \{ i \in \{1,\dots,N\}: \P(A_i^T \vert W) = 1 \}
\]
is a.s. nonempty. By monotonicity,
\[
\bigcap_{T>0} I_T = \{ i \in \{1,\dots,N\}: \lim_{T \to 0^+} \P(A_i^T \vert (W_t)_{0 \le t \le T}) = 1 \}
\]
is also nonempty a.s. For all $i=1,\dots,N$, the random variable $\lim_{T \to 0^+} \P(A_i^T \vert (W_t)_{0 \le t \le T})$ is measurable with respect to the tail sigma algebra of $(W_t)_{t\ge0}$ and is thus constant. By rotational invariance, this constant is independent of $i$ and must be equal to 1.
In particular, for any $T>0$, there is a cluster intersecting $(W_t)_{0 \le t \le T}$ and $B(x,\eps)$ a.s. By Palm's formula (Lemma~\ref{L:Palm}), we deduce that the cluster of each loop reaches $B(x,\eps)$ a.s. Since this is true for any $x$ and $\eps$, it shows that the cluster of each loop is dense in $\R^3$.

It remains to show that the number $N$ of clusters is either one a.s. or infinity a.s.
Since $N$ is invariant under translations, it is actually deterministic by ergodicity (Lemma \ref{L:ergodicity}) and there exists $k \in \{1, 2, \dots\} \cup \{\infty\}$ such that $N=k$ a.s. Assume by contradiction that $k\notin \{1,\infty\}$. Since clusters are not polar, by adding an independent Brownian trajectory, we can connect two of these clusters and reduce the total number of clusters by at least one with positive probability. By Palm's formula, we deduce that $\P(N \le k-1) >0$ which is absurd. This shows that $\P(N=1) = 1$ or $\P(N=\infty)=1$.
\end{proof}

\begin{lemma}\label{L:I1_two_open_subsets}
Let $\alpha \in \Ir_\b2b$. Then, for any non empty open sets $U, V \subset \R^3$, $\P( U \overset{\Lc^\alpha}{\longleftrightarrow} V) = 1$.
\end{lemma}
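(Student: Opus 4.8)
The plan is to reduce the statement to a ``steering'' claim: almost surely, for every ball $B(x,r)$, every $R>r$, and every nonempty relatively open subset $O$ of $\partial B(x,R)$, some cluster of $\Lc^\alpha$ meets both $B(x,r)$ and $O$. Granting this, the lemma follows at once. Given nonempty open $U,V$: if $U\cap V\neq\varnothing$, pick a ball $B\subset U\cap V$; almost surely $\Lc^\alpha$ contains a loop inside $B$ (there are even infinitely many, since $\loopmeasure(\{\wp\subset B\})=\infty$), and its cluster meets both $U$ and $V$. If $U\cap V=\varnothing$, fix balls $B(x_1,r_1)\subset U$ and $B(x_2,r_2)\subset V$; since $x_2\notin U$ we may shrink $r_1$ so that $r_1<R:=\|x_1-x_2\|$, and then $O:=\partial B(x_1,R)\cap B(x_2,r_2)$ is a nonempty relatively open subset of $\partial B(x_1,R)$ contained in $V$. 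The steering claim produces a cluster meeting $B(x_1,r_1)\subset U$ and $O\subset V$, so $\P(U\overset{\Lc^\alpha}{\longleftrightarrow}V)=1$.

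To prove the steering claim I would first use $\alpha\in\Ir_\b2b$, the zero--one law (Lemma~\ref{L:zero_one}), and the scale and translation invariance of the law of $\Lc^\alpha$ to upgrade the defining event of $\Ir_\b2b$ to the statement that, for every $x$ and every $0<r<R$, almost surely $B(x,r)\overset{\Lc^\alpha}{\longleftrightarrow}\partial B(x,R)$. By translation and scaling it then suffices to treat $x=0$, $R=1$, a fixed $r\in(0,1)$, and a fixed nonempty relatively open $O\subset\S^2$. Choose a closed spherical cap $\overline{O_1}\subset O$ of some angular radius $\delta>0$, and cover $\S^2$ by finitely many caps $\overline{O_1},\dots,\overline{O_N}$ of angular radius $\delta$, so that any two of them are related by an element of $SO(3)$. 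Let $A_i$ be the event that some cluster of $\Lc^\alpha$ meets both $B(0,r)$ and $\overline{O_i}$; each $A_i$ is increasing, and since the $\overline{O_i}$ cover $\S^2$ one has $\bigcup_i A_i=\{B(0,r)\overset{\Lc^\alpha}{\longleftrightarrow}\S^2\}$, an event of probability $1$. The square-root trick (Lemma~\ref{L:square-root_trick}) then gives $\max_i\P(A_i)\ge 1-(1-\P(\bigcup_iA_i))^{1/N}=1$, so $\P(A_j)=1$ for some $j$. Picking $g\in SO(3)$ with $g\overline{O_j}=\overline{O_1}$ and using that the law of $\Lc^\alpha$ is $SO(3)$-invariant while $B(0,r)$ is rotation invariant, the map $\Lc\mapsto g\Lc$ transports $A_j$ onto $A_1$ and preserves the law, so $\P(A_1)=1$. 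Hence almost surely a cluster meets $B(0,r)$ and $\overline{O_1}\subset O$, which is the steering claim.

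This argument deliberately mirrors the density proof inside Lemma~\ref{L:I2_no_bdd}, but uses only membership in $\Ir_\b2b$ and never invokes unbounded or dense clusters. The step I expect to need the most care is the passage from ``some $A_j$ has probability one'' to ``the prescribed $A_1$ has probability one'': this relies on the caps being exactly congruent and on the $SO(3)$-invariance of the loop soup, and one must check that $\Lc\mapsto g\Lc$ really maps $\{$\,cluster meets $B(0,r)$ and $\overline{O_j}$\,$\}$ onto $\{$\,cluster meets $B(0,r)$ and $\overline{O_1}$\,$\}$ (it does, because $g$ fixes $B(0,r)$). The remaining points — measurability of the connectivity events, the existence of a tessellation of $\S^2$ by congruent caps containing a prescribed cap, and the elementary geometry giving $O\subset V$ and allowing the shrinking $r_1<R$ — are routine.
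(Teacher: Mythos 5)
Your proof is correct and follows essentially the same strategy as the paper: reduce to connecting a small ball to a relatively open patch of a sphere, invoke the zero--one law for the ball-to-ball event, cover the sphere by congruent caps, and combine the square-root trick with rotational invariance (exactly as in the density step of Lemma~\ref{L:I2_no_bdd}, which the paper's one-line proof cites). Your write-up is somewhat more explicit than the paper's (notably in isolating the ``steering claim'' and the case $U\cap V\neq\varnothing$), but the underlying argument is the same.
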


\begin{proof}[Proof of Lemma \ref{L:I1_two_open_subsets}]
By translation and scaling invariance, we can assume that $U$ contains the ball $B(0,1)$ and $V$ contains a ball $B(x,\eps)$ for some $x \in \R^3 \setminus B(0,1)$ and $\eps >0$. By Lemma \ref{L:zero_one}, the probability $\P(B(0,1) \overset{\Lc^\alpha}{\longleftrightarrow} \partial B(0,\|x\|) )$ equals 1. As in the proof of Lemma \ref{L:I2_no_bdd}, we can then use the square-root trick to show the desired result: $\P( U \overset{\Lc^\alpha}{\longleftrightarrow} V) = 1$.
\end{proof}

\begin{lemma}[Uniqueness of unbounded cluster]\label{L:I3}
For all $\alpha \in \Ir_\tr$, $\Lc^\alpha_{\ge1}$ contains a unique unbounded cluster a.s.
The supercritical phase $\Ir_u$ is an interval of the form $(\alpha_c^3, \infty)$ or $[\alpha_c^3, \infty)$ and $\Ir_u \supset \Ir_\tr$. Moreover, for all $\alpha \in \Ir_u$, $\Lc^\alpha$ contains a unique cluster almost surely (no bounded clusters).
\end{lemma}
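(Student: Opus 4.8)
The plan is to treat the four assertions separately, from easiest to hardest. The statement ``for all $\alpha\in\Ir_u$, $\Lc^\alpha$ has a unique cluster (no bounded clusters)'' is essentially free: $\Ir_u\subseteq\Ir_\infty$, so Lemma~\ref{L:I2_no_bdd} already gives that every cluster of $\Lc^\alpha$ is unbounded and that there are either one or infinitely many of them; and the zero--one law (Lemma~\ref{L:zero_one}, case $k=u$) upgrades ``$\alpha\in\Ir_u$'' to ``a.s. $\Lc^\alpha$ has a unique unbounded cluster'', which combined with the absence of bounded clusters forces exactly one cluster. So the real content is: (a) uniqueness of the unbounded cluster of $\Lc^\alpha_{\ge1}$ for $\alpha\in\Ir_\tr$; (b) the transfer $\alpha\in\Ir_\tr\Rightarrow\alpha\in\Ir_u$; and (c) monotonicity of $\Ir_u$.

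For (a) I would run a Burton--Keane argument. By ergodicity (Lemma~\ref{L:ergodicity}) and the zero--one law the number $N$ of unbounded clusters of $\Lc^\alpha_{\ge1}$ is a deterministic element of $\{1,2,\dots,\infty\}$ (it is $\ge1$ since $\alpha\in\Ir_\tr$, by Lemma~\ref{L:zero_one}). The structural point that makes the discrete-style argument legitimate here, while it fails for $\Lc^\alpha$, is that since every loop of $\Lc^\alpha_{\ge1}$ has diameter $\ge1$, only finitely many of them meet any fixed ball (see \eqref{E:space_collection_loops}). Finite energy / insertion tolerance --- the loops meeting $B(0,R)$ form a Poisson point process independent of those that do not, and with positive conditional probability this process produces one extra loop of diameter $\ge1$ that intersects two prescribed non-polar pieces of clusters inside $B(0,R)$ --- rules out $2\le N<\infty$ by the usual contradiction with $N$ deterministic. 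The encounter-point (trifurcation) argument in the amenable group $\R^3$, valid here thanks to local finiteness, rules out $N=\infty$. This is the exact continuum analogue of the random walk loop soup statement in \cite{chang2016phase}, so I would adapt that proof.

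For (b) and (c) the basic tool I would use repeatedly is the elementary fact, read off from \eqref{E:loopmeasure} (if $x$ is within $\sqrt t$ of a non-polar set then a duration-$t$ bridge from $x$ hits it with probability bounded below, and the $\sqrt t$-neighbourhood of a $2$-dimensional set has volume of order $\sqrt t$), that $\loopmeasure(\{\wp':\wp'\cap K\neq\varnothing\})=\infty$ for every non-polar $K$; hence $\Lc^\alpha$ a.s. contains infinitely many loops meeting any fixed non-polar set. For the transfer: first upgrade the unique unbounded cluster $C_\infty$ of $\Lc^\alpha_{\ge1}$ to being dense in $\R^3$ by repeating the square-root-trick/tail-triviality scheme of Lemma~\ref{L:I2_no_bdd}, now building the required connections out of loops of diameter $\ge1$; then, since Lemma~\ref{L:I2_no_bdd} already gives that all clusters of $\Lc^\alpha$ are dense and unbounded and that there are one or infinitely many, it suffices to show the $\Lc^\alpha$-cluster $C$ containing $C_\infty$ is the only one, i.e. that every finite cluster of $\Lc^\alpha_{\ge1}$ is a.s. wired to $C_\infty$ through the small loops. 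Monotonicity of $\Ir_u$ is then a short coupling: for $\alpha<\alpha'$ with $\alpha\in\Ir_u$, write $\Lc^{\alpha'}=\Lc^\alpha\sqcup\Lc''$ with $\Lc''$ an independent loop soup of intensity $\alpha'-\alpha$; by (the already-established case of) the lemma $\Lc^\alpha$ has a unique cluster a.s., which sits in one $\Lc^{\alpha'}$-cluster $\widetilde C$, and by Palm's formula (Lemma~\ref{L:Palm}) applied to $\Lc''$ together with the infinite-mass fact, a.s. every loop of $\Lc''$ meets some loop of $\Lc^\alpha$ and hence lies in $\widetilde C$; so $\Lc^{\alpha'}$ has a unique (necessarily unbounded) cluster and $\alpha'\in\Ir_u$. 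This makes $\Ir_u$ an up-set, hence of the stated form with left endpoint $\alpha_u=\inf\Ir_u$, and $\Ir_u\supseteq\Ir_\tr$ is precisely what (b) delivers.

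The main obstacle will be (b), the transfer --- concretely, showing that no second dense unbounded cluster of $\Lc^\alpha$ can coexist with $C$. Density of $C_\infty$ by itself is not enough (two dense sets can be disjoint, and an unbounded $\Lc^\alpha$-cluster need not contain any \emph{unbounded} $\Lc^\alpha_{\ge1}$-subcluster, only a string of diameter-$\ge1$ loops each lying in a finite $\Lc^\alpha_{\ge1}$-cluster), so one has to combine the density and positive cluster-density of $C_\infty$ (and hence its substantial Newtonian capacity in every large ball) with the infinite-mass fact above, via a capacity / second-moment estimate across spatial scales, to force the connection a.s. The remaining ingredients --- ergodicity, the zero--one laws, and Lemma~\ref{L:I2_no_bdd} --- are soft.
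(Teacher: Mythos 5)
Your decomposition of the lemma into four statements and your treatment of three of them are fine, and in outline match the paper: the Burton--Keane argument for uniqueness of the unbounded cluster of $\Lc^\alpha_{\ge1}$ (in the paper the crucial quantitative input is Lemma~\ref{L:sphere_bound}, which gives the $O(R^2)$ count of loops of diameter $\ge1$ meeting $\partial B(0,R)$, to contradict an $\Omega(R^3)$ count of trifurcation loops --- your ``local finiteness / amenability'' gesture is pointing at this but is vague on the actual boundary/volume comparison); the interval property of $\Ir_u$ via coupling and Palm's formula; and the last assertion via Lemmas~\ref{L:zero_one} and~\ref{L:I2_no_bdd}.

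The genuine gap is in your step (b), the transfer $\Ir_\tr\Rightarrow\Ir_u$, and it is a real one. You propose to ``upgrade the unique unbounded cluster $C_\infty$ of $\Lc^\alpha_{\ge1}$ to being dense in $\R^3$''. That is impossible: since only finitely many loops of $\Lc^\alpha_{\ge1}$ meet any fixed ball (this is exactly what Lemma~\ref{L:sphere_bound} and the local-finiteness you invoke deliver), $C_\infty$ is a locally finite union of Brownian traces, hence a closed Lebesgue-null set and in particular nowhere dense. So the first half of your plan fails, and the second half --- the ``capacity / second-moment estimate across spatial scales'' that should wire every small loop to $C_\infty$ --- is precisely the part you confess you do not have. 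The paper sidesteps this entirely: it applies the uniqueness statement not just to the cutoff $1$ but, by scale invariance, to every cutoff $r>0$, yielding a \emph{nested} family $\Cc_\infty(r')\subset\Cc_\infty(r)$ for $r'\ge r$; the union $\Cc_\infty(0)=\bigcup_{r>0}\Cc_\infty(r)$ is then automatically connected (no density or second-moment argument needed), and the Brownian-germ argument of Lemma~\ref{L:I2_no_bdd} (scale invariance forces the hitting time of $\Cc_\infty(0)$ to be $0$ or $\infty$; Markov + non-polarity rule out $\infty$) plus Palm's formula then shows that every loop of $\Lc^\alpha$ intersects $\Cc_\infty(0)$ a.s. Replacing the single-cutoff object $C_\infty$ by the multiscale object $\Cc_\infty(0)$ is the missing idea: it is what makes the Brownian-germ trick applicable and avoids any need to show that a closed null set can be ``thick at every point''.
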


\begin{proof}[Proof of Lemma \ref{L:I3}]
Let $\alpha \in \Ir_\tr$. We start by showing that $\Lc^\alpha_{\ge 1}$ contains a unique unbounded cluster a.s. By Lemma \ref{L:zero_one}, $\Lc^\alpha_{\ge1}$ contains at least one unbounded cluster a.s. By ergodicity, the number of unbounded cluster is deterministic and, by the same argument as in the proof of Lemma~\ref{L:I2_no_bdd}, this number is either 1 or $\infty$. To exclude the scenario of infinitely many unbounded clusters, we will use a Burton--Keane-type argument \cite{MR990777}. 
For each loop $\wp \in \Lc^\alpha_{\ge 1}$, we will say that $\wp$ is a \textit{trifurcation loop} if, after deleting $\wp$, the cluster of $\Lc^\alpha_{\ge 1}$ containing $\wp$ is made of at least three unbounded clusters.

Let $R>0$. Enumerate in any given way the trifurcation loops $\wp_1, \dots, \wp_N$ of $\Lc^\alpha_{\ge 1}$ that are included in $B(0,R)$. By a standard procedure, we can associate to each $\wp_i$ pairwise distinct loops $\tilde \wp_i$ that intersect $\partial B(0,R)$. This implies that
\begin{equation}
\label{E:pf_Burton_Keane}
\E [ \# \{\wp \in \Lc^\alpha_{\ge1}: \wp~\text{trifurcation}, ~\wp \subset B(0,R) \} ]
\le \E [ \# \{ \wp \in \Lc^\alpha_{\ge1} : \wp \cap \partial B(0,R) \neq \varnothing \} ] \le C R^2.
\end{equation}
The last bound follows by Lemma \ref{L:sphere_bound} which crucially uses the lower bound on the diameter on the loops: it states that the expected number of loops in $\Lc^\alpha_{\ge 1}$ that intersect $\partial B(0,R)$ scales at most like the area of the sphere $\partial B(0,R)$.

Assume now by contradiction that $\Lc^\alpha_{\ge1}$ has infinitely many unbounded clusters a.s. Then there exists $R_0>0$ large enough so that the probability that $\Lc^\alpha_{\ge1}$ has at least three unbounded clusters intersecting $B(0,R_0/2)$ is positive. Since these clusters are not polar, the probability that an independent Brownian bridge trajectory, of duration $t \in [1,2]$ and starting at 0, has a diameter at least 1, stays in $B(0,R_0)$, and intersects three infinite clusters of $\Lc^\alpha_{\ge 1}$ is positive. By translation invariance, the same is true for any starting point of the Brownian bridge. By Palm's formula, we deduce that, for $R>0$ large enough, the left hand side of \eqref{E:pf_Burton_Keane} is at least a positive constant times $R^3$. This creates a contradiction and proves the first item.

\medskip

Let $\alpha \in \Ir_u$ (assuming at this stage that $\Ir_u$ is not empty). Let $(W_t)_{t \ge0}$ be a Brownian motion starting at 0 independent of  $\Lc^\alpha$ and let
\[
T := \inf \{ t> 0: W_t \text{ belongs to the unique cluster of } \Lc^\alpha \}.
\]
By our Brownian-germ trick, $T=0$ a.s. Let $\alpha'>\alpha$ and couple $\Lc^\alpha$ and $\Lc^{\alpha'}$ such that $\Lc^{\alpha'} \setminus \Lc^\alpha$ is independent of $\Lc^\alpha$ and distributed as $\Lc^{\alpha'-\alpha}$. We deduce that each loop of $\Lc^{\alpha'} \setminus \Lc^\alpha$ intersects the unique cluster of $\Lc^\alpha$ almost surely and thus $\Lc^{\alpha'}$ contains a unique cluster almost surely. This shows that $\alpha' \in \Ir_u$ and concludes the proof that $\Ir_u$ is an interval, unbounded from above.

Now, let $\alpha \in \Ir_\tr$. We want to show that $\alpha \in \Ir_u$.
Let $r>0$. Generalising the first item of this lemma to any $r$, the collection $\Lc^\alpha_{\ge r}$ contains a unique unbounded cluster $\Cc_\infty(r)$ a.s.
Clearly, $\Cc_\infty(r') \subset \Cc_\infty(r)$ if $r' \ge r$. In particular, $\Cc_\infty(0) := \bigcup_{r>0} \Cc_\infty(r)$ is connected. For a Brownian motion $W$ independent of $\Lc^\alpha$, we consider
\[
T := \inf \{ t>0: W_t \in \Cc_\infty(0) \}.
\]
By our Brownian-germ trick, $T=0$ a.s. As before, using Palm's formula we deduce that each loop of $\Lc^\alpha$ belongs to $\Cc_\infty(0)$ almost surely. This concludes the proof of the inclusion $\Ir_\tr \subset \Ir_u$.

Finally, the fact that for all $\alpha \in \Ir_u$, $\Lc^\alpha$ contains a unique cluster almost surely follows from the combination of Lemmas \ref{L:zero_one} and \ref{L:I2_no_bdd}.
\end{proof}

\begin{lemma}\label{L:Jr}
Let $R > 1$. The supercritical phase $\Ir_\tr^R$ \eqref{E:Jr} can be alternatively defined as
\begin{equation}
\label{E:L_Jr1}
\Ir_\tr^R = \{ \alpha>0: \lim_{n \to \infty} \P(\S^2 \overset{\Lc_{1,R}^\alpha}{\longleftrightarrow} n\S^2) > 0 \}.
\end{equation}
Moreover, for all $\alpha \in \Ir_\tr^R$,
\begin{equation}
\label{E:L_Jr2}
\lim_{r \to \infty} \P(\forall r'>r,  ~r\S^2 \overset{\Lc_{1,R}^\alpha}{\longleftrightarrow} r'\S^2) = 1.
\end{equation}
\end{lemma}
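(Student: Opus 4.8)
Write $\Uc$ for the event that $\Lc_{1,R}^\alpha$ contains an unbounded cluster, so that $\alpha\in\Ir_\tr^R$ means exactly $\P(\Uc)>0$, which by Lemma~\ref{L:zero_one} forces $\P(\Uc)=1$; and for a set $A\subset\R^3$ write $\Uc_A$ for the event that $\Lc_{1,R}^\alpha$ has an unbounded cluster meeting $A$. Everything rests on one structural fact, specific to the finite-range model: since every loop of $\Lc_{1,R}^\alpha$ has diameter at most $R$, a loop meeting $\rho\S^2$ is contained in $\overline{B(0,\rho+R)}$, and as such loops also have diameter at least $1$, local finiteness \eqref{E:space_collection_loops} shows that only finitely many loops, hence only finitely many clusters, meet $\rho\S^2$. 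This is the one place where the finite-range hypothesis is genuinely used (the analogous statement is false for $\Lc^\alpha$), and I expect it to be the only delicate point; the rest is soft.

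Using this finiteness together with the fact that a cluster is a connected subset of $\R^3$ (a chain of loops, each connected and meeting the next, so $x\mapsto\|x\|$ is continuous on it), I would first prove that for every $\rho>0$,
\[
\Big\{\forall \rho'>\rho:\ \rho\S^2 \overset{\Lc_{1,R}^\alpha}{\longleftrightarrow} \rho'\S^2\Big\}=\Uc_{\rho\S^2}.
\]
The inclusion $\supseteq$ is immediate from the intermediate value theorem: an unbounded connected set meeting $\rho\S^2$ meets every sphere of radius larger than $\rho$. For $\subseteq$, the intermediate value theorem first reduces the left-hand event to the event that, for every integer $m>\rho$, some cluster of $\Lc_{1,R}^\alpha$ meets both $\rho\S^2$ and $m\S^2$; on that event each such $m$ is reached by one of the finitely many clusters meeting $\rho\S^2$, so the pigeonhole principle produces a single cluster meeting $m\S^2$ for infinitely many $m$, hence an unbounded one. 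Taking $\rho=1$ (it suffices to let $\rho'$ range over integers $n$, again by the intermediate value theorem) and using that the events $\{\S^2\leftrightarrow n\S^2\}$ decrease in $n$, this yields $\lim_{n\to\infty}\P(\S^2 \overset{\Lc_{1,R}^\alpha}{\longleftrightarrow} n\S^2)=\P(\Uc_{\S^2})$; keeping $\rho=r$ general yields $\P(\forall r'>r:\ r\S^2 \overset{\Lc_{1,R}^\alpha}{\longleftrightarrow} r'\S^2)=\P(\Uc_{r\S^2})$.

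For \eqref{E:L_Jr1} it then remains to show $\{\alpha:\P(\Uc_{\S^2})>0\}=\Ir_\tr^R$. Since $\Uc_{\S^2}\subseteq\Uc$, only $\supseteq$ needs an argument: assume $\P(\Uc)=1$. The events ``there is an unbounded cluster meeting $B(0,m)$'' increase to $\Uc$ as $m\to\infty$ (every cluster contains a loop, which is bounded), so I would fix $m$ with this probability at least $\tfrac12$, cover $\overline{B(0,m)}$ by finitely many unit balls $B(z_1,1),\dots,B(z_N,1)$, observe that an unbounded cluster meeting $B(0,m)$ meets some $B(z_i,1)$, and conclude by translation invariance of $\Lc_{1,R}^\alpha$ and a union bound that ``there is an unbounded cluster meeting $B(0,1)$'' has probability at least $\tfrac1{2N}>0$; such a cluster, being connected and unbounded, meets $\S^2$, so $\P(\Uc_{\S^2})>0$, which combined with the previous paragraph gives \eqref{E:L_Jr1}.

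For \eqref{E:L_Jr2}, assume once more $\P(\Uc)=1$. The events ``there is an unbounded cluster $\Cc$ with $\inf_{x\in\Cc}\|x\|<r$'' increase to $\Uc$ as $r\to\infty$, and each is contained in $\Uc_{r\S^2}$ (an unbounded cluster containing a point of norm $<r$ contains, by the intermediate value theorem, a point of norm exactly $r$); hence $\P(\Uc_{r\S^2})\to1$, which by the identity above is exactly $\lim_{r\to\infty}\P(\forall r'>r:\ r\S^2 \overset{\Lc_{1,R}^\alpha}{\longleftrightarrow} r'\S^2)=1$. As indicated, the only real content is the first displayed identity and, within it, the finite-range input that funnels the whole family of crossing events into a single ``infinite arm'' event; the two limit statements are then routine monotone-limit and translation-invariance arguments.
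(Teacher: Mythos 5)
Your proof is correct, and the key structural input — finitely many loops, hence finitely many clusters of $\Lc_{1,R}^\alpha$ meet any fixed sphere, so a subsequence/pigeonhole argument converts ``crossings to every scale'' into an actual unbounded cluster — is exactly the mechanism the paper uses for \eqref{E:L_Jr1}. You package it via the cleaner identity $\{\forall\rho'>\rho:\ \rho\S^2 \leftrightarrow \rho'\S^2\}=\Uc_{\rho\S^2}$, and you flesh out the $\Ir_\tr^R\subset\tilde\Ir_\tr^R$ inclusion with an explicit unit-ball covering and union bound where the paper simply invokes translation invariance in one line; both are fine. Where you genuinely diverge is \eqref{E:L_Jr2}: the paper deduces it from ergodicity, observing that the event $\{\exists r:\ \forall r'>r,\ r\S^2\leftrightarrow r'\S^2\}$ is translation-invariant, hence has probability $0$ or $1$ by Lemma~\ref{L:ergodicity}, and is of positive probability since $\alpha\in\Ir_\tr^R$. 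You instead argue directly that the events ``some unbounded cluster has infimum norm $<r$'' increase to $\Uc$ as $r\to\infty$ and are each contained in $\Uc_{r\S^2}$, so $\P(\Uc_{r\S^2})\to 1$; combined with your identity this gives \eqref{E:L_Jr2} without invoking the $0$--$1$ law. Your route is more elementary and self-contained (no ergodicity input); the paper's is shorter given that Lemma~\ref{L:ergodicity} is already in place. Both are valid.
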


\begin{proof}[Proof of Lemma \ref{L:Jr}]
Let $\tilde{\Ir}_\tr^R$ be the set on    the right hand side of \eqref{E:L_Jr1}. By definition \eqref{E:Jr} of $\Ir_\tr^R$, if $\alpha \in \Ir_\tr^R$, the probability that there exists an unbounded cluster of $\Lc_{1,R}^\alpha$ is positive. By translation invariance, such a cluster intersects the unit sphere with positive probability showing that $\Ir_\tr^R \subset \tilde{\Ir}_\tr^R$. 
To prove the other inclusion, let us fix $\alpha \in \tilde\Ir_\tr^R$. By definition, the event that for all $n \ge 1$, there exists a cluster $\Cc_n$ of $\Lc_{1,R}^\alpha$ which intersects both $\S^2$ and $n\S^2$, has positive probability. 
Thanks to the restriction on the diameter of the loops, the number of loops in $\Lc_{1,R}^\alpha$ intersecting $\S^2$ is finite a.s., and so is the collection $\{ \Cc_n, n \ge 1\}$ of clusters. In particular, there exists an increasing subsequence $(n_k)_{k \ge 1}$ such that for all $k \ge 1$, $\Cc_{n_k} = \Cc_{n_1}$. By construction, the cluster $\Cc_{n_1}$ is an unbounded cluster of $\Lc_{1,R}^\alpha$ showing that $\alpha \in \Ir_\tr^R$.
This concludes the proof of \eqref{E:L_Jr1}.

Let $\alpha \in \Ir_\tr^R$. The proof of \eqref{E:L_Jr2} is a small variant of the proof of Lemma \ref{L:zero_one}. Indeed, the event
\[
\{ \exists r>0: \forall r'>r, r \S^2 \overset{\Lc_{1,R}^\alpha}{\longleftrightarrow} r' \S^2 \}
\]
is invariant under translations. By ergodicity (Lemma \ref{L:ergodicity}), its probability is thus 0 or 1. Since $\alpha \in \Ir_\tr^R$, its probability is positive and therefore equal to 1. This concludes.
\end{proof}

\subsection{Existence of phase transition}

In this section, we will show that all the critical intensities we introduced before are nondegenerate, i.e. strictly positive and finite. To this end, it will be enough to show that $\alpha_\b2b > 0$ and $\alpha_\tr^R < \infty$ for all $R>1$.

\begin{lemma}\label{L:finite_critical_point}
For all $R>1$, $\alpha_\tr^R < \infty$.
\end{lemma}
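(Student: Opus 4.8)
The goal is to show that for every fixed $R>1$, a sufficiently large intensity $\alpha$ forces $\Lc_{1,R}^\alpha$ to percolate. The strategy is the standard one alluded to in the introduction: build explicit connections and compare with a one-dependent site percolation model on $\Z^3$ whose success probability can be made arbitrarily close to $1$, then invoke the comparison theorem of \cite{MR1428500}. First I would tile $\R^3$ by boxes of a fixed side length $L$ (chosen so that $L$ and $R$ are compatible, e.g.\ $L \ll R$ so that a loop of diameter in $[1,R]$ can comfortably connect neighbouring boxes) indexed by $v \in \Z^3$. To each vertex $v$ associate the event $E_v$ that, inside a slightly enlarged box $\Lambda_v$ (say $3L\times 3L\times 3L$ centred at $Lv$), the loops of $\Lc_{1,R}^\alpha$ with diameter in $[1,R]$ that stay in $\Lambda_v$ form a cluster which crosses $\Lambda_v$ in every coordinate direction and contains a loop passing through the central sub-box around $Lv$. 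This event depends only on loops contained in $\Lambda_v$, so because $\Lambda_v$ and $\Lambda_w$ are disjoint whenever $\|v-w\|_\infty \ge 3$, the family $(E_v)_v$ is (say) $2$-dependent; a rescaling of the index lattice makes it genuinely one-dependent, or one directly uses the version of \cite{MR1428500} that allows finite-range dependence.

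The key quantitative step is to show $\P(E_v) \to 1$ as $\alpha \to \infty$. By translation invariance $\P(E_v)$ does not depend on $v$. One way: first produce, with probability close to $1$ for $\alpha$ large, a single loop of diameter in $[1,R]$ making a ``long thin'' crossing of $\Lambda_v$ in a prescribed tube — the loop measure of loops contained in a tube of a given shape, with diameter in $[1,R]$, and crossing it lengthwise, is a fixed positive number $m>0$ (finite and positive by the explicit estimates of Section~4, in particular Lemma~\ref{L:estimate_crossing} and Proposition~\ref{P:decompo_loopmeasure}), so the number of such loops in $\Lc_{1,R}^\alpha$ is Poisson with mean $\alpha m$, hence at least one with probability $1-e^{-\alpha m}$. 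Then one needs to weld six such crossing loops (two per coordinate direction, or a suitable connected configuration through the centre) into a single cluster: this is exactly what Lemma~\ref{L:intersection_loop} and the welding discussion provide — two loops, each carrying a non-negligible amount of ``size'' $g$ as seen by Brownian motion and living in overlapping regions, intersect with positive probability, and by taking $\alpha$ large one has many independent attempts so the intersection happens with probability close to $1$. Chaining finitely many such welding events (a number depending only on $L,R$, not on $\alpha$) and taking $\alpha \to \infty$ gives $\P(E_v)\to 1$.

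Once $\P(E_v)$ exceeds the critical threshold $p_c^{\mathrm{site}}(\Z^3, \text{$k$-dependent})$ appearing in \cite{MR1428500}, the occupied cluster of the coarse-grained model percolates, and by construction two occupied neighbours $v\sim w$ have their crossing clusters overlapping inside $\Lambda_v\cap\Lambda_w$, so the corresponding loop clusters merge; hence $\Lc_{1,R}^\alpha$ contains an infinite cluster with positive probability, i.e.\ $\alpha\in\Ir_\tr^R$ and therefore $\alpha_\tr^R<\infty$.

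The main obstacle is the welding step: one must check that the ``size'' quantity $g_{r,a}$ (or an analogous capacity-type functional) of the crossing loops produced in a tube is bounded below by a constant $\eta>0$ uniformly, so that Lemma~\ref{L:intersection_loop} applies with a fixed $\eta$, and that the geometric configuration of the $O(1)$ welding regions can be chosen with all the relevant sub-boxes and tubes nested correctly (so that each welding event is defined in terms of loops in a bounded region and the whole construction remains inside $\Lambda_v$, keeping the finite-range dependence). This is a bookkeeping-heavy but routine adaptation of the planar argument of \cite{SheffieldWernerCLE}; the genuinely new input — precise control of the $3$D loop measure of loops crossing a fixed shape with diameter in $[1,R]$ — is already packaged in Section~4, so no further loop-measure estimates should be needed beyond what the excerpt provides.
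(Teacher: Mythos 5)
Your strategy is essentially the paper's: block construction in $\R^3$, compare to a finite-range-dependent percolation model whose marginal probability tends to $1$ as $\alpha\to\infty$, and invoke \cite{MR1428500}. The overall skeleton (including the use of a capacity/hitting-probability requirement on the good loops so that welding is not ``infinitely costly'') is correct and matches the paper.

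Two adjustments are worth noting. First, the welding mechanism you point to is not Lemma~\ref{L:intersection_loop}: that lemma is calibrated for the one-arm lower bound and gives a probability of order $cr\to 0$, not a probability $\to 1$. The welding argument that actually works here is more elementary. In the paper's proof, $\Lc^\alpha_{1,R}$ is split into two independent copies $\Lc^{\alpha/2,(1)}_{1,R}$ and $\Lc^{\alpha/2,(2)}_{1,R}$; the first copy is used to produce, in each small cube $q_x$, a loop $\wp_x$ satisfying a uniform lower bound of the form $g_x(\wp_x\cap q_x)\ge 1/2$, and this happens with probability $\to1$ by pure Poisson concentration. Given neighbouring $\wp_x,\wp_y$ (each with $g$-value $\ge 1/2$), a loop from the \emph{second}, independent copy started near the midpoint is essentially a Brownian path from both loops' point of view, so it hits both with probability bounded from below; by independence and Poisson concentration, some connecting loop exists with probability $\to1$. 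Note that two high-capacity loops need \emph{not} intersect each other directly —- the connecting loop from the independent copy is what does the welding. Second, the paper sidesteps the loop-size-vs-block-size bookkeeping you flag as ``the main obstacle'': rather than asking loops of diameter in $[1,R]$ to stay inside a box, it works on the lattice $\tfrac12\Z^3$ with cubes $Q_x$ of side $1/2$ (smaller than any loop) and only asks that the loop be \emph{rooted} in the inner cube $q_x$ and have large capacity inside $Q_x$. This removes any tension between $L$ and $R$ and makes the finite-range dependence immediate: $I^{(1)}_x$ depends only on roots in $q_x$, and the edge variables $I^{(2)}_{\{x,y\}}$ only on the independent copy rooted near $(x+y)/2$; the resulting model is one-dependent bond percolation on $\tfrac12\Z^3$. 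Your site-percolation variant with loops constrained inside $\Lambda_v$ can be made to work, but with $L\ll R$ (so $\Lambda_v$ smaller than $R$) you would only be using the sub-soup of loops of diameter in $[1,3L]$, which is fine, but the $L\ll R$ heuristic is not the relevant constraint; what matters is $L\gtrsim 1$ so that $\Lambda_v$ can contain loops of diameter $\ge 1$.
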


\begin{figure}
    \centering
\begin{tikzpicture}[scale=1.2]
\def\BigSize{2}      
\def\SmallSize{1}  
\def\TinySize{0.5}  

\definecolor{myblue}{RGB}{40,90,200}
\definecolor{myred}{RGB}{200,40,40}
\definecolor{myorange}{RGB}{220,150,0}

\coordinate (cx) at (-1,0);
\coordinate (cy) at (1,0);

\draw[thick] ($(cx)+(-\BigSize/2,-\BigSize/2)$)
    rectangle ($(cx)+(\BigSize/2,\BigSize/2)$);
\draw[thick] ($(cy)+(-\BigSize/2,-\BigSize/2)$)
    rectangle ($(cy)+(\BigSize/2,\BigSize/2)$);

\node at ($(cx)+(0,-\BigSize/2-0.3)$) {$Q_x$};
\node at ($(cy)+(0,-\BigSize/2-0.3)$) {$Q_y$};

\draw[myred, thick] ($(cx)+(-\SmallSize/2,-\SmallSize/2)$)
    rectangle ($(cx)+(\SmallSize/2,\SmallSize/2)$);
\draw[myred, thick] ($(cy)+(-\SmallSize/2,-\SmallSize/2)$)
    rectangle ($(cy)+(\SmallSize/2,\SmallSize/2)$);

\node[myred] at ($(cx)+(0,-\SmallSize/2-0.3)$) {$q_x$};
\node[myred] at ($(cy)+(0,-\SmallSize/2-0.3)$) {$q_y$};

\fill (cx) circle (1.5pt);
\fill (cy) circle (1.5pt);

\node at ($(cx)+(0,0.25)$) {$x$};
\node at ($(cy)+(0,0.25)$) {$y$};

\coordinate (mid) at ($(cx)!0.5!(cy)$);

\draw[myorange, thick] ($(mid)+(-\TinySize/2,-\TinySize/2)$) rectangle ($(mid)+(\TinySize/2,\TinySize/2)$);
\fill[myorange] (mid) circle (1.5pt);

\node[myorange] (midlabel) at (0.5,1.6) 
   {$\displaystyle \frac{x+y}{2} + \left[-\tfrac{1}{16},\, \tfrac{1}{16}\right]^3$};

\draw[myorange, thick, ->] (midlabel.south) -- ($(mid)+(0.1,0.3)$);
\end{tikzpicture}
\caption{Illustration of the proof of Lemma \ref{L:finite_critical_point}}\label{Figure1}
\end{figure}

\begin{proof}[Proof of Lemma \ref{L:finite_critical_point}]
Let $R>1$. We want to show that, if $\alpha$ is large enough, $\Lc^\alpha_{1,R}$ contains an unbounded cluster with positive probability (equivalently, with probability one).
To do so, we will show that our model stochastically dominates a Bernoulli percolation with finite range interaction and conclude with \cite{MR1428500}.
There are certainly many ways of deriving such a domination. We present one way which relies on paving $\R^3$ using cubes of sidelength $1/2$. Figure \ref{Figure1} contains an illustration of some of the notations we use.

In this proof, it will be convenient to view our collections of loops as collections of \emph{rooted} loops. Let $\alpha>0$ be a large intensity parameter. We can realise $\Lc^\alpha_{1,R}$ as the union of two independent collections of loops $\Lc^{\alpha/2,(1)}_{1,R}$ and $\Lc^{\alpha/2,(2)}_{1,R}$ each having the same law as $\Lc^{\alpha/2}_{1,R}$.

Let $x \in \frac12 \Z^3$. We define $Q_x = x + [-1/4,1/4]^3$ and $q_x = x + [-1/8,1/8]^3$.
For each Borel set $A \subset q_x$, let $g_x(A)$ be the probability that $A$ is hit by a Brownian trajectory starting uniformly on the boundary of $q_x$ and killed upon exiting $Q_x$.
Let $I^{(1)}_x$ be the indicator function of the event that there exists a loop $\wp \in \Lc^{\alpha/2,(1)}_{1,R}$ whose root belongs to $q_x$ and such that $g_x(\wp \cap q_x) \ge 1/2$. If $I^{(1)}_x=1$, we select a specific loop $\wp_x \in \Lc^{\alpha/2,(1)}_{1,R}$ realising the above event, uniformly at random among each possible loops. The probability $\P(I^{(1)}_x =1)$ does not depend on $x$ and goes to 1 as $\alpha \to \infty$. Moreover, the random variables $I_x^{(1)}$, $x \in \frac12 \Z^3$, are independent.

Now, let $\{x,y\}$ be an edge of $\frac12\Z^3$ such that $I^{(1)}_x=I^{(1)}_y=1$. Let $I^{(2)}_{\{x,y\}}$ be the indicator function of the event that there exists a loop $\wp \in \Lc^{\alpha/2,(2)}_{1,R}$ whose root belongs $(x+y)/2 + [-1/16,1/16]^3$ and such that $\wp$ intersects both $\wp_x$ and $\wp_y$. We declare the edge $\{x,y\}$ open if $I^{(1)}_x=I^{(1)}_y=1$ and $I^{(2)}_{\{x,y\}}=1$.
Thanks to the bounds $g_x(\wp_x \cap q_x), g_y(\wp_y \cap q_y) \ge 1/2$, there exists a deterministic function $u : (0,\infty) \to [0,1]$ with $u(\alpha) \to 1$ as $\alpha \to \infty$ such that
\[
\P( \{x,y\} \text{ open} \vert I^{(1)}_x=I^{(1)}_y=1, \Lc^{\alpha/2,(1)}_{1,R} ) \ge u(\alpha) \qquad \text{a.s.}
\]
Thus, $\P( \{x,y\} \text{ open})$ does not depend on the edge $\{x,y\}$ and goes to 1 as $\alpha \to \infty$.

The states of two different edges $\{x,y\}$ and $\{x',y'\}$ are independent as soon as they do not share any endpoint. Wrapping up, we have a one-dependent percolation model on the edges of $\frac12 \Z^3$ with $\P(\{x,y\} \text{ open})$ as close to 1 as desired. By \cite{MR1428500}, this model percolates almost surely if $\alpha$ is large enough. By construction, this builds an unbounded cluster of loops in $\Lc^\alpha_{1,R}$.
\end{proof}

\begin{lemma}\label{L:positive_critical_point}
The critical point
$\alpha_\b2b$ is positive: $\alpha_\b2b>0$.
\end{lemma}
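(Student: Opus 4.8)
The plan is to find $\alpha_0>0$ such that $\P(E_\b2b)<1$ for all $\alpha<\alpha_0$, where $E_\b2b=\{\forall R>1:\ B(0,1)\overset{\Lc^\alpha}{\longleftrightarrow}\partial B(0,R)\}$; by the zero--one law (Lemma \ref{L:zero_one}) this yields $\P(E_\b2b)=0$, hence $\alpha\notin\Ir_\b2b$, and therefore $\alpha_\b2b\ge\alpha_0>0$. The first (soft) step is to observe that a cluster of $\Lc^\alpha$ is a connected subset of $\R^3$ (a finite chain of pairwise-intersecting continuous loops has connected union), so by the intermediate value theorem applied to $z\mapsto\|z\|$ a cluster meeting $B(0,1)$ and $\partial B(0,R')$ must meet $\partial B(0,R)$ for every $R\in(1,R')$. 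Hence the events $\{B(0,1)\overset{\Lc^\alpha}{\longleftrightarrow}\partial B(0,R)\}$ are decreasing in $R$, so that $\P(E_\b2b)=\lim_{R\to\infty}q_R(\alpha)$ with $q_R(\alpha):=\P(B(0,1)\overset{\Lc^\alpha}{\longleftrightarrow}\partial B(0,R))$, and it suffices to prove $q_R(\alpha)\to0$ as $R\to\infty$ for $\alpha$ small.

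The quantitative engine consists of two uniform-in-scale estimates obtained by rescaling. By scale invariance of $\loopmeasure$ and Lemma \ref{L:sphere_bound}, for every fixed integer $b\ge2$ there is $C_b<\infty$ with $\loopmeasure(\{\wp:\ \mathrm{diam}(\wp)\ge s,\ \wp\cap B(x,bs)\neq\varnothing\})\le C_b$ for all $s>0$ and $x\in\R^3$, so at every scale the expected number of loops of diameter $\gtrsim s$ meeting a given ball of radius $\sim s$ is $O(\alpha)$, uniformly. And by scale invariance together with \eqref{E:L_estimate_crossing1}, $\loopmeasure(\{\wp:\ \wp\cap B(x,s)\neq\varnothing,\ \wp\cap\partial B(x,2^{j}s)\neq\varnothing\})\le C2^{-j}$ for all $j\ge1$: loops ``bridging $j$ dyadic scales'' near a given ball have measure decaying geometrically in the scale gap, uniformly in the scale. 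The first estimate controls the occupation probability of the renormalised process; the second makes the cross-scale dependence negligible.

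The heart of the argument is the comparison with a Mandelbrot-type fractal percolation. Fix a large integer $b$ and, given $R=b^{N}$, consider the $b$-adic boxes covering a neighbourhood of $B(0,R)$ at \emph{all} scales: a level-$\ell$ box has side $b^{-\ell}R$ ($\ell\ge0$) and contains $b^{3}$ level-$(\ell+1)$ subboxes, and $B(0,1)$ sits inside a level-$N$ box. Declare a level-$\ell$ box $Q$ \emph{occupied} if some loop $\wp\in\Lc^\alpha$ with $\mathrm{diam}(\wp)\in[b^{-\ell-1}R,\,b^{-\ell}R)$ meets $Q$ and realises, inside a bounded neighbourhood of $Q$, a local crossing of the kind built in the proof of Lemma \ref{L:finite_critical_point}. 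By the first estimate, $\P(Q\ \text{occupied})=p(\alpha)\le C_b\alpha$, \emph{the same at every level} by scale invariance, with $p(\alpha)\to0$ as $\alpha\to0$; since distinct levels use disjoint ranges of loop diameters, occupations at distinct levels are independent, while occupations at a common level are finite-range dependent (a loop shorter than $b^{-\ell}R$ meets only boundedly many level-$\ell$ boxes). If the cluster of $B(0,1)$ reaches $\partial B(0,R)$, then — after absorbing the ``long'' loops (those too large for the boxes they meet), a sparse family whose contribution near the cluster is $O(\alpha)$ and geometrically summable in the scale gap by the second estimate, into the finite-range dependence — the occupied boxes must contain a connected path from the root box down to level $N$. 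Hence $q_R(\alpha)$ is bounded by $\P(\text{the occupied cluster of the root reaches depth }N)$ plus an $O(\alpha)$ error. By Liggett--Schonmann--Stacey \cite{MR1428500} the occupied field is dominated by an i.i.d.\ retention field of density $p'(\alpha)\to0$, so the associated fractal percolation is subcritical (indeed $b^{3}p'(\alpha)<1$ for $\alpha$ small) and its root cluster is a.s.\ finite; by \cite{Chayes91} the probability that it reaches depth $N$ tends to $0$ as $N\to\infty$. Letting $R=b^{N}\to\infty$ and using monotonicity of $q_R$ in $R$ gives $q_R(\alpha)\to0$, hence $\P(E_\b2b)=0$, as required.

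The step I expect to be the main obstacle is making rigorous that a loop-soup crossing forces a \emph{deep} path of occupied boxes. In a lattice model connectivity respects scales, but here a single Brownian loop can join far-separated scales and short-circuit the hierarchy, so the renormalised occupation process is genuinely long-range rather than $k$-dependent; the way out is the geometric decay $\loopmeasure(\{\wp:\ \wp\cap B(x,s)\neq\varnothing,\ \wp\cap\partial B(x,2^{j}s)\neq\varnothing\})\le C2^{-j}$ coming from Lemma \ref{L:estimate_crossing}, which turns the long loops into a summable sparse perturbation that can be absorbed (and handled via \cite{MR1428500}) rather than a mechanism that defeats subcriticality, plus FKG/monotonicity to reduce cleanly to the renormalised process. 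A secondary point — and the reason the comparison must be \emph{hierarchical} (a ``variant of Mandelbrot's fractal percolation'') rather than a single flat-lattice renormalisation — is that one must beat the $b^{3}$ branching of the tree by the small occupation probability $p(\alpha)$, whereas a naive annulus-by-annulus recursion would produce an uncontrolled $\sim R^{3}$ entropy factor.
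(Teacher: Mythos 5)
Your opening reduction (zero--one law, monotonicity in $R$, and the two scale-invariant $\mu^{\rm loop}$ estimates) is sound and would also work in the paper's argument. The comparison with Mandelbrot fractal percolation is the right circle of ideas, but the orientation you chose has a genuine gap that the paper deliberately avoids.

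You want to show that the \emph{occupied} set (boxes containing a loop of the matching scale realising a local crossing) does not percolate across scales. The key assertion, that a cluster of $\Lc^\alpha$ joining $B(0,1)$ to $\partial B(0,R)$ forces a connected path of occupied boxes from the root box down to level $N$, is false as stated. A crossing can be realised entirely by a chain of loops of, say, diameter of order $1$: such a chain registers as ``occupied'' only at the finest level ($\ell=N$) and at no coarser level, so there is no hierarchical (tree) path of occupied boxes — rather a flat, single-scale lattice crossing of length $\sim R$. Your hierarchical subcriticality bound $b^{3}p'(\alpha)<1$ says nothing about such a crossing; one would then be back to a flat-lattice renormalisation with an uncontrolled $\sim R^{3}$ entropy factor, which is exactly the scenario you flagged as untenable. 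The ``absorbing long loops'' step handles loops that are \emph{larger} than their box (a sparse family), but does not create the nesting you need; occupations at distinct levels in your construction are independent, not nested as in genuine Mandelbrot percolation, so the branching-process picture does not apply to the event you need to exclude. Incidentally, the role you assign to \cite{Chayes91} is also off: subcriticality ($pb^3<1$ implies extinction) is an elementary branching-process fact, whereas the substantive content of \cite{Chayes91} used in this problem is the \emph{positive} result (Theorem 4) that in the supercritical regime the retained set contains a blocking sheet.

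The paper avoids the difficulty by taking the dual route: rather than showing the occupied set is subcritical, it shows the \emph{free} set (boxes not covered by any loop's associated cuboid) is a Mandelbrot-type fractal percolation with retention probability $e^{-c\alpha}$, hence close to $1$ for small $\alpha$, and then invokes \cite[Theorem~4]{Chayes91} to produce, with positive probability, a crossing sheet in the free set that separates two opposite faces of a box; this sheet avoids all loops (each loop is contained in its cuboid), so any loop-soup cluster is blocked — no enumeration of possible crossings is needed. The $4\times 4\times 1$ (in the paper, $8\times 8\times 2$) cuboid trick converts the ``easy''-direction blocking sheet of \cite{Chayes91} into a ``hard''-direction one, and then gluing finitely many such sheets with FKG makes $\P\big(B(0,1)\overset{\Lc^\alpha}{\longleftrightarrow}\partial B(0,R)\big)<1$ for some $R$, which via Lemma~\ref{L:zero_one} gives $\alpha\notin\Ir_\b2b$. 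If you want to pursue your direction, you would have to prove a genuine ``no crossing at any scale'' statement (not merely cross-level hierarchical subcriticality), which is precisely what the blocking-sheet argument packages for free.
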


\begin{proof}[Proof of Lemma \ref{L:positive_critical_point}]
Let $E$ be the event that there exists a ``crossing sheet'' in $[0,2]^2\times [0,1]$ that does not intersect any loop of $\Lc^\alpha$ and which separates $[0,2]^2 \times \{0\}$ and $[0,2]^2 \times \{1\}$. More precisely, the complementary event $E^c$ is defined to be the event that there is continuous path in
\[
([0,2]^2\times [0,1]) \cap \{ x \in \R^3, \exists \wp \in \Lc^\alpha, x \in \text{range}(\wp)\}
\]
joining $[0,2]^2 \times \{0\}$ and $[0,2]^2 \times \{1\}$.
We are going to show that
\begin{equation}\label{E:blocking_sheet}
\exists \hspace{1pt} \alpha_0 >0, \quad \forall \alpha \in (0,\alpha_0), \quad
\P(E) >0.
\end{equation}
Gluing several of these blocking sheets and by FKG inequality, we will immediately deduce that, if $\alpha \in (0,\alpha_0)$ and $R$ is large enough,
\[
\P(B(0,1) \overset{\Lc^\alpha}{\longleftrightarrow} \partial B(0,R)) < 1.
\]
By Lemma \ref{L:zero_one}, this will imply that $(0,\alpha_0) \subset (0,\infty) \setminus \Ir_\b2b$ and thus $\alpha_\b2b \ge \alpha_0 > 0$ as desired.

Our main task is now to prove \eqref{E:blocking_sheet}.
As in the article \cite{SheffieldWernerCLE} which considered the percolation of a 2D Brownian loop soup, we will compare our loop soup percolation to Mandelbrot's fractal percolation. This will be achieved by replacing loops by slightly larger cubes.  However, we will rely here on a 3D result derived in \cite{Chayes91} for the existence of blocking sheet. The article \cite{Chayes91} proves the existence of such a blocking sheet for an ``easy'' crossing, rather than a ``hard'' crossing as above. We will remedy this issue with a simple trick: we will compare loops to $4\times 4\times 1$ rectangular cuboids instead of cubes.

In this proof, we will use the following notations, for $n \in \N$:
\begin{gather*}
\mathcal{R}_n = \{ [0,8.2^{-n}]^2 \times [0,2.2^{-n}] + (i_1,i_2,i_3) 2^{-n} : i_1,i_2,i_3 \in \N\}, \quad \mathcal{R}=\bigcup_{n \in \N} \mathcal{R}_n,\\
\mathcal{Q}_n = \{ [0,2^{-n}]^3 + (8j_1,8j_2,2j_3) 2^{-n}: j_1,j_2,j_3 \in \N \},\quad \mathcal{Q}=\bigcup_{n \in \N} \mathcal{Q}_n,\\
\mathcal{Q}_n' = \{ [0,2^{-n}]^3 + (j_1,j_2,j_3) 2^{-n}: j_1,j_2,j_3 \in \N \},\quad \mathcal{Q}'=\bigcup_{n \in \N} \mathcal{Q}_n'.
\end{gather*}
We start by assigning to each loop $\wp\in \Lc^\alpha$ a specific rectangular cuboid $R(\wp) \in \mathcal{R}$ as follows.
Let $\wp \in \Lc^\alpha$. Let $d(\wp)$ be the diameter of $\wp$ with respect to the L$^1$-norm and let $n(\wp) \in \Z$ be the unique integer such that $d(\wp) \in [2^{-n(\wp)-1},2^{-n(\wp)})$. The loop $\wp$ can intersect at most 8 dyadic cubes of side length $2^{-n(\wp)}$. We assign to $\wp$ the minimal (with respect to some lexicographic order) dyadic vertex $v(\wp) = (i_1 2^{-n(\wp)}, i_2 2^{-n(\wp)},i_3 2^{-n(\wp)})$ such that $\wp$ intersects $[0,2^{-n(\wp)}]^3 + v(\wp)$. Finally, we define $R(\wp)$ to be the translated rectangular cuboid $[0,8.2^{-n(\wp)}]^2 \times [0,2.2^{-n(\wp)}]$ centred at $v(\wp)$. By construction, the loop $\wp$ is included in $R(\wp)$.

There is a positive probability that none of the loops $\wp \in \Lc^\alpha$ with $n(\wp) < 0$ (``big loops'') intersect $[0,2]^2 \times [0,1]$. In the following we will thus focus on the loops $\wp$ with $n(\wp) \ge 0$ (recall that in the definitions of $\mathcal{R}, \mathcal{Q}$ and $\mathcal{Q}'$, only nonnegative values of $n$ are considered).

We now define a variant of Mandelbrot's fractal percolation. 
For each $R\in \mathcal{R}$, let $X(R)$ be the indicator function of the event that there is no loop $\wp$ in $\Lc^\alpha$ such that $R(\wp) = R$.
The variables $X(R)$, $R\in \mathcal{R}$, are independent. Moreover, by invariance under scaling and translation, there exists $c>0$ such that for all $R \in \mathcal{R}$, $\P(R=1) = e^{-c \alpha}$. Consider the random compact
\[
K = ([0,2]^2 \times [0,1]) \setminus \bigcup_{R \in \mathcal{R}: X(R) = 0} R.
\]
As in \cite{SheffieldWernerCLE},
this percolation model dominates a genuine Mandelbrot's fractal percolation with a smaller parameter. Indeed, define for each cube $Q\in \mathcal{Q}_n$, $n \in \N$,
\[
\hat{X}(Q) = \min_{R \in \mathcal{R}_n: Q \subset R} X(R).
\]
Thanks to the room left between each cube of $\mathcal{Q}_n$, the variables $\hat{X}(Q), Q \in \mathcal{Q}$, are i.i.d. Bernoulli random variables with success parameter $e^{-128 c \alpha}$ (the above minimum ranges over a set of size 128).
Now, let $Q' \in \mathcal{Q}'$. The stretched set
\[
\{ (8x_1,8x_2,2x_3) : (x_1,x_2,x_3) \in Q' \} \in \mathcal{R}
\]
contains a unique cube $Q \in \mathcal{Q}$ and we set $X'(Q') = \hat{X}(Q)$. The random compact
\[
K' = ([0,1/4]^2 \times [0,1/2]) \setminus \bigcup_{Q' \in \mathcal{Q}': X'(Q)=0} Q'
\]
has the law of the open cluster of a Mandelbrot's fractal percolation with parameter $e^{-128c\alpha}$ in $[0,1/4]^2 \times [0,1/2]$. By \cite[Theorem 4]{Chayes91}, if $\alpha$ is small enough, there is a positive probability that $K'$ contains a crossing sheet separating $[0,1/4]^2 \times \{0\}$ and $[0,1/4]^2 \times \{1/2\}$. By construction, this shows, for such values of $\alpha$, the existence of a crossing sheet for $K$ with positive probability. Since each loop $\wp \in \Lc^\alpha$ is contained in $R(\wp)$, this proves \eqref{E:blocking_sheet} and concludes the proof of the lemma.
\end{proof}

We can conclude with a proof of Theorem \ref{T:phase_transition}.

\begin{proof}[Proof of Theorem~\ref{T:phase_transition}]
    It follows from Lemmas \ref{L:I3}, \ref{L:finite_critical_point} and \ref{L:positive_critical_point}.
\end{proof}

\section{Crossing exponent: Proof of Theorem \ref{T:crossing}}\label{S:one-arm}

This section is dedicated to the proof of Theorem \ref{T:crossing} concerning the asymptotic behaviour of
\begin{equation}\label{E:def_pr}
p_r := \P \Big( \partial B(1) \overset{\Lc_{\R^3}^\alpha}{\longleftrightarrow} \partial B(r) \Big), \qquad r \ge 0.
\end{equation}
We wish to show the existence of an exponent $\xi=\xi(\alpha) \ge 0$ such that $p_r = r^{\xi+o(1)}$ as $r \to 0$ and establish bounds on $\xi$.

To prove the existence of the exponent, one could be tempted to show an inequality of the form $p_{rs} \ge (rs)^{o(1)} p_r p_s$ whose proof could go along the following lines. If a cluster $\Cc$ intersects both $B(rs)$ and $\partial B(r)$ and a cluster $\Cc'$ intersects both $B(r)$ and $\partial B(1)$, it only remains to make an intermediate connection to join $\Cc$ and $\Cc'$ to have an overall crossing of $B(1) \setminus \overline{B(rs)}$. By FKG inequality, the probability of the intersection of these three events is at least the product of the three probabilities. If one can guarantee that the intermediate connection has a not-too-small probability ($(rs)^{o(1)}$ say), then we would get the desired bound $p_{rs} \ge (rs)^{o(1)} p_r p_s$.
However, this last statement requires a precise control on the shape of the clusters conditioned on the event that they realise difficult crossings.

To circumvent this difficulty, we prove a submultiplicative inequality, instead of a supermultiplicative inequality:

\begin{proposition}\label{P:submultiplicative}
Defining
\begin{equation}\label{E:def_Fs}
\Fs : s \in [0,1] \mapsto \int_s^1 \frac{1}{r^2} p_r \d r,
\end{equation}
there exists $C=C(\alpha)>0$ such that for all $s,s' \in [0,1]$,
\begin{equation}
\label{E:existence_exp0}
\Fs(ss')+1 \leq C (\Fs(s)+1) (\Fs(s')+1).
\end{equation}
\end{proposition}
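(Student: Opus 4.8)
The plan is to reduce \eqref{E:existence_exp0} to a single ``inner'' integral estimate, and then to prove that estimate by decoupling a crossing of the annulus $B(1)\setminus\overline{B(r)}$ at the intermediate scale $s'$, extracting via Palm's formula the loop of the crossing cluster that straddles $\partial B(s')$.

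\emph{Reduction to an inner estimate.} The function $r\mapsto p_r$ is nondecreasing on $(0,1]$ (a thinner shell is easier to cross), $\Fs(1)=0$, and by \eqref{E:L_estimate_crossing2} a single loop already crosses $B(1)\setminus\overline{B(r)}$ with probability of order $\alpha r$, so $p_r\gtrsim r$ and $\Fs(0)=\int_0^1 r^{-2}p_r\,\d r=+\infty$. Hence \eqref{E:existence_exp0} is trivial if $s$ or $s'$ equals $0$ or $1$, and we may assume $s,s'\in(0,1)$. Splitting the integral defining $\Fs$ at $s'$ gives $\Fs(ss')=\Fs(s')+\int_{ss'}^{s'}r^{-2}p_r\,\d r$, and since $\Fs(s')+1\le(\Fs(s)+1)(\Fs(s')+1)$, it suffices to produce $C_1=C_1(\alpha)$ with $\int_{ss'}^{s'}r^{-2}p_r\,\d r\le C_1(\Fs(s)+1)(\Fs(s')+1)$, after which \eqref{E:existence_exp0} holds with $C=C_1+1$. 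Two elementary facts will be used: substituting $r=s'u$ gives $\int_{ss'}^{s'}r^{-2}p_r\,\d r=\tfrac1{s'}\int_s^1 u^{-2}p_{s'u}\,\d u$, and monotonicity gives $\Fs(s')\ge p_{s'}\int_{s'}^1 r^{-2}\,\d r=p_{s'}(s'^{-1}-1)$, whence $p_{s'}/s'\le 2(\Fs(s')+1)$ (directly for $s'\le\tfrac12$, and because $p_{s'}\le1$ for $s'>\tfrac12$). So a bound of the shape $\int_{ss'}^{s'}r^{-2}p_r\,\d r\lesssim_\alpha \tfrac{p_{s'}}{s'}\,(\Fs(s)+1)$ would finish the proof.

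\emph{Decoupling at the scale $s'$.} Fix $r\in(ss',s')$. If a cluster of $\Lc^\alpha$ crosses $B(1)\setminus\overline{B(r)}$, take a chain of loops in it from $\partial B(1)$ to $\partial B(r)$ and let $\wp$ be the first loop of the chain meeting $\overline{B(s')}$; then $\wp$ meets $\partial B(s')$ (a connected set meeting $\overline{B(s')}$ and $\R^3\setminus\overline{B(s')}$ meets $\partial B(s')$), the loops of the chain before $\wp$ lie entirely outside $\overline{B(s')}$, and the cluster of $\wp$ still reaches $\partial B(r)$. Applying Palm's formula \eqref{E:Palm} to such a loop and integrating against $r^{-2}\,\d r$, the $\partial B(r)$-part produces $\int_{ss'}^{s'}r^{-2}\mathbf 1_{\{\text{cluster of }\wp\text{ reaches }\partial B(r)\}}\,\d r$, which (the cluster being connected and meeting $\partial B(s')$) is at most the analogous integral of the indicator that the cluster of $\wp$ crosses $B(s')\setminus\overline{B(r)}$; rescaling that inner annulus by $s'$ turns it into a one-arm event of probability $\le p_{r/s'}$, and $\int_{ss'}^{s'}r^{-2}p_{r/s'}\,\d r=\tfrac1{s'}\Fs(s)$. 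The outward part and the loop-measure integral should supply the missing factor of order $p_{s'}$: one re-roots $\wp$ at its point closest to the origin (Proposition~\ref{P:decompo_loopmeasure}), decomposes $\wp$ at its successive crossings of $\partial B(s')$ into excursions alternately inside and outside $B(s')$ (Proposition~\ref{P:loopmeasure_cross}), and uses that the attached Poisson-kernel weights are $1+O(s')$ (Corollary~\ref{C:crossing_exactly_n}, cf.\ \eqref{E:Poisson_approx}) together with the mass estimate \eqref{E:L_estimate_crossing2}. Granting the decoupling of the ``inside'' and ``outside'' parts, the factors combine to $\tfrac{p_{s'}}{s'}(\Fs(s)+1)\le 2(\Fs(s)+1)(\Fs(s')+1)$, which is the required bound.

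\emph{Main obstacle.} The technical heart is the loop $\wp$ straddling $\partial B(s')$: it is shared by the outer and the inner crossings, so neither the van den Berg--Kesten inequality nor a bare first-moment bound decouples them; moreover the naive choice ``first loop of the chain reaching $\overline{B(s')}$'' is too permissive (such a loop can be arbitrarily small, there are infinitely many candidates meeting $\partial B(s')$ in a fixed crossing cluster, and the loop measure of loops meeting a sphere is infinite), so the Palm integral would diverge. One must instead select a \emph{canonical non-degenerate} loop attached to the crossing at scale $s'$ --- e.g.\ the loop of maximal diameter among the loops of the crossing cluster meeting $\overline{B(s')}$ --- and argue separately that performing the crossing near $\partial B(s')$ with small loops only is costly, so that small pivotal loops contribute little. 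Once such a pivotal loop is fixed, its excursion decomposition at $\partial B(s')$ makes the inside and outside parts independent up to $1+O(s')$ factors (the outside contributing $p_{s'}$, the inside the rescaled one-arm probabilities $p_{r/s'}$, $r\in(ss',s')$, i.e.\ $\Fs(s)$), and one must still check that crossings of $B(s')\setminus\overline{B(r)}$ that wander out of $B(s')$ do not spoil this independence. Making all of this quantitative is the crux; working with the smoothed quantity $\Fs$ (equivalently, not fixing the inner radius) is what lets one bypass the delicate control on the shape of crossing clusters that a supermultiplicative estimate would otherwise require.
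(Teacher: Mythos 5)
Your reduction---split $\Fs(ss')=\Fs(s')+\int_{ss'}^{s'}r^{-2}p_r\,\d r$, observe $p_{s'}/s'\le 2(\Fs(s')+1)$, and aim for $\int_{ss'}^{s'}r^{-2}p_r\,\d r\lesssim_\alpha (p_{s'}/s')(\Fs(s)+1)$---is sound, but the core step is only a sketch with an acknowledged hole, and the hole is exactly where you say it is. There is no well-defined ``first loop of the crossing chain meeting $\overline{B(s')}$'' to Palm over: a crossing cluster a.s.\ meets $\partial B(s')$ through infinitely many loops, $\loopmeasure(\{\wp:\wp\cap\partial B(s')\neq\varnothing\})=\infty$, and a loop straddling $\partial B(s')$ contributes to both the inner and the outer one-arm events, so neither a BK-type inequality nor a bare first moment decouples them. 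Your fallback (take the maximal-diameter straddling loop in the cluster and argue that small pivotal loops are costly) is a genuinely new quantitative estimate that you neither state precisely nor prove; by your own admission ``making all of this quantitative is the crux,'' so the proposal does not establish the proposition.

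The paper's argument avoids cutting the crossing cluster at any fixed intermediate sphere. It writes $p_{rs}=p_r\,\P(\partial B(1)\leftrightarrow\partial B(rs)\mid E_r)$ with $E_r$ the crossing event of $B(1)\setminus\overline{B(r)}$, splits $\Lc^\alpha$ into loops touching the shell $B(1)\setminus\overline{B(r)}$ and loops avoiding it, and compares two scalar radii: $R_1$, the deepest radius crossed from $\partial B(r)$ by a \emph{single} loop, and $R_2$, the outermost radius linked to $\partial B(rs)$ by the shell-avoiding loops alone. The event $\{\partial B(1)\leftrightarrow\partial B(rs)\}\cap E_r$ forces $R_2\ge R_1$. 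The decoupling you were reaching for is encapsulated in Lemma~\ref{L:small_loop}, which shows $\P(R_1\le rx\mid E_r)\le Cx$; this is where the excursion decomposition of Corollary~\ref{C:crossing_exactly_n} is actually used, applied to loops crossing inward from $\partial B(r)$ (a finite collection, well behaved under Palm), not to a loop straddling an interior sphere, which is why no divergence or shared-loop problem appears there. Together with the scaling bound $\P(R_2/r\ge y)\le p_{s/y}$ this gives $p_{rs}\le Cp_r\,s(\Fs(s)+1)$, and integrating against $r^{-2}\,\d r$ over $r\in[s',1]$ yields~\eqref{E:existence_exp0}.
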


We cannot use FKG inequality to prove \eqref{E:existence_exp0} since it goes in the wrong direction. Instead, we will control the deepest level reached by loops in $\{ \wp \in \Lc_{\R^3}^\alpha: \wp \cap (B(1) \setminus \overline{B(r)}) \neq \varnothing\}$ conditioned on the event that there exists a cluster crossing $B(1) \setminus \overline{B(r)}$; see Lemma \ref{L:small_loop}. This will allow us to recover a new independent set of loops.

\smallskip

The rest of this section is organised as follows.
Section \ref{SS:crossing1} contains the decoupling argument and proves Lemma \ref{L:small_loop}. Section \ref{SS:crossing2} proves Proposition \ref{P:submultiplicative} and deduce the existence of the exponent $\xi$ as well as its lower bound when $\alpha \notin \Ir_\b2b$. Finally, Section \ref{SS:crossing3} proves the upper bound on $\xi$.

\subsection{Decoupling step}\label{SS:crossing1}

The main result of this section is the following lemma.

\begin{lemma}\label{L:small_loop}
There exists $C=C(\alpha)>0$ such that for all $r \in (0,1)$ and $x \in (0,1)$,
\begin{equation}
\label{E:existence_exp5}
\P \Big( \exists \wp \in \Lc_{\R^3}^\alpha: \partial B(r) \overset{\wp}{\longleftrightarrow} \partial B(rx) \vert \partial B(1) \overset{\Lc_{\R^3}^\alpha}{\longleftrightarrow} \partial B(r)\Big) \leq Cx.
\end{equation}
\end{lemma}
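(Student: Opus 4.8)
Write $E=\{\partial B(1)\overset{\Lc^\alpha}{\longleftrightarrow}\partial B(r)\}$ and $F=\{\exists\,\wp_0\in\Lc^\alpha:\partial B(r)\overset{\wp_0}{\longleftrightarrow}\partial B(rx)\}$, so the claim is $\P(F\cap E)\le Cx\,p_r$ with $p_r=\P(E)$. We may assume $x<\tfrac12$, since otherwise $Cx\ge C/4$ and the bound is trivial for $C$ large. Two ingredients will be used repeatedly: (i) by Lemma~\ref{L:estimate_crossing} and scale invariance, any loop realising $F$ lies in the set $\{\wp:\partial B(1)\overset{\wp}{\longleftrightarrow}\partial B(x)\}$ rescaled, of $\loopmeasure$-mass $(1+O(x))x$, hence $\P(F)\le\alpha(1+O(x))x$; and (ii) $p_r\ge 1-e^{-\alpha\loopmeasure(\{\partial B(1)\leftrightarrow\partial B(r)\})}\ge c(\alpha)\,r$ for all $r\in(0,1)$ (one-loop lower bound via Lemma~\ref{L:estimate_crossing}), which lets us absorb any $O(\alpha r)$ prefactor into $Cx\,p_r$. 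The heuristic is that the loops meeting $B(r)$ (which carry the event $F$) and the loops avoiding $B(r)$ (which in the generic case carry $E$) are independent; the delicate part is the non-generic case where the macroscopic crossing genuinely dips into $B(r)$.

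\textbf{Palm reduction.} First I would apply Palm's formula (Lemma~\ref{L:Palm}) to the loop realising $F$:
\[
\P(F\cap E)\ \le\ \E\Big[\sum_{\wp_0\in\Lc^\alpha}\mathbf 1\{\partial B(r)\overset{\wp_0}{\longleftrightarrow}\partial B(rx)\}\,\mathbf 1_E\Big]
=\alpha\!\int_{[\Pf]}\!\mathbf 1\{\partial B(r)\overset{\wp_0}{\longleftrightarrow}\partial B(rx)\}\,\P\big(E\text{ holds for }\Lc^\alpha\cup\{\wp_0\}\big)\,\loopmeasure(\d\wp_0).
\]
The key is to bound the integrand. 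Since $E$ is increasing, $E$ holds for $\Lc^\alpha\cup\{\wp_0\}$ iff it holds for $\Lc^\alpha$, or adding $\wp_0$ creates a new crossing. In the latter case the cluster of $\wp_0$ in $\Lc^\alpha\cup\{\wp_0\}$ reaches $\partial B(1)$ (as $\wp_0$ already touches $\partial B(r)$), and unwinding the chain joining $\wp_0$ to $\partial B(1)$ produces an $\Lc^\alpha$-cluster $\Cc$ with $\Cc\cap\mathrm{range}(\wp_0)\neq\varnothing$ and $\Cc\cap\partial B(1)\neq\varnothing$; moreover $\Cc$ must avoid $B(r)$, for otherwise $\Cc$ alone would cross $B(1)\setminus\overline{B(r)}$ and $E$ would already hold for $\Lc^\alpha$. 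Writing $\rho_0=\rho_0(\wp_0):=\max\{\|y\|:y\in\mathrm{range}(\wp_0)\}$ (so $\rho_0\ge r$), this gives, for any $\wp_0$ realising $F$,
\[
\P\big(E\text{ for }\Lc^\alpha\cup\{\wp_0\}\big)\ \le\ p_r+\P\big(\mathrm{range}(\wp_0)\cap\{\|\cdot\|\ge r\}\overset{\Lc^\alpha}{\longleftrightarrow}\partial B(1)\big)\ \le\ p_r+p_{\rho_0\wedge 1},
\]
the last step because any cluster meeting $\overline{B(\rho_0)}$ and $\partial B(1)$ crosses $B(1)\setminus\overline{B(\rho_0)}$. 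The contribution of loops with $\rho_0\ge1$ (then $\wp_0$ meets both $\partial B(rx)$ and $\partial B(1)$) is at most $\alpha\,\loopmeasure(\{\wp:\wp\cap\partial B(rx)\neq\varnothing,\ \wp\cap\partial B(1)\neq\varnothing\})=\alpha(1+O(rx))\,rx\le Cx\,p_r$ by Lemma~\ref{L:estimate_crossing}, scaling, and $p_r\gtrsim r$.

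\textbf{The remaining integral and the main obstacle.} It remains to estimate $\alpha\int_{\{\wp_0 \text{ realises }F,\ \rho_0<1\}}(p_r+p_{\rho_0})\,\loopmeasure(\d\wp_0)$. Split the integral over dyadic ranges $\rho_0\in[2^k r,2^{k+1}r)$; by scale invariance and Lemma~\ref{L:estimate_crossing},
\[
\loopmeasure\big(\{\wp_0:\partial B(r)\overset{\wp_0}{\longleftrightarrow}\partial B(rx),\ \rho_0\ge 2^k r\}\big)\ \le\ \loopmeasure\big(\{\partial B(rx)\overset{\wp_0}{\longleftrightarrow}\partial B(2^k r)\}\big)=(1+O(2^{-k}x))\,2^{-k}x,
\]
so, using that $p_\cdot$ is nondecreasing, the remaining integral is $\lesssim_\alpha x\sum_{k\ge0}2^{-k}\big(p_r+p_{2^{k+1}r}\big)\lesssim_\alpha x\big(p_r+r\,\Fs(r)\big)$, with $\Fs$ as in \eqref{E:def_Fs}. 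Thus the whole argument yields $\P(F\cap E)\le C(\alpha)\,x\,(p_r+r\,\Fs(r))$, hence $\P(F\mid E)\le C(\alpha)\,x\,(1+r\Fs(r)/p_r)$. The \emph{main obstacle} is to remove the extra term, i.e.\ to show $r\,\Fs(r)\le C(\alpha)\,p_r$ for all $r\in(0,1)$: equivalently, the one-arm probability $p_r$ should not decay as slowly as a linear function (no logarithmic correction). This is the genuinely delicate input; I would obtain it by refusing to bound the deep loop $\wp_0$ merely through its outer reach $p_{\rho_0}$ in the configuration above — exploiting instead the thinness of $\mathrm{range}(\wp_0)\cap\{\|\cdot\|\ge r\}$ inside the shell (using the bubble/Bessel decomposition of Proposition~\ref{P:decompo_loopmeasure}) to recover the missing factor, so that the crude logarithmic loss disappears and the three contributions sum to $C(\alpha)\,x\,p_r$, as claimed.
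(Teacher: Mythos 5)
Your Palm reduction and the dyadic split over the outer reach $\rho_0$ are sound (modulo writing $\overline{B(r)}$ rather than $B(r)$ in the ``$\Cc$ must avoid'' step), and the computation that the contribution of loops with $\rho_0\ge1$ is $\lesssim_\alpha x p_r$ is correct. The honest accounting, as you note, then gives $\P(F\cap E)\lesssim_\alpha x\big(p_r + r\,\Fs(r)\big)$, so the whole weight of the proof is shifted onto the inequality $r\,\Fs(r)\le C(\alpha)\,p_r$.

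That inequality is not a technical detail that can be handed off — it is the genuine content, and it is not proven. Note that $r\,\Fs(r)\lesssim p_r$ is precisely the statement that $p_r$ does not have a logarithmic (or slower) correction to linear decay: if $p_s\sim s$, then $\Fs(r)\sim\log(1/r)$ and $r\Fs(r)/p_r\to\infty$. So you are implicitly assuming (a uniform, Harnack-type form of) $\xi<1$, which is exactly what Lemma~\ref{L:lower_bound} and Corollary~\ref{C:crossing_exactly_n} are later used to establish, building on the very Lemma~\ref{L:small_loop} you are trying to prove. Even granting $p_r\ge c r^{1-\eps}$ as an independent input from Lemma~\ref{L:lower_bound}, it does not by itself yield $\Fs(r)\lesssim p_r/r$ uniformly without some a priori regularity of $r\mapsto p_r$. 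The closing gesture — ``exploit the thinness of $\mathrm{range}(\wp_0)\cap\{\|\cdot\|\ge r\}$ to recover the missing factor'' — is where the actual difficulty lives, and the bubble/Bessel decomposition of Proposition~\ref{P:decompo_loopmeasure} does not obviously cash it out: the issue is not the geometry of a single $\wp_0$, but a lower bound on $p_r$ that degrades as you peel off a dyadic factor, and nothing in your scheme breaks that coupling.

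For comparison, the paper avoids any a priori regularity assumption on $p_r$. It splits $\Lc^\alpha$ into the loops $\Lc_0$ crossing $B(r)\setminus\overline{B(rx)}$, their complement $\Lc_1$, and the loops $\Lc_2$ confined to $\R^3\setminus B(r)$, and reduces the estimate to a comparison of the conditional laws
\[
\P\Big(\partial B(1)\overset{\Lc^\alpha_{\R^3\setminus B(r)}}{\longleftrightarrow} A_{r,x}\;\Big|\; A_{r,x}\neq\varnothing\Big)
\quad\text{versus}\quad
\P\Big(\partial B(1)\overset{\Lc^\alpha_{\R^3\setminus B(r)}}{\longleftrightarrow} A_{r,x_0}\;\Big|\; A_{r,x_0}\neq\varnothing\Big),
\]
as in Lemma~\ref{L:small_loop_intermediate}. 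That comparison is done purely at the level of the loop measure, by controlling the law of the total number of crossings $K$ of $B(r)\setminus\overline{B(rx_0)}$ under the conditioning $A_{r,x}\neq\varnothing$ via Corollary~\ref{C:crossing_exactly_n} and the moment generating function of $K$. In other words, where you try to collapse the event onto a one-arm event at an intermediate scale (and thereby smuggle in $p_\cdot$-regularity), the paper compares two conditional laws of the \emph{same} one-arm probability and never needs to know anything about how $p_r$ varies in $r$. You should either adopt that decoupling, or give a separate, non-circular proof that $r\,\Fs(r)\lesssim p_r$ uniformly — and I do not see how to get the latter without essentially redoing the paper's argument.
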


By \eqref{E:L_estimate_crossing2}, the probability on the left hand side of \eqref{E:existence_exp5} without the conditioning equals $\alpha x + o(x)$ as $x \to 0$. Lemma \ref{L:small_loop} thus shows that the conditioning increases this probability by at most a multiplicative constant.
Its proof is based on an intermediate result, Lemma \ref{L:small_loop_intermediate} below.
For $r,x \in (0,1)$, let $\Lc_{r,x} := \{ \wp \in \Lc_{\R^3}^\alpha: \partial B(r) \overset{\wp}{\longleftrightarrow} \partial B(rx) \}$ and
\begin{equation}
\label{E:not_A_rx}
A_{r,x} := \overline{\bigcup_{\wp \in \Lc_{r,x}} \text{range}(\wp) \cap (\R^3 \setminus B(r))}.
\end{equation}

\begin{lemma}\label{L:small_loop_intermediate}
There exist $x_0=x_0(\alpha) \in (0,1)$ and $C=C(\alpha)>0$ such that for all $r \in (0,1), x \in (0,x_0]$,
\begin{equation}
\label{E:xx_0}
\P \Big( \partial B(1) \overset{\Lc^\alpha_{\R^3 \setminus B(r)}}{\longleftrightarrow} A_{r,x} \vert A_{r,x} \neq \varnothing \Big)
\leq C \P\Big( \partial B(1) \overset{\Lc^\alpha_{\R^3 \setminus B(r)}}{\longleftrightarrow} A_{r,x_0} \vert A_{r,x_0} \neq \varnothing\Big).
\end{equation}
\end{lemma}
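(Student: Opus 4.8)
\emph{Plan.} By scale invariance of $\Lc_{\R^3}^\alpha$ we may assume $r=1$; write $\rho:=1/r\in(1,\infty)$ and suppress the first index of $\Lc_{1,x}$ and $A_{1,x}$. The loops of $\Lc_{1,x}$ enter $B(1)$ whereas those of $\Lc_{\R^3\setminus B(1)}^\alpha$ do not, so they form disjoint classes of a Poisson process and are independent; conditioning on all loops meeting $B(1)$ (which determine every $A_x$) therefore gives, for each $x$,
\[
\P\Big(\partial B(\rho)\overset{\Lc_{\R^3\setminus B(1)}^\alpha}{\longleftrightarrow}A_x\,\Big|\,A_x\neq\varnothing\Big)=\E\big[\psi_\rho(A_x)\mid A_x\neq\varnothing\big],\qquad\psi_\rho(K):=\P\Big(\partial B(\rho)\overset{\Lc_{\R^3\setminus B(1)}^\alpha}{\longleftrightarrow}K\Big),
\]
a deterministic functional of compact $K\subseteq\R^3\setminus B(1)$ which is increasing, subadditive ($\psi_\rho(K\cup K')\le\psi_\rho(K)+\psi_\rho(K')$, by a union bound on which piece the connecting cluster meets), and vanishes on finite sets (points are polar in $\R^3$). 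Thus it suffices to prove $\E[\psi_\rho(A_x)\mid A_x\neq\varnothing]\le C\,\E[\psi_\rho(A_{x_0})\mid A_{x_0}\neq\varnothing]$ uniformly in $\rho>1$ and $x\le x_0$.

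\emph{Reduction to one loop.} Conditionally on $|\Lc_{1,x}|=k$, the set $A_x$ is the union of the parts outside $B(1)$ of $k$ i.i.d.\ loops of law $\Pds_x$, the normalised restriction of $\loopmeasure_{\R^3}$ to $\{\wp:\S^2\overset{\wp}{\longleftrightarrow}x\S^2\}$. Subadditivity of $\psi_\rho$, the identity $\E|\Lc_{1,x}|=\alpha\loopmeasure_{\R^3}(\{\wp:\S^2\overset{\wp}{\longleftrightarrow}x\S^2\})=(1+O(x))\alpha x$ from \eqref{E:L_estimate_crossing2}, and hence $\P(|\Lc_{1,x}|\ge1)\ge\tfrac12\E|\Lc_{1,x}|$ for $x_0$ small, yield $\E[\psi_\rho(A_x)\mid A_x\neq\varnothing]\le 2m_x$ with $m_x:=\E_{\wp\sim\Pds_x}[\psi_\rho(\mathrm{range}(\wp)\setminus B(1))]$; keeping only the term $k=1$ and using monotonicity give $\E[\psi_\rho(A_{x_0})\mid A_{x_0}\neq\varnothing]\ge m_{x_0}$. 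It remains to show $m_x\le C\,m_{x_0}$ for all $x\le x_0$, with $x_0$ fixed small and $C$ uniform in $\rho,x$.

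\emph{Reduction to one excursion via decoupling.} Decompose $\wp\sim\Pds_x$ along the \emph{fixed} shell $B(1)\setminus\overline{B(x_0)}$: since $x\le x_0<1$, $\wp$ joins $\partial B(1)$ and $\partial B(x_0)$, and conditionally on crossing that shell exactly $n$ times it equals $e_1\wedge\cdots\wedge e_{2n}$ with $e_{2j-1}$ excursions from $\S^2$ to $x_0\S^2$ in $\R^3\setminus\overline{B(x_0)}$ and $e_{2j}$ excursions from $x_0\S^2$ to $\S^2$ in $B(1)$, while $\wp$ additionally joins $\partial B(x)$ exactly on the event $E^{(n)}_{x_0,x}$ of Corollary~\ref{C:crossing_exactly_n} (some $e_{2j}$ reaches $B(x)$). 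Since $\mathrm{range}(\wp)\setminus B(1)=\bigcup_{j=1}^n\mathrm{range}(e_{2j-1})\setminus B(1)$ up to finitely many points on $\partial B(1)$, which $\psi_\rho$ ignores, subadditivity gives $\psi_\rho(\mathrm{range}(\wp)\setminus B(1))\le\sum_{j=1}^n\psi_\rho(\mathrm{range}(e_{2j-1})\setminus B(1))$; and by Corollary~\ref{C:crossing_exactly_n}, Point~3, the law of $(e_{2j-1})_{j=1}^n$ under $\Pds^{(n)}_{x_0}(\cdot\mid E^{(n)}_{x_0,x})$ is absolutely continuous with respect to $\wt\Pds_{x_0}^{\otimes n}$ with density $(1+O(x_0))^n$, \emph{uniformly in $x$}. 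Writing $\mu_{x_0}:=\E_{e\sim\wt\Pds_{x_0}}[\psi_\rho(\mathrm{range}(e)\setminus B(1))]$, this bounds the conditional expectation of $\psi_\rho(\mathrm{range}(\wp)\setminus B(1))$ given $n$ crossings by $n(1+O(x_0))^n\mu_{x_0}$, and keeping $n=1$ (with $\loopmeasure_{\R^3}(\cross_{x_0,1})=(1+O(x_0))\,\loopmeasure_{\R^3}(\{\wp:\S^2\overset{\wp}{\longleftrightarrow}x_0\S^2\})$ from Corollary~\ref{C:crossing_exactly_n} and \eqref{E:Poisson_approx}, together with the density bound $\ge\tfrac12$) gives $m_{x_0}\ge c(x_0)\,\mu_{x_0}$.

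\emph{Number of crossings and conclusion.} It remains to bound, uniformly in $x$, the $\Pds_x$-law of the number $n$ of crossings of $B(1)\setminus\overline{B(x_0)}$. By Corollary~\ref{C:crossing_exactly_n} and \eqref{E:Poisson_approx}, $\loopmeasure_{\R^3}(\cross_{x_0,n})=(1+O(x_0))^{2n}\,x_0^{\,n}/n$, while the extra requirement $E^{(n)}_{x_0,x}$ costs a factor $\lesssim n\,x/x_0$: conditionally on their endpoints each $e_{2j}$ is a normalised excursion from $x_0\S^2$ to $\S^2$ in $B(1)$ (Corollary~\ref{C:crossing_exactly_n}, Point~2), and by the explicit Poisson kernel \eqref{E:Poisson_sphere} and a hitting-probability estimate (cf.\ Lemma~\ref{L:Poisson}) the probability it reaches radius $x$ is $\lesssim x/x_0$. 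Hence $\loopmeasure_{\R^3}(\cross_{x_0,n}\cap E^{(n)}_{x_0,x})\lesssim (1+O(x_0))^{2n}\,x\,x_0^{\,n-1}$, and dividing by $\loopmeasure_{\R^3}(\{\wp:\S^2\overset{\wp}{\longleftrightarrow}x\S^2\})\gtrsim x$ gives $\Pds_x(\wp\text{ crosses }B(1)\setminus\overline{B(x_0)}\text{ exactly }n\text{ times})\lesssim(1+O(x_0))^{2n}\,x_0^{\,n-1}$, uniformly in $x$. Summing, $m_x\le\mu_{x_0}\sum_{n\ge1}n\,(1+O(x_0))^{3n}x_0^{\,n-1}\le C''(x_0)\,\mu_{x_0}\le C'''(x_0)\,m_{x_0}$ once $x_0$ is small enough that the series converges, which proves the lemma. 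The main difficulty is exactly this uniformity in the depth $x$ (and the auxiliary radius $\rho$): it rests on the \emph{uniform-in-$r'$} form of the decoupling estimate in Corollary~\ref{C:crossing_exactly_n}, combined with the exponential-in-$n$ decay, uniform in $x$, of the number of shell crossings of a loop conditioned to descend to radius $x$.
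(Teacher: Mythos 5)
Your proof is correct and follows a genuinely different combinatorial route than the paper, while relying on the same key input, namely the uniform-in-$r'$ decoupling estimate from Corollary~\ref{C:crossing_exactly_n}, Point~3, and the crossing-shell asymptotics from Lemma~\ref{L:estimate_crossing} and \eqref{E:Poisson_approx}.

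The paper conditions on the \emph{total} number $K=\sum_n nP_n$ of crossings of $B(r)\setminus\overline{B(rx_0)}$ by \emph{all} loops of $\Lc^\alpha$, controls the size-bias introduced by $\{A_{r,x}\neq\varnothing\}$ on $K$ using the third Radon--Nikodym derivative in \eqref{E:L_Radon-N}, bounds the conditional connection probability by $(1+O(x_0))^kkq_r$ via the second derivative, and closes by summing against the moment generating function of $K$. You instead introduce the deterministic functional $\psi_\rho(K)=\P\bigl(\partial B(\rho)\overset{\Lc^\alpha_{\R^3\setminus B(1)}}{\longleftrightarrow}K\bigr)$, observe it is increasing, subadditive, and null on finite sets, and use this to \emph{decouple the conditioning from the connection event} and to reduce to a \emph{single} loop sampled from $\Pds_x$; only then do you decompose that one loop by its crossing count of the fixed shell $B(1)\setminus\overline{B(x_0)}$ and apply the same Radon--Nikodym bound, closing with a geometric series in the crossing count of a single loop. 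Your route avoids the Poisson-mixture bookkeeping around $K$ (no moment generating function is needed), at the price of introducing $\psi_\rho$ as a bookkeeping device; both arguments degrade identically as $x_0\to0$ (geometric in $n$ with ratio $\approx x_0$), so neither buys a sharper constant.

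Two small points worth noting, neither of which is a gap: the scaling preamble should read, more precisely, that one rescales space by $1/r$ so that $B(r)\to B(1)$, $B(1)\to B(\rho)$ and $\Lc^\alpha$ is invariant; and when you keep the term $n=1$ to get $m_{x_0}\ge c(x_0)\mu_{x_0}$ you should make explicit that the loop restricted to $\cross_{x_0,1}$ \emph{contains} the outer excursion $e_1$ so that the lower bound follows from the monotonicity of $\psi_\rho$ together with $\loopmeasure(\cross_{x_0,1})=(1+O(x_0))\loopmeasure(\S^2\overset{\wp}{\leftrightarrow}x_0\S^2)$ and the Radon--Nikodym bound. These are exactly as you indicated in parentheses; spelling them out would complete the write-up.
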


\begin{proof}[Proof of Lemma \ref{L:small_loop_intermediate}]
Let $x_0 \in (0,1)$. For $n \geq 1$, let $P_n$ be the number of loops crossing $B(r) \setminus \overline{B(rx_0)}$ exactly $n$ times. By Corollary~\ref{C:crossing_exactly_n}, $P_n$ is a Poisson random variable with parameter $\lambda_n = (1+O(x_0))^n \frac{x_0^n}{n}$. Let $K = \sum_{n \geq 1} n P_n$ be the total number of crossings of $B(r) \setminus \overline{B(rx_0)}$ by all the loops in $\Lc_{\R^3}^\alpha$. For ease of future reference, we note that the moment generating function of $K$ has a positive radius of convergence $R(K) = (1+O(x_0))/x_0$ as $x_0 \to 0$. Indeed, this follows from the estimate on $\lambda_n$ and from the fact that for $t >0$,
\begin{align*}
\Expect{t^K} = \prod_{n=1}^\infty \Expect{t^{nP_n}} = \prod_{n=1}^\infty \exp \left( \lambda_n (t^n -1) \right).
\end{align*}

Conditioning on $\{A_{r,x} \neq \varnothing\}$ tends to make the random variable $K$ larger. We are first going to control this ``stochastic increase''; see \eqref{E:xx_01} below.
Let $k \geq 1$ and let $e_1, \dots, e_k$ be $k$ i.i.d. excursion from $\partial B(rx_0)$ to $\partial B(r)$ in $B(r)$ with starting points and ending points that are independent and uniform on both spheres. By Corollary~\ref{C:crossing_exactly_n} (more precisely, we use the bound on the third Radon-Nikodym derivative in \eqref{E:L_Radon-N}),
\begin{align*}
& \Prob{A_{r,x} \neq \varnothing \vert K=k}
\leq (1+O(x_0))^k \Prob{\exists j =1, \dots, k, e_j \cap B(rx) \neq \varnothing} \\
& = (1+O(x_0))^k ( 1 - \Prob{e_1 \cap B(rx) = \varnothing}^k ).
\end{align*}
The probability that $e_1$ reaches $B(rx)$ is comparable to the probability that a Brownian motion starting on $\partial B(rx_0)$ hits $\partial B(rx)$ before hitting $\partial B(r)$. This latter probability being equal to $x/x_0$, we obtain that
\[
\Prob{A_{r,x} \neq \varnothing \vert K=k} \leq (1+O(x_0))^k (1 - (1-Cx/x_0)^k) \leq C (1+O(x_0))^k k x/x_0.
\]
Rearranging and using that $\Prob{A_{r,x} \neq \varnothing} = (1+O(x)) \alpha x$ (which follows from \eqref{E:L_estimate_crossing2}),
\begin{equation}
\label{E:xx_01}
\Prob{K=k \vert A_{r,x} \neq \varnothing} \leq C x_0^{-1} (1+O(x_0))^k k \P(K=k).
\end{equation}

Let $\tilde{e}_1, \dots, \tilde{e}_k$ be i.i.d. excursions from $\partial B(r)$ to $\partial B(rx_0)$ in $\R^3 \setminus \overline{B(rx_0)}$ with starting and ending points that are uniform and independent on both spheres. Define
\[
q_r := \P\Big( \partial B(1) \overset{\Lc^\alpha_{\R^3 \setminus B(r)}}{\longleftrightarrow} \tilde{e}_1 \Big).
\]
Now, by Corollary~\ref{C:crossing_exactly_n} (more precisely, using the bound on the second Radon--Nikodym derivative in \eqref{E:L_Radon-N}),
\begin{align}
\label{E:xx_02}
\P\Big(\partial B(1) \overset{\Lc^\alpha_{\R^3 \setminus B(r)}}{\longleftrightarrow} A_{r,x} \vert A_{r,x} \neq \varnothing, K=k\Big)
& \leq (1+O(x_0))^k \P\Big(\partial B(1) \overset{\Lc^\alpha_{\R^3 \setminus B(r)}}{\longleftrightarrow} \bigcup_{i=1}^k \tilde{e}_i \Big) \\
& = (1+O(x_0))^k (1-(1-q_r)^k)
\leq (1+O(x_0))^k k q_r.
\nonumber
\end{align}
Combining \eqref{E:xx_01} and \eqref{E:xx_02}, we obtain that 
\begin{align*}
\P\Big(\partial B(1) \overset{\Lc^\alpha_{\R^3 \setminus B(r)}}{\longleftrightarrow} A_{r,x} \vert A_{r,x} \neq \varnothing\Big)
\leq C x_0^{-1} q_r \sum_{k=1}^\infty (1+O(x_0))^k k^2 \Prob{K=k}.
\end{align*}
By scaling, the sum on the right hand side of the above display depends only on $x_0$. Because the radius of convergence of the moment generating function of $K$ is equal to $(1+O(x_0))/x_0$, if $x_0$ is small enough, this sum is simply a finite constant. Wrapping things up, we have shown that the left hand side of \eqref{E:xx_0} is at most $C q_r$ for some constant $C$ that may depend on $x_0$. Using Corollary~\ref{C:crossing_exactly_n}, one can see that the right hand side of \eqref{E:xx_0} is at least $c q_r$ concluding the proof.
\end{proof}

\begin{proof}[Proof of Lemma \ref{L:small_loop}]
Let $r \in (0,1)$, $x_0 \in (0,1)$ as in Lemma \ref{L:small_loop_intermediate} and $x \in (0,x_0)$ (the result for $x \geq x_0$ is clear by bounding the left hand side of \eqref{E:existence_exp5} by 1). In this proof, we will denote by $\Lc = \Lc_{\R^3}^\alpha$, $\Lc_0 = \{\wp \in \Lc$ crossing $B(r) \setminus \overline{B(rx)} \}$, $\Lc_1 = \Lc \setminus \Lc_0$ and $\Lc_2 = \{ \wp \in \Lc: \wp \subset \R^3 \setminus B(r) \}$. The probability we are interested in is equal to
\[
\Prob{ \Lc_0 \neq \varnothing, \partial B(1) \overset{\Lc}{\longleftrightarrow} \partial B(r) } / \Prob{ \partial B(1) \overset{\Lc}{\longleftrightarrow} \partial B(r) }.
\]
The numerator is equal to
\begin{align}
\label{E:existence_exp4}
& \Prob{ \Lc_0 \neq \varnothing, \partial B(1) \overset{\Lc_1}{\longleftrightarrow} \partial B(r) } + \Prob{ \exists \wp \in \Lc_0, \partial B(1) \overset{\Lc_1}{\longleftrightarrow} \wp } \\
& + \Prob{\Lc_0 \neq \varnothing, \partial B(1) \overset{\Lc}{\longleftrightarrow} \partial B(r), \{\partial B(1) \overset{\Lc_1}{\longleftrightarrow} \partial B(r) \}^c, \{ \exists \wp \in \Lc_0, \partial B(1) \overset{\Lc_1}{\longleftrightarrow} \wp\}^c }.\nonumber
\end{align}
By independence of $\Lc_0$ and $\Lc_1$, the first probability agrees with
\[
\Prob{ \Lc_0 \neq \varnothing} \Prob{ \partial B(1) \overset{\Lc_1}{\longleftrightarrow} \partial B(r) } \leq \Prob{ \Lc_0 \neq \varnothing} \Prob{ \partial B(1) \overset{\Lc}{\longleftrightarrow} \partial B(r) }.
\]
The event appearing in the last probability in \eqref{E:existence_exp4} is contained in the event that there exists a loop in $\Lc$ crossing $B(1) \setminus B(rx)$. Concerning the second term in \eqref{E:existence_exp4}, we notice that if we need to use loops from $\Lc_1 \setminus \Lc_2$ to intersect a loop $\wp \in \Lc_0$, then there must be a cluster of $\Lc_1$ that crosses $B(1) \setminus B(r)$. In other words, the second term in \eqref{E:existence_exp4} is at most
\begin{align*}
\Prob{ \exists \wp \in \Lc_0, \partial B(1) \overset{\Lc_2}{\longleftrightarrow} \wp } + \Prob{ \Lc_0 \neq \varnothing, \partial B(1) \overset{\Lc_1}{\longleftrightarrow} \partial B(r) }.
\end{align*}
Putting things together, we have obtained the following upper bound for the left hand side of \eqref{E:existence_exp5}:
\begin{align}
\label{E:existence_exp6}
& 2 \Prob{ \Lc_0 \neq \varnothing } + \Prob{\exists \wp \in \Lc: \partial B(1) \overset{\wp}{\longleftrightarrow} \partial B(rx)} / \Prob{ \partial B(1) \overset{\Lc}{\longleftrightarrow} \partial B(r) } \\
& + \Prob{ \exists \wp \in \Lc_0, \partial B(1) \overset{\Lc_2}{\longleftrightarrow} \wp } / \Prob{ \partial B(1) \overset{\Lc}{\longleftrightarrow} \partial B(r) }.\nonumber
\end{align}
By \eqref{E:L_estimate_crossing1}, the probability in the numerator of the second term is at most $C rx$. Concerning the denominator of the same term, we simply bound it from below by the probability that the crossing has been realised by a single loop which is at least $cr$. The second term is therefore at most $Cx$. By \eqref{E:L_estimate_crossing1}, the first term is also at most $Cx$. It only remains to deal with the third term. Let
$A_{r,x}$ and $A_{r,x_0}$ be as in \eqref{E:not_A_rx}.
By Lemma \ref{L:small_loop_intermediate},
\begin{align*}
& \Prob{ \exists \wp \in \Lc_0, \partial B(1) \overset{\Lc_2}{\longleftrightarrow} \wp }
= \Prob{\Lc_0 \neq \varnothing} \Prob{ \partial B(1) \overset{\Lc_2}{\longleftrightarrow} A_{r,x} \vert A_{r,x} \neq \varnothing} \\
& \leq C \Prob{\Lc_0 \neq \varnothing} \Prob{ \partial B(1) \overset{\Lc_2}{\longleftrightarrow} A_{r,x_0} \vert A_{r,x_0} \neq \varnothing}.
\end{align*}
The probability that a cluster of $\Lc_2$ intersects $\partial B(1)$ and a loop in $A_{r,x_0}$ is at most the probability that a cluster of $\Lc$ intersects both $\partial B(1)$ and $\partial B(r)$ (i.e. the denominator of the third term in \eqref{E:existence_exp6}).
This shows that the third term of \eqref{E:existence_exp6} is also at most $C \Prob{\Lc_0 \neq \varnothing} \leq C x$. This concludes the proof.
\end{proof}

\subsection{Existence of the exponent}\label{SS:crossing2}

\begin{proof}[Proof of Proposition \ref{P:submultiplicative}]
Let $s,s', r \in (0,1)$. Let $E_r$ be the event that there is a cluster of $\Lc_{\R^3}^\alpha$ crossing $B(1) \setminus \overline{B(r)}$. We can write
\begin{equation}
\label{E:existence_exp2}
p_{rs} = p_r \Prob{ \partial B(1) \overset{\Lc_{\R^3}^\alpha}{\longleftrightarrow} \partial B(rs) \vert E_r}.
\end{equation}
We decompose $\Lc_{\R^3}^\alpha$ into two independent sets of loops: $\Lc_1$ and $\Lc_2$ respectively formed of loops that touch and that do not touch $B(1) \setminus \overline{B(r)}$.
Define the random variables
\[
R_1 := \inf \{ R < r : \exists \wp \in \Lc_{\R^3}^\alpha, \partial B(r) \overset{\wp}{\longleftrightarrow} \partial B(R) \}
\]
and
\[
R_2 := \sup \{ R> rs: \partial B(R) \overset{\Lc_2}{\longleftrightarrow} \partial B(rs) \}.
\]
The event $E_r$ and the random variable $R_1$ are measurable with respect to $\Lc_1$ whereas the random variable $R_2$ is measurable with respect to $\Lc_2$.
On the event $E_r$, $\{ \partial B(1) \overset{\Lc_{\R^3}^\alpha}{\longleftrightarrow} \partial B(rs) \} \subset \{ R_2 \geq R_1 \}$.
Hence
\begin{equation}
\label{E:existence_exp1}
\Prob{ \partial B(1) \overset{\Lc_{\R^3}^\alpha}{\longleftrightarrow} \partial B(rs) \vert E_r}
\leq \Expect{ \Prob{R_1 \leq R_2 \vert R_2, E_r} }.
\end{equation}
By Lemma \ref{L:small_loop}, $\Prob{R_1 \leq R_2 \vert R_2, E_r} \leq C (R_2/r \indic{R_2/r \leq 1} + \indic{R_2/r >1})$ a.s. Integrating by parts we thus get that
\begin{align*}
& \Expect{ \Prob{R_1 \leq R_2 \vert R_2, E_r} }
\leq C \Expect{ R_2/r \indic{R_2/r \leq 1} } + C \Prob{R_2/r >1}
= C s + C \int_s^1 \Prob{R_2/r \geq y} \d y.
\end{align*}
Adding more loops does not decrease the crossing probability, so
\[
\Prob{R_2/r \geq y} \leq \Prob{ \partial B(ry) \overset{\Lc_{\R^3}^\alpha}{\longleftrightarrow} \partial B(rs) } = p_{s/y},
\]
by scaling. Overall, we obtain that the left hand side of \eqref{E:existence_exp1} is at most
$
C s + C\int_s^1 p_{s/y} \d y.
$
With a change of variable, and recalling the definition \eqref{E:def_Fs} of $\Fs$, we observe that this is equal to $Cs (\Fs(s)+1)$. Going back to \eqref{E:existence_exp2}, we obtained that
\[
p_{rs} \leq C p_r s (\Fs(s)+1).
\]
Multiplying this inequality by $1/r^2$ and integrating from $r=s'$ to $r=1$, we get that
\begin{align*}
\int_{s'}^1 \frac{1}{r^2} p_{rs} \d r \leq C \Fs(s') s (\Fs(s)+1).
\end{align*}
Since the left hand side term is equal to
\[
s \int_{ss'}^s \frac{1}{r^2} p_r \d r = s (\Fs(ss') - \Fs(s)),
\]
this inequality becomes
\[
\Fs(ss') - \Fs(s) \leq C \Fs(s')(\Fs(s)+1)).
\]
The submultiplicative inequality \eqref{E:existence_exp0} then follows.
\end{proof}

\begin{corollary}
There exists $\xi \in [0,1]$ such that $p_r = r^{\xi + o(1)}$ as $r \to 0$. Moreover, $\xi$ is strictly positive as soon as $p_r \to 0$ as $r \to 0$ (i.e. when $\alpha \in \Ir_\b2b^c$).
\end{corollary}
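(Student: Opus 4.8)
The plan is to pull the exponent out of a Fekete-type argument applied to the integrated quantity $\Fs$, and then transfer the polynomial behaviour back to $p_r$ itself using the monotonicity $r\mapsto p_r$. First I would record the elementary features of $r\mapsto p_r$ on $(0,1]$: it is non-decreasing (a longer crossing is harder), bounded with $p_1=1$, and, by \eqref{E:L_estimate_crossing2}, $p_r\ge cr$ for all small $r$ since a single loop crossing $B(1)\setminus\overline{B(r)}$ already produces a crossing cluster; consequently $\Fs(s)\to\infty$ as $s\to0$ while $\Fs(s)\le\int_s^1 r^{-2}\,\d r\le 1/s$. Writing $g(s):=C(\Fs(s)+1)$ with $C$ the constant of Proposition~\ref{P:submultiplicative}, inequality \eqref{E:existence_exp0} reads $g(ss')\le g(s)g(s')$, so $t\mapsto\log g(e^{-t})$ is subadditive on $[0,\infty)$. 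By Fekete's lemma the limit
\[
L:=\lim_{s\to0^+}\frac{\log(\Fs(s)+1)}{\log(1/s)}=\inf_{0<s<1}\frac{\log\big(C(\Fs(s)+1)\big)}{\log(1/s)}
\]
exists; since $\Fs(s)+1\ge 1$ we get $L\ge 0$, and since $\Fs(s)+1\le 2/s$ we get $L\le 1$. I then set $\xi:=1-L\in[0,1]$, so that $\Fs(s)+1=s^{-L+o(1)}$ as $s\to0$.

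Next I would show $p_r=r^{\xi+o(1)}$, i.e. $\log p_r/\log r\to\xi$. The inequality $\Fs(r)\ge\int_r^{2r}t^{-2}p_t\,\d t\ge p_r/(2r)$ (monotonicity) gives $p_r\le 2r(\Fs(r)+1)=r^{1-L+o(1)}=r^{\xi+o(1)}$, hence $\liminf_{r\to0}\log p_r/\log r\ge\xi$. For the matching bound I distinguish two cases. If $L=0$ (so $\xi=1$), then $cr\le p_r\le r^{1-o(1)}$ forces $\log p_r/\log r\to1$. If $L>0$, fix a small $\delta>0$, take $\eps=\eps(\delta)>0$ small enough that $\delta L>(2-\delta)\eps$, and split $\Fs(s)=\Fs(s^{1-\delta})+\int_s^{s^{1-\delta}}t^{-2}p_t\,\d t$; monotonicity bounds the integral by $p_{s^{1-\delta}}/s$, while the two-sided estimate on $\Fs$ gives $\Fs(s)\ge s^{-L+\eps}-1$ and $\Fs(s^{1-\delta})\le (s^{1-\delta})^{-L-\eps}\le\tfrac12 s^{-L+\eps}$ for $s$ small (this last step being exactly where $\delta L>(2-\delta)\eps$ is used). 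Therefore $p_{s^{1-\delta}}/s\ge\tfrac14 s^{-L+\eps}$, i.e. $p_r\ge\tfrac14 r^{(1-L+\eps)/(1-\delta)}$ with $r=s^{1-\delta}$, so $\limsup_{r\to0}\log p_r/\log r\le(1-L+\eps)/(1-\delta)$; letting $\delta\downarrow0$ (and $\eps(\delta)\downarrow0$) yields $\limsup\le 1-L=\xi$. Combining the two inequalities gives $p_r=r^{\xi+o(1)}$.

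For the last assertion, that $\xi>0$ whenever $p_r\to0$: since $L=\inf_{0<s<1}\log\big(C(\Fs(s)+1)\big)/\log(1/s)$, it suffices to exhibit one $s_0\in(0,1)$ with $C(\Fs(s_0)+1)<1/s_0$. If $p_r\to0$, choose $\eps_0:=1/(4C)$ and then $t_0\in(0,1)$ with $p_t\le\eps_0$ on $(0,t_0)$; for $s<t_0$,
\[
\Fs(s)\le\eps_0\int_s^{t_0}t^{-2}\,\d t+\int_{t_0}^1 t^{-2}\,\d t\le\frac{1}{4Cs}+\frac{1}{t_0},
\]
so $C(\Fs(s)+1)\le\tfrac{1}{4s}+C(\tfrac1{t_0}+1)$, which is $<\tfrac1s$ once $s$ is small enough. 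Hence $L<1$ and $\xi>0$. (The identification of the regime $p_r\to0$ with $\alpha\in\Ir_\b2b^{\,c}$, and in particular with $\alpha<\alpha_\b2b$, follows from the definitions together with Lemma~\ref{L:zero_one}: when $\alpha\in\Ir_\b2b$ one has $B(0,1)\overset{\Lc^\alpha}{\longleftrightarrow}\partial B(0,R)$ for all $R>1$ a.s., which by connectedness and scale invariance forces $p_r\to1$.)

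The step I expect to be the main obstacle is the second paragraph: recovering the \emph{two-sided} polynomial behaviour of $p_r$ from the Fekete asymptotics of $\Fs$. Since $\Fs$ is an integral transform of $p$ and monotonicity of $p$ is only one-sided, the easy direction ($p_r\le 2r(\Fs(r)+1)$) is immediate but the reverse bound requires the rescaling trick of splitting $\Fs(s)$ at $s^{1-\delta}$, together with a separate treatment of the borderline case $L=0$; getting the bookkeeping of the exponents ($\delta L>(2-\delta)\eps$) right so that the error term $\Fs(s^{1-\delta})$ is negligible against $\Fs(s)$ is the delicate point.
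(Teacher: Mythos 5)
Your proof is correct and follows essentially the same route as the paper's: apply Fekete's lemma to the submultiplicative quantity $C(\Fs(s)+1)$ to extract the exponent $L=1-\xi$, then transfer the polynomial behaviour of $\Fs$ to $p_r$ by combining the monotonicity of $r\mapsto p_r$ (for the upper bound) with the splitting $\Fs(s)=\Fs(s^{1-\delta})+\int_s^{s^{1-\delta}} t^{-2}p_t\,\d t$ (for the lower bound), with the borderline case $L=0$ handled directly via $p_r\ge cr$. The only cosmetic deviation is in the positivity step: the paper iterates the submultiplicative inequality along the geometric sequence $s_0^n$ to deduce $\Fs(s_0^n)+1\le\eps^{n+1}s_0^{-n}$, whereas you invoke the $\lim=\inf$ form of Fekete and reduce to exhibiting a single $s_0$ with $C(\Fs(s_0)+1)<1/s_0$---these two are equivalent (your $s_0$ is precisely the base case of the paper's recursion) and your phrasing is a clean shortcut.
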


\begin{proof}
By Proposition~\ref{P:submultiplicative} and Fekete's subadditive lemma, there exists $\xi_0 \in \R$ such that $\Fs(s) = s^{-\xi_0+o(1)}$ as $s \to 0$.
Because $r \mapsto p_r$ is nondecreasing, we deduce that
\[
p_s = p_s \frac{s}{1-s} \int_s^1 \frac{\d r}{r_2}
\leq \frac{s}{1-s} \int_s^1 \frac{\d r}{r_2} p_r = \frac{s}{1-s} \Fs(s) = s^{1-\xi_0+o(1)}.
\]
To get a lower bound, we first notice that (using \eqref{E:L_estimate_crossing2})
\begin{equation}
\label{E:existence_exp3}
p_r \geq \Prob{ \exists \wp \in \Lc_{\R^3}^\alpha: \partial B(1) \overset{\wp}{\longleftrightarrow} \partial B(r) } \geq c r, \quad \quad r \in (0,1).
\end{equation}
This implies in particular that
$\Fs(s) \geq c |\log s|$ so the exponent $\xi_0$ is nonnegative. Let $\eps >0$. We claim that $\Fs(s) - \Fs(s^{1-\eps}) \geq s^{-\xi_0+o(1)}$ as $s \to 0$. Indeed, this follows directly from the estimate $\Fs(s) = s^{-\xi_0+o(1)}$ if $\xi_0 >0$. If $\xi_0=0$, we use \eqref{E:existence_exp3} above to bound
\[
\Fs(s) - \Fs(s^{1-\eps}) \geq c \int_s^{s^{1-\eps}} \frac{\d r}{r} \geq c \eps \log s = s^{o(1)}.
\]
Combining the inequality $\Fs(s) - \Fs(s^{1-\eps}) \geq s^{-\xi_0+o(1)}$ with
\[
\Fs(s) - \Fs(s^{1-\eps}) \leq p_{s^{1-\eps}} \int_s^{s^{1-\eps}} \frac{\d r}{r^2} = p_{s^{1-\eps}} (s^{-1}-s^{-1+\eps}),
\]
we obtain that $p_{s^{1-\eps}} \geq s^{1-\xi_0+o(1)}$. Since this is true for every $\eps>0$, this shows that $p_s \geq s^{1-\xi_0+o(1)}$. This concludes the proof of the existence of the exponent $\xi = 1-\xi_0$.

Let us now assume that $p_r \to 0$ as $r \to 0$. We want to show that $\xi >0$. Let us write the constant $C$ appearing on the right hand side of the submultiplicative inequality \eqref{E:existence_exp0} as $C=1/\eps$, for some $\eps >0$. Because $p_r \to 0$, $\Fs(s) = o(s^{-1})$ as $s \to 0$ and there exists $s_0 >0$ small enough such that $\Fs(s_0) + 1 \le \eps^2 s_0^{-1}$. Injecting this estimate recursively in \eqref{E:existence_exp0}, we get that for all $n \ge 1$,
$\Fs(s_0^n) + 1 \le \eps^{n+1} s_0^{-n}$. This shows that the exponent
$\xi_0 = \lim_{s \to 0} \log \Fs(s) / |\log s|$ is strictly less than 1 and that $\xi=1-\xi_0$ is strictly positive.
\end{proof}

\subsection{Lower bound on the crossing probability}\label{SS:crossing3}

\begin{lemma}\label{L:lower_bound}
For all $\alpha>0$, there exist $c=c(\alpha)>0$ and $\eps=\eps(\alpha)>0$ such that $p_r \geq c r^{1-\eps}$ for all $r \in (0,1)$.
\end{lemma}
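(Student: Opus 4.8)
The bound $p_r\geq cr$ is immediate from \eqref{E:existence_exp3} (a single loop crosses $B(1)\setminus\overline{B(r)}$ with probability of order $r$), so the whole point is to improve it by a polynomial factor $r^{-\eps}$; I expect to have to let $\eps$ depend on $\alpha$ and degenerate as $\alpha\to0$. Since a single welded chain of loops crossing the shell costs, by the welding Lemma~\ref{L:intersection_loop}, a probability of order $r$ no matter how it is built, the gain must come from the \emph{multiplicity} of chains realising the crossing across the $\log_2(1/r)$ scales separating $\partial B(1)$ from $\partial B(r)$. The plan is to capture this through a second moment (Paley--Zygmund) estimate. By scale invariance we may take $r=2^{-L}$. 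Introduce
\[
W_r \;=\; \sum_{\vec\rho}\ \#\bigl\{(\wp_1,\dots,\wp_n)\in(\Lc^\alpha)^n:\ (\wp_i)_i\ \text{admissible for}\ \vec\rho,\ \wp_i\subset B(a^2)\bigr\},
\]
where $\vec\rho$ runs over decreasing dyadic sequences $1=\rho_0>\rho_1>\dots>\rho_n=r$, and a tuple is \emph{admissible} if $\wp_1$ meets $\partial B(1)$, $\wp_n$ reaches $B(r)$, each $\wp_i$ crosses $B(\rho_{i-1})\setminus\overline{B(\rho_i)}$ in a macroscopic way (i.e.\ $g_{\rho_i,a}(\wp_i)\geq\eta$ near its inner end, in the sense of \eqref{E:g_ra}), $\wp_i\cap\wp_{i+1}\neq\varnothing$, and --- a constraint only needed for the second moment --- successive loops lie in prescribed, spatially separated tubes. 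Then $\{W_r>0\}\subseteq\{\partial B(1)\overset{\Lc^\alpha}{\longleftrightarrow}\partial B(r)\}$, so $p_r\geq \E[W_r]^2/\E[W_r^2]$.

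For the first moment, expanding $\E[W_r]$ by the iterated Palm formula \eqref{E:Palm} turns each tuple into an $n$-fold integral against $\loopmeasure$, and the estimates behind the welding Lemma~\ref{L:intersection_loop}, Corollary~\ref{C:crossing_exactly_n} and Lemma~\ref{L:estimate_crossing} --- all uniform in the scale --- bound the $i$-th integral below by $c_0\alpha$ times the $\loopmeasure$-mass of loops crossing the $i$-th shell, hence by $c_0\alpha\,(\rho_i/\rho_{i-1})$, for a constant $c_0=c_0(a,\eta)>0$. Over $i$ the ratios telescope to $\rho_n/\rho_0=r$, and summing over the $\binom{L-1}{n-1}$ ways of cutting the $L$ scales into $n$ consecutive shells gives
\[
\E[W_r]\ \geq\ c\,r\sum_{n\geq1}\binom{L-1}{n-1}(c_0\alpha)^{n}\ =\ c\,c_0\alpha\,r\,(1+c_0\alpha)^{L-1}\ \geq\ c'\,r^{\,1-\eps},\qquad \eps:=\log_2(1+c_0\alpha)>0 .
\]
The combinatorial factor $(1+c_0\alpha)^{L}$ is precisely the gain: at each scale the growing cluster may either be carried through by the loop inherited from the previous shell or instead pick up a fresh, finer loop.

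For the second moment one writes $\E[W_r^2]=\sum_{\gamma,\gamma'}\P(\gamma\cup\gamma'\ \text{present})$ and organises the sum by the coarsest scale $2^{-j}$ below which the two admissible tuples $\gamma,\gamma'$ cease to share loops. On the shared part the tuples coincide, so (again by Palm) $\P(\gamma\cup\gamma'\ \text{present})$ factorises as $\P(\text{shared part})$ times the two conditional continuation probabilities; the tube separation in the definition of admissibility is what forces the two continuations to use disjoint loops --- hence be conditionally independent --- and what lets the welding estimates control those continuation probabilities uniformly in $j$ and in the shared part. Carrying out the resulting double geometric sum yields $\E[W_r^2]\leq C\bigl(\E[W_r]+\E[W_r]^2\bigr)$, and therefore
\[
p_r\ \geq\ \frac{\E[W_r]^2}{C(\E[W_r]+\E[W_r]^2)}\ =\ \frac{1}{C}\cdot\frac{\E[W_r]}{1+\E[W_r]}\ \geq\ c\,r^{\,1-\eps}.
\]
The delicate points --- and where I expect the real work to be --- are: arranging the tube separation so that enough admissible chains survive for the first-moment lower bound; verifying that the loop-measure and welding estimates hold with constants genuinely independent of the scale and of the unboundedly many shells traversed (this is what Lemmas~\ref{L:intersection_BM}, \ref{L:intersection_loop} and \ref{L:estimate_crossing} are designed to provide); and controlling the correlation of two continuations that, although confined to separated tubes, both live at scales comparable to $r$ near the inner sphere.
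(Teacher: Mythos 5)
Your plan is a Paley--Zygmund/second-moment argument over multi-loop chains, and you correctly diagnose the combinatorial source of the gain: at each of the $L=\log_2(1/r)$ scales the chain may either coast or pick up a fresh loop, giving a $(1+c_0\alpha)^L$ entropy factor against a single-loop cost of order $r$. That first-moment heuristic is sound and does produce $\E[W_r]\gtrsim r^{1-\eps}$ with $\eps=\log_2(1+c_0\alpha)$. The paper, however, does not use a second moment at all. It runs a greedy exploration: starting from $\partial B(1)$, at stage $i$ it looks for a loop $\wp$ of controlled ``Brownian size'' ($g_{r\mathbf{R}_i,a}(\wp)\geq 1/2$, in the sense of \eqref{E:g_ra}) attached to the current cluster $\Cc_i$ and reaching as deep as possible; the welding Lemma~\ref{L:intersection_loop} gives, uniformly over the past, $\P(R_{i+1}\leq r\mid\cdots)\geq\eps r$, and the procedure is stopped the first time $R_{i+1}>a^{-2}$. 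The total depth $\mathbf{R}_{I-1}$ is then a product $\prod R_i$, and $(\lvert\log R_i\rvert,I)$ is compared to i.i.d.\ $\mathrm{Exp}(1)$ variables run for a $\mathrm{Geom}(1-\eps)$ number of steps, whose sum has explicit density $\sim e^{-(1-\eps)t}$, yielding $p_r\geq\P(\mathbf{R}_{I-1}\leq r)\geq\eps r^{1-\eps}$ in two lines. This buys the factor $r^{-\eps}$ without ever controlling correlations between competing chains: the scale reached by a \emph{single} greedy chain already has the right tail.

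The genuine gap in your proposal is the inequality $\E[W_r^2]\leq C(\E[W_r]+\E[W_r]^2)$ with $C$ uniform in $r$ and in the level $L$. You motivate it by a factorisation at the split scale $2^{-j}$ and assert that the ``tube separation'' makes the two continuations conditionally independent. But two admissible chains emanating from the same shared prefix both explore a Brownian loop soup that is supported on the whole ambient space; restricting admissible loops to disjoint tubes does not by itself decouple the events, because negative information (absence of loops near the shared prefix) affects both continuations, and making the welding estimate of Lemma~\ref{L:intersection_loop} uniform given this conditioning is precisely where the work is. You do gesture at the right decomposition and the heuristic double geometric sum is consistent (indeed it reproduces the two regimes $\eps\lessgtr 1$), but the crucial decoupling lemma is stated, not proved, and it is not one of the tools the paper provides. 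Absent that, the second-moment route does not close. The paper's exploration avoids this difficulty entirely by never comparing two chains.
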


\begin{proof}
The idea of the proof is elementary. We will explore the cluster of $\partial B(1)$ step by step. First, we look at the smallest radius $\mathbf{R}_1$ that can be reached with a loop $\wp_1$ that intersects $\partial B(1)$ and that has a large capacity (more precisely, we will require $g_{\mathbf{R}_1,a}(\wp_1) \geq 1/2$, recalling \eqref{E:g_ra}). If $\mathbf{R}_1$ is not small enough, we will stop the procedure. If $\mathbf{R}_1$ is small enough, we iterate and look at the loops that intersect $\wp_1$ (instead of $\partial B(1)$). Thanks to the requirement on the capacity of $\wp_1$, connecting to $\wp_1$ will be essentially the same as connecting to the sphere $\partial B(\mathbf{R}_1)$. The loops involved in this second step will live in the domain $B(1)$ (since we know that no loop intersecting $\partial B(1)$ crosses further than $\partial B(\mathbf{R}_1)$). The condition that $\mathbf{R}_1$ is small enough in turn gives enough room for these loops.

We now define precisely this procedure.
Let $a>0$ be large enough so that Lemma \ref{L:intersection_loop} applies to $\eta=1/2$.
We are going to define a sequence of random variables $(\mathbf{R}_i)_{i \geq 0} \subset (0,1)$ and a non-decreasing sequence $(\Cc_i)_{i \geq 0}$ of subsets of $\R^3$ inductively as follows.
Let $\mathbf{R}_0 =1$, $\Cc_0 = \partial B(1)$. Let $i \geq 0$ and assume that we have defined $\mathbf{R}_i$ and $\Cc_i$. If $\mathbf{R}_i = +\infty$ (meaning that the procedure has already stopped), we set $\mathbf{R}_{i+1}=+\infty$ and $\Cc_{i+1} = \Cc_{i}$. Otherwise, let
\begin{equation}
R_{i+1} := \min \{ r \in (0,1): \exists \wp \in \Lc_{a^2 \mathbf{R}_i \D}^\alpha: \Cc_i \overset{\wp}{\longleftrightarrow} r \mathbf{R}_i \D, g_{r\mathbf{R}_i,a}(\wp) \geq 1/2 \},
\end{equation}
where we recall that $g_{r\mathbf{R}_i,a}(\wp)$ is defined in \eqref{E:g_ra}.
If $R_{i+1} > a^{-2}$, we consider that we do not have enough room to continue the procedure and we set $\mathbf{R}_{i+1}=+\infty$ and $\Cc_{i+1} = \Cc_{i}$. If $R_{i+1} \leq a^{-2}$, we instead define $\mathbf{R}_{i+1}= \mathbf{R}_i R_{i+1}$ and $\Cc_{i+1}$ to be the union of $\Cc_i$ and of all the loops that intersect $\Cc_i$.

Let $I = \inf \{ i \geq 1: \mathbf{R}_i = +\infty \}$. By construction,
$
p_r \geq \Prob{\mathbf{R}_{I-1} \leq r}.
$
By scaling and Lemma~\ref{L:intersection_loop}, there exists $\eps >0$ such that for all $i \geq 0$ and $r \in (0,1)$,
\begin{align*}
\Prob{R_i \leq r \vert (R_j,\Cc_j)_{j=1}^{i-1}, I > i-1} \geq \eps r.
\end{align*}
Iterating this estimate implies that for all $i \geq 1$ and $r_1, \dots, r_i \in (0,1)$,
\[
\Prob{R_1 \leq r_1, \dots, R_i \leq r_i, I > i} \geq \eps^i r_1 \dots r_n.
\]
Hence $(|\log R_1|, \dots, |\log R_{I-1}|, I)$ stochastically dominates $(E_1, \dots, E_{J-1}, J)$ where $J$ is a geometric random variable with success parameter $1-\eps$ and $E_i, i\geq 1$, are i.i.d. exponential random variables with parameter 1 independent of $J$.
One can compute explicitly the law of $\sum_{i=1}^{J-1} E_i$: it is equal to
\[
(1-\eps) \delta_0 + \frac{\eps}{1-\eps} e^{-(1-\eps)t} \indic{t>0} \d t.
\]
Wrapping up,
\[
p_r \ge 
\P ( \mathbf{R}_{I-1} \leq r ) = \P \Big( \sum_{i=1}^{I-1} |\log R_i| \geq \log r \Big) \geq \P \Big(\sum_{i=1}^{J-1} E_i \geq |\log r|\Big) = \eps r^{1-\eps}
\]
as desired.
\end{proof}

\section{Strict monotonicity of \texorpdfstring{$\alpha_\tr^R$}{inf JR}: Proof of Theorem~\ref{T:JR}, \protect\eqref{E:TJR}}\label{S:AG}

This section is devoted to the proof of Theorem \ref{T:JR}, \eqref{E:TJR} stating that the critical point $\alpha_\tr^R$ \eqref{E:def_alpha_R} is strictly decreasing with $R$. Using an Aizenman--Grimmett type argument \cite{MR1116036}, it boils down to the following key lemma (which will be used once more in the proof of Theorem \ref{T:JR}, \eqref{E:T_locuniq}). Crucially, the estimate \eqref{E:L_key} below is uniform over all $K_1$ and $K_2$. We state it for any dimension $d \ge 1$ since it could be useful in other contexts.

\begin{lemma}\label{L:key}
Let $d \ge 1$ and $R' > R > 1$. There exists  $c=c(d,R,R')>0$ such that for all Borel sets $K_1, K_2 \subset \R^d$,
\begin{equation}\label{E:L_key}
\loopmeasure_{\R^d}( \diam(\wp) \in [R,R'], K_1 \overset{\wp}{\longleftrightarrow} K_2 )
\ge c \loopmeasure_{\R^d}( \diam(\wp) \in [1,R], K_1 \overset{\wp}{\longleftrightarrow} K_2 ).
\end{equation}
\end{lemma}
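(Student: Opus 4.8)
The plan is to prove Lemma~\ref{L:key} by using the scaling invariance of $\loopmeasure_{\R^d}$ together with a covering argument that reduces the general case of arbitrary Borel sets $K_1,K_2$ to an estimate that does not see $K_1,K_2$ at all. The key observation is that whether a loop $\wp$ connects $K_1$ and $K_2$ is determined by the \emph{range} (the set of visited points) of $\wp$, so the statement only involves the law of the range of a loop conditioned on its diameter.

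First I would set up the right rerooted description: by Proposition~\ref{P:decompo_loopmeasure} (rerooting at the point closest to a fixed center, then integrating over the center by translation invariance), a loop of diameter in a fixed dyadic-type window can be described, up to a bounded multiplicative factor, as a bubble excursion $\mu^{\bub,\R^d\setminus B(0,a)}_{a\omega}$ for $a$ ranging over a compact interval. More efficient for the covering, however, is the following route. Fix the scale window $[1,R]$ and cover $\R^d$ by translates of the unit cube; for a loop with $\diam(\wp)\in[1,R]$ and $K_1\overset{\wp}{\longleftrightarrow}K_2$, its range meets at least one such cube $Q$ that also intersects both $K_1$ and $K_2$ (after possibly enlarging $Q$ by a factor $R$, which costs only a bounded constant depending on $d,R$). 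By translation invariance of $\loopmeasure$ it then suffices to compare, for each \emph{fixed} cube $Q_0=[0,1]^d$ and each fixed pair of points (or, better, to absorb $K_1,K_2$ into the event ``range passes through $Q_0$ and also through two prescribed far-apart regions''): but a cleaner formulation is to prove the purely loop-measure inequality
\begin{equation}\label{E:key_reduced}
\loopmeasure_{\R^d}\big(\diam(\wp)\in[R,R'],\ \wp\cap Q_0\ne\varnothing,\ \wp\not\subset B(0,M)\big)\ \ge\ c\,\loopmeasure_{\R^d}\big(\diam(\wp)\in[1,R],\ \wp\cap Q_0\ne\varnothing\big)
\end{equation}
and route the general case through it by scaling; here $M=M(R,R')$ is a fixed radius. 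The point is that in \eqref{E:key_reduced} there is no reference to $K_1,K_2$, so it is a finite, scale-invariant quantity on each side.

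The actual comparison in \eqref{E:key_reduced}, and hence the heart of the proof, is a \emph{restriction/enlargement} argument on the rerooted loop. Using Proposition~\ref{P:decompo_loopmeasure} the right-hand side is $\int_0^\infty\!a^{d-1}\d a\int_{\S^{d-1}}\!m_{\S^{d-1}}(\d\omega)\,\mu^{\bub,\R^d\setminus B(0,a)}_{a\omega}(\cdots)$ restricted to $\diam\in[1,R]$ and passing through $Q_0$, and similarly on the left with $\diam\in[R,R']$. I would argue that, given a loop of diameter in $[1,R]$ through $Q_0$, one can augment it — concatenating at the rerooting point a further excursion into $B(0,a)$ (a ``finger'' reaching distance $\sim R'$) — so as to land in the event on the left-hand side, and that this augmentation map has a bounded Radon--Nikodym-type cost; concretely, the bubble measure $\mu^{\bub,\R^d\setminus B(0,a)}_{a\omega}$ restricted to loops of diameter in $[R,R']$ through $Q_0$ stochastically dominates (up to a constant) the pushforward under the ``add an independent, nondegenerate deep excursion'' map of the same measure restricted to diameter $[1,R]$. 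Equivalently and more robustly, since both sides of \eqref{E:key_reduced} are finite and strictly positive (finiteness on the left follows from Lemma~\ref{L:sphere_bound}-type bounds; strict positivity of both from the explicit excursion/Poisson-kernel formulas of Section~\ref{S:Excursions}), and since the integrand depends measurably on the compact parameter $a\in[c_1,c_2]$ in a way that is bounded above and below, the ratio is pinched between two positive constants — this is the standard ``compactness of the scale window'' trick and is where I expect to spend the most care: making precise that the two restricted bubble measures are mutually absolutely continuous with densities bounded away from $0$ and $\infty$ uniformly in the relevant $a$, $\omega$.

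The main obstacle, then, is not conceptual but the bookkeeping in passing from arbitrary Borel $K_1,K_2$ to the $K$-free inequality \eqref{E:key_reduced} without losing control when $K_1,K_2$ are wild: one must ensure that the covering-cube reduction is \emph{uniform} in $K_1,K_2$, which it is because the number of cubes and the enlargement factor depend only on $d,R$, and because the event ``$K_1\overset{\wp}{\longleftrightarrow}K_2$'' is monotone in the range of $\wp$ (enlarging the range can only help), so adding the finger never destroys a connection already present. Once \eqref{E:key_reduced} is in hand, rescaling by the diameter and summing over the covering cubes gives \eqref{E:L_key} with $c=c(d,R,R')$, completing the proof.
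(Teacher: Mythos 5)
Your reduction to the $K$-free inequality \eqref{E:key_reduced} does not work, and this is the crux of the gap. The whole content of Lemma~\ref{L:key} is that the ratio of the two loop-measures is bounded below \emph{uniformly over all Borel sets} $K_1,K_2$; once you discard $K_1,K_2$ and only keep ``$\wp\cap Q_0\ne\varnothing$'', you have a strictly weaker statement. Concretely, a loop of diameter in $[R,R']$ passing through $Q_0$ (and escaping some ball $B(0,M)$) need not connect $K_1$ to $K_2$, so \eqref{E:key_reduced} says nothing about the numerator of the target inequality. The covering observation that a $[1,R]$-diameter connecting loop lies inside some $O(R)$-cube meeting both $K_1$ and $K_2$ is correct, but after rerooting into such a cube you still face arbitrary Borel traces $K_1\cap Q$, $K_2\cap Q$ --- there is nothing compact to run a ``bounded above and below'' argument on. Both sides of \eqref{E:L_key} can be made simultaneously arbitrarily small (take $K_1,K_2$ to be small and far apart within the window), so no soft pinching of a ratio over a compact parameter interval can recover the uniformity in $K_1,K_2$.

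The ``add a finger'' idea is the right instinct and is in the same spirit as what the paper does, but you have not supplied the decomposition that makes it quantitative. The paper never passes through a bubble-measure/compactness argument; instead it works directly with the rooted representation \eqref{E:loopmeasure} and a carefully chosen triple of stopping times: $\tau_1$ the first hit of $K_1\cup K_2$, $\tau_2$ the first hit of the other $K_i$, $\tau_3$ the first exit of $B(\wp(0),1/4)$ after $\tau_2$ (plus a time-reversal and a symmetric ``both ends near the root'' case, giving events $E_1,E_2,E_3$ covering $\{K_1\overset{\wp}{\leftrightarrow}K_2,\ \diam\ge 1\}$). On each $E_i$, the Markov property splits the bridge at a point whose distance to the root is exactly $1/4$; the piece before the split carries the (arbitrary, uncontrolled) $K_1\leftrightarrow K_2$ connection, while the piece after is a genuine Brownian bridge between two points at fixed distance $1/4$. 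The diameter constraint is then relocated entirely onto that explicit bridge, whose measure can be compared between the two diameter windows by elementary heat-kernel estimates, \emph{uniformly} in everything else (in particular in $K_1,K_2$). This is precisely the control that your ``bounded Radon--Nikodym-type cost'' gestures at but never establishes: without fixing the distance between the cut-points, the cost of the finger is not bounded. To repair your argument you would have to produce an analogous explicit decomposition --- a cut whose second piece is a free bridge with controlled endpoint geometry --- rather than falling back on compactness, and at that point you have essentially reconstructed the paper's proof.
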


A similar statement is proven in \cite{chang2017supercritical} for discrete loop soups; see Lemma~3.1 therein. Its proof is rather direct and uses in an essential way the discrete nature of the problem (for instance the bound (6) in \cite{chang2017supercritical} cannot hold in the continuum). 

Before proving Lemma \ref{L:key}, let us show how Theorem \ref{T:JR} follows. This step could be considered routine for percolation experts since it follows the standard strategy developed by Aizenman and Grimmett \cite{MR1116036}.

\begin{proof}[Proof of Theorem \ref{T:JR}, assuming Lemma \ref{L:key}]
Let $R'>R>1$ and $c>0$ be the same constant as in Lemma \ref{L:key}. Let $\alpha >0$, $n \ge 1$.
Consider the following family of measures on loops
\[
\mu(t)(\d \wp) = \alpha ((1+c t) \indic{\diam(\wp) \in [1,R)} + (1-t) \indic{\diam(\wp) \in [R,R']}) \loopmeasure(\d \wp), \quad t \in [0,1],
\]
interpolating between
\[
\mu(0)(\d\wp) = \alpha \indic{\diam(\wp) \in [1,R']} \loopmeasure(\d \wp)
\quad \text{and} \quad
\mu(1)(\d\wp) = \alpha (1+c) \indic{\diam(\wp) \in [1,R)} \loopmeasure(\d \wp).
\]
Denote by $\Lc(t)$ a random collection of loops sampled according to a Poisson point process of intensity $\mu(t)$ and let
\[
\Theta_n (t) := \P(\S^2 \overset{\Lc(t)}{\longleftrightarrow} n \S^2).
\]
Denoting by $\Cc_1(t)$ and $\Cc_n(t)$ the union of all clusters of $\Lc(t)$ intersecting $\S^2$ and $n\S^2$ respectively, a simple Poisson point process computation shows that
\begin{align*}
\frac{\partial}{\partial t} \Theta_n (t)
= \E[ \indic{\Cc_1(t) \neq \Cc_n(t)} \frac{\partial}{\partial t} \mu(t)(\Cc_1(t) \overset{\wp}{\longleftrightarrow} \Cc_n(t) ) ],
\end{align*}
where $\mu(t)$ only acts on $\wp$.
By definition of $\mu(t)$,
\begin{align*}
\frac{\partial}{\partial t} \mu(t)(\Cc_1(t) \overset{\wp}{\longleftrightarrow} \Cc_n(t) )
& = c \loopmeasure( \diam(\wp) \in [1,R], \Cc_1(t) \overset{\wp}{\longleftrightarrow} \Cc_n(t)) \\
& - \loopmeasure( \diam(\wp) \in [R,R'], \Cc_1(t) \overset{\wp}{\longleftrightarrow} \Cc_n(t)).
\end{align*}
By Lemma \ref{L:key} (and our choice of constant $c$), the right hand side is nonpositive. We deduce that $\frac{\partial}{\partial t} \Theta_n(t) \le 0$ and thus $\Theta_n(1) \le \Theta_n(0)$. In particular,
\[
\lim_{n \to \infty} \P ( \S^2 \overset{\Lc_{1,R}^{\alpha(1+c)}}{\longleftrightarrow} n \S^2 ) \le \lim_{n \to \infty} \P ( \S^2 \overset{\Lc_{1,R'}^{\alpha}}{\longleftrightarrow} n \S^2 ).
\]
Recall from \eqref{E:L_Jr1} that the right hand side vanishes if, and only if, $\alpha \notin \Ir_\tr^{R'}$ and similarly for the left hand side.
The above inequality therefore implies that $\alpha_\tr^{R'} (1+c) \le \alpha_\tr^R$. Because $\alpha_\tr^{R'} >0$ (by Lemma~\ref{L:positive_critical_point}), this implies that $\alpha_\tr^{R'} < \alpha_\tr^R$ as desired.
\end{proof}

The rest of this section is then dedicated to the proof of Lemma~\ref{L:key}.

\begin{proof}[Proof of Lemma \ref{L:key}]
The main idea of the proof is to cut the loop into two parts where one part intersects $K_1$ and $K_2$ and the other part achieves the appropriated diameter. Because $K_1$ and $K_2$ are arbitrary, the first part can have a complicated law. But the second part has an explicit distribution so that we can change the overall diameter at will. As already alluded to, Lemma~\ref{L:double} was actually first derived with this goal in mind, but turned out to be non tractable (mainly because the resampling of the root of the loop induces the appearance of new terms, such as the self-intersection local time, that have to be dealt with). We will use another cutting procedure that is more suited to our specific situation.

Consider a loop $\wp$. We are going to define three events $E_1$, $E_2$ and $E_3$ for the loop $\wp$ in such a way that
\begin{equation}\label{E:events3}
\{ K_1 \overset{\wp}{\longleftrightarrow} K_2 \} \cap \{\diam(\wp) \ge 1\}
\subset E_1 \cup E_2 \cup E_3.
\end{equation}
We first define stopping times using the convention that $\inf \varnothing = +\infty$ and $\sup \varnothing = - \infty$. Let $\tau_1$ be the first hitting time of $K_1 \cup K_2$ and $i \in \{1,2\}$ be such that $\wp(\tau_1) \in K_i$. Let $\tau_2$ be the first hitting time of $K_{2-i}$ after $\tau_1$. Finally, let $\tau_3$ be the first hitting time of $\partial B(\wp(0),1/4)$ after $\tau_2$.
We will denote by $E_1$ the event that $\tau_3 < T(\wp)$.
The event $E_2$ is then the event that $E_1$ holds for the time reversal $(\wp(T(\wp)-t))_{0 \le t \le T(\wp)}$.
Let
\[
\sigma_2 = \inf \{ t\ge 0: \wp(t) \in \partial B(\wp(0),1/4) \},
\quad \sigma_1 = \sup \{ t \in [0,T(\wp)]: \wp(t) \in \partial B(\wp(0),1/4) \}.
\]
Denoting by $[\sigma_1,\sigma_2]\subset T(\wp) \S^1$ the counter clockwise segment with endpoints $\sigma_1$ and $\sigma_2$,
let $E_3$ be the event that $\sigma_1 > -\infty$, $\sigma_2 < \infty$ and $\wp_{\vert [\sigma_1,\sigma_2]}$ intersects both $K_1$ and $K_2$.

Let us now show that \eqref{E:events3} holds. To this end, it is enough to show that
\[
E_3^c \cap \{ K_1 \overset{\wp}{\longleftrightarrow} K_2 \} \cap \{\diam(\wp) \ge 1\} \subset E_1 \cup E_2.
\]
We work on the event on the left hand side of the above display.
Because $\diam(\wp) \ge 1$, $\wp$ cannot stay in the ball $B(\wp(0),1/4)$ and we must have $\sigma_1 > -\infty$, $\sigma_2 < \infty$. Hence, $\wp_{\vert [\sigma_1,\sigma_2]}$ does not intersect at least one of the $K_i$. Without loss of generality, assume that it does not intersect $K_1$.
Because $\wp$ intersects both $K_1$ and $K_2$, $\tau_1$ and $\tau_2$ are finite.
To finish the proof of \eqref{E:events3}, we consider the following cases:
\begin{itemize}
\item
Case 1: $\wp(\tau_2) \in K_1$. Because $\wp_{\vert [\sigma_1,\sigma_2]}$ does not intersect $K_1$, we must have $\tau_2 < \sigma_1$ and thus $\tau_3 \le \sigma_1$. This shows that $E_1$ holds.
\item
Case 2: $\wp(\tau_2) \in K_2$. By definition of $\tau_1$ and $\tau_2$, we must have $\wp(\tau_1) \in K_1$. The same reasoning as in Case 1 for the reverse loop then applies, showing that $E_2$ holds.
\end{itemize}

As a consequence of \eqref{E:events3}, we have
\begin{align*}
\loopmeasure_{\R^d}( \diam(\wp) \in [1,R], K_1 \overset{\wp}{\longleftrightarrow} K_2 )
\le \sum_{i=1}^3 \loopmeasure_{\R^d}( \diam(\wp) \in [1,R], K_1 \overset{\wp}{\longleftrightarrow} K_2 ,E_i).
\end{align*}
By symmetry, the terms for $i=1$ and $i=2$ are equal. We will thus only consider $E_1$ and $E_3$.

\noindent
\textbf{Contribution of $E_1$.}
Let $x \in \R^d$ and let $\nu_x$ be the joint law of $(\wp(\tau_3),\tau_3)$ where $\wp \sim \P^x$ is a Brownian trajectory starting from $x$. We view $\nu_x$ as a probability measure on $(\R^d \cup \{\infty\}) \times ( \R \cup \{\infty\} )$. For $y \in \R^d$ and $s>0$, denote by $\tilde{\P}^{x,y;s}$ the law of $\wp_{\vert[0,s]}$ under $\P^x$, conditionally on $(\wp(\tau_3),\tau_3) = (y,s)$. The law $\tilde{\P}^{x,y;s}$ is well defined for $\nu_x$-almost every $(y,s)$. It is potentially complicated and we have no control over it.
By Markov's property, for any $x \in \R^d$ and $t>0$,
\[
p_t^{\R^d}(x,x) \P^{x,x;t} \mathbf{1}_{E_1}
= \int_{\R^d \times (0,t)} \nu_x(\d y \d s) p_{t-s}^{\R^d}(y,x) \tilde{\P}^{x,y;s} \wedge \P^{y,x;t-s},
\]
where ``$\wedge$'' means that the two paths are concatenated.
Importantly, the second part of the trajectory is simply a Brownian bridge. Integrating over $x$ and $t$ and then exchanging integrals and doing the change of variables $u = t-s$, we find that
\begin{align}\label{E:pf_key1}
\mathbf{1}_{E_1} \loopmeasure(\d \wp) & = \int_{\R^d} \d x \int_0^\infty \frac{\d t}{t} \int_{\R^d \times (0,t)} \nu_x(\d y \d s) p_{t-s}^{\R^d}(y,x) \tilde{\P}^{x,y;s} \wedge \P^{y,x;t-s} \\
& = \int_{\R^d} \d x \int_{\R^d \times (0,\infty)} \nu_x(\d y \d s) \tilde{\P}^{x,y;s} \wedge \Big( \int_0^\infty \frac{\d u}{s+u}p_u^{\R^d}(y,x) \P^{y,x;u} \Big).\notag
\end{align}
In particular,
\begin{align*}
& \loopmeasure_{\R^d}( \diam(\wp) \in [1,R], K_1 \overset{\wp}{\longleftrightarrow} K_2 ,E_1)\\
& = \int_{\R^d} \d x \int_{\R^d \times (0,\infty)} \nu_x(\d y \d s) \int \tilde{\P}^{x,y;s}(\d \wp_1) \indic{\diam(\wp_1) \in [1,R]} \\
& \hspace{130pt} \times \Big( \int_0^\infty \frac{\d u}{s+u}p_u^{\R^d}(y,x) \int \P^{y,x;u}(\d \wp_2) \indic{\diam(\wp_1 \wedge \wp_2) \in [1,R]} \Big).
\end{align*}
We are going to show that the term in parenthesis is at most a constant times the same expression where the diameter of $\wp_1 \wedge \wp_2$ is required to belong to $[R,R']$ instead.
Recalling that $\|x-y\|=1/4$, we have $p_u^{\R^d}(y,x) = (2\pi u)^{-d/2} e^{-\frac{1}{32u}}$ and
\begin{align*}
& \int_0^\infty \frac{\d u}{s+u}p_u^{\R^d}(y,x) \int \P^{y,x;u}(\d \wp_2) \indic{\diam(\wp_1 \wedge \wp_2) \in [1,R]}
\le \int_0^\infty \frac{\d u}{s+u}p_u^{\R^d}(y,x) \P^{y,x;u}(\diam(\wp_2) \le R)\\
& \le C \int_0^\infty \frac{\d u}{(s+u)u^{d/2}} e^{-\frac{1}{32u}} e^{-cu} \le C \min(1,s^{-1}).
\end{align*}
Similarly, for any $\wp_1$ with $\diam(\wp_1) \in [1,R]$ and endpoints $x$ and $y$ at distance $1/4$, for any $u>0$, one has
\[
\int \P^{y,x;u}(\d \wp_2) \indic{\diam(\wp_1 \wedge \wp_2) \in [R,R']}
\ge c e^{-Cu - C/u}
\]
and thus
\[
\int_0^\infty \frac{\d u}{s+u}p_u^{\R^d}(y,x) \int \P^{y,x;u}(\d \wp_2) \indic{\diam(\wp_1 \wedge \wp_2) \in [R,R']} \ge c \min(1,s^{-1}).
\]
We deduce that 
\begin{align*}
& \loopmeasure_{\R^d}( \diam(\wp) \in [1,R], K_1 \overset{\wp}{\longleftrightarrow} K_2 ,E_1)\\
& \le C \int_{\R^d} \d x \int_{\R^d \times (0,\infty)} \hspace{-4pt} \nu_x(\d y \d s) \int \tilde{\P}^{x,y;s}(\d \wp_1) \indic{\diam(\wp_1) \in [1,R]} \\
& \hspace{130pt} \times \Big( \int_0^\infty \frac{\d u}{s+u}p_u^{\R^d}(y,x) \int \P^{y,x;u}(\d \wp_2) \indic{\diam(\wp_1 \wedge \wp_2) \in [R,R']} \Big)\\
& = C \int_{\R^d} \d x \int_{\R^d \times (0,\infty)} \hspace{-4.1pt}\nu_x(\d y \d s) \int \tilde{\P}^{x,y;s}(\d \wp_1) \int_0^\infty \hspace{-4pt} \frac{\d u}{s+u}p_u^{\R^d}(y,x) \int \P^{y,x;u}(\d \wp_2) \indic{\diam(\wp_1 \wedge \wp_2) \in [R,R']}.
\end{align*}
Unwrapping the above procedure, we have obtained that
\[
\loopmeasure_{\R^d}( \diam(\wp) \in [1,R], K_1 \overset{\wp}{\longleftrightarrow} K_2 ,E_1)
\le C \loopmeasure_{\R^d}( \diam(\wp) \in [R,R'], K_1 \overset{\wp}{\longleftrightarrow} K_2 ,E_1).
\]

\noindent
\textbf{Contribution of $E_3$.}
The contribution of the event $E_3$ can be treated in a very similar way. The portion $\wp_{\vert [\sigma_1,\sigma_2]}$ intersecting both $K_1$ and $K_2$ can have a complicated law, but the remaining of the trajectory is simply a Brownian excursion measure. 
The biggest difference comes from the fact that the two endpoints $\wp(\sigma_1)$ and $\wp(\sigma_2)$ can be arbitrarily close to each other (they are arbitrary points of $\partial B(x,1/4)$ for some $x \in \R^d$). However, $\wp_{\vert [\sigma_1,\sigma_2]}$ stays in a ball of diameter $1/2$. Thus, the remaining of the trajectory is required to go at a macroscopic distance from its starting point in order to achieve an overall diameter in $[1,R]$ or $[R,R']$. In both cases, the probability is of the same order and we can conclude as before that
\[
\loopmeasure_{\R^d}( \diam(\wp) \in [1,R], K_1 \overset{\wp}{\longleftrightarrow} K_2 ,E_3)
\le C \loopmeasure_{\R^d}( \diam(\wp) \in [R,R'], K_1 \overset{\wp}{\longleftrightarrow} K_2 ,E_3).
\]
This concludes the proof of Lemma \ref{L:key}.
\end{proof}

\section{Local uniqueness: Proof of Theorem~\ref{T:JR}, \eqref{E:T_locuniq}}\label{S:GM}

This section is dedicated to the two equalities stated in \eqref{E:T_locuniq}. We will prove the second one and explain at the end of the section how to modify the arguments to prove the first one; see Section~\ref{SS:pf_1st_equality}.

\begin{proof}[Proof of Theorem \ref{T:JR}, \eqref{E:T_locuniq}, second equality, assuming Proposition \ref{P:local_main}]
Let 
\[ 
\loc = \{ \alpha >0: \forall~D\subset\R^3 \text{ connected and open}, \P(\Lc^\alpha_D \text{ contains a unique cluster}) =1 \}.
\]
We start by showing the easy inclusion $\bigcup_{R>1} \Ir_\tr^R \supset \loc$ which follows from a ``static'' renormalisation procedure. Let $\alpha \in \loc$ and $\delta >0$. For any domain $D \subset \R^3$, let $\Cc_{D,\delta}$ be the cluster of $\{\wp \in \Lc^\alpha_D: \diam(\wp)>\delta \}$ whose diameter is the biggest, say.
Consider the domains $D = [0,2] \times [0,1]^2$ and $D' = [0,1] \times [0,2] \times [0,1]$. The diameter of the loops staying in these domains does not exceed $\sqrt{6}$.
By uniqueness of the cluster in $\Lc^\alpha_D$, $\Cc_{D,\delta}$ contains some loops that stay entirely in $D \cap D'$ when $\delta$ is small enough. The same is true for $D'$. But since there is a unique cluster in $\Lc_{D \cap D'}^\alpha$, this shows that 
\[
\P(\Cc_{D,\delta} \cap \Cc_{D',\delta} \neq \varnothing) \to 1, \qquad \text{as} \quad \delta \to 0.
\]
This is the crucial step to be able to compare the percolation of $\Lc_{\delta,\sqrt{6}}^\alpha$ with a percolation on the sites of $\Z^3$ with a finite range of dependence and such that the probability $p(\delta)$ for a site to be open can be made arbitrarily close to 1. Since this is similar to the proof of Lemma~\ref{L:finite_critical_point}, we omit the details. We deduce that, if $\delta$ is small enough, $\Lc_{\delta,\sqrt{6}}^\alpha$ contains an unbounded cluster a.s. By scaling, this shows that $\alpha \in \Ir_\tr^{\sqrt{6}/\delta}$ which shows the desired inclusion: $\loc \subset \bigcup_{R>1} \Ir_\tr^R$.

We now initiate the proof of the reverse inclusion with some preliminary reduction steps.
Let $\alpha \in \Ir_\tr^R$ for some $R>1$. Let $D \subset \R^3$ be a connected open set. We want to show that for any $\wp, \wp' \in \Lc_D^\alpha$, $\wp$ and $\wp'$ belong to the same cluster a.s. By Palm's formula (Lemma~\ref{E:Palm}), it is enough to show that for any $\wp, \wp'$ independent loops sampled according to $\alpha \loopmeasure_D$ and for $\Lc_D^\alpha$ independent of $\wp$ and $\wp'$, a.s. there exists a cluster of $\Lc_D^\alpha$ which intersects both $\wp$ and $\wp'$.
This claim is a consequence of Proposition \ref{P:local_main} below. Our above line of arguments proves the second equality of \eqref{E:T_locuniq}, assuming this proposition.
\end{proof}

\begin{proposition}\label{P:local_main}
Let $R>1$, $\alpha \in \Ir_\tr^R$, $D \subset \R^3$ be a connected open set and $R''>2R$. For $\delta >0$, let $\Lc_\delta = \{\wp \in \Lc_{\delta,\delta R''}^\alpha: \wp \subset D\}$ and let $\wp$ and $\wp'$ be independent Brownian loops sampled according to $\alpha \loopmeasure_{D}$, independent of $\Lc_\delta$. Then, for almost all sample of $\wp$ and $\wp'$,
\[
\P( \wp \overset{\Lc_\delta}{\longleftrightarrow} \wp' \vert \wp, \wp') \to 1
\qquad \text{as} \quad \delta \to 0.
\]
\end{proposition}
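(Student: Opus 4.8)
The plan is to prove Proposition \ref{P:local_main} via a Grimmett--Marstrand-type renormalisation, following the strategy of \cite{zbMATH04169828} but adapted to the continuum loop soup, where the essential gain over a naive multi-scale argument is that ``sprinkling'' is performed on the upper cutoff of loop diameters rather than on the intensity $\alpha$. The key point is that $\alpha \in \Ir_\tr^R$ already gives us a supercritical truncated model $\Lc_{1,R}^\alpha$; rescaled, this says that for every $\delta>0$ the loops of diameter in $[\delta, \delta R]$ percolate, and in a strong (uniqueness-type, by Lemma~\ref{L:Jr}, \eqref{E:L_Jr2}) sense. The extra room between $\delta R$ and $\delta R''$ with $R'' > 2R$ is precisely the room needed to ``connect across blocks'': inside a block of size of order $\delta R$ we use the percolating loops of $\Lc_{\delta, \delta R}^\alpha$ to build a crossing cluster, and loops of diameter up to $\delta R''$ (but still microscopic, hence staying in $D$) are available to glue the crossing clusters of neighbouring blocks together.

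The main steps I would carry out are the following. First, fix a mesh size $L = L(\delta)$ of order $\delta$ times a large constant, pave $\R^3$ by cubes of side $L$, and declare a cube \emph{good} if (i) the loops of $\Lc_{\delta,\delta R}^\alpha$ contained in a slightly enlarged concentric cube realise a crossing cluster $\Cc$ connecting all six faces, (ii) $\Cc$ is ``fat'' in the sense that it has large $g_{\cdot,a}$-size (recall \eqref{E:g_ra}) as seen from within each face-region, and (iii) this crossing cluster is unique among crossing clusters. Using Lemma~\ref{L:Jr} together with the scale invariance of the loop soup, the probability that a given cube is good can be made as close to $1$ as we wish by choosing the constant in $L/\delta$ large; goodness of well-separated cubes is independent by the finite-range structure. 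This is a one-dependent site percolation on $\Z^3$ with parameter close to $1$, so by \cite{MR1428500} there is a.s. a unique infinite cluster of good cubes, and crucially the crossing clusters of adjacent good cubes can be merged into one: here the loops of diameter in $[\delta R, \delta R'']$ intersecting the common face enter, and one uses a welding estimate in the spirit of Lemma~\ref{L:intersection_loop} (or the argument in the proof of Lemma~\ref{L:finite_critical_point}) to show that, conditionally on the two fat crossing clusters, a diameter-$\in[\delta R,\delta R'']$ loop near the shared face intersects both of them with probability bounded below. Applying this along the whole good-cube cluster produces, inside $D$ (provided $\delta$ is small enough that a block near the relevant region sits inside $D$), a single macroscopic cluster $\mathbf{C}_\delta$ of $\Lc_\delta$ that is ``dense at scale $L$''.

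Second, I would argue that the two fixed independent loops $\wp$ and $\wp'$, sampled from $\alpha\loopmeasure_D$, are a.s. each connected to $\mathbf{C}_\delta$ by $\Lc_\delta$, with conditional probability tending to $1$ as $\delta\to 0$. Since $\wp$ is a fixed macroscopic set with $\loopmeasure_D$-a.s. positive capacity, it intersects of order $L^{-2}$ good cubes near it; within any such good cube the crossing cluster of $\Lc_\delta$ (which is part of $\mathbf{C}_\delta$) is fat, and the probability that \emph{some} loop of $\Lc_\delta$ inside that cube meets both $\wp$ and the crossing cluster is bounded below independently of $\delta$ by Lemma~\ref{L:intersection_BM}/\ref{L:intersection_loop}-type estimates, so the probability of failing to connect $\wp$ decays. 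One subtlety is that $\wp$ may be very small or very ``thin'' — here one uses that $\loopmeasure_D$-a.e.\ loop has positive diameter and positive capacity, so restricting to loops $\wp$ with $\diam(\wp) \ge \epsilon$ and $g_{\cdot,a}(\wp) \ge \eta$ for small $\epsilon,\eta$ captures $\loopmeasure_D$-almost every loop, and the bound degrades only through $\epsilon,\eta$; letting $\delta\to 0$ first and then $\epsilon,\eta\to 0$ gives the claim for $\loopmeasure_D\otimes\loopmeasure_D$-a.e. $(\wp,\wp')$. Combining with the connectedness of $\mathbf{C}_\delta$ from the previous step yields $\P(\wp \overset{\Lc_\delta}{\longleftrightarrow} \wp' \mid \wp,\wp') \to 1$.

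The main obstacle I expect is making the renormalisation genuinely self-contained in the continuum: unlike the lattice, there is no a priori smallest scale, so one must carefully track that the ``fatness'' condition (iii) on crossing clusters — which is what makes the gluing step have a probability bounded below — is itself a high-probability event uniformly in the scale, and that it propagates: when we merge crossing clusters across faces we need the merged cluster to \emph{still} be fat as seen from the next face, so that the induction on good cubes closes. This is exactly the kind of ``non-degeneracy propagation'' that the discrete Grimmett--Marstrand argument handles by combinatorial block-gluing but which here requires the Brownian loop measure estimates of Part~\ref{PartA} (in particular Corollary~\ref{C:crossing_exactly_n} and Lemma~\ref{L:intersection_loop}) as inputs; the delicate bookkeeping of constants (how large $L/\delta$ and $a$ must be, in which order the limits $\delta\to0$, then $\epsilon,\eta\to0$ are taken) is where the real work lies, though conceptually it is standard once the loop-measure toolbox is in place.
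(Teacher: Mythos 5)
Your proposal has a genuine gap at the very first step, and it is of a circular nature. You declare a cube $Q$ of side $L$ ``good'' when the loops of $\Lc^\alpha_{\delta,\delta R}$ inside a slightly larger cube produce a \emph{unique} fat crossing cluster joining all six faces, and you claim that, because $\alpha \in \Ir_\tr^R$ and by Lemma~\ref{L:Jr}, this probability can be made close to $1$ by taking $L/\delta$ large. That is not what Lemma~\ref{L:Jr} gives you. Statement \eqref{E:L_Jr2} is a one-arm estimate: with probability tending to $1$, the sphere $r\S^2$ is connected by $\Lc^\alpha_{1,R}$ to all larger spheres. From a one-arm estimate alone one can certainly extract, with high probability, a crossing of the box; but you cannot extract \emph{uniqueness} of the crossing cluster, and without uniqueness the gluing across a shared face can fail — you may connect into the wrong of several crossing clusters of the neighbouring box. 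The ``good cube with a unique, fat crossing cluster happens with probability close to $1$'' event is, in spirit, the local uniqueness property that Proposition~\ref{P:local_main} is supposed to establish. Assuming it as input is circular. This is exactly why the discrete Grimmett--Marstrand theorem is a theorem rather than a routine block argument, and the same obstruction is present here. (Note that the paper \emph{does} run the static renormalisation you describe, but only in the easy direction of the proof of \eqref{E:T_locuniq}, where it is allowed to assume $\alpha \in \loc$.)

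The actual proof in the paper circumvents this in the standard Grimmett--Marstrand way, but the continuum analogue is not a cosmetic adaptation. The ``seed'' events (Definition~\ref{D:seed}), formulated in terms of loops of $\Lc^\alpha_{R',R''}$ with a quantitative capacity condition \eqref{E:def_seed}, have only \emph{positive} probability, not probability close to $1$ — and that is unavoidable. What makes the induction close is Proposition~\ref{P:seed}: starting from one seed, many candidate target squares on the next face each have a positive chance of hosting a connected seed, and the union bound over many candidates (Lemma~\ref{L:localK}) pushes the success probability close to $1$. The crucial technical difficulty, which your proposal does not engage with, is how to iterate this across boxes while controlling the \emph{negative} information revealed by exploring the cluster so far. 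This is where sprinkling enters, and it is where you invoke — but do not actually use — the idea of sprinkling on the diameter cutoff. Concretely, the exploration reveals that no $\Lc^\alpha_{1,R}$-loop connects to the explored cluster; the paper then brings in loops of diameter in $[R,R']$ and trades the negative information for an independent $\Lc^\alpha_{1,R}$-copy by the Aizenman--Grimmett-type estimate of Lemma~\ref{L:key} (followed by H\"older and FKG). That is the linchpin of the argument. Your welding estimates (Lemma~\ref{L:intersection_loop}, Corollary~\ref{C:crossing_exactly_n}) are indeed used in the paper, but they serve a different, more limited role (defining and extracting seeds), not the renormalisation step. As written, your proposal is not a proof: the goodness-probability claim is unjustified and cannot be repaired without bringing in a seed/sprinkling mechanism of the kind the paper develops.
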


The rest of this section is devoted to the proof of Proposition \ref{P:local_main} and we fix $R>1$, $\alpha \in \Ir_\tr^R$ and $R''>R'>2R$ (a slight variation of our argument below allows to take $R''>R' > R > 1$ instead).

\begin{definition}[Seed]\label{D:seed}
For $r>0$ and $x \in \Z^3$, let
\begin{equation}
Q(x,r) = x + [-r,r]^3.
\end{equation}
We will simply write $Q(r)$ instead of $Q(0,r)$.

For $x \in \R^3$ and $r>R''/2$, let $\mathrm{Seed}_{x,r}$ be the event that $\{ \wp \in \Lc_{R',R''}^\alpha, \wp \cap Q(x,r-R''/2) \neq \varnothing\}$ contains a unique cluster $\Cc(\mathrm{Seed}_{x,r})$ which satisfies the property that
for all $y\in \R^3$ at distance at most $R$ to $Q(x,r)$, $z \in B(y,1/2)$ and $u \in [1,2]$,
\begin{equation}
\label{E:def_seed}
\P^{y,z;u}(\Cc(\mathrm{Seed}_{x,r}) \cap \wp \neq \varnothing \vert \Lc_{R',R''}^\alpha) \ge 1/2.
\end{equation}
If $\Seed_{x,r}$ does not hold, we will use the convention that $\Cc(\Seed_{x,r}) = \varnothing$.
\end{definition}

Using a Grimmett--Marstrand-type argument \cite{zbMATH04169828}, we will be able to prove:

\begin{proposition}\label{P:GM}
    For all $\eps >0$, there exist $r'>r>0$ large enough and a coupling between $\Lc_{1,R''}^\alpha$, a site Bernoulli percolation $(\eta_x)_{x \in \Z^3}$ on $\Z^3$ where for all $x \in \Z^3$,
    \[ 
    \P(\eta_x=1) = 1-\P(\eta_x = 0) = 1-\eps,
    \]
    and an embedding $F:\Z^3 \to \R^3$ such that the following holds. Almost surely, for all $x,y \in \Z^3$, $\|F(x)-F(y) - r'(x-y)\| \le r'$ and for all $x\sim y$, $F(y) \in \partial Q(F(x),r')$. Moreover, if $\eta_x=1$, then $\Seed_{F(x),r}$ holds and if $x\sim y$ are such that $\eta_x=\eta_y=1$, then there exists a cluster of $\{ \wp \in \Lc_{1,R'}^\alpha : \wp \cap Q(F(x),r'-r)\neq \varnothing\}$ which intersects both $\Cc(\Seed_{F(x),r})$ and $\Cc(\Seed_{F(y),r})$.
\end{proposition}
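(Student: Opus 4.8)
The plan is to run a Grimmett--Marstrand renormalisation along the scheme of \cite{zbMATH04169828}, adapted to our continuum long-range setting, where the key novelty is that sprinkling is performed on the diameter cutoff rather than on the intensity $\alpha$. The basic object to build is a renormalised site percolation on $\Z^3$ whose ``open'' sites carry seeds (Definition~\ref{D:seed}) and whose open edges carry connections between adjacent seeds realised by loops of diameter at most $R'$. I would first record the two ``building block'' estimates. The first is that, for $r$ large, $\P(\Seed_{0,r}) \to 1$: since $\alpha \in \Ir_\tr^R$, by Lemma~\ref{L:I3} the truncated soup $\Lc^\alpha_{\ge 1}$ has a unique unbounded cluster, hence (by \eqref{E:L_Jr1}, applied at scale $R'$ rather than $R$, which only helps) the loops of diameter in $[R',R'']$ already percolate and, crucially, the unbounded cluster they form is \emph{locally unique} at large scales; combined with the non-polarity estimate (Lemma~\ref{L:intersection_loop}/Corollary~\ref{C:crossing_exactly_n}) this gives the capacity-type lower bound \eqref{E:def_seed} with probability tending to one as $r\to\infty$. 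The second building block is the ``glueing'' estimate: given two seeds in neighbouring boxes $Q(F(x),r')$ and $Q(F(y),r')$, a single loop of diameter in $[1,R']$ has $\loopmeasure$-mass bounded below for connecting $\Cc(\Seed_{F(x),r})$ to $\Cc(\Seed_{F(y),r})$ \emph{and} reaching a prescribed macroscopic region, uniformly in the (complicated, arbitrary) shapes of the two clusters. This is exactly the content of Lemma~\ref{L:key} together with the uniform capacity bound in \eqref{E:def_seed}; the latter is what makes ``intersecting $\Cc(\Seed)$'' comparable to ``intersecting $Q(F(x),r)$''.

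The second step is the inductive construction of the embedding $F$. Following Grimmett--Marstrand, one proceeds scale by scale: assuming seeds have been successfully placed and connected up to a box of size $\sim r'^k$, one uses the percolation of $\Lc^\alpha_{\ge R'}$ (which holds because $\alpha\in\Ir_\tr^R\subset\Ir_\tr^{R'}$... wait, here one must be careful since $R'>R$ makes $\Ir_\tr^{R'}$ \emph{smaller}; so instead one uses $\Lc^\alpha_{\ge 1}$ and the local uniqueness of its unbounded cluster furnished by the first building block) to find, with probability close to one, a new seed in one of a bounded number of candidate neighbouring boxes, and then uses the glueing estimate to connect it. The positions $F(y)$ are chosen on $\partial Q(F(x),r')$ as the construction dictates, and the drift control $\|F(x)-F(y)-r'(x-y)\|\le r'$ follows by keeping each new box within one ``slab'' of the previous one. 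The outcome is a one-step renormalisation inequality: the probability that a given site is ``good'' (seed present, and connectable to each good neighbour) exceeds $1-\eps$ provided $r,r'$ are chosen large enough depending on $\eps$. Standard finite-dependence arguments (the good/bad status of a site depends only on loops meeting a bounded neighbourhood, so we get a finite-range-dependent field, which by \cite{MR1428500} dominates Bernoulli$(1-\eps')$ for $\eps'$ small) then upgrade this to the stated \emph{independent} Bernoulli$(1-\eps)$ coupling after relabelling $\eps$.

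The main obstacle, and the place where real work is needed, is the first building block: proving that $\Seed_{x,r}$ has probability tending to one. Percolation of $\Lc^\alpha_{\ge 1}$ (from $\alpha\in\Ir_\tr^R$) gives a unique unbounded cluster, but a priori only one \emph{global} cluster — one needs that within a large box $Q(x,r)$ the loops of diameter in $[R',R'']$ already form, with high probability, a single macroscopic cluster that is, in the capacity sense, ``everywhere dense'' in the box (inequality \eqref{E:def_seed}). This is precisely a local-uniqueness statement at a fixed large scale, and its proof is itself a Grimmett--Marstrand-type argument — so there is a genuine bootstrapping structure here: one proves a finite-scale local-uniqueness statement to define seeds, and then re-runs the renormalisation with seeds to get percolation of $\Lc^\alpha_{\delta,\delta R''}$. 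I would organise this by first establishing, from $\alpha\in\Ir_\tr^R$ and Lemma~\ref{L:I3}, that the unbounded cluster of $\Lc^\alpha_{\ge 1}$ has the property that for every $\eps>0$ there is a scale $\rho$ with $\P(\text{the cluster crosses }Q(0,\rho)\text{ and has a ``thick'' trace there})\ge 1-\eps$, via the square-root trick and non-polarity exactly as in Lemmas~\ref{L:I2_no_bdd}--\ref{L:I1_two_open_subsets}; then feed this, together with Lemma~\ref{L:key} and Corollary~\ref{C:crossing_exactly_n}, into the seed definition. The bookkeeping of which loops (by diameter band) are used where, so as to keep independence between the ``seed-forming'' loops (diameters in $[R',R'']$) and the ``glueing'' loops (diameters in $[1,R']$) and the finite-range structure, is the fiddly part, but it is routine once the two building-block estimates are in hand.
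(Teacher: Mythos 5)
Your overall scaffolding — two building blocks (seed estimate and welding estimate), sprinkling on the diameter cutoff rather than on $\alpha$ via Lemma~\ref{L:key}, domination of a finite-range-dependent field by a high-density Bernoulli field via \cite{MR1428500}, and the steering/drift control that gives $\|F(x)-F(y)-r'(x-y)\|\le r'$ — is the right shape, and your identification of the ``main obstacle'' is exactly where your attempt goes wrong, not because it is genuinely hard but because you are trying to prove a statement the argument does not need and that would, as you yourself notice, produce a circular bootstrap.

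You are trying to prove $\P(\Seed_{0,r})\to 1$ as $r\to\infty$. The paper proves no such thing, and the proposition does not require it. The only seed input used is Lemma~\ref{L:seed_proba}, which says merely $\P(\Seed_{x,r})>0$ for every $r>R''\vee 4R$ and $x\in\R^3$ (this follows from an easy positive-probability construction and a Palm-formula reduction of the number of admissible clusters from $k$ to $1$; no renormalisation at all). The high-probability ingredient is a different one: Lemma~\ref{L:localK} shows that, given $\eps$, one may pick $r$ and then $r'\gg r$ so that the random variable $K(r,r')$ — the number of boundary squares in $\mathfrak{X}_{r,r'}$ reached by the cluster of $Q(r)$ inside $\Lc^\alpha_{1,R}\cap Q(r'-r)$ — exceeds any prescribed $k$ with probability at least $1-\eps/2$. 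Each of these $\ge k$ attachment squares then has an \emph{independent} probability at least $p_1p_2>0$ (bounded below uniformly in $r'$ and $k$, using the disjointness of the collections $\{\wp\in\Lc^\alpha_{1,R}:\wp\cap Q(x,r+R''/2)\neq\varnothing\}$ for well-separated $x$) of actually hosting a seed that gets welded onto the crossing cluster; with $k$ large, at least one succeeds. That is Proposition~\ref{P:seed}. This is the standard Grimmett--Marstrand trick you were groping toward but misfiled: one does not need seeds to be typical, only to be possible, provided the crossing cluster offers enough independent tries. The proof of Lemma~\ref{L:localK} itself is soft: the second-moment-free input is that $K(r,r')\ge 1$ with high probability for $r$ large (from $\alpha\in\Ir_\tr^R$ and \eqref{E:L_Jr2}), plus the observation that $\P(1\le \tilde K(r,r')\le k)\to 0$ as $r'\to\infty$, because if $K$ stayed bounded one could, at bounded cost $p^k$, disconnect $Q(r)$ from $\partial Q(r'-r+R)$, contradicting $\alpha\in\Ir_\tr^R$.

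A secondary, smaller point: your parenthetical about $\Lc^\alpha_{\ge R'}$ versus $\Ir_\tr^{R'}$ is confused, but harmlessly so. Since $\Lc^\alpha_{1,R}\subset\Lc^\alpha_{1,R'}$, the inclusion $\Ir_\tr^R\subset\Ir_\tr^{R'}$ goes the way you want; the genuinely delicate diameter bookkeeping is elsewhere, in the iteration of Proposition~\ref{P:seed} in Section~\ref{SS:seed}, where one explores $\Cc$ inside $\Lc^\alpha_{1,R}$, incurs negative information, and compensates by sprinkling the band $[R,R']$ via Lemma~\ref{L:key} and H\"older's inequality, so that the diameter budget grows only once per renormalisation level and stays within $[1,R']$. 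None of that requires $\P(\Seed)\to 1$.
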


The proof of Proposition \ref{P:local_main} then follows:

\begin{proof}[Proof of Proposition \ref{P:local_main}, assuming Proposition \ref{P:GM}]
    Let $\hat D \Subset \widetilde D \Subset D$ be open subsets compactly included in each other. By considering an increasing limit $\hat D \uparrow D$, it is enough to show the result for paths $\wp$ and $\wp'$ sampled from $\loopmeasure_{\hat D}$.
    Let $\eps >0$ and consider $r'>r>0$ as in Proposition \ref{P:GM} and the resulting coupling between $\Lc_{1,R''}^\alpha$, a Bernoulli site percolation with parameter $1-\eps$ and an embedding $F$. The collections of loops $\Lc_\delta$ in the statement of Proposition \ref{P:local_main} are coupled together by
    \[
    \Lc_\delta = \{ \delta \wp: \wp \in \Lc_{1,R''}^\alpha, \wp \subset \delta^{-1} D \}.
    \]
    For almost all sample $\wp$ from $\loopmeasure_{\hat D}$,
    \[
    \# \{ x \in \Z^3: Q(\delta F(x),\delta r) \cap \wp \neq \varnothing \} \to \infty, \quad \text{as~} \delta\to 0.
    \]
    Moreover, each time $\wp$ hits such a cube $Q(\delta F(x),\delta r)$, it has a positive probability to hit the seed cluster $\delta \Cc(\Seed_{F(x),r})$. So, for almost all sample $\wp$ from $\loopmeasure_{\hat D}$,
    \[
    \P(\exists x \in \Z^3: \wp \cap \delta\Cc(\Seed_{F(x),r}) \neq \varnothing \vert \wp) \to 1, \quad \text{as~} \delta\to 0.
    \]
    Now, consider two independent samples $\wp$ and $\wp'$ from $\loopmeasure_{\hat D}$. With probability tending to 1 as $\delta \to 0$, there exist $x, x' \in \Z^3$, $\wp \cap \delta\Cc(\Seed_{F(x),r})\neq \varnothing$ and $\wp' \cap \delta\Cc(\Seed_{F(x'),r})\neq \varnothing$.
    Now, by Lemma~\ref{L:Bernoulli_site} below, with probability at least $p(\eps)$ which goes to 1 as $\eps \to 0$, $x$ and $x'$ belong to the same Bernoulli site percolation cluster and moreover, there exists a path $\gamma$ joining $x$ to $x'$ such that $\delta F\circ \gamma$ never exits $\widetilde D$.
    This constructs a cluster $\Cc_\delta$ of $\Lc_{1,R''}^\alpha$ of loops remaining in $\delta^{-1} D$ with $\Cc_\delta \cap \wp \neq \varnothing$ and $\Cc_\delta \cap \wp' \neq \varnothing$ and shows that
    \[
    \liminf_{\delta \to 0} \P(\delta\Cc_\delta \cap \wp \neq \varnothing, \delta\Cc_\delta \cap \wp' \neq \varnothing \vert \wp, \wp') \ge p(\eps).
    \]
    Since $p(\eps) \to 1$ as $\eps \to 0$, this concludes the proof.
\end{proof}

\begin{lemma}\label{L:Bernoulli_site}
    Let $\hat D \Subset \widetilde D$.
    For $\eps>0$, let $\P_{1-\eps}$ be the law of a Bernoulli site percolation on $\Z^3$ where the probability for a given site to be open is $1-\eps$. Then
    \[
    \lim_{\eps \to 0} \inf_{\delta \in (0,1)} \inf_{x,y \in \Z^3 \cap \delta^{-1} \hat D} \P_{1-\eps}( x \longleftrightarrow y \text{ within } \delta^{-1} \widetilde D ) = 1.
    \]
\end{lemma}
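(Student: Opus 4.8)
We need to show that two vertices $x,y$ in a rescaled domain $\delta^{-1}\hat D$ can be connected by an open path that stays inside the slightly larger rescaled domain $\delta^{-1}\widetilde D$, with probability tending to $1$ uniformly in $\delta$ and in the choice of $x,y$, as the density $1-\eps \to 1$.

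Let me sketch the proof plan.

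The plan is to reduce the statement, by a coarse-graining (renormalisation) argument, to the fact that supercritical Bernoulli site percolation on $\Z^3$ with density close to $1$ has, with high probability, a unique giant cluster which is ``crossing'' in every macroscopic box, together with a standard tube/slab argument. First I would fix the geometric setup: since $\hat D \Subset \widetilde D$, there is $\rho>0$ with $\mathrm{dist}(\hat D, \partial \widetilde D) > 3\rho$ (say), so that for every $\delta \in (0,1)$ the $\rho/\delta$-neighbourhood of $\delta^{-1}\hat D$ is contained in $\delta^{-1}\widetilde D$, and any two points of $\delta^{-1}\hat D$ can be joined by a $*$-path of unit-length steps in $\Z^3$ that stays within the $2\rho/\delta$-neighbourhood of $\delta^{-1}\hat D$, hence within $\delta^{-1}\widetilde D$, using connectedness of $\hat D$ (a compact connected open set is path-connected, and one approximates a continuous path by a lattice path). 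The point is that the combinatorial/geometric length of such a path is at most $C/\delta$ for a constant $C$ depending only on $\hat D$ and $\widetilde D$, and it stays in a tube of radius $\asymp 1/\delta$ around itself which is still inside $\delta^{-1}\widetilde D$.

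Next I would invoke the standard large-density behaviour of Bernoulli site percolation on $\Z^3$. For $\eps$ small enough, percolation is deeply supercritical, and one has the following renormalisation estimate (see e.g. the block arguments in Grimmett's book, or Pisztora/Antal--Pisztora): there exist a block size $L$ and, for each $L$-box $B_i = Li + [0,L)^3$, a ``good block'' event $G_i$, measurable with respect to the configuration in a bounded neighbourhood of $B_i$, such that (i) on $G_i$ there is a unique crossing cluster of $B_i$ touching all six faces, and any two open vertices of $B_i$ lying in clusters of diameter $\ge L/10$ are connected within a slightly enlarged box; (ii) the events $(G_i)$ are $1$-dependent (or $k$-dependent for a fixed $k$); and (iii) $\P(G_i) \to 1$ as $\eps \to 0$. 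By \cite{MR1428500} (Liggett--Schonmann--Stacey), for $\eps$ small enough the field $(\mathbf 1_{G_i})_i$ stochastically dominates a high-density Bernoulli field on the block lattice.

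Then I would run the tube argument on the block lattice. Given $x, y \in \Z^3 \cap \delta^{-1}\hat D$, pick the lattice path $\gamma$ from the first paragraph, of length $\le C/\delta$, staying in the tube $T$ of radius $\asymp 1/\delta$ inside $\delta^{-1}\widetilde D$. Cover $\gamma$ by a chain of $L$-blocks $B_{i_0}, \dots, B_{i_m}$ with $m \le C'/(\delta L)$, consecutive blocks overlapping or adjacent, all contained in $T$. On the event that all these blocks are good, their crossing clusters glue up into a single cluster $\mathcal C$ crossing the whole chain, contained in $T \subset \delta^{-1}\widetilde D$. It remains to connect $x$ and $y$ to $\mathcal C$: here one uses that $x$ (resp. $y$) lies in some block of the chain, and one needs $x$ to lie in the crossing cluster of that block. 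This is where I would be slightly careful: rather than conditioning on $x$ being open, I would observe that it suffices to connect $x$ to $y$, so I should instead take $x$ and $y$ to be arbitrary vertices of $\delta^{-1}\hat D$, not assumed open — but in the statement of Lemma~\ref{L:Bernoulli_site} the connectivity event $\{x \longleftrightarrow y\}$ already presupposes $x$ and $y$ open, so on the complementary event the claim is vacuous in the sense that the probability is evaluated only where it makes sense; concretely I would note that $\P_{1-\eps}(x \longleftrightarrow y \text{ within } \delta^{-1}\widetilde D) \ge \P_{1-\eps}(x,y \in \mathcal C')$ where $\mathcal C'$ is the crossing cluster of the block chain enlarged to include $x$ and $y$ at its ends, and that $\P_{1-\eps}(x \in \mathcal C') \to 1$ as $\eps\to 0$ uniformly (an open vertex lies in the local crossing cluster with probability $\to 1$, by the same block estimate applied near $x$). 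A union bound over the $\le C'/(\delta L) + 2$ block-events and the two endpoint-events gives a lower bound $1 - (C''/(\delta L))(1-\P(G_i)) - \text{(endpoint error)}$; this is not yet uniform in $\delta$ because of the factor $1/\delta$.

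To fix the lack of uniformity — and this is the real obstacle, and the reason one cannot just use a crude union bound — I would not take a fixed block size but instead use a \emph{second}, coarser renormalisation, or equivalently invoke an exponential estimate: in deeply supercritical percolation, the probability that a macroscopic box of side $N$ fails to be ``good'' (in the sense of containing a unique well-connected crossing cluster) is at most $e^{-c(\eps) N}$ with $c(\eps)\to\infty$ as $\eps\to 0$ (Antal--Pisztora / Pisztora; or via the block construction iterated once more so that block-failure probabilities are summable and in fact exponentially small in the block scale). With such a bound, choosing the block scale $L = L(\eps)$ growing as $\eps \to 0$, the total failure probability along the chain is at most $(C/\delta L)\, e^{-c(\eps)L} \le e^{-c(\eps)L/2}$ for $\delta$ bounded below by — no: $\delta$ ranges over all of $(0,1)$, so $1/\delta$ is unbounded. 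The clean way around this is to run the argument on a self-similar hierarchy of scales: connect $x$ to $y$ by first connecting each to the crossing cluster of the smallest enclosing good box, then connecting those boxes through good boxes of the next scale, and so on, so that the number of ``connection events'' at scale $2^k$ that one uses is bounded by a constant independent of $\delta$ (because the path's tube at scale $2^k$ needs only $O(1)$ boxes once $2^k$ exceeds a fixed fraction of the diameter, and the path structure is self-similar). Summing the failure probabilities $\sum_k (\text{const})\, e^{-c(\eps) 2^k}$ gives a bound $h(\eps)$ with $h(\eps)\to 0$ as $\eps\to 0$, uniformly in $\delta$ and in $x,y$. Thus $\P_{1-\eps}(x\longleftrightarrow y \text{ within }\delta^{-1}\widetilde D) \ge 1 - h(\eps)$ for all $\delta \in (0,1)$ and all $x,y\in\Z^3\cap\delta^{-1}\hat D$, which is exactly the claim. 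The hard part is organising this hierarchical connection scheme cleanly while keeping all block events contained in $\delta^{-1}\widetilde D$; the input percolation facts (exponential decay of the crossing-cluster-failure probability, uniqueness and local connectivity in good boxes) are by now completely standard and I would quote them.
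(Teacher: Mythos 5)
Your proposal fills in, with roughly the right ingredients, the content of what the paper dispatches in three sentences by citing Grimmett's Section~7.4: block renormalisation, local uniqueness of macroscopic clusters, and gluing along a spatially-controlled chain of good blocks. You correctly identify the one genuine subtlety, namely that a naive union bound over a chain of $\asymp 1/\delta$ fixed-size blocks does not produce a bound that is uniform in $\delta\in(0,1)$. However, the multi-scale hierarchy you propose to repair this is more elaborate than necessary, and you leave it unexecuted (``the hard part is organising this\ldots''). A cleaner fix is to let the block scale track $\delta$: since $\hat D\Subset\widetilde D$, there is $\rho>0$ with $\mathrm{dist}(\hat D,\partial\widetilde D)>3\rho$, and one may take blocks of side $L\asymp\rho/\delta$. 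Any lattice path from $x$ to $y$ inside a tube of radius $\asymp\rho/\delta$ (hence inside $\delta^{-1}\widetilde D$) has $\le C/\delta$ steps and is therefore covered by only $O(C/\rho)$ such blocks --- a number that is \emph{independent of $\delta$}. Each block fails to contain a unique well-connected crossing cluster with probability at most $e^{-c(\eps)L}$, and since $\delta<1$ forces $L\gtrsim\rho$, this is uniformly $\le e^{-c(\eps)\rho'}\to 0$; together with the endpoint estimate that an open vertex lies in the macroscopic cluster with probability $\ge\theta(1-\eps)\to1$, a single union bound over the $O(1)$ blocks finishes the argument. (For $\delta$ near $1$, where $L$ would fall below the renormalisation scale, the box $\delta^{-1}\widetilde D$ has a uniformly bounded number of sites and the claim follows by forcing them all open.) With this simplification in place of your hierarchy, your sketch becomes a complete and correct proof of the lemma.
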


\begin{proof}[Proof of Lemma \ref{L:Bernoulli_site}]
    This is a standard result. A spatially-controlled connection between any two pair of points can be made by gluing successive paths. Local uniqueness of macroscopic clusters (see e.g. \cite[Section 7.4]{zbMATH01301486}) guarantees the success of the gluing with high probability.
\end{proof}

\tdplotsetmaincoords{80}{20}

\begin{figure}
    \centering
\begin{tikzpicture}[tdplot_main_coords, line join=round, scale=.5]

\def\L{10}   
\def\l{2}    
\def\H{5}    
\def\h{1}    

\def\X{5}  
\def\Y{2}
\def\Z{2}

\coordinate (A1) at (-\H, -\H, -\H);
\coordinate (B1) at (\H,  -\H, -\H);
\coordinate (C1) at (\H,   \H, -\H);
\coordinate (D1) at (-\H,  \H, -\H);

\coordinate (A2) at (-\H, -\H,  \H);
\coordinate (B2) at (\H,  -\H,  \H);
\coordinate (C2) at (\H,   \H,  \H);
\coordinate (D2) at (-\H,  \H,  \H);

\draw[thick] (A1)--(B1)--(C1);
\draw[thick] (A2)--(B2)--(C2)--(D2)--cycle;
\draw[thick] (A1)--(A2);
\draw[thick] (B1)--(B2);
\draw[thick] (C1)--(C2);
\draw[thick, dotted] (D1)--(D2);
\draw[thick, dotted] (D1)--(C1);
\draw[thick, dotted] (D1)--(A1);

\coordinate (M1) at (\H, 0, -\H);
\coordinate (M2) at (\H, 0,  \H);
\coordinate (M3) at (\H, -\H, 0);
\coordinate (M4) at (\H,  \H, 0);

\draw[dashed] (M1)--(M2);
\draw[dashed] (M3)--(M4);

\drawcube[fill=gray!40,fill opacity=0.6,draw=black]{0}{0}{0};
\drawcube[fill opacity=0,draw=black]{0}{0}{0};

\drawcube[fill=gray!40,fill opacity=0.6,draw=black]{\X}{\Y}{\Z};
\drawcube[fill opacity=0,draw=black]{\X}{\Y}{\Z};

\begin{scope}[shift={(-6,-6,-6)}]   
    \draw[->,thick,black] (0,0,0) -- (1.5,0,0) node[anchor=west] {$x$};
    \draw[->,thick,black] (0,0,0) -- (0,2,0) node[anchor=south] {$y$};
    \draw[->,thick,black] (0,0,0) -- (0,0,1.5) node[anchor=south] {$z$};
\end{scope}

\draw (0,0,-1.2) node {$Q(r)$};
\draw (-8,0,0) node {$Q(r')$};
\draw (\X+2.5,\Y,\Z) node {$Q(x,r)$};
\end{tikzpicture}
\caption{Illustration of some notations used in Section \ref{SS:seed}. The large cube represents $Q(r')$. The $x=r'$ face is divided into four squares isometric to $\{r'\} \times [0,r']^2$. In Proposition~\ref{P:seed}, we start with a seed at the origin, meaning that the cube $Q(r)$ contains a unique very well connected cluster of $\{\wp \in \Lc_{R',R''}^\alpha: \wp \cap Q(r-R''/2)\neq\varnothing\}$; see Definition \ref{D:seed}. Proposition~\ref{P:seed} then asserts that with high probability, we can find a seed within some $Q(x,r)$ with $x \in \{r'\} \times [0,r']^2$ which is connected to the seed at the origin in $\Lc_{1,R}^\alpha\cap Q(r'-r)$.}
    \label{fig:GM}
\end{figure}

\subsection{Proof of Proposition \ref{P:GM}, assuming Proposition \ref{P:seed}}\label{SS:seed}

\begin{notation}[Cubes, squares, etc.]
Let $r'>r>0$ be such that $r'/r \in 2\N$.
We will need to consider the six faces of $\partial Q(r')$ individually and divide each such face into four quadrants. Consider for instance $\{r'\} \times [0,r']^2$. We will pave this quadrant into $r'^2/(4r^2)$ squares of sidelength $2r$. To this end, let
\[
\mathfrak{X}_{r,r'} = \{ (r',(2n+1)r,(2m+1)r): (n,m) \in \{0,1, \dots, \frac{r'}{2r}-1\}^2 \}.
\]
The quadrant $\{r'\} \times [0,r']^2$ is then paved as follows:
\[
\{r'\} \times [0,r']^2 = \bigcup_{x \in \mathfrak{X}_{r,r'}} (x + \{0\} \times [-r,r]^2)
\]
where the interior of the squares on the right hand side are pairwise disjoint.
\end{notation}


\begin{proposition}\label{P:seed}
Let $\eps >0$. There exist $r'>2r>0$ large enough with $r'/r \in 2\N$ such that
\begin{equation}
\P(\exists x \in \mathfrak{X}_{r,r'}, ~\Cc(\Seed_{0,r}) \overset{\Lc_{1,R}^\alpha \cap Q(r'-r)}{\longleftrightarrow} \Cc(\Seed_{x,r}) \vert \Seed_{0,r} ) \ge 1 - \eps,
\end{equation}
where $\Lc_{1,R}^\alpha \cap Q(r'-r)$ stands for the subset of loops $\wp \in \Lc_{1,R}^\alpha$ which intersect the cube $Q(r'-r)$.
\end{proposition}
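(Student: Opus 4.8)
The plan is to carry out a Grimmett--Marstrand renormalisation inside $Q(r')$, the crucial structural input being that the diameter windows $[1,R]$ and $[R',R'']$ are disjoint, so that $\Lc_{1,R}^\alpha$ and $\Lc_{R',R''}^\alpha$ are independent. Both are \emph{finite-range} loop soups: $\Lc_{1,R}^\alpha$ is strictly supercritical because $\alpha\in\Ir_\tr^R$, and $\Lc_{R',R''}^\alpha$ is strictly supercritical as well — again a consequence of $\alpha\in\Ir_\tr^R$ after rescaling, and this is where the freedom in the choice of $R'$ and $R''$ is used. For a strictly supercritical finite-range loop soup one has the usual renormalisation toolkit — box crossings with probability tending to one, and local uniqueness of macroscopic clusters — obtained by coarse-graining onto a lattice of blocks of some intermediate side length $L$, exactly in the spirit of the proof of Lemma~\ref{L:finite_critical_point}. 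I would use these for both soups as black boxes, fix $\eps>0$, and choose the scales in the order $L$, then $r$, then $r'$, with $1\ll L\ll r\ll r'$ and $r'/r\in2\N$.

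First I would build the ``highway''. Condition on $\Lc_{R',R''}^\alpha$ on the event $\Seed_{0,r}$, so that $K_3:=\Cc(\Seed_{0,r})$ is a deterministic set satisfying \eqref{E:def_seed}, hence ``$O(1)$-dense'' throughout a neighbourhood of $Q(r)$. Run the renormalisation for $\Lc_{1,R}^\alpha$, declaring an $L$-block \emph{good} in the usual way, except that for blocks lying in $Q(r)$ one additionally requires a $\Lc_{1,R}^\alpha$-loop that crosses the block and meets $K_3$: by a continuum ``gluing'' estimate — proved exactly as Lemma~\ref{L:key}, cutting a crossing loop into a part realising the crossing and a Brownian-bridge-like part of duration in $[1,2]$ rooted near $Q(r)$, which meets $K_3$ with probability $\ge 1/2$ by \eqref{E:def_seed}, the resulting comparison of loop measures being uniform in $K_3$ — this extra requirement only costs a bounded factor in the block probability. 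Requiring in addition the renormalised crossing to pass through $Q(r)$, one gets that, provided $L$ is large, with conditional probability $\ge1-\eps/3$ (uniformly in the conditioning) there is a dominant cluster $\mathcal{K}$ of $\{\wp\in\Lc_{1,R}^\alpha:\wp\cap Q(r'-r)\neq\varnothing\}$ which meets $\Cc(\Seed_{0,r})$, reaches $\partial Q(r'-r)$ near every face, occupies a positive fraction of the $L$-blocks of $Q(r'-r)$ with positive density of loops, and in particular meets a positive fraction of the cubes $Q(x,r)$, $x\in\mathfrak{X}_{r,r'}$, ``thickly'': for each such $x$ it contains a loop lying in $Q(r'-r)$ that reaches the slab into which a potential cluster $\Cc(\Seed_{x,r})$ protrudes into $Q(r'-r)$.

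It remains to find such an $x$ that actually carries a seed and to weld $\mathcal{K}$ onto it. By translation invariance of $\Lc_{R',R''}^\alpha$ one has $\P(\Seed_{x,r})\ge p_\Seed$ for a constant $p_\Seed=p_\Seed(R',R'',\alpha,r)>0$ — this is, up to local uniqueness and density estimates for the supercritical finite-range soup $\Lc_{R',R''}^\alpha$, a bulk statement — and, decisively, the field $(\indic{\Seed_{x,r}})_x$ has finite-range dependence in $x$ and is independent of $\Lc_{1,R}^\alpha$ and hence of $\mathcal{K}$. Writing $A$ for the ($\Lc_{1,R}^\alpha$-measurable) set of cubes met thickly by $\mathcal{K}$, one has $|A|\ge c\,|\mathfrak{X}_{r,r'}|$ with conditional probability $\ge1-\eps/3$ once $r'$ is large; since the seeds are independent of $A$ with marginals $\ge p_\Seed$ and finite-range dependence, a stochastic domination (in the spirit of \cite{MR1428500}) together with a union bound over $A$ yields, with probability $\ge1-\eps/3$, some $x\in A$ with $\Seed_{x,r}$ holding. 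For that $x$, welding $\mathcal{K}$ onto $\Cc(\Seed_{x,r})$ is done as for the origin seed: the gluing estimate (again Lemma~\ref{L:key}-style, uniform in the thick set $\Cc(\Seed_{x,r})$) shows $\mathcal{K}$ may be taken to also contain a loop in $Q(r'-r)$ meeting $\Cc(\Seed_{x,r})$, after absorbing one more $\eps/3$ into the renormalisation. Intersecting these events of probability $\ge1-\eps/3$ — highway existence with origin hookup, a far cube of $A$ carrying a seed, far hookup — and choosing $L$, then $r$, then $r'$ large enough, proves the proposition; conditioning on $\Seed_{0,r}$ throughout is harmless because every estimate above is uniform in $K_3=\Cc(\Seed_{0,r})$.

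The step I expect to be the main obstacle is the coordination in the third paragraph: one must arrange that the highway $\mathcal{K}$ not only reaches the far quadrant but reaches it at a location where a seed happens to be present, and reaches it thickly enough to be welded to it, with all estimates uniform over the possibly very irregular seed clusters. This is precisely where the disjointness of the two diameter windows is indispensable — it makes ``where the highway goes'' and ``where the seeds are'' genuinely independent, so that the two positive-density families (cubes met thickly by $\mathcal{K}$, and cubes carrying a seed) must overlap once $r'$ is large — and the role of local uniqueness is to guarantee that $\mathcal{K}$ behaves like \emph{the} macroscopic $\Lc_{1,R}^\alpha$-cluster, of positive density throughout $Q(r')$, so that ``met thickly'' really holds for a positive fraction of the far cubes.
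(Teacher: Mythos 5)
Your proposal has a genuine circularity at its core. You invoke, as a ``black box'' for the supercritical finite-range soup $\Lc_{1,R}^\alpha$ (and for $\Lc_{R',R''}^\alpha$), ``box crossings with probability tending to one, and local uniqueness of macroscopic clusters.''  These are precisely the \emph{conclusions} of a Grimmett--Marstrand-type argument, and Proposition~\ref{P:GM} --- of which Proposition~\ref{P:seed} is the key building block --- is exactly that argument.  The only input available at this stage is that $\alpha\in\Ir_\tr^R$, i.e. that $\Lc_{1,R}^\alpha$ percolates almost surely; this does \emph{not} give high-probability box crossings or local uniqueness, any more than $p>p_c$ alone does for Bernoulli percolation on $\Z^3$ before Grimmett--Marstrand.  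The ``coarse-graining onto a lattice of blocks of side $L$, exactly in the spirit of Lemma~\ref{L:finite_critical_point}'' only produces a highly supercritical $1$-dependent model when one has a small-box crossing estimate with probability close to one; that is not a consequence of bare percolation when $\alpha$ is barely above $\alpha_\tr^R$.  So the ``highway'' renormalisation in your second paragraph cannot get off the ground, and the ``positive-density'' description of the set $A$ of cubes met thickly is not justified.

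The paper sidesteps this entirely with a more elementary and weaker claim that suffices.  Lemma~\ref{L:localK} shows only that the number $K(r,r')$ of quadrant cubes $Q(x,r)$ in $\Lc_{1,R}^\alpha$-contact with the $Q(r)$-cluster tends to infinity in probability as $r'\to\infty$; the proof uses just the soft consequence \eqref{E:L_Jr2} of percolation and ergodicity together with the square-root trick, and the observation that if $1\le \tilde K(r,r')\le k$ then with probability $\ge p^k$ the exterior loops fail to attach, contradicting percolation.  No positive density, no box crossings, no local uniqueness.  Having many contact cubes is enough because seeds carry probability $p_1>0$ uniformly (Lemma~\ref{L:seed_proba}, a Palm-formula argument merging admissible clusters with a single added loop rather than a structure theorem for $\Lc_{R',R''}^\alpha$) and are independent across cubes that are $r\gg R''$ apart, so a Bernoulli domination with $\floor{K/8}$ independent trials of success probability $p_1 p_2>0$ finishes the proof.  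Your ideas about independence of the two diameter windows, the role of \eqref{E:def_seed} in making welding a positive-probability event uniformly in the seed shape, and stochastic domination for the seed field are all on the right track and do appear in the paper; the error is only in what you take as known about the $\Lc_{1,R}^\alpha$-cluster, which is too strong and presupposes the theorem.
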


We now explain how to iterate. Let $\Cc$ be the union of all clusters of $\{\wp \in \Lc_{1,R}^\alpha : \wp \cap Q(r'-r) \neq \varnothing\}$ which intersect $\Cc(\Seed_{0,r})$.
With probability at least $1-\eps$, there exists $x \in \mathfrak{X}_{r,r'}$ such that $\Seed_{x,r}$ holds and such that $\Cc \cap \Cc(\Seed_{x,r}) \neq \varnothing$.
We want to lower bound
\begin{align}\label{E:P_negative1}
    \P(\exists y \in x+\mathfrak{X}_{r,r'}: \Cc(\Seed_{x,r}) \overset{\Lc_{1,R'}^\alpha\cap Q(x,r'-r)}{\longleftrightarrow} \Cc(\Seed_{y,r}) \vert \Cc, \Cc(\Seed_{x,r})).
\end{align}
The exploration which reveals $\Cc$ contains negative information: we know that there is no loop of $\Lc_{1,R}^\alpha$ which intersects both $Q(r'-r)$ and $\Cc$. This negative information will be dealt with by sprinkling some loops. Instead of increasing the intensity $\alpha$, we consider diameters in $[R,R']$.
Let $\Cc'$ be the union of all the clusters in $\{ \wp \in \Lc_{1,R}^\alpha : \wp \cap Q(x,r'-r) \neq \varnothing, \wp \cap (\Cc \cup \Cc(\Seed_{x,r})) = \varnothing \}$ which intersect some $\Cc(\Seed_{y,r})$ for some $y \in x + \mathfrak{X}_{r,r'}$. The probability in \eqref{E:P_negative1} equals
\begin{align*}
    & 1- \P(\nexists \wp \in \Lc_{1,R'}^\alpha\cap Q(x,r'-r) : \Cc\cup\Cc(\Seed_{x,r}) \overset{\wp}{\longleftrightarrow} \Cc' \vert \Cc, \Cc(\Seed_{x,r})) \\
    & \ge 1 - \P(\nexists \wp \in \Lc_{R,R'}^\alpha\cap Q(x,r'-r) : \Cc\cup\Cc(\Seed_{x,r}) \overset{\wp}{\longleftrightarrow} \Cc' \vert \Cc, \Cc(\Seed_{x,r})).
\end{align*}
By Lemma \ref{L:key}, there exists $c = c(R,R') \in (0,1)$ such that
\begin{align*}
    & \P(\nexists \wp \in \Lc_{R,R'}^\alpha\cap Q(x,r'-r) : \Cc\cup\Cc(\Seed_{x,r}) \overset{\wp}{\longleftrightarrow} \Cc' \vert \Cc, \Cc(\Seed_{x,r}), \Cc') \\
    & \le \P(\nexists \wp \in \tilde\Lc_{1,R}^\alpha\cap Q(x,r'-r) : \Cc\cup\Cc(\Seed_{x,r}) \overset{\wp}{\longleftrightarrow} \Cc' \vert \Cc, \Cc(\Seed_{x,r}), \Cc')^c,
\end{align*}
where $\tilde\Lc_{1,R}^\alpha$ is an independent copy of $\Lc_{1,R}^\alpha$. By Hölder's inequality, we deduce that the probability \eqref{E:P_negative1} is at least
\begin{align*}
    & 1 - \E[ \P(\nexists \wp \in \tilde\Lc_{1,R}^\alpha\cap Q(x,r'-r) : \Cc\cup\Cc(\Seed_{x,r}) \overset{\wp}{\longleftrightarrow} \Cc' \vert \Cc, \Cc(\Seed_{x,r}), \Cc')^c \vert \Cc, \Cc(\Seed_{x,r})] \\
    & \ge 1 - \P(\nexists \wp \in \tilde\Lc_{1,R}^\alpha\cap Q(x,r'-r) : \Cc\cup\Cc(\Seed_{x,r}) \overset{\wp}{\longleftrightarrow} \Cc' \vert \Cc, \Cc(\Seed_{x,r}))^c.
\end{align*}
All the negative information has been replaced by an independent copy of $\Lc_{1,R}^\alpha$. It only remains positive information so, by FKG inequality, the probability \eqref{E:P_negative1} is at least
\begin{align*}
    & 1 - \P(\nexists y \in x +\mathfrak{X}_{r,r'}: \Cc(\Seed_{x,r}) \overset{\Lc_{1,R}^\alpha\cap Q(x,r'-r)}{\longleftrightarrow} \Cc(\Seed_{y,r}) \vert \Cc(\Seed_{x,r}))^c \ge 1-\eps^c,
\end{align*}
using in the last inequality Proposition \ref{P:seed}.

The iteration of this procedure is then standard. In particular, the subdivision of $\{r'\} \times [-r',r']^2$ into four quadrants permits to control the lateral deviation of the procedure and thus the position of the seeds: this is known as ``steering''. See \cite[Section 7.2]{zbMATH01301486} for a detailed exposition.
This yields Proposition \ref{P:GM}.

\subsection{Proof of Proposition \ref{P:seed}}\label{SS:pf_seed}

Let $\Cc_{r,r'}$ be the union of all clusters of $\{ \wp \in \Lc^\alpha_{1,R}: \wp \subset Q(r'-r) \}$ intersecting $Q(r)$.
Let $K(r,r')$ (resp. $K^\Seed(r,r')$) be the maximal number $k\ge 0$ such that there exists $\mathfrak{X} \subset \mathfrak{X}_{r,r'}$ with $\# \mathfrak{X} = k$, such that for all $x \in \mathfrak{X}$, there exists $\wp_x \in \Lc^\alpha_{1,R}$ intersecting $Q(x,r)$ and $\Cc_{r,r'}$ (resp. intersecting $\Cc(\Seed_{x,r})$ and $\Cc_{r,r'}$). 

\begin{lemma}\label{L:localK}
For all $\eps>0$, there exists $r>0$ large enough such that for all $k \ge 1$, there exists $r' > r$ large enough such that
\[
\P(K(r,r') \ge k) \ge 1-\eps.
\]
\end{lemma}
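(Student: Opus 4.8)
The plan is to fix a large intensity $\alpha \in \Ir_\tr^R$ and exploit the fact that, by Lemma~\ref{L:Jr} (more precisely \eqref{E:L_Jr2}), with probability tending to $1$ as $r \to \infty$, the rescaled model $\Lc_{1,R}^\alpha$ has a cluster crossing the shell between radii $r$ and $r'$ for all $r'$. First I would choose $r$ large enough that the probability that $\Cc_{r,r'}$ is empty, or that $\Cc_{r,r'}$ fails to cross from $Q(r)$ to near $\partial Q(r')$, is below $\eps/2$; this uses the uniform-in-$r'$ statement in \eqref{E:L_Jr2} together with translation invariance. Since loops of $\Lc_{1,R}^\alpha$ have bounded diameter, no single loop connects $Q(r)$ to $\partial Q(r'-r)$ when $r' \gg r$, so the cluster $\Cc_{r,r'}$ genuinely ``propagates'' outward through successive loops. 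Once this is guaranteed, the cluster $\Cc_{r,r'}$ necessarily reaches many of the $2r \times 2r$ squares $x + \{0\} \times [-r,r]^2$ tiling the face $\{r'\} \times [0,r']^2$, simply because its trace on that face has macroscopic diameter while each square has fixed size $2r$; as $r' \to \infty$ the number of distinct squares touched goes to infinity, and one can arrange for at least $k$ of them to be touched with probability $\ge 1-\eps$.

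\textbf{Key steps.} (1) Reduce to showing that, with probability $\ge 1-\eps$, $\Cc_{r,r'}$ intersects at least $k$ of the boxes $Q(x,r)$, $x \in \mathfrak{X}_{r,r'}$. This follows from: (a) $\Cc_{r,r'} \ne \varnothing$ with high probability (from \eqref{E:L_Jr2} after rescaling $\Lc_{1,R}^\alpha$ by $1/r$); (b) on this event, $\Cc_{r,r'}$ crosses $Q(r'-r) \setminus Q(r)$ and hence reaches within $R$ of $\partial Q(r'-r)$; (c) a deterministic combinatorial fact: a connected set crossing a cube of side $2(r'-r)$ and made up of loops of diameter $\le R$ must intersect $\Omega(r'/r)$ of the boxes $Q(x,r)$ on one of the six faces — by the pigeonhole principle applied to the faces, at least one face is hit by $\gtrsim r'/r$ such boxes, so for $r'$ large enough this exceeds $k$. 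Actually a cleaner route for (c): on the crossing event, consider the first loop of $\Cc_{r,r'}$ that reaches $\partial Q(r'-2r)$; starting from it and following the cluster, one finds a chain of loops whose union is connected and has diameter comparable to $r'$, hence it must pass through many disjoint boxes of fixed size. (2) To convert ``$\Cc_{r,r'}$ intersects $Q(x,r)$'' into ``$\Cc(\Seed_{x,r})$ is defined and $\Cc_{r,r'}$ intersects $\Cc(\Seed_{x,r})$'' one would need the version with seeds, but that is precisely the content of $K^\Seed(r,r')$ and is addressed later (the present lemma only concerns $K(r,r')$, without seeds), so here I only need the cruder statement $\P(K(r,r') \ge k) \ge 1-\eps$.

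\textbf{Main obstacle.} The delicate point is making step (c) uniform and quantitative: I need that \emph{conditionally} on $\Cc_{r,r'}$ being nonempty and crossing, it reaches a prescribed number $k$ of boxes with conditional probability close to $1$, not merely with positive probability. The cleanest way is to avoid conditioning and instead bound directly: choose $r$ so that $\P(\Cc_{r,r'}$ crosses $Q(r'-r)\setminus Q(r)) \ge 1-\eps/2$ uniformly in $r' > r_0(r)$ — this is legitimate because \eqref{E:L_Jr2} gives a statement holding simultaneously for all large radii — and then observe that the crossing event \emph{deterministically implies} that $\Cc_{r,r'}$ meets at least $\lfloor (r'-2r)/(2r)\rfloor$ boxes in $\mathfrak{X}_{r,r'}$ along whichever face it exits through; hence picking $r'$ with $(r'-2r)/(2r) \ge k$ forces $K(r,r') \ge k$ on that event, so $\P(K(r,r') \ge k) \ge 1-\eps$. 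The only genuine work is checking the deterministic geometric implication — that a connected union of diameter-$\le R$ loops joining $Q(r)$ to $\partial Q(r'-r)$ must thread through $\Omega(r'/r)$ of the fixed-size boxes tiling the exit face — which is elementary: project the crossing path onto the coordinate normal to that face, use that consecutive loops in the chain overlap and have diameter $\le R \le r$, and count level sets, so that the path's trace on the face, being connected and of diameter $\gtrsim r'$, cannot fit in fewer than $\sim r'/r$ of the $2r$-boxes.
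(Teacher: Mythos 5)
Your reduction and preliminary steps are fine: using \eqref{E:L_Jr2} to choose $r$ so that $\P(\forall r'>r,\ Q(r)\overset{\Lc_{1,R}^\alpha}{\longleftrightarrow}\partial Q(r'))\ge 1-\eps/2$ is exactly how the paper's proof also begins. The gap is in your step (c), which is where all the content lies. The claim that the crossing event \emph{deterministically} forces $\Cc_{r,r'}$ to meet $\Omega(r'/r)$ of the boxes $Q(x,r)$ with $x\in\mathfrak{X}_{r,r'}$ is false: a connected crossing of $Q(r'-r)\setminus Q(r)$ need only touch $\partial Q(r'-r)$ at one point, hence one box. Your justification conflates the projection of the path onto the normal axis (which indeed spans an interval of length $\sim r'$, so the path meets $\sim r'/r$ parallel slabs of thickness $2r$) with the \emph{trace of the path on the exit face} (which can be a single point). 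Counting level sets of the normal coordinate puts one box per slab, on many different slabs; it does not and cannot put many boxes on the single face $\{r'\}\times[0,r']^2$ that $\mathfrak{X}_{r,r'}$ refers to. Nor would a pigeonhole over the six faces help, since the crossing cluster only needs to reach one face at one spot.

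The paper's proof is therefore not geometric but probabilistic, and that is essential. After symmetrizing over the $24$ quadrants of $\partial Q(r')$ (square-root trick, Lemma~\ref{L:square-root_trick}) to work with a quantity $\tilde K(r,r')$, the key observation is that on $\{1 \le \tilde K(r,r')\le k\}$ and conditionally on the loops hitting $Q(r'-r)$, the event that no loop disjoint from $Q(r'-r)$ extends the cluster of $Q(r)$ beyond $\partial Q(r'-r+R)$ has conditional probability at least $p^{\tilde K}\ge p^k$, because there are only $\tilde K$ boxes where a bridging loop is present and each offers an independent and uniformly positive chance of failure. This yields
\[
\P\big(1\le \tilde K(r,r') \le k\big) \le p^{-k}\,\P\Big(Q(r)\overset{\Lc_{1,R}^\alpha}{\longleftrightarrow}\partial Q(r'-r),\ \ Q(r)\overset{\Lc_{1,R}^\alpha}{\centernot\longleftrightarrow}\partial Q(r'-r+R)\Big),
\]
and the right-hand side tends to $0$ as $r'\to\infty$ by the uniform-in-$r'$ persistence of the crossing from \eqref{E:L_Jr2}. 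It is this ``if $K$ were small, the crossing would almost stop, but it must continue'' tension that forces $K$ to be large; there is no deterministic pigeonhole substitute.
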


\begin{proof}
For $r',r>0$, let $\tilde K(r,r')$ be the random variable defined in the same way as $K(r,r')$ with the difference that we consider a paving of the whole surface $\partial Q(r')$ by squares of sidelength $2r$. Since $\partial Q(r')$ is the union of 24 squares obtained by rotating $\{r'\} \times [0,r']^2$, $\{\tilde{K}(r,r') \ge 24k\}$ is included in the union of 24 rotated versions of the event $\{K(r,r')\ge k\}$.
By the square-root trick (Lemma \ref{L:square-root_trick}) and invariance under rotation, we deduce that
\[
\P(K(r,r')\ge k) \ge 1 - (1-\P(\tilde K(r,r') \ge 24k))^{1/24}.
\]
To prove the lemma, it is thus enough to show that for all $\eps >0$ and $k \ge 1$, there exists $r' > r> 0$ large enough such that
$
\P(\tilde K(r,r')\ge k) \ge 1-\eps.
$

Let $\eps >0$ be fixed. By \eqref{E:L_Jr2} (and because one can fit a big cube in a big ball and vice versa), we can first pick $r>0$ large enough such that
\begin{equation}
\label{E:pf_local1}
\P(\forall r'>r, ~\tilde K(r,r') \ge 1) =
\P(\forall r'>r,  ~Q(r) \overset{\Lc_{1,R}^\alpha}{\longleftrightarrow} \partial Q(r')) \ge 1 - \eps/2.
\end{equation}
Fix now $k \ge 1$, let $r'>r$ and recall that $\tilde K(r,r')$ only depends on the loops which hit $Q(r'-r)$.
Consider the event that none of the loops in $\{ \wp \in \Lc_{1,R}^\alpha: \wp \cap Q(r'-r) = \varnothing\}$ intersects a cluster of $Q(r)$ in $\{\wp \in \Lc_{1,R}^\alpha: \wp \cap Q(r'-r) \neq \varnothing\}$. Conditionally on $\tilde{K}(r,r')$, this occurs with probability at least $p^{\tilde K(r,r')}$ for some $p>0$ which only depends on $\alpha$ and $R$.
Since this event guarantees that $Q(r)$ is not connected to $\partial Q(r'-r+R)$, this shows that
\begin{align*}
\P(1 \le \tilde K(r,r') \le k) \le p^{-k} \P( Q(r) \overset{\Lc_{1,R}^\alpha}{\longleftrightarrow} \partial Q(r'-r), Q(r) \overset{\Lc_{1,R}^\alpha}{\centernot\longleftrightarrow} \partial Q(r'-r+R)).
\end{align*}
The right hand side tends to zero as $r' \to \infty$ which shows that $\P(1 \le \tilde K(r,r') \le k) \to 0$ as $r' \to 0$. We can now pick $r'$ large enough so that $\P(1 \le\tilde K(r,r') \le k) \le \eps/2$. Together with \eqref{E:pf_local1}, this shows that $\P(\tilde K(r,r') > k ) \ge 1-\eps$ which concludes the proof.
\end{proof}

\begin{lemma}[Seed]\label{L:seed_proba}
For all $r> R'' \vee 4R$ and $x \in \R^3$, $\P(\Seed_{x,r}) > 0$.
\end{lemma}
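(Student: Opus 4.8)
\emph{Strategy.} By translation invariance we may take $x=0$. Write $Q^+:=Q(r+R''/2)$; a loop of diameter at most $R''$ meeting $Q(r-R''/2)$ is contained in $Q^+$, and (arguing as in Lemma~\ref{L:sphere_bound}) $\loopmeasure(\{\diam(\wp)\ge R',\ \wp\cap Q^+\ne\varnothing\})<\infty$, so the collection $\Lc':=\{\wp\in\Lc^\alpha_{R',R''}:\wp\cap Q(r-R''/2)\ne\varnothing\}$ is a.s.\ finite. Hence ``$\Lc'$ contains a unique cluster'' simply means ``$\Lc'$ is non-empty and connected'', and it suffices to exhibit an event of positive probability, measurable with respect to $\Lc^\alpha_{R',R''}$, on which $\Lc'$ is non-empty, connected, and its union $\Cc$ satisfies $\P^{y,z;u}(\Cc\cap\wp\ne\varnothing\mid\Lc^\alpha_{R',R''})\ge1/2$ for every $y$ with $\mathrm{dist}(y,Q(r))\le R$, every $z\in B(y,1/2)$ and every $u\in[1,2]$.

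\emph{The prescribed configuration.} The plan is to build one explicit ``shape'' of configuration, fix small $\rho>0$ and a large integer $N$, and prescribe, for each anchor point $v$ of a suitable finite $\rho$-net, $N$ loops forced to pass through $B(v,\rho)$. Inside $Q(r-R''/2)$ (which has side $2r-R''>R''$, so it accommodates loops of diameter up to $R''$) I would prescribe, as in the proof of Lemma~\ref{L:finite_critical_point}, a connected chain of such loops whose anchors form a $\rho$-net of $Q(r-R''/2)$, neighbouring loops being forced through a common $\rho$-ball. Writing $\mathcal Y:=\{y:\mathrm{dist}(y,Q(r))\le R\}\subset Q(r+R)$ and using $R''>2R$ (which guarantees $\mathrm{dist}(v,Q(r-R''/2))\le R+R''/2\le R''$ for all $v\in\mathcal Y$), I would add, for each $v$ in a $\rho$-net of $\mathcal Y$, $N$ further loops of diameter in $(R',R'')$ passing through $B(v,\rho)$ and meeting $Q(r-R''/2)$ near the point of that cube closest to $v$. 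Let $U_1,\dots,U_M$ be pairwise disjoint open neighbourhoods in path space of these finitely many prescribed loops, each consisting of loops with $\diam\in(R',R'')$, meeting $Q(r-R''/2)$, and passing through the required $\rho$-balls; each $U_i$ is a non-empty open set of loops of bounded duration, so $0<\loopmeasure(U_i)<\infty$. Let $E_0$ be the event that $\Lc^\alpha_{R',R''}$ has exactly one loop in each $U_i$ and no loop of diameter in $[R',R'']$ meeting $Q(r-R''/2)$ outside $\bigcup_iU_i$; by the Poisson structure, disjointness, and finiteness of $\loopmeasure(\{\diam\ge R',\ \wp\cap Q(r-R''/2)\ne\varnothing\})$, one gets $\P(E_0)=\big(\prod_i\alpha\loopmeasure(U_i)e^{-\alpha\loopmeasure(U_i)}\big)e^{-\alpha\loopmeasure(\cdots)}>0$, and on $E_0$ the collection $\Lc'$ is exactly the $M$ prescribed loops.

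\emph{Checking the seed conditions.} It then remains to show that, conditionally on $E_0$, the two required properties of $\Cc=\bigcup\Lc'$ hold with positive probability. For connectedness: two loops forced through a common $\rho$-ball each contain a Brownian crossing of a fixed small annulus centred there, and two independent such crossings meet with probability bounded below by the non-trivial intersection property of three-dimensional Brownian motion (Lemma~\ref{L:intersection_BM}) together with Brownian scaling; as there are finitely many ``should-be-adjacent'' pairs, all the needed intersections occur simultaneously with positive conditional probability, and each radial loop is joined to the chain through its contact point in $Q(r-R''/2)$, so $\Cc$ is a single cluster. For the hitting bound: for $y\in\mathcal Y$, a Brownian bridge $\wp\sim\P^{y,z;u}$, $u\in[1,2]$, restricted to a short initial interval has law absolutely continuous with respect to Brownian motion from $y$ with density bounded by a universal constant (a ratio of heat kernels $p^{\R^d}$), so with probability at least $1-\delta$ it performs a crossing of a fixed macroscopic annulus around $y$ staying inside $Q^+$; near $y$ the cluster $\Cc$ contains at least $N$ loops through a common $\rho$-ball, each meeting that crossing with conditional probability at least some universal $p_0>0$ (again by the $3$D intersection property and scaling), whence $\P^{y,z;u}(\Cc\cap\wp\ne\varnothing\mid\Lc^\alpha_{R',R''})\ge(1-\delta)\big(1-(1-p_0)^N\big)$ uniformly over the finitely many relevant scales; choosing $\delta$ small, then $N$ large, and union-bounding over the finitely many anchors shows this exceeds $1/2$ for all admissible $(y,z,u)$ with conditional probability close to $1$. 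Combining, $\P(\Seed_{0,r})\ge\P(E_0)\cdot\P(\text{both properties}\mid E_0)>0$.

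\emph{Main difficulty.} The delicate part is the hitting bound: one must assemble a \emph{single} configuration that simultaneously keeps every loop attached to $Q(r-R''/2)$ despite the diameter cap $R''$ (this is exactly where $R''>2R$ is used), remains connected, and is ``thick'' enough near the whole region $\mathcal Y$ that every admissible short Brownian bridge hits it with probability at least $1/2$. This last point is what forces the use of the quantitative three-dimensional Brownian intersection estimates — in particular the fact that packing many loops near a point drives the hitting probability towards $1$ — rather than a purely combinatorial argument; the geometric bookkeeping ensuring (a), (b) and (c) hold at once is the other source of friction.
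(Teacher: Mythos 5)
Your proof is correct in substance but takes a genuinely different route from the paper's. The paper argues abstractly: it calls a cluster \emph{admissible} if it satisfies the hitting bound \eqref{E:def_seed}, asserts (as ``clear'') that with positive probability at least one admissible cluster exists, picks a $k$ with $\P(\Seed_{x,r}^k)>0$, and then uses Palm's formula to add a single extra loop of diameter in $[R',R'']$ touching $Q(x,r-R''/2)$ that with positive probability intersects all $k$ admissible clusters, merging them into a single admissible cluster — hence $\P(\Seed_{x,r})\ge c(k)\P(\Seed_{x,r}^k)>0$. Your proof instead \emph{constructs} an explicit configuration (finitely many loops forced through prescribed $\rho$-balls) that is a single cluster by design, and checks the hitting bound directly; this way there is no uniqueness issue to repair, because by construction the whole collection is one component. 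In effect, your construction supplies the content of the paper's unproved ``Clearly, $\P(\bigcup_{k\ge1}\Seed_{x,r}^k)>0$'': producing \emph{any} admissible cluster already requires packing loops thickly near every $y$ at distance $\le R$ from $Q(x,r)$, respecting the diameter cap $R''$ and the attachment to $Q(x,r-R''/2)$, and the inequality $R''>2R$ is used exactly for the same geometric reason you identify. What the paper's route buys is a short, clean separation of ``existence'' from ``uniqueness'' via the Palm surgery; what yours buys is self-containedness and an explicit picture of what a seed looks like. One fragile spot, which you yourself flag: writing that a \emph{fixed} loop ``meets the crossing with conditional probability at least $p_0$'' is not literally meaningful once $\Lc^\alpha_{R',R''}$ is conditioned on — the loop is then a deterministic curve. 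The correct formulation is that each of the $N$ loops (independently and identically distributed under the $U_i$-conditioned law) is, with high probability over its own randomness, a curve hit by the test bridge with $\P^{y,z;u}$-probability at least $p_0$; then $\E\big[\P^{y,z;u}(\wp\cap A_v=\varnothing\mid\Lc)\big]=\E_\wp\big[(\P(\wp\cap\ell_1=\varnothing\mid\wp))^N\big]\to 0$ as $N\to\infty$ by bounded convergence, and Markov's inequality plus a union bound over the finite $\rho$-net and a continuity argument in $(y,z,u)$ closes the bound. With that restatement the argument goes through.
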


\begin{proof}
We say that a cluster $\Cc$ of $\{ \wp \in \Lc_{R',R''}^\alpha, \wp \cap Q(x,r-R''/2) \neq \varnothing\}$ is ``admissible'' if for all $y\in \R^3$ at distance at most $R$ to $Q(x,r)$, $z \in B(y,1/2)$ and $u \in [1,2]$,
\[
\P^{y,z;u}(\Cc \cap \wp \neq \varnothing \vert \Lc_{R',R''}^\alpha) \ge 1/2.
\]
The event $\Seed_{x,r}$ corresponds to the existence of a unique such admissible cluster. For $k \ge 1$, denote by $\Seed_{x,r}^k$ the event that $\{ \wp \in \Lc_{R',R''}^\alpha, \wp \cap Q(x,r-R''/2) \neq \varnothing\}$ contains exactly $k$ different admissible clusters $\Cc_1, \dots, \Cc_k$.
Clearly, $\P(\bigcup_{k\ge1} \Seed_{x,r}^k) >0$. So there exists $k \ge 1$ such that $\P(\Seed_{x,r}^k) >0$. By definition and on the event $\Seed_{x,r}^k$, if one adds on top of $\{ \wp \in \Lc_{R',R''}^\alpha, \wp \cap Q(x,r-R''/2) \neq \varnothing\}$ an independent loop sampled according to $\alpha \mathbf{1}_{\diam(\wp)\in[R',R''],\wp \cap Q(x,r-R''/2) \neq \varnothing}\loopmeasure(\d \wp)$, then there is a positive probability that this loop intersects all of the admissible clusters $\Cc_1, \dots, \Cc_k$. This reduces the number of admissible clusters to 1. By Palm's formula, we deduce that
$\P(\Seed_{x,r}) \ge c(k) \P(\Seed_{x,r}^k) >0.$ This concludes the proof.
\end{proof}

We now have all the ingredients to prove Proposition \ref{P:seed}.

\begin{proof}[Proof of Proposition \ref{P:seed}]
Let $\eps>0$. Let $r>0$ be large enough so that Lemma \ref{L:localK} holds: for all $k \ge 1$, there exists $r' > r$ large enough so that $\P(K(r,r') \ge k) \ge 1-\eps/2$. We fix these values of $k$ and $r'$.
By Lemma \ref{L:seed_proba}, $p_1 := \P(\Seed_{0,r})$ is positive. The value of $p_1$ does not depend on $k$ and $r'$.
By definition of the seed events, there exists $p_2 >0$ independent of $r'$ and $k$ such that for all $x \in \mathfrak{X}_{r,r'}$, on the event that $\Seed_{x,r}$ holds,
\begin{align*}
& \P(\exists \wp_x \in \Lc_{1,R}^\alpha, \wp_x \nsubseteq Q(r'-r): \Cc_{r,r'} \overset{\wp_x}{\leftrightarrow} \Cc(\Seed_{x,r}) \vert \Cc(\Seed_{x,r}), \Cc_{r,r'} ) \\
& \ge p_2 \P(\exists \wp_x \in \Lc_{1,R}^\alpha, \wp_x \nsubseteq Q(r'-r): \Cc_{r,r'} \overset{\wp_x}{\leftrightarrow} Q(x,r) \vert \Cc_{r,r'} ).
\end{align*}
When $r$ is large enough compared to $R''$ and for $x \ne y \in \mathfrak{X}_{r,r'}$, the collections $\{ \wp \in \Lc_{1,R}^\alpha: \wp \cap Q(x,r+R''/2) \neq \varnothing \}$ and $\{ \wp \in \Lc_{1,R}^\alpha: \wp \cap Q(y,r+R''/2) \neq \varnothing \}$ are independent, except when $Q(x,r) \cap Q(y,r) \neq \varnothing$, i.e. for $8$ values of $y$.
Overall, this shows that $K^\Seed(r,r')$ stochastically dominates
\[
\sum_{j=1}^{\floor{K(r,r')/8}} B_j, \qquad \text{where} \quad B_1, B_2, \dots \overset{\text{i.i.d.}}{\sim} \text{Bernoulli}(p_1p_2).
\]
Hence,
\begin{align*}
    \P(K^\Seed(r,r') \ge 1) \ge 1 - \eps/2 - (1-p_1p_2)^{\floor{k/8}}.
\end{align*}
If $k$ is large enough, $(1-p_1p_2)^{\floor{k/8}}$ is smaller than $\eps/2$, concluding that $\P(K^\Seed(r,r') \ge 1) \ge 1-\eps$. This shows that
\[ 
\P(\exists x \in \mathfrak{X}_{r,r'}, ~Q(x,r) \overset{\Lc_{1,R'}^\alpha}{\longleftrightarrow} \Cc(\Seed_{x,r}) ) \ge 1 - \eps.
\]
A variant of this argument shows the desired bound where $Q(x,r)$ is replaced by $\Cc(\Seed_{0,r})$, conditionally on $\Cc(\Seed_{0,r})$.
\end{proof}

\subsection{Proof of Theorem \ref{T:JR}, \eqref{E:T_locuniq}, first equality}\label{SS:pf_1st_equality}

The inclusion $\Ir_\tr \supset \bigcup_{R>1} \Ir_\tr^R$ is clear and we only need to prove the reverse inclusion. Let $\alpha \in \Ir_\tr$.
Let $r'>2r>0$ with $r'/r \in 2\N$ and $A>0$.
We adapt slightly definitions from Section \ref{SS:pf_seed}. Let $\Cc_{r,r'}$ be the union of all clusters of $\{ \wp \in \Lc^\alpha_{\ge 1}: \wp \subset Q(r'-r) \}$ intersecting $Q(r)$.
Let $K(A,r,r')$ be the maximal number $k\ge 0$ such that there exists $\mathfrak{X} \subset \mathfrak{X}_{r,r'}$ with $\# \mathfrak{X} = k$, such that for all $x \in \mathfrak{X}$, there exists $\wp_x \in \Lc^\alpha_{\ge 1}$ intersecting $\Cc_{r,r'}$, whose root belongs to $Q(x,r)$ and with $\diam(\wp) \le Ar'$.

Let $\eps >0$ and $k \ge 1$. Because $\alpha \in \Ir_\tr$, if $r$ is large enough,
\[
\P( \forall r'>r, \partial Q(r) \overset{\Lc_{\ge 1}^\alpha}{\longleftrightarrow} \partial Q(r') ) \ge 1-\eps/4.
\]
Moreover, by scale invariance
\begin{align*}
    \loopmeasure_{\R^3}(\{ \wp : \wp \cap Q(r') \neq \varnothing, \diam(\wp) \ge A r' \} ) = \loopmeasure_{\R^3}(\{ \wp : \wp \cap Q(1) \neq \varnothing, \diam(\wp) \ge A \} ),
\end{align*}
which goes to 0 as $A \to \infty$ (see e.g. Lemma \ref{L:estimate_crossing}).
By taking $A$ large we can thus guarantee that for all $r' > r$,
\[
\P(\P( \partial Q(r) \overset{\Lc_{1,Ar'}^\alpha}{\longleftrightarrow} \partial Q(r') ) \ge 1-\eps/2.
\]
In particular, $\P(K(A,r,r') \ge 1) \ge 1-\eps/2$ for all $r' > r$. 
As in Lemma \ref{L:localK}, we can then infer that if $r'$ is large enough, $\P(K(A,r,r') \ge k) \ge 1-\eps$.

We can then repeat the dynamical construction as before which shows that $\Lc_{1,Ar'}^\alpha$ percolates with positive probability and thus $\alpha \in \Ir_\tr^{Ar'}$. This proves that $\Ir_\tr \subset \bigcup_{R>1}\Ir_\tr^R.$

Now that we know that $\Ir_\tr = \bigcup_{R>1}\Ir_\tr^R$ and that $\alpha_\tr^{R}>\alpha_\tr^{R'}$ for all $R'>R>1$, the fact that $\Ir_\tr$ is open ($\alpha_\tr \notin \Ir_\tr$) follows.
\qed

\paragraph*{Acknowledgements}
We are grateful to Vincent Tassion for many inspiring discussions.
Thanks also to Nathanaël Berestycki, Tom Hutchcroft and Xin Sun for questions/discussions related to the invariance of Brownian motion and the Brownian loop measure under inversions.

AJ and TL would like to thank the Isaac Newton Institute for Mathematical Sciences, Cambridge, for support and hospitality during the programme \textit{Stochastic systems for anomalous diffusion}, where work on this paper was undertaken. This work was supported by EPSRC grant EP/Z000580/1.

\addcontentsline{toc}{section}{References}
{\small
\bibliographystyle{alpha}
\bibliography{bibliography}
}
\end{document}